\newtheorem{theorem}{Theorem}[section]
\newtheorem{lemma}[theorem]{Lemma}
\newtheorem{corollary}[theorem]{Corollary}
\newtheorem{definition}[theorem]{Definition}
\newtheorem{proposition}[theorem]{Proposition}
\newtheorem{conjecture}[theorem]{Conjecture}
\theoremstyle{definition}
\newtheorem{remark}[theorem]{Remark}
\def\bV{\mathbf{V}}
\def\bG{\mathbf{G}}
\def\bfEVO{\mathbf{E}_{\mathbf{V},\Omega}}
\def\bfEVH{\mathbf{E}_{\mathbf{V},H}}
\def\bfEVOf{\mathbf{E}_{\mathbf{V}\oplus\mathbf{W}^{1},\Omega^{(1)}}}
\def\bfEVOfpp{\mathbf{E}_{\mathbf{V}''\oplus\mathbf{W}^{1},\Omega^{(1)}}}
\def\bfEVOff{\mathbf{E}_{\mathbf{V}\oplus\mathbf{W}^{1}\oplus \mathbf{W}^2,\Omega^{(2)}}}
\def\bfEVHf{\mathbf{E}_{\mathbf{V}\oplus\mathbf{W},H^{(1)}}}
\def\bfEVOfid{\dot{\mathbf{E}}^{i}_{\mathbf{V}\oplus\mathbf{W}^{1},\Omega^{(1)}}}
\def\bfEVOfidpp{\dot{\mathbf{E}}^{i}_{\mathbf{V}''\oplus\mathbf{W}^{1},\Omega^{(1)}}}
\def\mD{\mathcal{D}}
\def\mP{\mathcal{P}}
\def\mQ{\mathcal{Q}}
\def\mN{\mathcal{N}}
\def\mK{\mathcal{K}}
\def\mL{\mathcal{L}}
\def\Hom{{\rm{Hom}}}
\def\BM{{\rm{BM}}}
\def\CC{{\rm{CC}}}
\def\ftnu{\boldsymbol{\nu}}
\def\Uv{{_{\mathcal{A}} \mathbf{U}_{v} }}
\def\Uvm{{_{\mathcal{A}} \mathbf{U}^{-}_{v} }}
\begin{document}
\begin{spacing}{1}
\title[]
{Lusztig sheaves, characteristic cycles and the Borel-Moore homology of Nakajima's quiver varieties}
\author[Jiepeng Fang, Yixin Lan]{Jiepeng Fang, Yixin Lan}

\address{Department of Mathematics and New Cornerstone Science Laboratory,The University of Hong Kong, Pokfulam, Hong Kong, Hong Kong SAR, P. R. China}
\email{fangjp@hku.hk (J.Fang)}

\address{Academy of Mathematics and Systems Science, Chinese Academy of Sciences, Beijing 100190, P.R.China}
\email{lanyixin@amss.ac.cn (Y.Lan)}

\subjclass[2020]{Primary 17B37;  Secondary 14D21, 16G20.}

\date{\today}

\bibliographystyle{abbrv}

\keywords{Canonical basis, quiver varieties, Characteristic cycles}

\maketitle

\begin{center}
	\small\emph{Dedicated to Professor George Lusztig with admiration}
\end{center}

\begin{abstract}
	By using characteristic cycles, we build a morphism from the canonical bases of integrable highest weight modules of quantum groups to the top Borel-Moore homology groups of Nakajima's quiver and tensor product varieties, and compare the canonical bases and the fundamental classes. As an application, we show that Nakajima's realization of irreducible highest weight modules and their tensor products can be defined over $\mathbb{Z}$. We also give a new proof of Nakajima's conjecture on the canonical isomorphism of  tensor product varieties.
\end{abstract}

\setcounter{tocdepth}{1}\tableofcontents

\section{Introduction}\label{introduction}
Given a symmetric generalized Cartan matrix $C=(a_{ij})_{i,j\in I}$, the matrix $C$ defines a bilinear form $\langle -,-\rangle: \mathbb{Z}I \times \mathbb{Z} I \rightarrow \mathbb{Z}$, and there is a Kac-Moody Lie algebra $\mathfrak{g}$ associated to $C$.  The quantum group $\mathbf{U}_{v}(\mathfrak{g})$ is the $\mathbb{Q}(v)$-algebra generated by $E_i, F_i, K_{\nu}$ for $i\in I, \nu\in \mathbb{Z}I$, subject to the following relations:
\begin{itemize}
	\item {\rm{(a)}} $K_0 = 1$ and $K_{\nu}K_{\nu'} = K_{\nu+\nu'}$ for any $\nu,\nu' \in \mathbb{Z}I$;
	\item {\rm{(b)}} $K_{\nu}E_i = v^{\langle \nu,i \rangle} E_i K_{\nu}$ for any $i \in I, \nu \in \mathbb{Z}I$;
	\item {\rm{(c)}} $K_{\nu}F_i = v^{-\langle \nu,i \rangle} F_i K_{\nu}$ for any $i \in I, \nu \in \mathbb{Z}I$;
	\item {\rm{(d)}} $E_i F_j - F_j E_i = \delta_{ij} \frac{K_{i} - K_{-i}}{v - v^{-1}}$ for any $i,j\in I$;
	\item {\rm{(e)}} $\sum\limits_{p+q=1-a_{ij}} (-1)^p E_i^{(p)} E_j E_i^{(q)} = 0$ for any $i\not=j$;
	\item {\rm{(f)}} $\sum\limits_{p+q=1-a_{ij}} (-1)^p F_i^{(p)} F_j F_i^{(q)} = 0$ for any $i\not=j$.
\end{itemize}
Here $E_i^{(n)}= E_i^n/[n]!$, $F_i^{(n)}= F_i^n/[n]!$ are the divided power of $E_i,F_{i}$, where $[n]! := \prod \limits_{s=1}^n \frac{v^s - v^{-s}}{v - v^{-1}}$.  Taking the classical limit $v\rightarrow 1$, we obtain the universal enveloping algebra $\mathbf{U}(\mathfrak{g})$ of $\mathfrak{g}$ from the quantum group.

The quantum group $\mathbf{U}_{v}(\mathfrak{g})$ admits a triangular decomposition 
$$ \mathbf{U}_{v}(\mathfrak{g}) = \mathbf{U}^{+}_{v}(\mathfrak{g}) \otimes \mathbf{U}^{0}_{v}(\mathfrak{g})\otimes\mathbf{U}^{-}_{v}(\mathfrak{g}),$$ where $\mathbf{U}^{+}_{v}(\mathfrak{g})$ is the subalgebra generated by $E_{i},i\in I$, $\mathbf{U}^{-}_{v}(\mathfrak{g})$ is generated by $F_{i},i\in I$,  and $\mathbf{U}^{0}_{v}(\mathfrak{g})$ is generated by $K_{\nu},\nu \in \mathbb{Z}I$. 

Given a dominant weight $\lambda$, the Verma module $M_{v}(\lambda)$ is realized by 

\begin{equation*}
	\begin{split}
		M_{v}(\lambda) =& \mathbf{U}_{v}(\mathfrak{g})/ ( \sum\limits_{i\in I} \mathbf{U}_{v}(\mathfrak{g}) E_i + \sum\limits_{\nu \in \mathbb{Z}[I]} \mathbf{U}_{v}(\mathfrak{g}) (K_{\nu} - v^{\langle \nu,\lambda \rangle})).
	\end{split}
\end{equation*}
The irreducible integrable highest weight module $L_{v}(\lambda)$ is the unique irreducible quotient of the Verma module $M_{v}(\lambda)$, and can be realized by
\begin{equation*}
	\begin{split}
		L_{v}(\lambda) =& \mathbf{U}_{v}(\mathfrak{g})/ ( \sum\limits_{i\in I} \mathbf{U}_{v}(\mathfrak{g}) E_i + \sum\limits_{\nu \in \mathbb{Z}[I]} \mathbf{U}_{v}(\mathfrak{g}) (K_{\nu} - v^{\langle \nu,\lambda \rangle}) + \sum\limits_{i \in I} \mathbf{U}_{v}(\mathfrak{g}) F_i^{\langle i,\lambda \rangle +1})\\
		\cong & \mathbf{U}^{-}_{v}(\mathfrak{g})/ (  \sum\limits_{i \in I} \mathbf{U}^{-}_{v}(\mathfrak{g}) F_i^{\langle i,\lambda \rangle +1}).
	\end{split}
\end{equation*}
We denote the image of $1$  by $v_{\lambda}$, it is the highest weight vector of $L_{v}(\lambda)$. Taking classical limit $v\rightarrow 1$, we obtain the irreducible highest weight $\mathfrak{g}$-module $L_{1}(\lambda)$.

Let $\mathcal{A}=\mathbb{Z}[v,v^{-1}]$, the integral form $_{\mathcal{A}} \mathbf{U}_{v}(\mathfrak{g})$ of the quantum group is the $\mathcal{A}$-subalgebra generated by $K_{\nu},\nu \in \mathbb{Z}$ and the divided powers of $E_{i},F_{i},i \in I$. Similarly, the integral form $_{\mathcal{A}} \mathbf{U}^{\pm}_{v}(\mathfrak{g})$ is the $\mathcal{A}$-subalgebra of $ \mathbf{U}^{\pm}_{v}(\mathfrak{g})$ generated by the divided powers. We also let $_{\mathcal{A}}L_{v}(\lambda) \subseteq L_{v}(\lambda)$  be the  $_{\mathcal{A}} \mathbf{U}_{v}(\mathfrak{g})$-module generated by $v_{\lambda}$. Then if we take specialization at $v=1$, we get the $\mathbb{Z}$-form $_{\mathbb{Z}}\mathbf{U}(\mathfrak{g})$ of  $\mathbf{U}(\mathfrak{g})$ from $_{\mathcal{A}} \mathbf{U}_{v}(\mathfrak{g})$, and get the irreducible highest weight $_{\mathbb{Z}}\mathbf{U}(\mathfrak{g})$-module $_{\mathbb{Z}}L_{1}(\lambda)$   from $_{\mathcal{A}}L_{v}(\lambda)$.

\subsection{Background}
\subsubsection{The canonical basis of Lusztig's integral form} 
For a given symmetric generalized Cartan matrix $C$, there is a finite quiver (without loops) $Q=(I,H,\Omega)$ associated to $C$ such that the symmetric Euler form of $Q$ coincides with the bilinear form defined by $C$.
In \cite{MR1088333},\cite{MR1227098} and \cite{MR1653038}, G.Lusztig has considered a certain class $\mQ$ of semisimple complexes of  perverse sheaves on the moduli space $\bfEVO$ of  representations of $Q$. These semisimple complexes are called Lusztig's sheaves.   
Lusztig has also defined the induction and restriction functors for these complexes, which make the Grothendieck group of Lusztig's sheaves  become a bialgebra. This bialgebra is canonically isomorphic to the integral form ${_{\mathcal{A}}\mathbf{U}_{v}^{+}}(\mathfrak{g})$ (or ${_{\mathcal{A}}\mathbf{U}_{v}^{-}}(\mathfrak{g})$) of the positive (or negative) part of the quantum group.  Moreover, the simple Lusztig sheaves forms the canonical basis. One can also see details in Section 3.

\subsubsection{Realization of integrable modules of universal enveloping algebras}
It is an important problem to give a geometric realization of integrable modules of quantum groups or enveloping algebras. Nakajima has provided a construction for integrable modules of $\mathfrak{g}$ by using Borel-Moore homology groups of quiver varities in \cite{MR1604167} and \cite{MR1865400}.  One can also see details in Section 5.

In \cite{MR1604167}, Nakajima introduces the stable condition and constructs certain variety $\mathfrak{L}(\omega)$ and $\mathfrak{M}(\omega)$ by using symplectic geometry and GIT quotients. He uses the convolution of Hecke correspondences to define the $\mathfrak{g}$-module structure on the top BM  homology group (with $\mathbb{Q}$-coefficients) $\mathbf{H}(\mathfrak{L}(\omega))$ of $\mathfrak{L}(\omega)$ and shows that $\mathbf{H}(\mathfrak{L}(\omega))$ is canonically isomorphic to $L_{1}(\lambda)$, where $\lambda$ is a dominant weight corresponding to $\omega$. 

Later in \cite{MR1865400}, Nakajima has considered a certain $\mathbb{C}^{\ast}$-action on  $\mathfrak{M}(\omega)$ and  defined his tensor product variety $\tilde{\mathfrak{Z}}$. The convolution of Hecke correspondences also gives a $\mathfrak{g}$-module structure on the top BM  homology group of $\tilde{\mathfrak{Z}}$, and Nakajima has showed  that  $\mathbf{H}(\tilde{\mathfrak{Z}})$  is isomorphic to the tensor product $L_{1}(\lambda) \otimes L_{2}(\lambda)$. Since his proof relies on crystal structure and the calculation of characters, the isomorphism $\mathbf{H}(\tilde{\mathfrak{Z}}) \cong L_{1}(\lambda) \otimes L_{2}(\lambda)$  is not canonical. Nakajima has conjectured  that there is a unique canonical isomorphism satisfies certain properties.  (See \cite[Conjecture 5.6]{MR1865400} or Conjecture 5.7 of the present paper for details.) This conjectural canonical isomorphism has been realized by the stable envelope in \cite{maulik2019quantum}.

\subsubsection{Realization of integrable modules of quantum groups} 
A quantized version of Nakajima's work is expected and there are many successful results. For example, Kang and Kashiwara have provided a categorical realization of irreducible integrable modules of quantum groups via cyclotomic KLR algberas in \cite{MR2995184}. Webster has given a categorification of tensor products of irreducible integrable modules of quantum groups by using diagrammatic approaches  in \cite{webster2015canonical}. However, one may still expect a geometric realization from Lusztig's theory.

Following Lusztig's theory of canonical basis and also inspired by Zheng's work \cite{zheng2014categorification}, the authors together with the collaborator consider certain class $\mQ_{\nu,\omega}$ of semisimple complexes on the moduli space of framed quivers $Q^{(1)}$ (over $\overline{\mathbb{F}}_{q}$) and its localization in \cite{fang2023lusztig}. They provide a categorical construction of $_{\mathcal{A}}L_{v}(\lambda)$ and realize  the canonical basis of $_{\mathcal{A}}L_{v}(\lambda)$ by simple perverse sheaves. Moreover, they study the relation between the canonical basis at $v=1$ and the fundamental classes of $\mathfrak{L}(\omega)$ by combinatoric methods and prove that the transition matrix of these two bases is upper-triangular with respect to an order from crystals.

After that, the authors in \cite{fang2023tensor} introduce the 2-framed quivers and consider Lusztig's sheaves $\mQ_{\nu,\omega^1,\omega^2}$ on the  moduli space of 2-framed quivers $Q^{(2)}$. They have showed that the Grothendieck group of the localization of $\mQ_{\nu,\omega^1,\omega^2}$ is canonically isomorphic to the tensor product of $_{\mathcal{A}}L_{v}(\lambda_{1})$ and $_{\mathcal{A}}L_{v}(\lambda_{2})$, and the simple perverse sheaves of framed quivers provide the canonical bases of tensor products defined  in \cite{lusztig1992canonical} and \cite{bao2016canonical}.

\subsection{The results of the present paper} 
The purpose of the present article is to study the relation between the canonical basis of integrable highest weight modules and the fundamental classes of Nakajima's quiver varieties via characteristic cycles, which is a continuation of  \cite{fang2023lusztig} and \cite{fang2023tensor}.

In order to apply characteristic cycle map defined by Hennecart in \cite{hennecart2024geometric}, we need to generalize the main results of \cite{fang2023lusztig} and \cite{fang2023tensor} from varieties over $\overline{\mathbb{F}}_{q}$ to complex varieties.  We consider the  $\mathbb{T}= \prod\limits_{i \in I} \mathbb{G}^{i}_{m}$-action on the moduli spaces  and  define certain localizations $\mQ_{\nu,\omega^{1}}/\mathcal{N}_{\nu}$ and $\mQ_{\nu,\omega^1,\omega^2}/\mathcal{N}_{\nu}$ of $\mathbf{G}_{\bV}\times \mathbb{T}$-equivariant Lusztig sheaves, then the Grothendieck groups $\mL(\omega^{1})$ and $\mL(\omega^1,\omega^2)$ are canonically isomorphic to irreducible highest weight modules and their tensor products. Moreover, simple perverse sheaves provide the canonical bases of these modules.
\begin{theorem}
	The Grothendieck group $\mL(\omega^{1})$ of the localization of Lusztig sheaves on framed quiver   is canonically isomorphic to the  irreducible highest weight module ${_{\mathcal{A}}L}_{v}(\lambda_{1})$,
	$$\chi^{\omega^{1}}: \mL(\omega^{1}) \rightarrow  {_{\mathcal{A}}L}_{v}(\lambda_{1}).$$
	Let $L_{0}$ be the constant sheaf on $\bfEVOf$ with $\mathbf{V}=0$, then $[L_{0}]$ is sent to the highest weight vector. Moreover, the images of simple perverse sheaves in $\mL(\omega^{1})$ form the canonical basis of ${_{\mathcal{A}}L}_{v}(\lambda_{1})$.
\end{theorem}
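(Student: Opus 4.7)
The plan is to transport the construction of \cite{fang2023lusztig} from $\overline{\mathbb{F}}_{q}$ to complex varieties, with the extra $\mathbb{T}$-equivariance recorded throughout, and then to match the geometric operations with the defining relations of ${_{\mathcal{A}}L}_{v}(\lambda_{1})$. First I would fix the framed quiver $Q^{(1)}$ associated to $\omega^{1}$, the moduli space $\bfEVOf$ with its $\bG_{\bV}\times \mathbb{T}$-action, and the class $\mQ_{\nu,\omega^{1}}$ of $\bG_{\bV}\times\mathbb{T}$-equivariant semisimple complexes built by iterated inductions from the constant sheaf at $\bV=0$, using Lusztig's construction in the framed setting. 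The thick subcategory $\mN_{\nu}\subseteq \mQ_{\nu,\omega^{1}}$ is taken to consist of complexes whose support lies entirely in the unstable locus, and $\mL(\omega^{1})$ is the Grothendieck group of the quotient.

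Next I would define the operators $E_{i}$ and $F_{i}$ on $\mL(\omega^{1})$ via the induction/restriction functors of the framed moduli, parallel to \cite{fang2023lusztig}. The fact that they descend to the quotient relies on two geometric observations: inductions preserve $\mQ_{\nu,\omega^{1}}$, and restrictions applied to objects supported in the unstable locus produce objects still supported in the unstable locus, hence remain in $\mN_{\nu}$. The quantum Serre relations are then inherited from Lusztig's calculus for the unframed moduli. The delicate relation $E_{i}F_{j}-F_{j}E_{i}=\delta_{ij}(K_{i}-K_{-i})/(v-v^{-1})$ follows from the induction/restriction distinguished triangle on $\bfEVOf$; the $\mathbb{T}$-weights (given by the Euler class from the framing) split the extremal terms into the two $K_{\pm i}$ contributions.

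For the highest weight vector, when $\bV=0$ the moduli space $\bfEVOf$ is a point and $[L_{0}]$ is annihilated by every $E_{i}$ since there is no subrepresentation of positive dimension to restrict along. The cyclotomic relation $F_{i}^{\langle i,\lambda_{1}\rangle+1}\cdot [L_{0}]=0$ is the key geometric input: the induced sheaf on $\bfEVOf$ with $\bV$ having $\langle i,\lambda_{1}\rangle+1$ simple summands at the vertex $i$ has support entirely contained in the unstable locus, since any representation with too many $S_{i}$-summands relative to the framing $\omega^{1}$ fails Nakajima's stability. Thus it lies in $\mN_{\nu}$. Together with the already-verified commutation relations this produces a well-defined surjection $\chi^{\omega^{1}}\colon \mL(\omega^{1})\twoheadrightarrow {_{\mathcal{A}}L}_{v}(\lambda_{1})$ sending $[L_{0}]$ to $v_{\lambda_{1}}$. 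For injectivity I would compare graded dimensions: on the algebraic side the weight-space dimensions of ${_{\mathcal{A}}L}_{v}(\lambda_{1})$ are given by the Weyl--Kac character formula, while on the geometric side Lusztig's classification of simple perverse sheaves on $\bfEVO$ together with the stability constraint counts precisely the simple perverse sheaves in $\mQ_{\nu,\omega^{1}}$ that survive in the quotient, and these two counts coincide by Nakajima's (or Zheng's) calculation; this also identifies the images of simple perverse sheaves with the canonical basis.

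The main obstacle will be the faithful translation of Lusztig's semisimplicity and decomposition arguments into the $\bG_{\bV}\times\mathbb{T}$-equivariant complex analytic setting: one needs that the relevant proper direct images of pure semisimple objects remain semisimple (BBD decomposition for proper maps between smooth complex varieties) and that $\mathbb{T}$-equivariance is preserved under all the operations in play. A secondary subtlety is verifying that the $\mathbb{T}$-weights on cohomology of induction/restriction are exactly those required to reproduce the $K_{\nu}$-action with the correct power of $v$; since $\mathbb{T}$ only rescales framings, this amounts to a careful bookkeeping of shifts, but it is the linchpin that makes the complex setting compatible with the later characteristic cycle constructions.
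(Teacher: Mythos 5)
Your proposal has the right outline (carry \cite{fang2023lusztig} over to $\mathbb{C}$, use the stability/open-stratum structure to cut out the Serre quotient, match operators with the defining relations of ${_{\mathcal{A}}L}_v(\lambda_1)$), but the two mechanisms you describe for the hardest parts are not the ones that make the argument work, and one of them is a genuine gap. You say $E_i$ is given by "restriction functors" and that $E_iF_j-F_jE_i=\delta_{ij}(K_i-K_{-i})/(v-v^{-1})$ "follows from the induction/restriction distinguished triangle on $\bfEVOf$," with the $K_{\pm i}$-terms arising from "$\mathbb{T}$-weights (given by the Euler class from the framing)." Lusztig's restriction functor decategorifies the coproduct on ${_{\mathcal{A}}}\mathbf{U}_v^-$ and is not the $E_i$-operator on the module; moreover $\kappa_!\iota^\ast$ does not preserve the thick subcategory $\mathcal{N}_\nu$, so it would not descend to $\mL(\omega^1)$. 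The operator that actually works (Zheng's functor, as used in \cite{fang2023lusztig} and reproduced in Section 4 of this paper) is the composite
$\mathcal{E}^{(r)}_i=(j_{\mathbf{V}'',i})_! (\phi_{\mathbf{V}'',i})^\ast (q_2)_!(q_1)^\ast(\phi_{\mathbf{V},i})_\flat (j_{\mathbf{V},i})^\ast[-r\nu_i]$
built from the open stratum $\bfEVOf^{i,0}$, the principal $\mathrm{GL}(\bV_i)$-bundle $\phi_{\bV,i}$, and a Grassmannian--flag correspondence. The $\mathfrak{sl}_2$-relation is a distinguished triangle in this Grassmannian calculus, whose extremal terms are the pure shift functors $\mathcal{K}_{\pm i}=\mathrm{Id}[\mp(2\nu_i-\sum_{h\in\Omega_i,h'=i}\nu_{h''}-\omega^1_i)]$; no equivariant Euler class is involved.

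Relatedly, you misattribute the role of $\mathbb{T}$. The torus is not there to reproduce $K_i$-weights; it is there so that the Fourier--Sato transforms reversing the framing arrows $i\to i^1$ are defined on $\mathcal{D}^b_{\bG_\bV\times\mathbb{T}}(\bfEVOf)$. One needs the complexes to be monodromic along a one-parameter subgroup of $\bG_\bV\times\mathbb{T}$, and Lemma~\ref{FST} constructs this subgroup inductively for an arbitrary chain of sink/source mutations. This is the one genuinely new ingredient the paper adds in passing from the $\overline{\mathbb{F}}_q$/Fourier--Deligne setting of \cite{fang2023lusztig} to the $\mathbb{C}$/Fourier--Sato setting, and your proposal omits it; the "obstacle" paragraph gestures at semisimplicity and equivariance but not at monodromicity, which is where the work actually lies. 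Two smaller points: the paper's $\mathcal{N}_\nu$ is defined by support in $\bfEVOf^{i,\geqslant 1}$ for a source $i$ together with Fourier--Sato closure (Definition~\ref{def localization}); the reformulation via singular support and the stable locus $\Lambda^s_{\bV,\mathbf{W}}$ is Proposition~\ref{fss}, proved afterward, not a definition. And the identification of surviving simple perverse sheaves with $\mathcal{B}_{\lambda_1}$ is inherited structurally from \cite[Theorem 3.28]{fang2023lusztig} via Lemma~\ref{rkey}, rather than by comparing graded dimensions against the Weyl--Kac formula.
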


\begin{theorem} 
	Let $\Delta$ be the $\mathcal{A}$-linear map induced by the functor  
	$  (\mathrm{sw})_{!}(\mathbf{D} \boxtimes \mathbf{D})\mathbf{Res}^{\bV\oplus \mathbf{W}^{1} \oplus \mathbf{W}^{2}}_{\mathbf{V}^{1}\oplus\mathbf{W}^{1},\mathbf{V}^{2}\oplus \mathbf{W}^{2}} \mathbf{D}, $
	then morphism $ \Delta: \mL(\omega^1,\omega^2) \rightarrow \mL(\omega^{2}) \otimes \mL(\omega^{1})$ is an isomorphism of $\Uv$-modules. In particular, $\mL(\omega^1,\omega^2)$ is canonically isomorphic to the tensor product of $_{\mathcal{A}}L_{v}(\lambda_{2}) \otimes {_{\mathcal{A}}L}_{v}(\lambda_{1})$ via $ \tilde{\chi}^{\omega^1,\omega^2}=(\chi^{\omega^{2}} \otimes \chi^{\omega^{1}}  )\Delta$. Moreover, the images of simple perverse sheaves in $\mL(\omega^{1})$ form the canonical basis of $_{\mathcal{A}}L_{v}(\lambda_{2}) \otimes {_{\mathcal{A}}L}_{v}(\lambda_{1})$.
\end{theorem}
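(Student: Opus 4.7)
The plan is to adapt the proof of the analogous statement over $\overline{\mathbb{F}}_q$ from \cite{fang2023tensor} to the complex $\bG_{\bV}\times\mathbb{T}$-equivariant setting underlying Theorem~1. The construction of $\Delta$ is purely functorial at the level of derived categories, so the definition carries over verbatim; the work is (i) to show $\Delta$ descends to the localized Grothendieck groups, (ii) to verify $\Uv$-linearity, and (iii) to identify simple perverse sheaves with the canonical basis of the tensor product.

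For (i), I would first check that the composite $(\mathrm{sw})_{!}(\mathbf{D}\boxtimes\mathbf{D})\mathbf{Res}\,\mathbf{D}$ preserves the class $\mQ_{\nu,\omega^1,\omega^2}$: Verdier duality and the swap obviously preserve Lusztig's semisimple class, and the restriction functor sends such a sheaf to a sum of exterior tensor products of Lusztig sheaves on the two framed factors by Lusztig's restriction theorem. Passage through the localization then reduces to showing that the subcategory $\mN_{\nu}$—characterized by a vanishing/support condition encoded by the $\mathbb{T}$-action at the framing—is sent into the subspace of $\mL(\omega^2)\otimes\mL(\omega^1)$ generated by tensors with at least one factor in the corresponding $\mN$. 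This can be done by a fiberwise analysis along the restriction diagram together with compatibility of the $\mathbb{T}$-equivariant structures on the 2-framed and single-framed moduli spaces.

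For (ii), the $F_i$-part of $\Uv$-linearity is a direct consequence of the bialgebra compatibility between induction (which governs the $F_i$-action) and restriction (which governs the coproduct) in Lusztig's category, twisted by the two Verdier duals and the swap. These extra ingredients convert Lusztig's coproduct into the coopposite coproduct that matches the tensor product module structure on ${_{\mathcal{A}}L_v(\lambda_2)}\otimes {_{\mathcal{A}}L_v(\lambda_1)}$, accounting for the indicated ordering of factors on the right-hand side. For the $E_i$-action and the $K_{\nu}$-twists, I would use that $\mathbf{D}$ intertwines the actions of $E_i$ and $F_i$ up to the bar-involution and a Cartan twist, a symmetry which already holds in the localizations constructed in Theorem~1.

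Once $\Uv$-linearity is in hand, bijectivity of $\Delta$ follows from a graded-dimension comparison: by Theorem~1 the target has the known weight multiplicities of ${_{\mathcal{A}}L_v(\lambda_2)}\otimes{_{\mathcal{A}}L_v(\lambda_1)}$, while the source is computed as the number of simple perverse sheaves in $\mQ_{\nu,\omega^1,\omega^2}$ surviving the localization, which agrees with the tensor product multiplicity by the crystal argument already used in \cite{fang2023tensor}. The canonical basis identification then follows from bar-invariance of $\Delta$—automatic from the $\mathbf{D}$'s built into its definition—combined with the characterization of the Lusztig--Bao canonical basis of \cite{lusztig1992canonical, bao2016canonical} by bar-invariance plus asymptotic triangularity. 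The main obstacle I anticipate is the bookkeeping in (ii): tracking every degree shift, $K_{\nu}$-twist, and $\mathbb{T}$-equivariant correction so that $\Delta$ is genuinely a $\Uv$-module map and not merely a map of $\Uvm$-modules with ambiguous weight decoration; this is precisely the place where the complex versus $\overline{\mathbb{F}}_q$ setting demands a careful re-verification even though the formulas themselves are unchanged.
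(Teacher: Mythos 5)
Your proposal correctly identifies that the proof consists of adapting \cite[Theorem~4.15]{fang2023tensor} from the $\overline{\mathbb{F}}_q$ setting to the complex $\bG_{\bV}\times\mathbb{T}$-equivariant one, which is exactly what the paper does: Theorem~\ref{tensor} is stated as a citation to that result, with the only new input being the Fourier--Sato transform machinery (Lemmas~\ref{FST}, \ref{FST1}) replacing Fourier--Deligne transforms, as noted in the proof of Proposition~\ref{span} and in the remark following the theorem. Your sketch of the ingredients---descent through the localization, bialgebra compatibility of $\mathbf{Ind}$/$\mathbf{Res}$ for $\Uv$-linearity, dimension comparison for bijectivity, and bar-invariance plus triangularity for the canonical-basis identification---is a faithful outline of what that cited proof contains.
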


 Consider the specialization of the Grothendieck groups  $\mL(\omega^{1})$ and $\mL(\omega^1,\omega^2)$ at $v=-1$ and take certain sign twists determined by Euler forms of the quivers, we can apply the characteristic cycle maps defined  in \cite{hennecart2024geometric}, and build  isomorphisms $\CC^{s,\omega^{1}}$ and $\CC^{s,\omega^1,\omega^2}$ of $_{\mathbb{Z}}\mathbf{U}(\mathfrak{g})$-modules from the twisted Grothendieck groups to the top Borel-Moore homology groups of Nakajima's quiver varieties and tensor products varieties.
 
 \begin{theorem}
 	The morphism $\CC^{s,\omega^{1}}$ induces an isomorphism of $_{\mathbb{Z}}\mathbf{U}(\mathfrak{g})$-modules,
 	$$  \CC^{s,\omega^{1}}:\mL(\omega^{1})^{\psi}_{-1} \longrightarrow \mathbf{H}(\mathfrak{L}(\omega),\mathbb{Z}).$$
 	Moreover, we have the following commutative diagram
 	\[
 	\xymatrix{
 		\mL(\omega^{1})^{\psi}_{-1} \ar[rr] ^{\CC^{s,\omega^{1}}} \ar[dr]_{\chi^{\omega^{1},\psi}_{-1}} &   &  	\mathbf{H}(\mathfrak{L}(\omega^{1}),\mathbb{Z}) \ar[dl]^{\varphi^{\omega^{1}}}\\
 		& _{\mathbb{Z}}L_{1}(\lambda_{1}), &  
 	}
 	\]
 	where $\chi^{\omega^{1},\psi}_{-1}$ is the  twisted version of our canonical isomorphism in Theorem 1.1 and $\varphi^{\omega^{1}}$ is Nakajima's canonical isomorphism.
 \end{theorem}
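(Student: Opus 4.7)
The plan is to first verify that Hennecart's characteristic cycle map descends to the localization $\mL(\omega^{1})^{\psi}_{-1}$, then show $\CC^{s,\omega^{1}}$ intertwines the module structures, and finally deduce both the bijectivity and the commutativity of the triangle from compatibility on the highest weight vector.

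First, I would check well-definedness on the quotient. By Hennecart's construction, for a $\mathbf{G}_{\bV} \times \mathbb{T}$-equivariant semisimple perverse sheaf $P$ on $\bfEVOf$, the characteristic cycle $\CC(P)$ is a Lagrangian cycle in $T^{*}\bfEVOf$. The subcategory $\mathcal{N}_{\nu}$ used in the localization is designed so that its objects have characteristic cycles supported away from the preimage of the stable locus. Consequently, after restriction to the stable locus and symplectic reduction, the sign-twisted specialization $\CC^{s,\omega^{1}}$ at $v=-1$ vanishes on $\mathcal{N}_{\nu}$ and lands in the top Borel-Moore homology $\mathbf{H}(\mathfrak{L}(\omega^{1}),\mathbb{Z})$ of the Nakajima Lagrangian $\mathfrak{L}(\omega^{1})$.

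Next, I would verify $_{\mathbb{Z}}\mathbf{U}(\mathfrak{g})$-equivariance. The actions of the Chevalley generators on $\mL(\omega^{1})^{\psi}_{-1}$ come from Lusztig's induction and restriction functors on equivariant sheaves, while the actions on $\mathbf{H}(\mathfrak{L}(\omega^{1}),\mathbb{Z})$ are convolution with Nakajima's Hecke correspondences. By the general functoriality of $\CC$ proved by Hennecart, the characteristic cycle of an induced sheaf is computed by intersecting with the conormal variety of the induction diagram; after symplectic reduction this conormal variety should be identified with the corresponding Hecke correspondence in Nakajima's setup. The sign twist $\psi$ and the specialization at $v=-1$ are calibrated precisely so that the cohomological shifts by half the Euler form in Lusztig's perverse normalization match the orientation signs arising from the symplectic reduction. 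This step is the main technical obstacle, since the functorial side and the geometric side of the Hecke action are defined in quite different terms, and a careful bookkeeping of all signs and Euler-form shifts is needed.

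Once the equivariance is established, the triangle follows from irreducibility of the target. Both $\chi^{\omega^{1},\psi}_{-1}$ and $\varphi^{\omega^{1}} \circ \CC^{s,\omega^{1}}$ are $_{\mathbb{Z}}\mathbf{U}(\mathfrak{g})$-module morphisms into the irreducible highest weight module ${_{\mathbb{Z}}L}_{1}(\lambda_{1})$, so it suffices to compare them on a single generator. By Theorem 1.1, the class $[L_{0}]$ of the constant sheaf on the component with $\bV=0$ is sent to the highest weight vector $v_{\lambda_{1}}$ under $\chi^{\omega^{1},\psi}_{-1}$; on the other hand, the corresponding component of $\mathfrak{L}(\omega^{1})$ is a single point, whose fundamental class is the image of $[L_{0}]$ under $\CC^{s,\omega^{1}}$ (the sign twist being trivial in that dimension vector) and which Nakajima's $\varphi^{\omega^{1}}$ sends to $v_{\lambda_{1}}$. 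This proves the triangle commutes, and since $\chi^{\omega^{1},\psi}_{-1}$ and $\varphi^{\omega^{1}}$ are already known to be isomorphisms, so is $\CC^{s,\omega^{1}}$.
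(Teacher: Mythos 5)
Your proposal follows the overall shape of the paper's argument for well-definedness, module equivariance, and the commutativity of the triangle, but there is a genuine gap at the final step where you conclude bijectivity over $\mathbb{Z}$.

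You write that $\CC^{s,\omega^{1}}$ is an isomorphism because ``$\chi^{\omega^{1},\psi}_{-1}$ and $\varphi^{\omega^{1}}$ are already known to be isomorphisms.'' This is correct over $\mathbb{Q}$ and is exactly how the paper proves the rational version (Proposition 6.10). But the theorem is a statement about $\mathbb{Z}$-coefficients, and Nakajima's isomorphism $\varphi^{\omega^{1}}$ (Theorem 5.6 in the paper) is only established with $\mathbb{Q}$-coefficients: it is not known a priori that $\varphi^{\omega^{1}}$ carries the $\mathbb{Z}$-lattice of fundamental classes $\mathbf{H}(\mathfrak{L}(\omega),\mathbb{Z})$ into the $\mathbb{Z}$-form ${_{\mathbb{Z}}L}_{1}(\lambda_{1})$, let alone isomorphically. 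In fact that integrality is precisely one of the main new consequences of this theorem (Corollary 6.14: ``The morphism $\varphi^{\omega^{1}}$\ldots restricts to an isomorphism of $_{\mathbb{Z}}\mathbf{U}(\mathfrak{g})$-modules''), so your argument is circular at this point. The paper closes the gap by a different device: it first proves (Corollary 6.11, via the monomial bases $m_L$, $m_Z$ and the refine string order) that $\CC^{s,\omega^{1}}$ sends the basis $\{[L]\}$ to the basis $\{[Z]\}$ by an upper-triangular integer matrix with $1$'s on the diagonal, which makes $\CC^{s,\omega^{1}}$ a $\mathbb{Z}$-module isomorphism \emph{directly}, without invoking the $\mathbb{Z}$-integrality of $\varphi^{\omega^{1}}$. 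The module structure is then transported across. Your sketch needs this upper-triangularity (or some other direct integrality mechanism) to go through; without it you only get the $\mathbb{Q}$-statement.

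A smaller remark: the paper only verifies $\mathbf{U}^{-}(\mathfrak{g})$-linearity of $\CC^{s,\omega^{1}}$ explicitly (by identifying the conormal correspondence with the stable part of the Hecke correspondence and matching signs) and then uses that a $\mathbf{U}^{-}$-linear isomorphism between integrable highest weight modules agreeing on highest weight vectors is automatically $\mathbf{U}(\mathfrak{g})$-linear. Your plan to check full $\mathbf{U}(\mathfrak{g})$-equivariance by sign bookkeeping is not wrong, but it is more work than needed and the paper explicitly sidesteps it in a remark.
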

 
 \begin{theorem}
 	The morphism $\CC^{s,\omega^1,\omega^2} =\bigoplus_{\nu \in \mathbb{N}I}\CC^{s}_{\bG_{\bV}}:  \mK(\omega^1,\omega^2)_{-1}^{\psi} \rightarrow \mathbf{H}(\tilde{\mathfrak{Z}},\mathbb{Z})$ induces an isomorphism of $_{\mathbb{Z}}\mathbf{U}(\mathfrak{g})$-modules,
 	$$  \CC^{s,\omega^1,\omega^2}:\mL(\omega^1,\omega^2)^{\psi}_{-1} \longrightarrow \mathbf{H}(\tilde{\mathfrak{Z}},\mathbb{Z}).$$
 \end{theorem}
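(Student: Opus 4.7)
The plan is to mirror the proof of Theorem 1.3, substituting the 2-framed moduli $\bfEVOff$ for $\bfEVOf$ and the tensor product variety $\tilde{\mathfrak{Z}}$ for $\mathfrak{L}(\omega)$, and using Theorem 1.2 in place of Theorem 1.1 for the algebraic identification. The proof divides into three steps: well-definedness, $_{\mathbb{Z}}\mathbf{U}(\mathfrak{g})$-equivariance, and bijectivity.

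For well-definedness, I would check that Hennecart's characteristic cycle sends a simple $\bG_{\bV} \times \mathbb{T}$-equivariant perverse sheaf in $\mQ_{\nu,\omega^1,\omega^2}$ to a $\bG_{\bV} \times \mathbb{T}$-equivariant BM class on $T^{\ast}\bfEVOff$ supported in the moment map zero locus, and that after restricting to the stable locus and passing to $\mathbb{C}^\ast$-fixed points this produces a class on $\tilde{\mathfrak{Z}}$. Exactly as in the framed case, one then verifies that complexes in the localization subcategory $\mathcal{N}_\nu$ have characteristic cycle supported outside the stable locus, so that $\CC^{s,\omega^1,\omega^2}$ factors through $\mL(\omega^1,\omega^2)^{\psi}_{-1}$. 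Equivariance under the $E_i, F_i$ actions then follows from Hennecart's compatibility between characteristic cycles and the induction/restriction functors of Lusztig sheaves, combined with the identification of the resulting correspondences with Nakajima's Hecke correspondences on $\tilde{\mathfrak{Z}}$; since the 2-framed Hecke correspondences restrict compatibly to each framed summand, this step reduces to the framed case established in the proof of Theorem 1.3.

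For bijectivity, I would combine Theorem 1.2 with Theorem 1.3. After twisting by $\psi$ and specializing at $v=-1$, the morphism $\Delta$ of Theorem 1.2 becomes an isomorphism of $_{\mathbb{Z}}\mathbf{U}(\mathfrak{g})$-modules
$$\Delta^{\psi}_{-1}: \mL(\omega^1,\omega^2)^{\psi}_{-1} \xrightarrow{\sim} \mL(\omega^2)^{\psi}_{-1} \otimes \mL(\omega^1)^{\psi}_{-1},$$
and Theorem 1.3 identifies the right-hand side with $\mathbf{H}(\mathfrak{L}(\omega^2),\mathbb{Z}) \otimes \mathbf{H}(\mathfrak{L}(\omega^1),\mathbb{Z})$. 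Nakajima's component decomposition of the $\mathbb{C}^\ast$-fixed locus of $\mathfrak{M}(\omega^1 \oplus \omega^2)$ supplies a canonical map from this tensor product into $\mathbf{H}(\tilde{\mathfrak{Z}},\mathbb{Z})$. The goal is to show that $\CC^{s,\omega^1,\omega^2}$ fits into a commuting square with these three maps; since the other three arrows are isomorphisms, this forces $\CC^{s,\omega^1,\omega^2}$ to be an isomorphism as well. Commutativity reduces to the compatibility of the characteristic cycle with the restriction functor $\mathbf{Res}^{\bV\oplus \mathbf{W}^{1} \oplus \mathbf{W}^{2}}_{\mathbf{V}^{1}\oplus\mathbf{W}^{1},\mathbf{V}^{2}\oplus \mathbf{W}^{2}}$ appearing in the definition of $\Delta$ and with the geometric product decomposition of $\tilde{\mathfrak{Z}}$ into Nakajima Lagrangians.

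This compatibility is where I expect the main obstacle to lie. Translating the functorial restriction of Lusztig sheaves on the 2-framed moduli into the Lagrangian decomposition of $\tilde{\mathfrak{Z}}$ at the level of characteristic cycles is intimately tied to Nakajima's Conjecture 5.7 on the canonical isomorphism of tensor product varieties. I would therefore establish the present theorem and that conjecture in tandem: once the commuting square above is verified, both results follow simultaneously, with the induced map on the highest weight generator $[L_0]$ providing the normalization required by Conjecture 5.7.
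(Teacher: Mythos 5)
Your first two steps (well-definedness and equivariance) are sound and track the paper. But the bijectivity step takes a route the paper deliberately avoids, and it has a genuine gap.

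The paper does not pass through $\Delta$ or the Thom decomposition to prove bijectivity. Instead it reruns the upper-triangular argument: Proposition \ref{CC}, applied to the two-framed quiver $Q^{(2)}$, shows that for a simple $L\in\mathcal{P}^s_{\nu,\omega^1,\omega^2}$ with $Z=\Phi_{Q^{(2)}}(L)$,
$$\CC_{\bG_\bV}([L])=[Z]+\sum_{Z\prec Z'}c_{Z,Z'}[Z'],$$
where $\prec$ is the refined string order of $B_{Q^{(2)}}(\infty)$. Restricting to the stable locus via $j^{s\ast}$ then sends the classes $[L]$ of nonzero simples to the fundamental classes of $\tilde{\mathfrak{Z}}(\nu,\omega)$ by a unitriangular matrix, so $\CC^{s,\omega^1,\omega^2}$ is an isomorphism of $\mathbb{Z}$-modules. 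Combined with $\mathbf{U}^-(\mathfrak{g})$-linearity (the cotangent-correspondence argument with $\tilde{\mathfrak{U}}$) and the fact that a $\mathbf{U}^-(\mathfrak{g})$-linear isomorphism of integrable highest weight modules is automatically $\mathbf{U}(\mathfrak{g})$-linear, the theorem follows. Everything is internal to the two-framed quiver; neither Theorem 1.2 nor Theorem 1.3 is used.

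The gap in your approach is the ``commuting square.'' You propose to deduce bijectivity from $\Delta$, $\CC^{s,\omega^2}\otimes\CC^{s,\omega^1}$, and a canonical map into $\mathbf{H}(\tilde{\mathfrak{Z}},\mathbb{Z})$ built from the $\mathbb{C}^\ast$-fixed-point decomposition, by showing $\CC^{s,\omega^1,\omega^2}$ agrees with this composite. That agreement is precisely a microlocal compatibility of the characteristic cycle with Lusztig's $\mathbf{Res}$ functor and with the attracting-set stratification of $\tilde{\mathfrak{Z}}$ --- a compatibility that is not established anywhere in the paper, and which in effect \emph{is} the content of Nakajima's Conjecture 5.7. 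You flag this as ``the main obstacle'' and propose to resolve it ``in tandem'' with Theorem 1.5, but the paper's own proof of Theorem 1.5 (Theorem 6.20) \emph{uses} the present theorem as input: $\delta$ is \emph{defined} by the commuting square, which only makes sense once $\CC^{s,\omega^1,\omega^2}$ is known to be an isomorphism, and then the Thom normalization is verified by an algebraic computation $\Delta([L_{\boldsymbol{\omega}^1\boldsymbol{\nu}\boldsymbol{\omega}^2}])=[L_{\boldsymbol{\nu}\boldsymbol{\omega}^2}]\otimes[L_{\boldsymbol{\omega}^1}]$. So ``in tandem'' is circular as stated: you would need to supply the full microlocal argument for $\CC\circ\mathbf{Res}$ before the square can be asserted, and that argument is both absent from your proposal and harder than the direct triangularity bound. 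You should instead invoke Proposition \ref{CC} for $Q^{(2)}$ directly, as the paper does.
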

 
 As an application, we can give a new proof of Nakajima's conjecture, which has been mentioned in Section 1.2.
 \begin{theorem}
 	Let $\delta:\mathbf{H}(\tilde{\mathfrak{Z}},\mathbb{Z}) \longrightarrow \mathbf{H}(\mathfrak{L}(\omega^{2}),\mathbb{Z})  \otimes \mathbf{H}(\mathfrak{L}(\omega^{1}),\mathbb{Z})$ be the unique isomorphism such that the following commutative diagram commutes
 	\[
 	\xymatrix{
 		\mL(\omega^1,\omega^2)^{\psi}_{-1}   \ar[d]_{\CC^{s,\omega^1,\omega^2}}  \ar[r]^-{\Delta} &  \mL(\omega^{2})^{\psi}_{-1}  \otimes \mL(\omega^{1})^{\psi}_{-1}  \ar[d]^{\CC^{s,\omega^{2}} \otimes \CC^{s,\omega^{1}}}      \\
 		\mathbf{H}(\tilde{\mathfrak{Z}},\mathbb{Z}) \ar[r]^-{\delta} & \mathbf{H}(\mathfrak{L}(\omega^{2}),\mathbb{Z})  \otimes \mathbf{H}(\mathfrak{L}(\omega^{1}),\mathbb{Z}) ,
 	}
 	\]
 	where $\Delta$ is the morphism in Theorem 1.2 and those $\CC^{s}$ are the morphisms in Theorem 1.3 and 1.4  respectively. Then after base change to $\mathbb{Q}$, $\delta$ is the unique isomorphism which satisfies the conditions in Nakajima's conjecture.
 \end{theorem}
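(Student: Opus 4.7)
The plan is to exploit the commutative diagram that defines $\delta$ together with the rigidity statement built into Nakajima's conjecture. First I would observe that $\delta$ is automatically an isomorphism of ${}_{\mathbb{Z}}\mathbf{U}(\mathfrak{g})$-modules: the vertical maps $\CC^{s,\omega^{1},\omega^{2}}$ and $\CC^{s,\omega^{2}}\otimes\CC^{s,\omega^{1}}$ are such isomorphisms by Theorems 1.3 and 1.4, while $\Delta$ is an isomorphism of $\Uv$-modules by Theorem 1.2, which after the sign-twisted specialization at $v=-1$ descends to an isomorphism of ${}_{\mathbb{Z}}\mathbf{U}(\mathfrak{g})$-modules between $\mL(\omega^{1},\omega^{2})^{\psi}_{-1}$ and $\mL(\omega^{2})^{\psi}_{-1}\otimes\mL(\omega^{1})^{\psi}_{-1}$. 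Consequently $\delta:=(\CC^{s,\omega^{2}}\otimes\CC^{s,\omega^{1}})\circ\Delta\circ(\CC^{s,\omega^{1},\omega^{2}})^{-1}$ exists, is uniquely determined, and is an isomorphism of ${}_{\mathbb{Z}}\mathbf{U}(\mathfrak{g})$-modules. Base change to $\mathbb{Q}$ then yields an isomorphism $\delta_{\mathbb{Q}}$ of $\mathbf{U}(\mathfrak{g})$-modules.

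Next I would verify that $\delta_{\mathbb{Q}}$ satisfies the normalization part of Nakajima's Conjecture 5.7. The normalization conditions in the conjecture concern (i) the action of $\mathbf{U}(\mathfrak{g})$, and (ii) a compatibility with the $\mathbb{C}^{\ast}$-fixed point stratification of $\tilde{\mathfrak{Z}}$ that pins down the isomorphism on the tensor-product-of-highest-weight side. Condition (i) has just been established. For (ii), I would chase the image of the constant sheaf $L_{0}$ on $\bfEVOff$ with $\bV=0$ through the diagram: by the 2-framed analogue (in Theorem 1.2) of the highest-weight identification in Theorem 1.1, $\Delta([L_{0}])$ equals $v_{\lambda_{2}}\otimes v_{\lambda_{1}}$, and by Theorem 1.3 together with the triangle involving Nakajima's $\varphi^{\omega^{i}}$, the classes $\CC^{s,\omega^{i}}([L_{0}])$ are identified with the fundamental classes of the zero-dimensional strata of $\mathfrak{L}(\omega^{i})$. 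This pins down the highest weight corner of the fixed-point stratification as required by Nakajima.

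To extend this normalization to the entire stratification, I would use the compatibility of the characteristic cycle map with Lusztig's restriction functor. The map $\Delta$ is built from $\mathbf{Res}^{\bV\oplus\mathbf{W}^{1}\oplus\mathbf{W}^{2}}_{\mathbf{V}^{1}\oplus\mathbf{W}^{1},\mathbf{V}^{2}\oplus\mathbf{W}^{2}}$ composed with Verdier duality and the swap, and under $\CC$ this restriction corresponds to the hyperbolic localization with respect to the $\mathbb{C}^{\ast}$-action defining $\tilde{\mathfrak{Z}}$. Thus the stratification compatibility of $\delta_{\mathbb{Q}}$ follows from the compatibility of Lusztig's restriction with hyperbolic localization under the characteristic cycle map. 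Once both the highest-weight normalization and the stratification compatibility are in place, the uniqueness clause of Nakajima's conjecture identifies $\delta_{\mathbb{Q}}$ with the canonical isomorphism.

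The main obstacle I anticipate is precisely this last compatibility: verifying that $\CC$ intertwines $\mathbf{Res}$ on the framed Lusztig sheaf side with the hyperbolic-localization / nearby-cycle construction on Nakajima's tensor product variety side, at the level of ${}_{\mathbb{Z}}\mathbf{U}(\mathfrak{g})$-modules and with all the correct sign twists. Most other ingredients are either formal consequences of the commutative diagram or direct invocations of previously established results, but this intertwining is the geometric content that must be extracted in order to match the two normalizations of the tensor product structure.
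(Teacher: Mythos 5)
Your overall framework is right—define $\delta$ by the diagram, check it is a $_{\mathbb{Z}}\mathbf{U}(\mathfrak{g})$-module isomorphism, then verify the normalization condition of Conjecture 5.7—but the central verification is left as an open obstacle in your proposal, and the route you sketch for it is not the one that works here.

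Two specific issues. First, you slightly misstate what Conjecture 5.7 requires: the condition is a single normalization on the subspace $[\mathfrak{L}(0,\omega^{1})]\otimes\mathbf{H}(\mathfrak{L}(\omega^{2}))$ (the isomorphism must agree there with the Thom isomorphism from $\tilde{\mathfrak{Z}}_{1}$), not a compatibility along an entire fixed-point stratification. Checking $\Delta([L_{0}])=v_{\lambda_{2}}\otimes v_{\lambda_{1}}$ alone is not enough—that only handles the corner $\nu=0$—but the claim about needing ``the entire stratification'' overshoots. Second, and more importantly, the ``hyperbolic localization intertwining'' you flag as the main obstacle is not the mechanism the paper uses, and your proposal offers no way to establish it. The paper instead argues elementarily: let $\mL^{1}(\nu,\omega^{1},\omega^{2})$ be the subspace spanned by $[L]$ for $L$ occurring in $L_{\boldsymbol{\omega}^{1}\ftnu\boldsymbol{\omega}^{2}}$; Proposition \ref{ssZ1} identifies $\CC^{s,\omega^{1},\omega^{2}}$ restricted to $\mL^{1}$ with an isomorphism onto $\mathbf{H}^{\BM}_{\mathrm{top}}(\tilde{\mathfrak{Z}}_{1},\mathbb{Z})$, and the inverse of left multiplication by $[L_{\boldsymbol{\omega}^{1}}]$ (i.e.\ $i_{\mathbf{W}^{1}\ast}$) corresponds under $\CC$ to the Thom isomorphism. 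Then one computes $\Delta$ directly on the spanning set $[L_{\boldsymbol{\omega}^{1}\ftnu\boldsymbol{\omega}^{2}}]$ using Lusztig's explicit restriction formula (Proposition \ref{indres formula}) and the crucial vanishing $[L_{\boldsymbol{\omega}^{1}\ftnu'}]=0$ in $\mL(\omega^{1})$ for nonempty $\ftnu'$ (these sheaves lie in $\mathcal{N}_{\nu}$). This vanishing is what kills all extra restriction terms and reduces $\Delta|_{\mL^{1}}$ to $u\mapsto([L_{\boldsymbol{\omega}^{1}}])^{-1}(u)\otimes[L_{\boldsymbol{\omega}^{1}}]$, giving precisely the Thom normalization. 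Without this vanishing observation and the explicit restriction computation, the proposal does not close the argument.
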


 A immediate corollary is that Nakajima's construction of irreducible modules and their tensor products can be defined on homology groups with $\mathbb{Z}$-coefficients.  Since the characteristic cycle of a perverse sheaf is nonnegative, we also have the following corollaries after taking sign twists for BM homology groups. 
 \begin{corollary}
 	There is a commutative diagram of $_{\mathbb{Z}}\mathbf{U}_{-1}(\mathfrak{g})$-isomorphisms,
 	\[
 	\xymatrix{
 		\mL(\omega^{1})_{-1} \ar[rr] ^{\CC^{s,\omega^{1}}} \ar[dr]_{\chi^{\omega^{1}}_{-1}} &   &  	\mathbf{H}(\mathfrak{L}(\omega),\mathbb{Z})^{\psi} \ar[dl]^{\varphi^{\omega^{1},\psi}}\\
 		& _{\mathbb{Z}}L_{-1}(\lambda_{1}). &  
 	}
 	\]
 	Moreover, the transition matrix between the canonical basis at $v=-1$ and the fundamental classes is an upper triangular (with respect to the refine string order defined in Section 2.3) matrix, whose diagonal elements are $1$ and the other elements are non-negative integers. More precisely, if $Z$ is the irreducible component corresponding to the simple perverse sheaf $L$, we have the following equation 
 	$$\chi^{\omega^{1}}_{-1}([L])=\varphi^{\omega^{1},\psi}( [Z] + \sum\limits_{Z \prec Z'}  c_{Z,Z'}[Z']), $$ 	with constants $c_{Z,Z'} \in \mathbb{N} $.
 \end{corollary}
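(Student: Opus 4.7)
The plan is to derive Corollary 1.7 as a direct consequence of Theorem 1.3 combined with two standard ingredients: the $\mathbb{N}$-positivity of characteristic cycles of simple perverse sheaves, and the refined-string-order triangularity established for the canonical basis at $v=1$ in \cite{fang2023lusztig}.

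First, I would transport the sign twist $\psi$ across the triangle of Theorem 1.3. The sign twist coming from the Euler form is an $\mathcal{A}$-linear involution compatible with the $\Uv$-action after the appropriate rescaling of Chevalley generators, so untwisting on the Lusztig sheaf side and simultaneously retwisting on the homology and on the integrable module converts the triangle of Theorem 1.3 (with vertices $\mL(\omega^{1})^{\psi}_{-1}$, $\mathbf{H}(\mathfrak{L}(\omega^{1}),\mathbb{Z})$, $_{\mathbb{Z}}L_{1}(\lambda_{1})$) into the triangle of Corollary 1.7 (with vertices $\mL(\omega^{1})_{-1}$, $\mathbf{H}(\mathfrak{L}(\omega^{1}),\mathbb{Z})^{\psi}$, $_{\mathbb{Z}}L_{-1}(\lambda_{1})$).

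Next, I would unwind $\CC^{s,\omega^{1}}$ on the canonical basis. An element of the canonical basis is the class of a simple $\mathbf{G}_{\bV}\times \mathbb{T}$-equivariant perverse sheaf $L=\mathrm{IC}(\overline{O},\mathscr{L})$ on $\bfEVOf$, and by the general formula for the characteristic cycle,
\begin{equation*}
\CC(L)=\bigl[\overline{T^{\ast}_{O}\bfEVOf}\bigr]+\sum_{O'\subsetneq \overline{O}}m_{O,O'}\bigl[\overline{T^{\ast}_{O'}\bfEVOf}\bigr],\qquad m_{O,O'}\in \mathbb{N},
\end{equation*}
where the leading conormal cycle appears with multiplicity one and the positivity of the remaining multiplicities is Kashiwara's local index formula. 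Under the identification of the conormal varieties in $T^{\ast}\bfEVOf$ with the irreducible components of $\mathfrak{L}(\omega^{1})$ used in \cite{hennecart2024geometric}, this translates into $\CC^{s,\omega^{1}}([L])=[Z]+\sum_{Z'\neq Z}c_{Z,Z'}[Z']$ inside $\mathbf{H}(\mathfrak{L}(\omega^{1}),\mathbb{Z})^{\psi}$, with $c_{Z,Z'}\in \mathbb{N}$ and $Z$ the component corresponding to the support of $L$. Because the bijection $L\leftrightarrow Z$ is precisely the crystal-theoretic matching used in \cite{fang2023lusztig}, the refined-string-order triangularity proved there forces $Z\prec Z'$ for every non-zero coefficient $c_{Z,Z'}$; combining this with the triangle of the first step yields the displayed formula.

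The step I expect to be most delicate is the matching of sign conventions in the first paragraph: one has to check that the three twists $\psi$ (on Lusztig sheaves, on Borel--Moore homology, and on the integrable module) synchronize coherently so that $\CC^{s,\omega^{1}}([L])$ lands in the correct $\mathbb{Z}_{\geq 0}$-cone rather than acquiring alternating signs from the perversity shift. Once this bookkeeping is done, the $\mathbb{N}$-positivity of the $c_{Z,Z'}$ and the ordering statement are essentially automatic from the ingredients cited above.
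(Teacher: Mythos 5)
Your proposal is correct and the end result matches the paper's, but the route differs in how the triangularity is secured. The paper's proof of this corollary (which it says follows ``immediately'' from Theorem~\ref{main1} and Corollary~\ref{co1}) does the work in Corollary~\ref{co1}: it builds a monomial basis $\{m_L\}$ of $\mL(\nu,\omega^{1})$ by the same inductive argument as Proposition~\ref{mono} on the Nakajima side, matches the two monomial bases via the crystal bijection $\Phi_{Q^{(1)}}$ to obtain $\chi^{\omega^{1},\psi}_{-1}([L])=\varphi^{\omega^{1}}(\pm[Z]+\sum_{Z\prec Z'}c_{Z,Z'}[Z'])$ with $c_{Z,Z'}\in\mathbb{Z}$, and then invokes positivity of characteristic cycles \emph{only} to resolve the sign and promote the $c_{Z,Z'}$ to $\mathbb{N}$. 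You invert this: you take the multiplicity-one-plus-positivity structure of $\CC(L)$ as the starting point, and outsource the $\prec$-triangularity to the prior paper \cite{fang2023lusztig}. Both are valid; the paper's route re-derives the triangularity internally in the complex-analytic setting and never needs to identify $\Phi_{Q^{(1)}}(L)$ with the closure of the conormal bundle of the open dense stratum of $\mathrm{supp}(L)$, whereas your step~2 implicitly leans on exactly that identification and on the triviality of the local system on that stratum to get the leading multiplicity to be~$1$ --- both facts are true in this setting but are nontrivial inputs from Kashiwara--Saito theory that the paper's argument sidesteps. Your reliance on \cite{fang2023lusztig} is also slightly delicate since that paper works over $\overline{\mathbb{F}}_q$, though the transition matrix is a combinatorial object independent of the base field, so the citation does carry over. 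On the whole, you have identified the right mechanism (sign-twist transport of the Theorem~1.3 triangle, then CC-positivity) and the only real gap is that you treat the equality $Z=\Phi_{Q^{(1)}}(L)=\overline{T^\ast_O\bfEVOf}$-restricted-to-the-stable-locus as given rather than justified; filling it would essentially amount to reproving Corollary~\ref{co1}'s monomial matching or spelling out the Kashiwara--Saito input.
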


 \begin{corollary}
 	There is a commutative diagram of
 	$_{\mathbb{Z}}\mathbf{U}_{-1}(\mathfrak{g})$-isomorphisms 
 	\[
 	\xymatrix{
 		\mL(\omega^1,\omega^2)_{-1} \ar[rr] ^{\CC^{s,\omega^1,\omega^2}} \ar[dr]_{\chi^{\omega^1,\omega^2}_{-1}} &   &  	\mathbf{H}(\tilde{\mathfrak{Z}},\mathbb{Z})^{\psi} \ar[dl]^{\varphi^{\omega^1,\omega^2,\psi}}\\
 		& {_{\mathbb{Z}}L}_{-1}(\lambda_{2}) \otimes {_{\mathbb{Z}}L}_{-1}(\lambda_{1}).&  
 	}
 	\]
 	The transition matrix between the canonical basis of the tensor product ${_{\mathbb{Z}}L}_{-1}(\lambda_{2}) \otimes {_{\mathbb{Z}}L}_{-1}(\lambda_{1})$ at $v=-1$ and the fundamental classes in $	\mathbf{H}(\tilde{\mathfrak{Z}},\mathbb{Z})^{\psi}$ is an upper triangular (with respect to the string order) matrix, whose diagonal elements are $1$ and the other elements are non-negative integers. More precisely, if $Z$ is the irreducible component corresponding to the simple perverse sheaf $L$, we have the following equation 
 	$$\tilde{\chi}^{\omega^1,\omega^2}_{-1}([L])=\tilde{\varphi}^{\omega^1,\omega^2,\psi}( [Z] + \sum\limits_{Z \prec Z'}  c_{Z,Z'}[Z']), $$ 	where  $c_{Z,Z'}$ are constants in $\mathbb{N} $,   $Z'$ runs over irreducible components of $\tilde{\mathfrak{Z}}$, and $\prec$ is the refine string order associated to $Q^{(2)}$.
 \end{corollary}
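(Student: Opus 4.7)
The plan is to derive Corollary~1.8 as the tensor-product analogue of Corollary~1.7, combining the isomorphisms built in Theorems~1.2, 1.4, and~1.5 with a microlocal analysis of the characteristic cycles of simple perverse sheaves on the 2-framed moduli space.

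First, I would assemble the commutative diagram. Theorem~1.4 provides the $_{\mathbb{Z}}\mathbf{U}_{-1}(\mathfrak{g})$-isomorphism $\CC^{s,\omega^1,\omega^2}$, and Theorem~1.2 specialized at $v=-1$ with the sign twist $\psi$ gives $\tilde{\chi}^{\omega^1,\omega^2}_{-1}\colon \mL(\omega^1,\omega^2)_{-1} \longrightarrow {_{\mathbb{Z}}L}_{-1}(\lambda_2)\otimes {_{\mathbb{Z}}L}_{-1}(\lambda_1)$. Define $\tilde{\varphi}^{\omega^1,\omega^2,\psi}$ as the composition $\tilde{\chi}^{\omega^1,\omega^2}_{-1}\circ (\CC^{s,\omega^1,\omega^2})^{-1}$; by Theorem~1.5 (applied after base change to $\mathbb{Q}$ to identify $\delta$ with Nakajima's canonical tensor-product isomorphism) this is the sign-twisted version of Nakajima's realization of the tensor product, and in particular is a $_{\mathbb{Z}}\mathbf{U}_{-1}(\mathfrak{g})$-module isomorphism. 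The triangle commutes by construction.

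Second, I would analyse the class $\CC^s([L])$ for each simple perverse sheaf $L$ in $\mQ_{\nu,\omega^1,\omega^2}$. By Hennecart's construction the characteristic cycle $\CC^s([L])$ is a non-negative integral combination of irreducible components of the conormal Lagrangian; under the identification implicit in Theorem~1.4 these components are exactly the irreducible components of $\tilde{\mathfrak{Z}}$. If $Z$ denotes the component associated to $L$ (the closure of the conormal bundle to the smooth locus of the support of $L$), then the coefficient of $[Z]$ in $\CC^s([L])$ equals $1$, since the microlocal multiplicity of an IC sheaf along its own conormal equals~$1$. All remaining components $Z'$ appearing in the characteristic cycle project to proper closed subsets of the support of $L$.

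Third, the main geometric content is to show that each remaining $Z'$ satisfies $Z\prec Z'$ in the refined string order associated to $Q^{(2)}$. These $Z'$ correspond, via the crystal structure on irreducible components of $\tilde{\mathfrak{Z}}$, to elements of the tensor product crystal that are reached by crossing into deeper strata of the support of $L$, which by the very definition of the refined string order lie strictly above the element attached to $Z$. This is where I expect the principal obstacle: in the single-framed case (Corollary~1.7) the analogous compatibility was established in \cite{fang2023lusztig} through an explicit comparison of Kashiwara operators with the closure stratification. In the 2-framed setting I would bootstrap this by using the functor $\Delta$ of Theorem~1.2 together with Hennecart's naturality of $\CC^s$ under restriction: the 2-framed stratification can be compared layer-by-layer with the product of single-framed stratifications, and the refined string order on the tensor product is designed precisely so that this comparison is monotone. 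Once the order compatibility is in place, applying $\tilde{\varphi}^{\omega^1,\omega^2,\psi}$ to the characteristic-cycle decomposition of $\CC^s([L])$ and using the commutative diagram from the first paragraph yields the displayed formula.
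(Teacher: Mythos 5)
Your first two steps are in line with the paper: the commutative triangle does follow formally from Theorems 1.2, 1.4, and 1.5 plus the $\psi$-twist, and the positivity of characteristic cycles of perverse sheaves (with multiplicity $1$ on the conormal to the dense stratum of the support) gives the non-negative integer coefficients and the leading term $[Z]$.

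The gap is in your step 3. You propose to establish the order compatibility $Z\prec Z'$ by ``bootstrapping'' from the single-framed case via the functor $\Delta$ of Theorem~1.2 and ``Hennecart's naturality of $\CC^s$ under restriction'', asserting that ``the refined string order on the tensor product is designed precisely so that this comparison is monotone''. That is not how the refine string order in this corollary is defined: it is the intrinsic order on the crystal $B_{Q^{(2)}}(\infty)$ attached to the \emph{two-framed quiver $Q^{(2)}$ viewed as an ordinary quiver}, built lexicographically from the sequences $s^\prec(\cdot)$ via the bijections $\pi_{i,t}$ (Section~2.3). It is not a tensor-product order, and $\Delta$ (which involves Verdier duality, a restriction functor, and a swap) carries no a priori monotonicity with respect to it. Nothing in the paper or in Hennecart's formalism identifies the string order on $Q^{(2)}$ with a ``product'' of string orders for the two single-framed quivers, so your transfer mechanism does not get off the ground.

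What the paper actually does is bypass $\Delta$ entirely at this step. It first proves an order-triangularity statement for the characteristic cycle of \emph{any} simple Lusztig sheaf on an arbitrary quiver (Proposition~6.17 in Section~6.3): for $L\in\mathcal{P}_\nu$ with $Z=\Phi(L)$ one has $\CC_{\bG_\bV}([L])=[Z]+\sum_{Z\prec Z'}c_{Z,Z'}[Z']$. That proposition is established by choosing a framing large enough that $j^{s\ast}$ is injective, invoking the single-framed result (which in turn rests on the monomial basis upper-triangularity of Proposition~6.4/Corollary~6.11 and the Kashiwara--Saito bound $\Phi(L)\subseteq SS(L)\subseteq\bigcup_{\epsilon_i,\epsilon_i^\ast\text{ large}}Z$), and using that $\CC$ commutes with the projection $\pi_{\mathbf{W}^1}^\ast$. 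Then in Theorem~6.20 the paper simply \emph{applies Proposition~6.17 to the quiver $Q^{(2)}$ itself}, using $\Phi_{Q^{(2)}}$ and the $Q^{(2)}$-string order, and restricts along $j^{s\ast}$ to $\Pi^s$. This is both more direct and logically tighter than the route you sketch, since it never needs to compare orders across $\Delta$.

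So: retain your steps 1 and 2, but replace step 3 by a direct application of the general quiver result to $Q^{(2)}$: show (or cite) that $\CC_{\bG_\bV}([L])$ is upper-triangular with respect to the $Q^{(2)}$-refine string order for simple perverse sheaves in $\mathcal{P}_{\nu,\omega^1,\omega^2}$, then apply $j^{s\ast}$ and use that the corresponding $Z'\cap\Pi^s_{\bV,\mathbf{W}^1\oplus\mathbf{W}^2}$ are irreducible components of $\tilde{\mathfrak{Z}}(\nu,\omega)$. The hand-wavy appeal to ``deeper strata'' and tensor-product monotonicity should be removed.
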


 \subsection{The structure of the present paper}
 In Section 2, we recall some preliminaries. In Section 3, we recall Lusztig's theory on canonical basis of quantum groups. In Section 4, we generalize the sheaf theoretic construction of integrable modules of quantum groups to complex varieties. In Section 5, we recall the construction of integrable modules via Nakajima's quiver varieties and tensor product varieties. In Section 6, we construct the characteristic cycle maps and prove our main theorems.

\subsection*{Acknowledgements.}
J. Fang is supported by the New Cornerstone Science Foundation through the New Cornerstone Investigator Program awarded to Professor Xuhua He.  Y. Lan is supported by the National Natural Science Foundation of China [Grant No. 1288201]. This paper is a continuation of a collaborative work \cite{fang2023lusztig} with Prof. Jie Xiao, we are very grateful to his detailed discussion. We are also  grateful to Prof. Yiqiang Li and Prof. Hiraku Nakajima.  Li  pointed out that the sign twists of Chevalley generators in \cite{MR1604167} and \cite{fang2023lusztig}  are incorrect, and Nakajima explained how to deduce the correct sign twists from his work \cite{nakajima2001quiver} for us.

\subsection*{Convention}
Throughout this paper, all smooth varieties are  over $\mathbb{C}$ with complex analytic topology and all sheaves are of $\mathbb{C}$-vector spaces.\\
(1) Let $X$ be a smooth variety. We denote by $\mathcal{D}^b_c(X)$ the bounded derived category of constructible sheaves on $X$ and denote by $\mathbf{H}^{\BM}_{*}(X,\mathbb{Z})$ the Borel-Moore homologies of $X$ which are normalised such that $\mathbf{H}^{\BM}_{2\dim X}(X,\mathbb{Z})$ has a basis given by fundamental classes of irreducible components of $X$.\\
(2) Let $X$ be an algebraic variety and $G$ be a connected algebraic group acting on $X$. We denote by $\mathcal{D}^b_G(X)$ the $G$-equivariant derived category of constructible sheaves on $X$ and denote by $\mathbf{H}^{\BM,G}_{*}(X,\mathbb{Z})$ the $G$-equivariant Borel-Moore homologies of $X$ which are normalised such that $\mathbf{H}^{\BM}_{2\dim X-2\dim G}(X,\mathbb{Z})$ has a basis given by fundamental classes of irreducible components of $X$.\\
(3) Let $X$ be an algebraic variety and $S\subset X$ be a locally closed subvariety. We denote by $T^\ast X$ the cotangent bundle on $X$ and denote by $T^\ast_SX$ the conormal bundle on $S$ in $X$.

Since we only work on derived categories, we denote by $f^{\ast},f_{\ast},f^{!},f_{!}$ their derived functors for simplicity.   We denote the $n$-times shift functor in a triangulated category  by $[n]$ and we always omit the Tate twists. The subscript $\pm 1$ means the specialization of a quantized algebra or module at $v=\pm 1$, and the subscript $\mathbb{Z}$ or $\mathcal{A}$ means the $\mathbb{Z}$-form or the integral form of an algebra or module.  For example, $_{\mathbb{Z}}\mathbf{U}_{-1}(\mathfrak{g})$ means the $\mathbb{Z}$-form of the specialization of the quantum group at $v=-1$.

\section{Preliminaries}
\subsection{Equivariant characteristic cycle and induction}
In this subsection, we recall the formalism of the equivariant characteristic cycle in \cite[Section 2.2]{hennecart2024geometric} and the induction in \cite[Section 4]{hennecart2024geometric}. Let $X$ be a smooth algebraic variety and $G$ be an algebraic group acting on $X$.

\begin{definition}
    For any fixed $G$-invariant Whitney stratification $\mathcal{S}_X$ of $X$, we denote by $\mathcal{D}^b_{c,\mathcal{S}_X}(X)$ the full subcategory of $\mathcal{D}^b_c(X)$ consisting of $\mathcal{S}_X$-constructible complexes and denote by $\mathcal{D}^b_{G,\mathcal{S}_X}(X)$ the full subcategory of $\mathcal{D}^b_G(X)$ consisting of $\mathcal{S}_X$-constructible complexes. Let $\Lambda=\coprod_{S \in \mathcal{S}_{X}}T^{\ast}_{S}X$. We define the equivariant characteristic cycle to be the composition
	$$\CC_{G}:\mK_{0}(\mD^{b}_{G,\mathcal{S}_{X}}(X)) \xrightarrow{[{\rm{For}}^G]} \mK_{0}(\mD^{b}_{c,\mathcal{S}_{X}}(X)) \xrightarrow{\CC} \mathbf{H}^{\BM}_{2{\rm{dim}}X}(\Lambda,\mathbb{Z}) \cong \mathbf{H}^{\BM,G}_{2{\rm{dim}}X-2{\rm{dim}}G}(\Lambda,\mathbb{Z}),$$
	where $\mK_{0}(\mD^{b}_{c,\mathcal{S}_{X}}(X)),\mK_{0}(\mD^{b}_{G,\mathcal{S}_{X}}(X))$ are the Grothendieck groups of $\mathcal{D}^b_{c,\mathcal{S}_X}(X),\mathcal{D}^b_{G,\mathcal{S}_X}(X)$ respectively and the map $[{\rm{For}}^G]$ is induced by the forgetful functor
    \begin{align*}
    {\rm{For}}^{G}:\mD^{b}_{G,\mathcal{S}_{X}}(X) &\rightarrow \mD^{b}_{c,\mathcal{S}_{X}}(X)\\
    \mathcal{F}&\mapsto \mathcal{F}.
    \end{align*}
\end{definition}

We refer \cite[Section 2.1]{hennecart2024geometric} for the definition of the characteristic cycle $\CC:\mK_{0}(\mD^{b}_{c,\mathcal{S}_{X}}(X)) \rightarrow \mathbf{H}^{BM}_{2{\rm{dim}}X}(\Lambda,\mathbb{Z})$. We remark that we do not take the shifts $[\dim G]$ in the forgetful functor ${\rm{For}}^{G}$ which is different from \cite[Section 2.2]{hennecart2024geometric}, since we use the perverse t-structure defined in \cite{MR1299527}, also see \cite[Section 6.4]{MR4337423}, which is different from the perverse t-structure used in \cite{hennecart2024geometric}.

\subsubsection{Equivariant induction diagram}
Consider the following morphisms of smooth varieties 
$$Y \xleftarrow{p} V \xrightarrow{q} X',$$ 
where $p$ is smooth of relative dimension $d$ and $q$ is a closed embedding. Let the algebraic group $G$ act on $X'$ and let $P$ be a parabolic subgroup of $G$ acting on $Y,V$ such that and $p,q$ are $P$-equivariant. Let $X=G\times^{P}Y, W=G\times^{P} V$ be the parabolic inductions of $Y,V$ respectively, then there are $G$-equivariant morphisms of smooth varieties 
\begin{equation}\label{equivariant induction diagram}
X \xleftarrow{f} W \xrightarrow{g} X'
\end{equation}
defined by $f(m,v)=(m,p(v)),g(m,v)=mq(v)$ for any $(m,v)\in W$. Note that $f$ is smooth of relative dimension $d$, $g$ is proper and $(f,g):W \rightarrow X\times X'$ is a closed embedding. We refer \cite[Section 3.2]{hennecart2024geometric} for details.

\subsubsection{Induction of equivariant complex}
Consider the diagram (\ref{equivariant induction diagram}), we can define a functor
$$\mathbf{Ind}=g_{\ast}f^{\ast}[d]: \mD^{b}_{G}(X) \rightarrow \mD^{b}_{G}(X'). $$
Composing it with the induction equivalence $\mD^{b}_{P}(Y) \cong \mD^{b}_{G}(X)$, see \cite[Theorem 6.5.10]{MR4337423}, we obtain a functor 
$$\mathbf{Ind}: \mD^{b}_{P}(Y) \rightarrow \mD^{b}_{G}(X').$$
We refer \cite[Section 4.2]{hennecart2024geometric} for details.

\subsubsection{Induction of (equivariant) Borel-Moore homology}
For any morphism of smooth varieties $h:X_1\rightarrow X_2$, its cotangent correspondence is the diagram
$$T^\ast X_2\xleftarrow{{\mathrm{pr}}}T^\ast X_2\times_{X_2} X_1\xrightarrow{(dh)^\ast}T^\ast X_1,$$
where ${\mathrm{pr}}$ is the natural projection and $(dh)^\ast$ is given by the pullback of covectors. Consider the diagram (\ref{equivariant induction diagram}), the cotangent correspondences of the morphisms $f,g$ give the following diagrams
\begin{align*} 
T^{\ast}X \xleftarrow{{\mathrm{pr}}} T^{\ast}X&\times_{X}W \xrightarrow{(df)^{\ast}} T^{\ast}W,\\
T^{\ast}X' \xleftarrow{{\mathrm{pr}}} T^{\ast}X'&\times_{X'}W \xrightarrow{(dg)^{\ast}} T^{\ast}W.
\end{align*}
Let $Z=T^{\ast}_{W}(X \times X')$, then there is a Cartesian square
$$\xymatrix{
Z \ar[r]^-{\psi'} \ar[d]_{\phi'} & T^{\ast}X'\times_{X'} W \ar[d]^{(dg)^{\ast}} \\
T^{\ast}X\times_{X} W \ar[r]^{(df)^{\ast}} & T^{\ast}W,}$$
where $\phi':Z\rightarrow T^{\ast}X\times_{X} W$ is the natural morphism $Z\hookrightarrow T^\ast(X\times X')\times_{(X\times X')}W\xrightarrow{{\mathrm{pr}}}T^{\ast}X\times_{X} W$ and $\psi':Z\rightarrow T^{\ast}X'\times_{X'} W$ is the composition of the natural morphism $Z\hookrightarrow T^\ast(X\times X')\times_{(X\times X')}W\xrightarrow{{\mathrm{pr}}}T^{\ast}X'\times_{X'} W$ and the automorphism on $T^\ast X'$ acting by $-1$ on the fibers. We define $\phi: Z \rightarrow T^{\ast}X$ to be the composition 
$$Z\xrightarrow{\phi'}T^{\ast}X\times_{X} W\xrightarrow{{\mathrm{pr}}}T^\ast X,$$
and define $-\psi: Z \rightarrow T^{\ast}X'$ to be the composition 
$$Z\xrightarrow{\psi'}T^{\ast}X'\times_{X} W\xrightarrow{{\mathrm{pr}}}T^\ast X'\xrightarrow{-1}T^\ast X',$$
where $-1$ is the automorphism on $T^\ast X'$ acting by $-1$ on the fibers. We refer \cite[Section 3.5]{hennecart2024geometric} for details.

For any closed, conical, Lagrangian subvarieties $\Lambda \subseteq T^{\ast}X$ and $\Lambda' \subseteq T^{\ast}X'$ satisfying $(-\psi)\phi^{-1}(\Lambda) \subseteq \Lambda'$, we can define a map 
$$ \mathbf{Ind}=(-\psi)_{\ast}\phi^{!}: \mathbf{H}^{\BM}_{\ast}(\Lambda,\mathbb{Z}) \rightarrow \mathbf{H}^{\BM}_{\ast-2{\rm{dim}}X +2{\rm{dim}}X' }(\Lambda',\mathbb{Z}).$$
We refer \cite[Section 4.3]{hennecart2024geometric} for details. In particular, for a fixed $P$-invariant Whitney stratification $\mathcal{S}_Y$ of $Y$, we set $\Lambda_Y=\coprod_{S \in \mathcal{S}_{Y}}T^{\ast}_{S}Y$ and $\Lambda=G\times^P \Lambda_Y$, then there is an induction isomorphism 
$$\mathbf{H}^{\BM}_{2{\rm{dim}}X}(\Lambda,\mathbb{Z}) \cong \mathbf{H}^{\BM}_{2{\rm{dim}}Y}(\Lambda_{Y},\mathbb{Z}).$$
Composing it with the map $\mathbf{Ind}: \mathbf{H}^{\BM}_{\ast}(\Lambda,\mathbb{Z}) \rightarrow \mathbf{H}^{\BM}_{\ast-2{\rm{dim}}X +2{\rm{dim}}X' }(\Lambda',\mathbb{Z})$, we obtain a map 
 $$ \mathbf{Ind}: \mathbf{H}^{\BM}_{2{\rm{dim}}Y}(\Lambda_{Y},\mathbb{Z}) \rightarrow \mathbf{H}^{\BM}_{2{\rm{dim}}X'}(\Lambda',\mathbb{Z}).$$
We refer \cite[Section 4.5]{hennecart2024geometric} for details.
 
\subsubsection{Compatibility of induction operations}
 
Let $\mD^{b}_{G}(X,\Lambda)$ be the full subcategory of $\mD^{b}_{G}(X)$ consisting of objects whose singular supports are contained in $\Lambda$ and $\mK_{0}(\mD^{b}_{G}(X,\Lambda))$ be its Grothendieck group.
 
\begin{theorem}[{\cite[Theorem 6.3]{hennecart2024geometric}}]\label{com}
The induction operations of equivariant complexes and Borel-Moore homologies are compatible via the equivariant characteristic cycle, that is, for the diagram (\ref{equivariant induction diagram}), the following diagram
$$\xymatrix{
 		\mK_{0}( \mD^{b}_{P}(Y,\Lambda_{Y}) ) \ar[r]^-{\mathbf{Ind}}   \ar[d]_{\CC_{P}} & \mK_{0}( \mD^{b}_{G}(X',\Lambda') ) \ar[d]^{\CC_{G}} \\
 		\mathbf{H}^{\BM}_{2{\rm{dim}}Y}(\Lambda_{Y},\mathbb{Z})\ar[r]^{\mathbf{Ind}} & \mathbf{H}^{\BM}_{2{\rm{dim}}X'}(\Lambda',\mathbb{Z})}$$
commutes.
\end{theorem}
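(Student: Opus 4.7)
The approach is to reduce the equivariant statement to a non-equivariant one, then to apply the two classical Kashiwara-Schapira microlocal compatibilities of $\CC$ with smooth pullback and proper pushforward, and finally to glue them through the Cartesian square defining $Z=T^{\ast}_W(X\times X')$.

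First I would eliminate the equivariance. Since $\CC_G=\CC\circ[{\rm For}^G]$ by definition, and the forgetful functor commutes with both $f^{\ast}$ and $g_{\ast}$, while the induction equivalence $\mathcal{D}^b_P(Y)\cong\mathcal{D}^b_G(X)$ matches the BM induction isomorphism $\mathbf{H}^{\BM}_{2\dim X}(\Lambda,\mathbb{Z})\cong\mathbf{H}^{\BM}_{2\dim Y}(\Lambda_Y,\mathbb{Z})$ recalled in the excerpt, it suffices to prove the non-equivariant identity
$$\CC\bigl(g_{\ast}f^{\ast}[d]\mathcal{F}\bigr)=(-\psi)_{\ast}\phi^{!}\,\CC(\mathcal{F})$$
for any object $\mathcal{F}$ of $\mathcal{D}^b_c(X,\Lambda)$.

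Second I would invoke the two microlocal index formulas. For the smooth morphism $f:W\to X$ of relative dimension $d$, one has $\CC(f^{\ast}\mathcal{F}[d])=(df)^{\ast}{\mathrm{pr}}^{!}\CC(\mathcal{F})$ via the cotangent correspondence $T^{\ast}X\xleftarrow{{\mathrm{pr}}}T^{\ast}X\times_X W\xrightarrow{(df)^{\ast}}T^{\ast}W$ of $f$; for the proper morphism $g:W\to X'$, one has $\CC(g_{\ast}\mathcal{G})={\mathrm{pr}}_{\ast}(dg)^{\ast,!}\CC(\mathcal{G})$ via the cotangent correspondence of $g$, with the standard orientation sign on the fibers of $T^{\ast}X'$. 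Applying the first formula to $\mathcal{F}$ and then the second to $\mathcal{G}=f^{\ast}\mathcal{F}[d]$, one reduces the statement to a clean cycle-theoretic identity in $T^{\ast}W$.

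Third I would glue both formulas through the Cartesian square
$$\xymatrix{Z\ar[r]^-{\psi'}\ar[d]_{\phi'} & T^{\ast}X'\times_{X'}W\ar[d]^{(dg)^{\ast}}\\ T^{\ast}X\times_{X}W\ar[r]^-{(df)^{\ast}} & T^{\ast}W}$$
recalled in the excerpt: base change in BM homology rewrites the composite $g_{\ast}f^{\ast}[d]$ at the characteristic-cycle level as ${\psi'}_{\ast}{\phi'}^{!}\CC(\mathcal{F})$, and absorbing the $-1$ fiber automorphism on $T^{\ast}X'$ that distinguishes $\psi$ from $\psi'$ turns this into $(-\psi)_{\ast}\phi^{!}\CC(\mathcal{F})$, which is precisely $\mathbf{Ind}(\CC(\mathcal{F}))$ as defined in the excerpt.

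The main obstacle will be sign- and shift-bookkeeping. The $-1$ automorphism on the cotangent fibers used to build $-\psi$ from $\psi'$ must be inserted at exactly the point where the proper-pushforward convention for covectors differs from the smooth-pullback convention, and its interaction with the absence of a $[\dim G]$-shift in ${\rm For}^G$ (flagged in the conventions of the paper) must be checked carefully. A secondary, more mechanical technicality is verifying the intermediate singular-support containments $(df)^{\ast}{\mathrm{pr}}^{-1}(\Lambda)\subseteq SS(f^{\ast}\mathcal{F}[d])$ and $SS(g_{\ast}f^{\ast}\mathcal{F}[d])\subseteq\Lambda'$, so that the characteristic cycles live in the correct BM homology groups and the Cartesian-square base change is applicable.
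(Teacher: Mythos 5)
The paper does not prove this statement; it is quoted verbatim from \cite[Theorem 6.3]{hennecart2024geometric}, so there is no internal argument to compare against. Your plan---strip equivariance via the forgetful functor and the induction equivalence, chain the Kashiwara--Schapira index formulas for smooth pull-back and proper direct image, then compose the two Lagrangian correspondences through the Cartesian square for $Z=T^{\ast}_W(X\times X')$ by base change---is precisely the skeleton of Hennecart's argument, and you correctly single out the two delicate points (signs/shifts and the intermediate singular-support containments).

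One conceptual imprecision is worth correcting, since it is the most likely place for a sign error in a detailed write-up. The $-1$ twist on $T^{\ast}X'$ is not an artifact of a clash between ``push-forward'' and ``pull-back'' conventions for covectors. It is intrinsic to the geometry of conormals in a product: a covector $(\xi,\eta)\in T^{\ast}X\times T^{\ast}X'$ lies in $T^{\ast}_W(X\times X')$ exactly when $\xi+(df)^{\ast}(\cdot)$ and $(dg)^{\ast}\eta$ satisfy the conormal equation with a relative minus sign, so projecting $Z$ to the two factors of $T^{\ast}X\times T^{\ast}X'$ naturally yields $(\phi,-\psi)$ rather than $(\phi,\psi)$. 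Locating the antipode there (and not inside one of the two index formulas, which are both sign-free in the Kashiwara--Schapira normalization) is what makes the definition of $-\psi$ in Section 2.1.3 and the final identity $\CC(g_{\ast}f^{\ast}[d]\mathcal{F})=(-\psi)_{\ast}\phi^{!}\CC(\mathcal{F})$ come out cleanly. You should also record why the base change $(dg)^{\ast,!}(df)^{\ast}_{\ast}=\psi'_{\ast}\phi'^{!}$ is legitimate: the map $(df)^{\ast}$ is a regular (vector sub-bundle) closed embedding because $f$ is smooth, and this regularity is what makes the refined Gysin pull-back commute with the proper push-forward across the Cartesian square.
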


\section{Geometric realization of $_{\mathcal{A}} \mathbf{U}^{-}_{v}(\mathfrak{g})$ and canonical bases}
In this section, we recall the categorification and canonical basis theory in \cite{MR1088333} and \cite{MR1227098}.

\subsection{Lusztig sheaves}\label{Lusztig sheaf}
We refer {\cite[Section 9.1]{MR1227098}} for details of this subsection.

For the symmetric generalized Cartan matrix $C=(a_{ij})_{i,j\in I}$, we associate it with a finite graph $(I,H)$, where $I$ is the set of vertices and $H$ is the set of pairs consisting of an edge together with an orientation of it, that is, there are two maps $h\mapsto h',h\mapsto h''$ from $H$ to $I$, and a fixed point free involution $h\mapsto \bar{h}$ of $H$ such that $h'\not= h'',(\bar{h})'=h''$ for any $h\in H$, and 
$$\{h\in H\mid h'=i,h''=j\ \textrm{or}\ h'=j,h''=i\}=-2a_{ij}\ \textrm{for}\ i\not=j.$$
An orientation of this graph is a subset $\Omega\subset H$ such that $\Omega\cup\overline{\Omega}=H,\Omega\cap\overline{\Omega}=\varnothing$. Given an orientation, we obtain a finite quiver $Q=(I,H,\Omega)$ without loops, where $I$ is the set of vertices, and $\Omega$ is the set of arrows such that for any $h\in \Omega$, its source and target are $h'$ and $h''$ respectively. We assume $Q$ doesn't have loops and oriented cycles, but may have multiple arrows.

For any $I$-graded $\mathbb{C}$-vector space $\bV=\bigoplus_{i\in I}\bV_i$, we denote by $|\bV|=\sum_{i\in I}\dim \bV_i i\in \mathbb{N}I$ the dimension vector of $\bV$. For any $\nu\in \mathbb{N}I$, we fix a $I$-graded $\mathbb{C}$-vector space $\bV$ such that $|\bV|=\nu$ and define an affine space
$$\bfEVO=\bigoplus_{h\in \Omega} \Hom (\bV_{h'},\bV_{h''})$$
such that the connected algebraic group 
$$\bG_{\bV}=\prod_{i \in I} \mathrm{GL}(\bV_{i})$$ acts on $\bfEVO$ by conjugation $g.x=(g_{h''}x_hg_{h'}^{-1})_{h\in \Omega}$. 

Let $\mathcal{V}_{\nu}$ be the set of sequences $\ftnu= (\nu_{1},\nu_{2},...,\nu_{m})$ in $\mathbb{N}I$ such that each $\nu_{l}=n_li_l$ for some $n_l\in \mathbb{N},i_l\in I$ and $\sum_{l=1}^m \nu_{l} =\nu$. For any $\ftnu= (\nu_{1},\nu_{2},...,\nu_{m})\in \mathcal{V}_{\nu}$, let $\tilde{\mathcal{F}}_{\ftnu}$ be the variety of pairs $(x,f)$, where $x \in \bfEVO$ and $f=(\mathbf{V}=\mathbf{V}^0\supseteq \bV^1\supseteq...\supseteq \bV^m=0)$ is a sequence of $I$-graded subspaces of $\mathbf{V}$ satisfying $|\mathbf{V}^{l-1}/\mathbf{V}^{l}|=\nu^{l}$ and $x_h(\bV^l_{h'})\subseteq \bV^l_{h''}$ for any $h\in \Omega, l=1,...,m$, then $\bG_{\bV}$ acts on $\tilde{\mathcal{F}}_{\ftnu}$ by $g.(x,f)=(g.x,g.f)$, where $g.f=(\mathbf{V}=g\mathbf{V}^0\supseteq g\bV^1\supseteq...\supseteq g\bV^m=0)$.
\begin{lemma}[{\cite[Lemma 1.6]{{MR1088333}}}]
{{\rm{(a)}}} The variety $\tilde{\mathcal{F}}_{\ftnu}$ is smooth and irreducible. \\
{{\rm{(b)}}} The natural projection $\pi_{\ftnu}:\tilde{\mathcal{F}}_{\ftnu}\rightarrow \bfEVO$ is proper and $\bG_{\bV}$-equivariant.
\end{lemma}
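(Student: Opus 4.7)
The plan is to exhibit $\tilde{\mathcal{F}}_{\ftnu}$ as the total space of a $\bG_{\bV}$-equivariant vector bundle over a partial flag variety, from which smoothness, irreducibility, properness and equivariance will all follow by standard arguments.

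First I would introduce the auxiliary variety $\mathcal{F}_{\ftnu}$ of $I$-graded flags $f=(\bV=\bV^{0}\supseteq \bV^{1}\supseteq\cdots\supseteq \bV^{m}=0)$ of type $\ftnu$, i.e.\ with $|\bV^{l-1}/\bV^{l}|=\nu^{l}$, and the forgetful projection $p_{\ftnu}:\tilde{\mathcal{F}}_{\ftnu}\rightarrow \mathcal{F}_{\ftnu}$ sending $(x,f)\mapsto f$. Since each $\nu^{l}=n_{l}i_{l}$ is concentrated at a single vertex, the flag $f$ decomposes vertex-by-vertex into a tuple of honest flags in the vector spaces $\bV_{i}$, so $\mathcal{F}_{\ftnu}=\prod_{i\in I}\mathcal{F}_{\ftnu,i}$ is a product of partial flag varieties for the groups $\mathrm{GL}(\bV_{i})$. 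In particular $\mathcal{F}_{\ftnu}$ is smooth, irreducible and projective, and is a homogeneous space $\bG_{\bV}/P_{\ftnu}$ for a parabolic subgroup $P_{\ftnu}\subseteq \bG_{\bV}$.

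For part (a), I would observe that the fiber of $p_{\ftnu}$ over a flag $f$ is the linear subspace
\[
\bigl\{x\in\bfEVO \bigm| x_{h}(\bV^{l}_{h'})\subseteq \bV^{l}_{h''}\ \text{for all}\ h\in\Omega,\ l=1,\dots,m\bigr\}
\]
of $\bfEVO$, whose dimension depends only on $\ftnu$ because $\bG_{\bV}$ acts transitively on $\mathcal{F}_{\ftnu}$ and transports fibers linearly. Hence $p_{\ftnu}$ is a $\bG_{\bV}$-equivariant vector bundle (it is isomorphic to $\bG_{\bV}\times^{P_{\ftnu}}F$, where $F$ is the fiber over the standard flag). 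Being a vector bundle over a smooth irreducible base, $\tilde{\mathcal{F}}_{\ftnu}$ is itself smooth and irreducible. I do not expect any obstacle here; the only point to check carefully is the equality of fiber dimensions, which is immediate from the $\bG_{\bV}$-equivariance.

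For part (b), the $\bG_{\bV}$-equivariance of $\pi_{\ftnu}$ is tautological from the definition $g.(x,f)=(g.x,g.f)$. For properness, I would use the closed embedding
\[
\tilde{\mathcal{F}}_{\ftnu}\hookrightarrow \bfEVO\times \mathcal{F}_{\ftnu},\qquad (x,f)\mapsto (x,f),
\]
whose image is cut out by the closed algebraic condition $x_{h}(\bV^{l}_{h'})\subseteq \bV^{l}_{h''}$. Composing with the projection to the first factor and using that $\mathcal{F}_{\ftnu}$ is projective, hence the second projection $\bfEVO\times\mathcal{F}_{\ftnu}\rightarrow\bfEVO$ is proper, we conclude that $\pi_{\ftnu}$ is proper as the composition of a closed immersion and a proper morphism. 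The whole argument is elementary; the main (mild) conceptual point is the vertex-wise decomposition of $\mathcal{F}_{\ftnu}$, which one should record explicitly because it is used throughout the subsequent construction of Lusztig sheaves.
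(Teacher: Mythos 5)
Your proof is correct and is essentially the standard argument of Lusztig (\cite[Lemma 1.6]{MR1088333}), which the paper cites without reproducing: realize $\tilde{\mathcal{F}}_{\ftnu}$ as a $\bG_{\bV}$-equivariant vector bundle over the partial flag variety $\mathcal{F}_{\ftnu}\cong\bG_{\bV}/P_{\ftnu}$ via the forgetful map, and factor $\pi_{\ftnu}$ through the closed embedding into $\bfEVO\times\mathcal{F}_{\ftnu}$ followed by the proper projection. All steps check out, including the vertex-wise decomposition of $\mathcal{F}_{\ftnu}$ (valid because each $\nu_l$ is supported at a single vertex) and the identification $\tilde{\mathcal{F}}_{\ftnu}\cong\bG_{\bV}\times^{P_{\ftnu}}F$, which correctly upgrades the constant-fiber-dimension observation to an actual vector-bundle structure.
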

By the decomposition theorem, see \cite[Theorem 5.3]{MR1299527}, the complex 
$$L_{\ftnu}=(\pi_{\ftnu})_!(\mathbb{C}_{\tilde{\mathcal{F}}_{\ftnu}}[\dim  \tilde{\mathcal{F}}_{\ftnu}])$$
is a $\bG_{\bV}$-equivariant semisimple complex on $\bfEVO$. Let $\mP_{\nu}$ be the full subcategory of $\mathcal{D}^b_{\bG_{\bV}}(\bfEVO)$ consisting of simple perverse sheaves $L$ such that $L[n]$ is a direct summand of $L_{\ftnu}$ for some $n\in \mathbb{Z},\ftnu\in \mathcal{S}_\nu$, and let $\mQ_{\nu}$ be the full subcategory of $\mathcal{D}^b_{\bG_{\bV}}(\bfEVO)$ consisting of direct sums of complexes of the form $L[n]$ for any $L\in \mP_{\nu},n\in \mathbb{Z}$. We call $\mQ_\nu$ the category of Lusztig sheaves.

\subsection{Induction and restriction functors}\label{Induction and restriction functors}
We refer \cite[Section 9.2]{MR1227098} for details of this subsection.

For any $\nu=\nu'+\nu''\in \mathbb{N}I$ and $I$-graded $\mathbb{C}$-vector spaces $\bV=\mathbf{V}' \oplus \mathbf{V}''$ such that $|\bV|=\nu,|\bV'|=\nu',|\bV''|=\nu''$, let $F$ be the closed subset of $\bfEVO$ consisting of $x$ such that $x_h(\bV_{h'}'')\subseteq \bV_{h''}''$ for any $h\in \Omega$. For any $x\in F$, there are natural linear maps $x'_h:\bV'_{h'}\rightarrow \bV'_{h''}, x''_h:\bV''_{h'}\rightarrow \bV''_{h''}$ induced by $x_h:\bV_{h'}\rightarrow \bV_{h''}$ for any $h\in \Omega$, and then we obtain elements $x'=(x'_h)_{h\in \Omega}\in \mathbf{E}_{\bV',\Omega},x''=(x''_h)_{h\in \Omega}\in \mathbf{E}_{\bV'',\Omega}$. Consider the following diagram 
\begin{equation}\label{resd}
	\mathbf{E}_{\mathbf{V}',\Omega} \times \mathbf{E}_{\mathbf{V}'',\Omega} \xleftarrow{\kappa } F \xrightarrow{\iota} \mathbf{E}_{\mathbf{V},\Omega},
\end{equation}
where $\iota(x)=x, \kappa(x)=(x',x'')$. Let $P\subseteq \bG_{\bV}$ be the stabilizer of $\bV''$ which is a parabolic subgroup, and let $U\subseteq P$ be the unipotent radical. Consider the following diagrams
\begin{equation}\label{indd}
	\mathbf{E}_{\mathbf{V}',\Omega} \times \mathbf{E}_{\mathbf{V}'',\Omega} \xleftarrow{p_{1}} \bG_{\bV} \times^{U} F \xrightarrow{p_{2}} \bG_{\bV} \times^{P} F \xrightarrow{p_{3}} \mathbf{E}_{\mathbf{V},\Omega},
\end{equation}
where $p_{1}(g,x) =\kappa (x), p_{2}(g,x)=(g,x), p_{3}(g,x)=g_{\cdot}\iota(x)$. 

The induction functor functor is defined by 
\begin{align*}
	\mathbf{Ind}^{\mathbf{V}}_{\mathbf{V}',\mathbf{V}''}: 
    &\mathcal{D}^b_{\bG_{\bV'}}(\mathbf{E}_{\mathbf{V}',\Omega}) \boxtimes \mathcal{D}^b_{\bG_{\bV''}}(\mathbf{E}_{\mathbf{V}'',\Omega})\rightarrow  \mathcal{D}^b_{\bG_{\bV}}(\bfEVO),\\
    &(A\boxtimes B) \mapsto (p_{3})_{!}(p_{2})_{\flat}(p_{1})^{\ast}(A\boxtimes B)[d_{1}-d_{2}],
\end{align*}
where $(p_{2})_{\flat}$ is the equivariant decent functor of the principal bundle $p_2$ and $d_1,d_2$ are the dimension of the fibers of $p_1,p_2$ respectively. The restriction functor is defined by
\begin{align*}
	\mathbf{Res}^{\mathbf{V}}_{\mathbf{V}',\mathbf{V}''}: &\mathcal{D}^b_{\bG_{\bV}}(\bfEVO) \rightarrow \mathcal{D}^{b}_{G_{\mathbf{V}'} \times G_{\mathbf{V}''}}(\mathbf{E}_{\mathbf{V}',\Omega}\times \mathbf{E}_{\mathbf{V}'',\Omega}), \\
    &C \mapsto (\kappa)_{!} (\iota)^{\ast}(C)[-\langle\nu',\nu''\rangle_{Q}],
\end{align*}
where $\langle \nu',\nu''\rangle_{Q}=\sum_{i\in I}\nu'_i\nu''_i-\sum_{h\in \Omega}\nu'_{h'}\nu''_{h''}$ is the Euler form of the quiver $Q$.

\begin{proposition}[{\cite[Section 9.2.7, 9.2.11]{MR1227098}}] \label{indres formula}
For any $\ftnu'\in \mathcal{V}_{\nu'},\ftnu''\in \mathcal{V}_{\nu''}$, we have 
	\begin{equation*}
		\mathbf{Ind}^{\mathbf{V}}_{\mathbf{V}',\mathbf{V}''}(L_{\ftnu'} \boxtimes L_{\ftnu''})\cong L_{\ftnu'\ftnu''},
	\end{equation*}
where $\ftnu' \ftnu''\in \mathcal{V}_{\nu}$ is the sequence formed by elements in the sequence $\ftnu'$ followed by elements in the sequence $\ftnu''$. For any $\ftnu\in \mathcal{V}_{\nu}$, we have 
	\begin{equation*}
		\mathbf{Res}^{\mathbf{V}}_{\mathbf{V}',\mathbf{V}''}( L_{\ftnu}) =\bigoplus L_{\ftnu'} \boxtimes L_{\ftnu''}[M(\ftnu',\ftnu'')],
	\end{equation*}
where the direct sum is taken over $(\ftnu',\ftnu'')\in \mathcal{V}_{\nu'}\times \mathcal{V}_{\nu''}$ such that $\ftnu'+\ftnu''=\ftnu$ and $M(\ftnu',\ftnu'')\in \mathbb{Z}$.
\end{proposition}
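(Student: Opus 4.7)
The plan is to establish both formulas through standard base-change arguments on suitable geometric correspondences relating the flag varieties $\tilde{\mathcal{F}}_{\ftnu}$ to the induction/restriction diagrams (\ref{indd})/(\ref{resd}). Both proofs go by exhibiting the relevant derived functor as the pushforward from an auxiliary variety of decorated flags, and then matching that variety with a (stratification of a) flag variety of the form $\tilde{\mathcal{F}}_\bullet$.

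For the induction formula, I would set up the variety $\tilde{Z}$ of tuples $(x, W, f'', f')$ where $W\subseteq \mathbf{V}$ is an $I$-graded subspace of dimension vector $\nu''$, $x\in\bfEVO$ preserves $W$, $f''$ is an $x$-stable flag of type $\ftnu''$ on $W$, and $f'$ is an $x$-induced flag of type $\ftnu'$ on $\mathbf{V}/W$. Chasing through the definition (\ref{indd}), one identifies $\mathbf{Ind}^{\mathbf{V}}_{\mathbf{V}',\mathbf{V}''}(L_{\ftnu'}\boxtimes L_{\ftnu''})$ with the (shifted) pushforward from $\tilde Z$ to $\bfEVO$: the pullback along $p_1$ yields the flag data on $(\bV',\bV'')$ pieces, and after descending along the $U$-torsor $p_2$ the subspace $W=g\bV''$ is recovered, with $p_3$ being the projection forgetting all flag data. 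The crucial geometric observation is that specifying the pair $(f'',f')$ together with $W$ is the same as specifying a single flag $f=(\bV=\bV^0\supseteq\cdots\supseteq\bV^{m'}=W\supseteq\cdots\supseteq\bV^{m'+m''}=0)$ of type $\ftnu'\ftnu''$ on $\bV$. This gives an isomorphism $\tilde Z\cong\tilde{\mathcal{F}}_{\ftnu'\ftnu''}$ over $\bfEVO$, and then a routine check that the various shifts (relative dimensions of $p_1,p_2$, and the identification $\dim\tilde{\mathcal{F}}_{\ftnu'}+\dim\tilde{\mathcal{F}}_{\ftnu''}+d_1-d_2=\dim\tilde{\mathcal{F}}_{\ftnu'\ftnu''}$) line up produces $L_{\ftnu'\ftnu''}$.

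For the restriction formula, the idea is to apply proper base change to $\pi_\ftnu$ over the diagram (\ref{resd}). Form the fiber product $Y=\tilde{\mathcal{F}}_\ftnu\times_{\bfEVO}F$, whose points are pairs $(x,f)\in\tilde{\mathcal{F}}_\ftnu$ such that $x$ preserves $\bV''$. Stratify $Y$ by the sequence of dimension vectors $\bigl(|\bV^{l}\cap\bV''|\bigr)_{l=0,\ldots,m}$; each stratum $Y_{(\ftnu',\ftnu'')}$ is indexed by a decomposition of $\ftnu$ into a pair $(\ftnu',\ftnu'')\in\mathcal{V}_{\nu'}\times\mathcal{V}_{\nu''}$ with $\ftnu'+\ftnu''=\ftnu$ componentwise. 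On each stratum, taking $(f\cap\bV''$ on $\bV'')$ and the image flag on $\bV/\bV''\cong\bV'$ yields a $\bG_{\bV'}\times\bG_{\bV''}$-equivariant smooth fibration
\[
Y_{(\ftnu',\ftnu'')}\longrightarrow \tilde{\mathcal{F}}_{\ftnu'}\times\tilde{\mathcal{F}}_{\ftnu''}
\]
whose fibers are affine spaces, so the pushforward of the constant sheaf along $\kappa\circ(\text{inclusion})$ restricted to this stratum yields $L_{\ftnu'}\boxtimes L_{\ftnu''}$ with some degree shift. A standard induction on the filtration of $Y$ by closures of strata, using the distinguished triangles coming from open–closed decompositions and the semisimplicity coming from purity, splits the total contribution into the direct sum indexed by $(\ftnu',\ftnu'')$, with $M(\ftnu',\ftnu'')$ recording the resulting shift.

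The main technical obstacle will be bookkeeping: one has to verify that the $U$-torsor descent in the induction diagram, the Euler form shift $\langle\nu',\nu''\rangle_Q$ built into $\mathbf{Res}$, and the relative dimensions of the smooth fibrations $Y_{(\ftnu',\ftnu'')}\to\tilde{\mathcal{F}}_{\ftnu'}\times\tilde{\mathcal{F}}_{\ftnu''}$ combine correctly to produce $L_{\ftnu'\ftnu''}$ in the first formula and an integer shift $M(\ftnu',\ftnu'')$ in the second. Equally important is that the stratification of $Y$ is compatible with the direct sum decomposition, which relies on purity together with the fact that each stratum closure only introduces contributions already accounted for by lower-dimensional terms; this is where appealing to the decomposition theorem for each $\pi_{\ftnu'}\times\pi_{\ftnu''}$ becomes essential.
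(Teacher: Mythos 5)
Your proposal reconstructs Lusztig's original argument, which the paper simply cites from \cite[9.2.7, 9.2.11]{MR1227098} rather than reproving. For the induction formula, the identification of $\tilde{Z}$ with $\tilde{\mathcal{F}}_{\ftnu'\ftnu''}$ (flag on $\bV''$, flag on $\bV/\bV''$, and choice of $W$ is the same data as a single flag of type $\ftnu'\ftnu''$) and the bookkeeping of shifts is exactly Lusztig's proof and is correct. For the restriction formula, forming the fiber product $Y = \tilde{\mathcal{F}}_\ftnu \times_{\bfEVO} F$, stratifying by the intersection data $(|\bV^l\cap\bV''|)_l$, and recognizing each stratum $Y_{(\ftnu',\ftnu'')}$ as an affine fibration over $\tilde{\mathcal{F}}_{\ftnu'}\times\tilde{\mathcal{F}}_{\ftnu''}$ is also correct and matches Lusztig. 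The one place to be careful is the splitting step: the open--closed distinguished triangles coming from the stratification do not split for free, and your appeal to ``purity'' is precisely Lusztig's argument over $\overline{\mathbb{F}}_q$ (separating the Frobenius weights of the strata contributions), whereas this paper works over $\mathbb{C}$ with the analytic topology, where one needs either Saito's mixed Hodge module weights or the observation that $\kappa: F \to \mathbf{E}_{\bV',\Omega}\times\mathbf{E}_{\bV'',\Omega}$ is a $\mathbb{G}_m$-equivariant vector bundle whose pushforward is a hyperbolic restriction preserving purity. The paper itself side-steps this by citing Lusztig and invoking its general remark at the end of Section 3.3 that all of Lusztig's statements transfer once Fourier--Deligne is replaced by Fourier--Sato. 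Your sketch is correct in substance, but if you want a self-contained proof over $\mathbb{C}$, the purity invocation needs to be made precise in that setting.
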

As a result, the induction and restriction functor can be restricted to be the functors
\begin{align*}
\mathbf{Ind}^{\mathbf{V}}_{\mathbf{V}',\mathbf{V}''}:\mQ_{\nu'}\boxtimes \mQ_{\nu''}\rightarrow \mQ_{\nu},\ \mathbf{Res}^{\mathbf{V}}_{\mathbf{V}',\mathbf{V}''}:\mQ_{\nu}\rightarrow \mQ_{\nu'}\boxtimes \mQ_{\nu''}.
\end{align*}

\subsection{Fourier-Sato transforms}\label{Fourier-Sato transform}
In this section, we recall the definition of Fourier-Sato transform for quivers in \cite{HLSS}. One can also see details in \cite{AHJR} and \cite{MR1074006} for general manifolds.
Assume $i$ is a source  in $\Omega$ , that is, there is no $h\in \Omega$ such that $h''=i$, then there is a natural projection
$$\pi_{i}:\bfEVO \rightarrow  \dot{\mathbf{E}}_{\mathbf{V},i}= \bigoplus_{h\in \Omega,h'\neq i} \Hom (\bV_{h'},\bV_{h''}).$$
The formula $\lambda:\mathbb{G}_{m} \rightarrow \bG_{\bV}, t \mapsto t^{-1} \textrm{Id}_{\bV_{i}}$ defines an action of $\mathbb{G}_{m}$ on the fiber of  $\pi_{i}$, which contracts the fiber. Similarly, let $\Omega'=\mu_{i}(\Omega)$ be the orientation obtained from $\Omega$ by reversing all arrows starting from $i$, then $i$ is a sink in $\Omega'$, that is, there is no $h\in \Omega'$ such that $h'=i$. We can consider the action of $\lambda':\mathbb{G}_{m} \rightarrow \bG_{\bV}, t \mapsto t \textrm{Id}_{\bV_{i}}$ on $\mathbf{E}_{\bV,\Omega'}$, then $\mathbf{E}_{\bV,\Omega'}$ is identified with the dual bundle of $\bfEVO$ over $\dot{\mathbf{E}}_{\mathbf{V},i}$ via the trace form.
\[
\xymatrix{
\bfEVO \ar[dr]_{\pi_{i}} & & \mathbf{E}_{\bV,\Omega'} \ar[dl]^{\pi^{'}_{i}} \\
 & \dot{\mathbf{E}}_{\mathbf{V},i} &
}
\]
Notice that any object in $\mathcal{D}^b_{\bG_{\bV}}(\bfEVO)$ is monodromic with respect to $\lambda$ in the sense of \cite{AHJR}, by \cite[D.4]{HLSS} the Fourier-Sato transform defines a perverse equivalence 
$$ {\rm{Four}}_{i}:  \mD^{b}_{\bG_{\bV}}(\bfEVO) \rightarrow \mD^{b}_{\bG_{\bV}}(\mathbf{E}_{\bV,\Omega'}).$$

Since Fourier-Sato transforms are compatible with  pull-back or push-forward of base change and  morphisms between vector bundles, one can apply the proof of \cite[10.4.5]{MR4337423} for Fourier-Laumon transforms, and prove that ${\rm{Four}}_{i}$ commutes with the induction functor 
$$\mathbf{Ind}^{\mathbf{V}}_{\mathbf{V}',\mathbf{V}''}({\rm{Four}}_{i}(L) \boxtimes {\rm{Four}}_{i}(L')) ={\rm{Four}}_{i}(\mathbf{Ind}^{\mathbf{V}}_{\mathbf{V}',\mathbf{V}''} (L \boxtimes L') ).$$

In general, for any other orientation $\Omega,\Omega'\subset H$ without oriented cycles, we assume that  there is a sequence $(i_{1},i_{2},\cdots ,i_{m})$ of $i \in I$, such that $i_{s}$ is a sink in $\mu_{s-1}\cdots \mu_{1} (\Omega)$ for any $s$,  and $\mu_{m}\cdots \mu_{1} (\Omega)=\Omega'$. Consider the vector bundle $\bfEVO \rightarrow \mathbf{E}_{\bV,\Omega \cap \Omega'}$ and define the corresponding Fourier-Sato  transform ${\rm{Four}}_{\Omega,\Omega'}$. In this case, we say  $\Omega,\Omega'$ are equivalent under mutations.

\begin{lemma}\label{FST}
With the above assumption,	any object in $\mD^{b}_{\bG_{\bV}}(\bfEVO)$ is monodromic with respect to the above vector bundle and the Fourier-Sato  transform is well-defined on $\mD^{b}_{\bG_{\bV}}(\bfEVO)$. 
\end{lemma}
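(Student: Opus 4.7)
The plan is to argue by induction on the length $m$ of the mutation sequence $(i_1,\ldots,i_m)$. For the base case $m=1$, this is the single-vertex situation already treated in the paragraphs preceding the lemma: the cocharacter $\lambda:\mathbb{G}_m\to\bG_{\bV}$ acting by a suitable power of $t$ on $\bV_{i_1}$ and trivially on the remaining $\bV_j$ implements the fiberwise scaling on the bundle $\bfEVO\to\dot{\mathbf{E}}_{\bV,i_1}$. Consequently every $\bG_{\bV}$-equivariant object on $\bfEVO$ is automatically $\mathbb{G}_m$-equivariant under this scaling, hence monodromic, and ${\rm Four}_{i_1}$ is well-defined. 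Since $\lambda$ commutes with the rest of $\bG_{\bV}$ and the Fourier--Sato transform is natural in the equivariant structure, ${\rm Four}_{i_1}$ restricts to a functor $\mD^{b}_{\bG_{\bV}}(\bfEVO)\to\mD^{b}_{\bG_{\bV}}(\mathbf{E}_{\bV,\Omega_1})$, where $\Omega_1=\mu_{i_1}(\Omega)$.

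For the inductive step $m\geq 2$, the sequence $(i_2,\ldots,i_m)$ is a valid mutation sequence of length $m-1$ from $\Omega_1$ to $\Omega'$. Combining the base case (well-definedness of ${\rm Four}_{i_1}$) with the inductive hypothesis applied to ${\rm Four}_{\Omega_1,\Omega'}$, I would define ${\rm Four}_{\Omega,\Omega'}$ as the composition ${\rm Four}_{\Omega_1,\Omega'}\circ{\rm Four}_{i_1}$; this immediately gives a well-defined functor on $\mD^{b}_{\bG_{\bV}}(\bfEVO)$. For the monodromicity of arbitrary $\bG_{\bV}$-equivariant complexes with respect to the total bundle $\bfEVO\to\mathbf{E}_{\bV,\Omega\cap\Omega'}$, I would assemble the step-wise $\mathbb{G}_m$-scaling actions on $\bfEVO,\mathbf{E}_{\bV,\Omega_1},\ldots,\mathbf{E}_{\bV,\Omega'}$ into a single compatible family and transport them across the ${\rm Four}_{i_s}$, using the standard fact that Fourier--Sato transforms for compatible factorizations of vector bundles compose to give the transform for the composite bundle.

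The hard part will be verifying that this composite functor is genuinely independent of the chosen mutation sequence and actually realizes the Fourier--Sato transform for the ``total'' vector bundle $\bfEVO\to\mathbf{E}_{\bV,\Omega\cap\Omega'}$; because different sequences may pass through different intermediate orientations, one needs a base-change/naturality argument for Fourier--Sato along iterated vector bundle factorizations, for which I would rely on the framework developed in \cite{HLSS}.
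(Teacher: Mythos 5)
Your base case is right, but the overall strategy diverges from the paper's in a way that creates gaps you acknowledge but do not close.

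The paper does not define $\mathrm{Four}_{\Omega,\Omega'}$ as a composition of one-step transforms at all. It defines it in one shot as the Fourier--Sato transform for the single vector bundle $\bfEVO \rightarrow \mathbf{E}_{\bV,\Omega\cap\Omega'}$; the only thing to verify is that every $\bG_{\bV}$-equivariant object is monodromic for this bundle. To do this, the paper constructs by induction a single one-parameter subgroup $\lambda:\mathbb{G}_m\to\bG_{\bV}$ that contracts the whole fiber, taking $\lambda_{k+1}(t)=\lambda_k(t)\cdot t\,\mathrm{Id}_{\bV_{i_{k+1}}}$, and then checks the weights on each $\Hom(\bV_{h'},\bV_{h''})$ directly. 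The technical heart of that check is the adjacency observation: if $h\in\Omega\setminus(\Omega\cap\Omega_k)$ is adjacent to $i_{k+1}$, then because $i_{k+1}$ is a sink in $\Omega_k$ the arrow $h$ must \emph{start} at $i_{k+1}$, so multiplying in $t\,\mathrm{Id}_{\bV_{i_{k+1}}}$ contributes $t^{-1}$ and brings the weight on $x_h$ back to $0$, as it must since $h$ now lies in $\Omega\cap\Omega_{k+1}$; on the complementary arrows the new factor acts with weight $0$ or $+1$ as required. Once such a $\lambda$ is produced, monodromicity is immediate because any $\bG_{\bV}$-equivariant complex is in particular $\lambda(\mathbb{G}_m)$-equivariant.

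Your proposal instead (i) builds $\mathrm{Four}_{\Omega,\Omega'}$ as a composite $\mathrm{Four}_{\Omega_1,\Omega'}\circ\mathrm{Four}_{i_1}$ and (ii) argues monodromicity by ``transporting'' scaling actions across the intermediate transforms. For (i), you correctly flag that independence of the chosen mutation sequence and agreement with the total-bundle transform are nontrivial; but the paper never incurs this debt because it does not define the transform by composition. For (ii), the ``assembling'' you describe is in spirit what the paper's cocharacter product achieves, but you neither write down the candidate cocharacter nor verify its weights, and that weight verification — resting on the fact that $i_{k+1}$ is a sink in $\Omega_k$ — is exactly where the content of the lemma lives. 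Note also that there is nothing to ``transport across'' the transforms: all the spaces $\mathbf{E}_{\bV,\Omega_k}$ carry the same $\bG_{\bV}$-action, and the relevant cocharacter is a single element of $\bG_{\bV}$ acting on $\bfEVO$ itself. Filling your proposal in would essentially force you to reconstruct the paper's cocharacter and its weight computation, at which point the composition-of-transforms detour becomes unnecessary.
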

\begin{proof}
	We prove that there exists an one parameter subgroup $\lambda: \mathbb{G}_{m} \rightarrow \bG_{\bV}$ such that $\lambda(\mathbb{G}_{m})$ acts on $\bfEVO$ by contracting the fiber of $\bfEVO \rightarrow \mathbf{E}_{\bV,\Omega \cap \Omega'}$ by induction on the length $m$ of the sequence. When $m=1$, we use the action of $\mathbb{G}_{m}$ for ${\rm{Four}}_{i_{1}}$.  If $\lambda_{k}:\mathbb{G}_{m} \rightarrow \bG_{\bV} $ has been constructed  for $(i_{1},i_{2},\cdots ,i_{k})$, then we can take $\lambda_{k+1}(t)=\lambda_{k}(t) \circ t \textrm{Id}_{\bV_{k+1}}$. 
	
	Let $\Omega_{k}=\mu_{k}\cdots \mu_{1} (\Omega) $, then $\lambda_{k}(t)$ acts on  $x_{h}$ of arrows $h \in \Omega \backslash (\Omega \cap \Omega_{k})$ with weight $1$. Notice that if $h \in \Omega \backslash (\Omega \cap \Omega_{k})$ and $h$ is adjacent to $k+1$, then $\bar{h} \in \Omega_{k}$ must has a target $k+1$, since $k+1$ is a sink in $\Omega_{k}$. In particular, $h$ starts at $k+1$ and $t\textrm{Id}_{\bV_{k+1}}$ acts on such $x_{h}$ by $t^{-1}$. Hence $\lambda_{k+1}(t)$ acts trivially on such $x_{h}$. When $h \in \Omega \backslash (\Omega \cap \Omega_{k})$ is not adjacent to $k+1$, $\lambda_{k+1}(t)$ acts on such $x_{h}$ as $\lambda_{k}(t)$ with weight $1$. If $h \notin \Omega \backslash (\Omega \cap \Omega_{k})$, then $\lambda_{k+1}(t)$ acts on such $x_{h}$ as $t\textrm{Id}_{\bV_{k+1}}$. With the argument above, we can see that $\lambda_{k+1}(t)$ acts by contracting $x_{h}$ with $h \in \Omega \backslash (\Omega \cap \Omega_{k+1}). $
	
	With the action of $\lambda: \mathbb{G}_{m} \rightarrow \bG_{\bV}$, any object in $\mD^{b}_{\bG_{\bV}}(\bfEVO)$ is $\mathbb{G}_{m}$-equivariant, hence is monodromic.
\end{proof}

\begin{lemma}\label{FST1}
	For any acyclic orientation $\Omega$ and $i \in I$, there exists an orientation $\Omega_{i}$ equivalent to $\Omega$ under mutations such that $i$ is a source in $\Omega_{i}$. Similarly, there exists an orientation $\Omega^{i}$ equivalent to $\Omega$ under mutations such that $i$ is a sink in $\Omega^{i}$.
\end{lemma}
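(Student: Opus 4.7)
The plan is to establish the second assertion first (making $i$ a sink) and to deduce the first (making $i$ a source) from it by one additional mutation. For the second, I will induct on the cardinality of the set $D(i) \subseteq I$ of vertices reachable from $i$ by an oriented path in $\Omega$, including $i$ itself. The base case $|D(i)| = 1$ means $i$ is already a sink, so one may take $\Omega^{i} = \Omega$. The inductive step requires producing a sink $j \ne i$ of $\Omega$ lying in $D(i)$ such that mutation at $j$ strictly decreases the size of this reachability set.

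To produce such a $j$, observe that the full subquiver of $\Omega$ supported on $D(i) \setminus \{i\}$ is acyclic, hence admits a sink $j$. Any arrow $j \to k$ in $\Omega$ would force $k \in D(i)$, and $k \ne i$ by the acyclicity of $\Omega$; this would place $k$ in $D(i)\setminus\{i\}$ and contradict the choice of $j$. Therefore $j$ is a sink of the whole $\Omega$. Set $\Omega' = \mu_j(\Omega)$. A cycle of $\Omega'$ would either avoid $j$, in which case it would be a cycle of $\Omega$, or else pass through $j$, which is impossible since $j$ is a source of $\Omega'$; hence $\Omega'$ is again acyclic. Moreover, since $j$ is a sink of $\Omega$ no $\Omega$-path from $i$ uses $j$ as an intermediate vertex, and since $j$ is a source of $\Omega'$ no $\Omega'$-path from $i$ can reach $j$ at all; consequently $D_{\Omega'}(i) = D(i) \setminus \{j\}$. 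The induction hypothesis applied to $\Omega'$ supplies the remaining sink mutations and completes the proof of the second assertion.

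The first assertion follows immediately: given $\Omega^{i}$ with $i$ a sink, set $\Omega_{i} := \mu_i(\Omega^{i})$, in which $i$ becomes a source, and extend the sequence of sinks witnessing $\Omega \sim \Omega^{i}$ by appending $i$ to witness $\Omega \sim \Omega_{i}$ under sink mutations. The only delicate point in the entire argument is verifying the identity $D_{\Omega'}(i) = D(i) \setminus \{j\}$, which rests squarely on $j$ being a sink before mutation and a source after; the remainder is routine bookkeeping, and there is no substantive obstacle beyond this single observation.
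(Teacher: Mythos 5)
Your proof is correct, but it takes a genuinely different route from the paper's. The paper proves the source version directly by inducting on the maximal length $m$ of oriented paths terminating at $i$: it identifies the sources of all length-$m$ paths into $i$, observes that each of these is necessarily a source of the full quiver $\Omega$, and mutates at all of them simultaneously, thereby dropping $m$ by one. You instead prove the sink version first (deducing the source version by one extra mutation at $i$), and you induct on the cardinality of the forward-reachability set $D(i)$, peeling off one carefully chosen sink $j\in D(i)\setminus\{i\}$ per mutation. The two invariants are dual in flavor (paths into $i$ versus paths out of $i$) and both strictly decrease. Your approach is somewhat more granular --- one mutation per inductive step, with an explicit identity $D_{\Omega'}(i)=D(i)\setminus\{j\}$ --- and you make explicit the preservation of acyclicity, which the paper leaves implicit; the paper's approach is more parallel, mutating a whole antichain of sources at once. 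Both arguments deliver the required mutation sequence, and both are elementary; neither has a real advantage beyond taste, though your version spells out the bookkeeping (why $j$ is a global sink, why $\Omega'$ is acyclic, why $D$ drops by exactly one) that a reader might want to see.

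One small remark on conventions: the paper's $\mu_i$ is defined as reversal of arrows emanating from a source $i$, whereas you apply $\mu_j$ at a sink $j$; this is the same reflection functor up to which end you start from, and since each such reflection is its own inverse (source becomes sink and vice versa), the resulting notion of ``equivalent under mutations'' is unaffected. It is worth being aware that the paper itself is not entirely consistent on this point between Lemma \ref{FST} and Lemma \ref{FST1}, so you are in good company.
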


\begin{proof}
	We only prove the first statement. Consider the set $P(\Omega,i)$ of paths in $Q$ with target $i$. Since there is no oriented cycle in $\Omega$, there exists a minimal integer $m$ such that all paths in $P(\Omega,i)$ has length $\leqslant m$. If $m=0$, then $i $ is a source in $\Omega$.
	If $m>0$, then we consider the paths in $P(\Omega,i)$ with length equal to $m$ and denote their sources by $i_{1},i_{2},\cdots, i_{t} \in I$.   Then $i_{1},i_{2},\cdots, i_{t}$ must be sources in $\Omega$, otherwise, there exists a path with length $> m$ in $\Omega$. After applying $\mu_{i_{1}}, \cdots ,\mu_{i_{t}}$, we obtain an orientation $\tilde{\Omega}$ such that paths in $P(\tilde{\Omega},i)$ has length $\leqslant m-1$. By induction on $m$, we  get the sequence desired.
\end{proof}

We remark that Lusztig used the Fourier-Deligne transform in \cite{MR1088333} and \cite{MR1227098}, because he considered $\overline{\mathbb{F}}_q$-varieties and $\mathbb{C}$-sheaves. In this paper, we consider $\mathbb{C}$-varieties and $\mathbb{C}$-sheaves, and so the Fourier-Deligne transform should be replaced by the Fourier-Sato transform. The statements in \cite{MR1088333} and \cite{MR1227098} involving the Fourier-Deligne transform still hold after replacing by the Fourier-Sato transform.

\subsection{Lusztig's key lemmas}
By Lemma \ref{FST} and \ref{FST1}, we can always apply the Fourier-Sato transform and assume $i$ is a source or sink.
\subsubsection{Analysis at sink}
Let $i$ be a sink for the orientation $\Omega$. For any $\nu\in \mathbb{N}I, 0\leqslant t\leqslant \nu_i$, we define ${^{t}\bfEVO^{i}}\subset \bfEVO$ to be the locally closed subset consisting of $x$ such that
$$\mathrm{codim}_{\bV_i}(\sum_{h\in \Omega, h''=i}\mathrm{Im} (x_h:\bV_{h'}\rightarrow \bV_i))=t,$$
then all these ${^{t}\bfEVO^{i}}, 0\leqslant t\leqslant \nu_i$ form a $\bG_{\bV}$-invariant stratification of $\bfEVO$. For any simple perverse sheaf $L\in \mathcal{D}^b_{\mathbf{G}_{\bV}}(\bfEVO)$, there exists a unique integer $t_i(L)$ such that ${^{t_i(L)}\bfEVO^{i}}\cap \mathrm{supp}(L)$ is open dense in $\mathrm{supp}(L)$. 

\begin{lemma}[{\cite[Lemma 6.4]{MR1088333}}]\label{lkey}
For any $0 \leqslant t \leqslant \nu_{i}$, let $\nu'=ti$, then $\mathbf{E}_{\mathbf{V}',\Omega}=\{0\}$ is a point. For any $\nu''\in \mathbb{N}I$ such that $\nu=\nu'+\nu''$, we identify $\mathbf{E}_{\mathbf{V}',\Omega}\times \mathbf{E}_{\mathbf{V}'',\Omega}$ with $\mathbf{E}_{\mathbf{V}'',\Omega}$. \\
{{\rm{(a)}}} For any $L\in \mP_{\nu}$ with $t_{i}(L)=t$, we have 
$$\mathbf{Res}^{\mathbf{V}}_{\mathbf{V'},\mathbf{V}''}(L)= K \oplus \bigoplus_{t_{i}(K')>0} K'[d']^{\oplus n(K',d')},$$ 
where $K, K'\in \mP_{\nu''},d'\in \mathbb{Z}, n(K',d')\in \mathbb{N}$ and $K$ is the unique direct summand satisfying $t_{i}(K)=0$.\\ 
{{\rm{(b)}}} For any $K \in \mP_{\nu''}$ with $t_{i}(K)=0$, we have
$$\mathbf{Ind}^{\mathbf{V}}_{\mathbf{V}',\mathbf{V}''}(\mathbb{C} \boxtimes K) = L \oplus \bigoplus_{t_{i}(L')>t} L'[d']^{\oplus n(L',d')}, $$ 
where $L, L'\in \mP_{\nu},d'\in \mathbb{Z}, n(L',d')\in \mathbb{N}$ and $L$ is the unique direct summand satisfying $t_{i}(L)=t$.\\
{{\rm{(c)}}} The maps $L\mapsto K$ and $K\mapsto L$ define a bijection  $\pi_{i,t}:\{ L \in \mP_{\nu}| t_{i}(L)=t \} \rightarrow \{ K \in \mP_{\nu''}| t_{i}(K)=0 \} $.
\end{lemma}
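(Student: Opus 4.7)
The plan is to follow Lusztig's original strategy from \cite{MR1088333}, suitably adapted to the complex-analytic setting considered here. The key simplifications from $i$ being a sink and $\bV'$ being concentrated at $i$ are that $\mathbf{E}_{\bV',\Omega}$ reduces to a point and, because no arrow has $h'=i$, we have $\bV''_{h'}=\bV_{h'}$ for every arrow with $h''=i$; consequently the subvariety $F\subset \bfEVO$ in (\ref{resd}) is identified with $\mathbf{E}_{\bV'',\Omega}$ itself and $\kappa$ becomes the identity map. Under this identification, $\bG_{\bV}\times^{P} F$ is the incidence variety
\[
\widetilde{\mathbf{E}}_t=\{(x,W)\mid x\in \bfEVO,\ W\subseteq \bV_i \text{ of codimension } t,\ \textstyle\sum_{h''=i}x_h(\bV_{h'})\subseteq W\},
\]
and the projection $p_3:\widetilde{\mathbf{E}}_t\to \bfEVO$ has fiber over $x$ equal to the Grassmannian $\mathrm{Gr}_{\nu_i-t}(\bV_i/\sum_{h''=i}x_h(\bV_{h'}))$, which is empty if $t_i(x)<t$, a single point if $t_i(x)=t$, and a positive-dimensional Grassmannian if $t_i(x)>t$. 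In particular $p_3$ is an isomorphism over ${^{t}\bfEVO^{i}}$ with image the closed subset $\bigsqcup_{s\geqslant t}{^{s}\bfEVO^{i}}$.

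Next I would prove (b). For $K\in \mP_{\nu''}$ with $t_i(K)=0$, applying the decomposition theorem to the proper pushforward $(p_3)_!$ of the pulled-back complex along the induction diagram gives a direct sum of simple perverse sheaves on $\bfEVO$ with shifts, supported in the image $\bigsqcup_{s\geqslant t}{^{s}\bfEVO^{i}}$. Over the open stratum ${^{t}\bfEVO^{i}}$, $p_3$ is an isomorphism and the pushforward matches $K$ with the prescribed shift; by the IC-extension principle this produces a unique simple summand $L$ with $t_i(L)=t$, multiplicity one and shift zero, while all other simple summands $L'$ are supported in the closed complement $\bigsqcup_{s>t}{^{s}\bfEVO^{i}}$ and hence satisfy $t_i(L')>t$. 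Part (a) is parallel: using $F=\mathbf{E}_{\bV'',\Omega}$ and $\kappa=\mathrm{id}$ we rewrite $\mathbf{Res}^{\bV}_{\bV',\bV''}(L)=\iota^{\ast}L[-\langle\nu',\nu''\rangle_{Q}]$, and since ${^{t}\bfEVO^{i}}\cap\mathrm{supp}(L)$ is open dense in $\mathrm{supp}(L)$ and meets $\iota(\mathbf{E}_{\bV'',\Omega})$ in a dense open subset corresponding to ${^{0}\mathbf{E}_{\bV'',\Omega}^{i}}\cap \iota^{-1}(\mathrm{supp}(L))$, a transversality analysis (combined with the Euler-form shift $-\langle\nu',\nu''\rangle_{Q}=-t(\nu_i-t)$, using that no arrow has $h'=i$) isolates a unique IC summand $K$ with $t_i(K)=0$, multiplicity one, and shift zero; the remaining simple summands $K'$ are forced into $\bigsqcup_{s>0}{^{s}\mathbf{E}_{\bV'',\Omega}^{i}}$ and so satisfy $t_i(K')>0$.

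Finally, the bijection (c) follows by comparing the two constructions: in both cases the distinguished summand is determined by its restriction to the open stratum, and the open strata ${^{t}\bfEVO^{i}}$ and ${^{0}\mathbf{E}_{\bV'',\Omega}^{i}}$ are canonically identified (modulo the $\bG_{\bV}$-action) via the isomorphism $p_3|_{{^{t}\bfEVO^{i}}}$ and its inverse through $\iota$, so the maps $L\mapsto K$ of (a) and $K\mapsto L$ of (b) are mutually inverse. The main obstacle will be the careful perverse-sheaf bookkeeping needed to verify that the leading summand has shift zero and multiplicity exactly one: in (b) this reduces to the smallness of $p_3$ over ${^{t}\bfEVO^{i}}$ together with the perversity of the IC extension of $K$, while in (a) it reduces to matching the Euler-form shift with the codimension of $\iota$ and to the transversality of $\iota$ to $\mathrm{supp}(L)$ along the open stratum.
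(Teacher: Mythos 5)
The paper does not reprove this statement — it quotes Lusztig's Lemma 6.4 from \cite{MR1088333} — and your proposal follows Lusztig's original route: reduce $F$ and $\kappa$ to $\mathbf{E}_{\bV'',\Omega}$ and the identity, realize $\bG_\bV\times^P F$ as an incidence variety, use the decomposition theorem plus IC-extension over the open stratum ${^{t}\bfEVO^{i}}$ for (b), and analyze $\iota^\ast L$ over ${^{0}\mathbf{E}_{\bV'',\Omega}^{i}}$ for (a). Two points need correcting. The fiber of $p_3$ over $x$ parametrizes $(\nu_i-t)$-dimensional subspaces of $\bV_i$ containing $R_x=\sum_{h''=i}x_h(\bV_{h'})$, so it is $\mathrm{Gr}_{t_i(x)-t}(\bV_i/R_x)$, not $\mathrm{Gr}_{\nu_i-t}(\bV_i/R_x)$; your qualitative conclusion (empty, a point, or positive-dimensional as $t_i(x)$ is $<t$, $=t$, $>t$) survives the correction.

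More seriously, the ``transversality analysis'' in (a) is not the right mechanism. The codimension of $\iota(\mathbf{E}_{\bV'',\Omega})$ in $\bfEVO$ is $t\cdot\sum_{h\in\Omega,\,h''=i}\nu_{h'}$, which does not equal the Euler-form shift $t(\nu_i-t)$, so a naive transversality argument cannot by itself produce the claimed shift-zero summand. What actually works is the induction (change-of-groups) equivalence: the $\bG_\bV$-equivariant map $x\mapsto R_x$ exhibits ${^{t}\bfEVO^{i}}$ as $\bG_{\bV_i}\times^{P_i}\iota({^{0}\mathbf{E}_{\bV'',\Omega}^{i}})$ over the Grassmannian $\bG_{\bV_i}/P_i$ of dimension $t(\nu_i-t)$, and under this equivalence the simple $\bG_\bV$-equivariant perverse sheaf $L|_{{^{t}\bfEVO^{i}}}$ corresponds to a simple $P$-equivariant perverse sheaf on $\iota({^{0}\mathbf{E}_{\bV'',\Omega}^{i}})$ after shifting by $-t(\nu_i-t)$ — which is precisely the Euler-form shift built into $\mathbf{Res}^{\bV}_{\bV',\bV''}$. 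That, not transversality, is what isolates the unique $K$ with $t_i(K)=0$, multiplicity one and shift zero, and it likewise explains why the restriction of $(p_3)_!(\cdots)$ to ${^{t}\bfEVO^{i}}$ in (b) is a simple perverse sheaf in degree zero (you do not need, and will not in general have, smallness of $p_3$). With these two repairs the argument is correct and matches the cited proof.
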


\subsubsection{Analysis at source}\label{Analysis at source}
Let $i$ be a source for the orientation $\Omega$. For any $\nu\in \mathbb{N}I, 0\leqslant t\leqslant \nu_i$, we define $\bfEVO^{i,t}\subset \bfEVO$ to be the locally closed subset consisting of $x$ such that
$${\mathrm{dim}}\,(\bigcap_{h\in \Omega,h'=i} \mathrm{Ker}(x_h:\bV_i\rightarrow \bV_{h''}))=t,$$
then all these $\bfEVO^{i,t},0\leqslant t\leqslant \nu_i$ form a $\bG_{\bV}$-invariant stratification of $\bfEVO$. For any simple perverse sheaf $L\in \mathcal{D}^b_{\mathbf{G}_{\bV}}(\bfEVO)$, there exists a unique integer $t_i^{\ast}(L)$ such that $\bfEVO^{i,t_i^{\ast}(L)}\cap \mathrm{supp}(L)$ is open dense in $\mathrm{supp}(L)$. The following lemma is dual to Lemma \ref{lkey}.

\begin{lemma} \label{rkey}
For any $0 \leqslant t \leqslant \nu_{i}$, let $\nu''=ti$, then $\mathbf{E}_{\mathbf{V}'',\Omega}=\{0\}$ is a point. For any $\nu'\in \mathbb{N}I$ such that $\nu=\nu'+\nu''$, we identify $\mathbf{E}_{\mathbf{V}',\Omega}\times \mathbf{E}_{\mathbf{V}'',\Omega}$ with $\mathbf{E}_{\mathbf{V}',\Omega}$. \\ 
{{\rm{(a)}}} For any $L\in \mP_{\nu}$ with $t_{i}^{\ast}(L)=t$, we have 
$$\mathbf{Res}^{\mathbf{V}}_{\mathbf{V'},\mathbf{V}''}(L)= K \oplus \bigoplus_{t_{i}^{\ast}(K')>0} K'[d']^{\oplus n(K',d')},$$ 
where $K, K'\in \mP_{\nu'},d'\in \mathbb{Z}, n(K',d')\in \mathbb{N}$ and $K$ is the unique direct summand satisfying $t_{i}^{\ast}(K)=0$.\\ 
{{\rm{(b)}}} For any $K \in \mP_{\nu'}$ with $t_{i}^{\ast}(K)=0$, we have
$$\mathbf{Ind}^{\mathbf{V}}_{\mathbf{V}',\mathbf{V}''}(\mathbb{C} \boxtimes K) = L \oplus \bigoplus_{t_{i}^{\ast}(L')>t} L'[d']^{\oplus n(L',d')}, $$ 
where $L, L'\in \mP_{\nu},d'\in \mathbb{Z}, n(L',d')\in \mathbb{N}$ and $L$ is the unique direct summand satisfying $t_{i}^{\ast}(L)=t$.\\
{{\rm{(b)}}} The maps $L\mapsto K$ and $K\mapsto L$ define a bijection   $\pi^{\ast}_{i,t}:\{ L \in \mP_{\nu}| t^{\ast}_{i}(L)=t \} \rightarrow \{ K \in \mP_{\nu'}| t^{\ast}_{i}(K)=0 \} $.
\end{lemma}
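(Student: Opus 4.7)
The plan is to deduce Lemma \ref{rkey} from Lemma \ref{lkey} by means of the Fourier--Sato transform, which is exactly what the formalism of Section 3.3 has been set up to do. Since $i$ is a source in $\Omega$, the mutated orientation $\Omega':=\mu_i(\Omega)$ has $i$ as a sink, and by Lemmas \ref{FST}, \ref{FST1} the Fourier--Sato equivalence
$${\rm{Four}}_{\Omega,\Omega'}:\mD^b_{\bG_{\bV}}(\bfEVO)\xrightarrow{\ \sim\ }\mD^b_{\bG_{\bV}}(\mathbf{E}_{\bV,\Omega'})$$
is a well-defined perverse equivalence. Since it intertwines induction functors and sends the constant sheaf on a point to itself, every Lusztig sheaf $L_{\ftnu}$ on the $\Omega$-side is carried to (a shift of) $L_{\ftnu}$ on the $\Omega'$-side, so ${\rm{Four}}_{\Omega,\Omega'}$ restricts to a bijection between Lusztig's simple objects for the two orientations.

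Next I would verify two compatibilities. First, the above bijection matches the source-case invariant $t_i^*$ for $\Omega$ with the sink-case invariant $t_i$ for $\Omega'$. This is linear-algebraic: over the common base $\dot{\mathbf{E}}_{\bV,i}$, the trace pairing between the fibers $\bigoplus_{h\in\Omega,\,h'=i}\mathrm{Hom}(\bV_i,\bV_{h''})$ and $\bigoplus_{h\in\Omega',\,h''=i}\mathrm{Hom}(\bV_{h'},\bV_i)$ renders ``intersection of kernels of outgoing arrows at $i$'' dual to ``cokernel of the sum of images of incoming arrows at $i$'', so the strata $\bfEVO^{i,t}$ and ${^t\mathbf{E}^i_{\bV,\Omega'}}$ are exchanged and the invariants match on simple perverse sheaves. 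Second, ${\rm{Four}}_{\Omega,\Omega'}$ intertwines the restriction functor $\mathbf{Res}^\bV_{\bV',\bV''}$ for $\Omega$ with the corresponding functor for $\Omega'$, up to a shift determined by $\langle\nu',\nu''\rangle_Q$. This is the Fourier--Sato analogue of Lusztig's commutation in \cite[Section 9.2]{MR1227098}: the restriction diagram (\ref{resd}) is made of a closed embedding and a projection of vector bundles, and the standard toolkit of smooth pullback, proper pushforward and base change for Fourier--Sato -- the same ingredients used for the induction compatibility recalled just before Lemma \ref{FST} -- yields the intertwining.

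With these compatibilities in hand, parts (a), (b) and (c) of Lemma \ref{rkey} follow immediately by applying the corresponding parts of Lemma \ref{lkey} on the $\Omega'$-side and transporting back along ${\rm{Four}}_{\Omega,\Omega'}$: the unique summand of $\mathbf{Res}$ with $t_i^*=0$ in the source case corresponds to the unique summand with $t_i=0$ in the sink case, and similarly for the induction statement and the resulting bijection $\pi_{i,t}^*$. I expect the main obstacle to be the second compatibility, the commutation of Fourier--Sato with restriction, because restriction involves a closed embedding rather than a principal bundle map and one has to keep careful track of the $!/\ast$ functors and of the Euler-form shift. Should the Fourier--Sato bookkeeping produce an unexpected sign or shift, a safe fallback is to mimic Lusztig's geometric fiber analysis in \cite[Section 6]{MR1088333} directly at the source $i$, systematically replacing ``sum of images of arrows into $i$'' by ``intersection of kernels of arrows out of $i$''; the stratification geometry and the counting of direct summands are entirely parallel to the sink case and go through verbatim.
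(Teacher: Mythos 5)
Your reduction to Lemma \ref{lkey} via the Fourier--Sato transform does not work, and the weak link is precisely the step you flag as ``linear-algebraic'': ${\rm Four}_{\Omega,\Omega'}$ does \emph{not} match the source invariant $t_i^\ast$ on $\bfEVO$ with the sink invariant $t_i$ on $\mathbf{E}_{\bV,\Omega'}$. The trace pairing does identify the subspace $\{x\mid x|_{S}=0\}$ with the annihilator $\{y\mid {\rm Im}(y)\subset S\}$ for a fixed $S\subset \bV_i$, but this is a statement about linear subspaces of the fiber, not about the rank stratification, and the Fourier--Sato transform of a sheaf is not supported on the ``dual stratum'' of its support. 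Concretely, take $Q$ to be $1\to 2$ with $\nu=(1,1)$, $i=1$ (a source): the two simple objects in $\mathcal P_\nu$ are $L_{(2,1)}=\delta_0$ and $L_{(1,2)}=\mathbb C_{\bfEVO}[1]$, with $t_1^\ast(\delta_0)=1$ and $t_1^\ast(\mathbb C[1])=0$. But ${\rm Four}_{\Omega,\Omega'}$ sends $L_{(2,1)}$ to $L_{(2,1)}|_{\Omega'}=\mathbb C_{\mathbf E_{\bV,\Omega'}}[1]$, which has $t_1=0$, and sends $L_{(1,2)}$ to $\delta_0$ with $t_1=1$, so the invariants are \emph{reversed}, not preserved. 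This reflects the general fact that what the Fourier--Sato transform preserves is the combinatorial pair $(\epsilon_i,\epsilon_i^\ast)$ of Section~3.5 (since it sends $L_{\ftnu}$ to $L_{\ftnu}$ up to shift), and $t_i=\epsilon_i$ at a sink while $t_i^\ast=\epsilon_i^\ast$ at a source; since $\epsilon_i\ne\epsilon_i^\ast$ in general, transporting Lemma \ref{lkey} across the mutation yields a statement about $\epsilon_i$ of $L$ with $\nu'=ti$, not the statement of Lemma \ref{rkey} about $\epsilon_i^\ast$ of $L$ with $\nu''=ti$.

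The duality the paper has in mind is the transpose (opposite-quiver) isomorphism $\bfEVO\xrightarrow{\ \sim\ }\mathbf{E}_{\bV^\ast,\bar\Omega}$, $x_h\mapsto x_h^{\rm t}$, which is a genuine isomorphism of $\bG_\bV$-varieties rather than a sheaf-theoretic transform. It takes a source at $i$ to a sink at $i$, exchanges the subobject $\bV''$ with the quotient $\bV'$ in the restriction diagram (so $\nu''=ti$ becomes the first argument, matching Lemma \ref{lkey}), sends $L_{\ftnu}$ to $L_{\ftnu^{\rm op}}$ (so $\epsilon_i^\ast\leftrightarrow\epsilon_i$), and, because $\bigcap_{h'=i}{\rm Ker}(x_h)=\bigl(\sum_{h'=i}{\rm Im}(x_h^{\rm t})\bigr)^{\perp}$ pointwise, literally carries $\bfEVO^{i,t}$ onto ${}^{t}\mathbf{E}^i_{\bV^\ast,\bar\Omega}$. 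Applying Lemma \ref{lkey} for $\bar\Omega$ and pulling back along this isomorphism gives Lemma \ref{rkey} with no further work. Alternatively, your fallback — rerunning Lusztig's fiber analysis of \cite[Section 6]{MR1088333} directly at a source, systematically replacing ``sum of images of arrows into $i$'' by ``intersection of kernels of arrows out of $i$'' — is a sound way to obtain the result, and it is the one you should promote to the main argument.
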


\subsection{Categorification of $\Uvm$ and canonical bases}\label{Categorification theorem}
Let $\mK(\mQ_{\nu})$ be the Grothendieck group of $\mQ_{\nu}$, that is, a $\mathbb{Z}$-module spanned by the isomorphism classes $[L]$ of objects $L\in \mathcal{Q}_\nu$ subject to the relations
$$[L \oplus L']=[L]+[L'].$$
We equip it with a $\mathcal{A}$-module structure by  
$$v[L]=[L[1]].$$
We denote by the direct sum 
$$\mK=\bigoplus_{\nu \in \mathbb{N}I} \mK(\mQ_{\nu}).$$
Then all induction functors and restriction functors induce two $\mathcal{A}$-linear maps
$\mK\otimes \mK\rightarrow \mK,\ \mK\rightarrow \mK\otimes \mK$.

\begin{theorem}\label{Lumain}
All induction functors induce a multiplication, and restriction functors induce a comultiplication on $\mK$ such that $\mK$ is a bialgebra over $\mathcal{A}$. Moreover, there is a bialgebra isomorphism
$$\chi:\mK\rightarrow \Uvm$$
sending $[\mathbb{C}|_{\bfEVO}]$ to $F_i^{(n)}$, where $|\mathbf{V}|=ni$ for $i\in I, n\in \mathbb{N}$.
\end{theorem}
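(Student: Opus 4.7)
The plan is to follow Lusztig's original strategy in \cite{MR1088333, MR1227098}. First I would check that $\mathbf{Ind}$ endows $\mK$ with an associative $\mathcal{A}$-algebra structure; associativity comes from a natural isomorphism $\mathbf{Ind}\circ(\mathrm{id}\boxtimes\mathbf{Ind})\cong\mathbf{Ind}\circ(\mathbf{Ind}\boxtimes\mathrm{id})$ obtained by base change on the diagram \eqref{indd}, and is already visible on flag varieties through the identity $L_{\ftnu'\ftnu''}\cong\mathbf{Ind}^{\bV}_{\bV',\bV''}(L_{\ftnu'}\boxtimes L_{\ftnu''})$ of Proposition 3.2. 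Dually $\mathbf{Res}$ gives a coassociative comultiplication. The bialgebra compatibility between the two is a geometric Green-type formula: I would prove it by combining \eqref{resd} and \eqref{indd} into a fibre square, stratifying the intermediate space by the relative position of the filtration and the decomposition $\bV=\bV'\oplus\bV''$, and summing the contributions from each stratum.

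Second, I would construct a map in the opposite direction $\Uvm\to\mK$ sending $F_i^{(n)}\mapsto[\mathbb{C}|_{\bfEVO}]$ with $|\bV|=ni$, extended multiplicatively via $\mathbf{Ind}$. For this to descend to $\Uvm$ one has to kill the quantum Serre relators $\sum_{p+q=1-a_{ij}}(-1)^p F_i^{(p)}F_jF_i^{(q)}$; using the bialgebra compatibility just established, this reduces to a vanishing computation of $\mathbf{Res}^{\bV}_{\bV',\bV''}$ with $|\bV''|\in\{i,j\}$, which amounts to an elementary identity among cohomologies of partial flag varieties. Surjectivity of $\chi$ is then immediate from Proposition 3.2, since $[L_{\ftnu}]$ is the image of the monomial $F_{i_1}^{(n_1)}\cdots F_{i_m}^{(n_m)}$ for $\ftnu=(n_1i_1,\ldots,n_mi_m)$, and the $[L_{\ftnu}]$ span $\mK(\mQ_\nu)$ by the definition of $\mQ_\nu$.

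For injectivity I would equip $\mK$ with the Ext pairing dual to $\mathbf{Res}$, identify it under $\chi$ with Lusztig's bilinear form on $\Uvm$, and compare graded ranks weight by weight, both being known. That the simple perverse sheaves in $\mP_\nu$ are sent to Lusztig's canonical basis of $\Uvm$ will follow from their almost-orthonormality with respect to the Ext pairing together with the bar-invariance $\mathbf{D}(L)\cong L$, which are the two defining properties of the canonical basis. The main obstacle is the bialgebra compatibility, since it is precisely the geometric input forcing $\mK$ to realise $\Uvm$ and not some larger quotient; Lemmas 3.5 and 3.6 combined with the Fourier--Sato transforms of Section 3.3 that exchange sinks and sources are the essential tools, as they let one reduce both the Serre relations and the Green formula to computations at a single vertex, where they become Grassmannian identities.
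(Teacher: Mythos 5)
The paper states Theorem~\ref{Lumain} without proof: it is a recollection of Lusztig's categorification theorem, and the paper implicitly defers to \cite{MR1088333} and \cite{MR1227098}. So there is no paper-internal proof to compare against. Your sketch is a recognizable outline of Lusztig's strategy and is correct in broad strokes, but a few points deserve flagging. First, $\mK$ is a \emph{twisted} bialgebra: $\mathbf{Res}$ is a ring map only after one equips $\mK\otimes\mK$ with the multiplication twisted by $v^{-\langle\cdot,\cdot\rangle}$, and this twist appears precisely because of the shift $[-\langle\nu',\nu''\rangle_Q]$ built into $\mathbf{Res}$ and the dimension count in the Green-formula fibre square; your ``Green-type formula'' phrase points at the right mechanism, but the statement needs the twist to be literally true. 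Second, your surjectivity step claims the $[L_{\ftnu}]$ span $\mK(\mQ_\nu)$ ``by the definition of $\mQ_\nu$'' --- this is not quite by definition: $\mQ_\nu$ is built from simple summands of the $L_{\ftnu}$, and to go back one needs the unitriangularity of the decomposition of each $L_{\ftnu}$ with respect to a suitable order (e.g., by support dimension), which is a separate lemma. Third, your route to well-definedness of $\Uvm\to\mK$ via direct rank-two verification of the Serre relations is valid and is essentially the approach of the original article \cite{MR1088333} (and of Ringel in the Hall-algebra picture); note however that Lusztig's treatment in \cite{MR1227098} deliberately avoids verifying Serre relations and instead realises $\Uvm$ as the quotient of the free algebra by the radical of a bilinear form, then matches the form on $\mK$ --- the two routes are interchangeable, and it is worth knowing that the second one sidesteps any explicit flag-variety computation. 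Your final point about almost-orthonormality plus self-duality characterising the canonical basis is correct, though it is really the content of the material \emph{after} the theorem statement rather than of the theorem itself.
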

The $\mathcal{A}$-modules $\mK$ has a $\mathcal{A}$-basis $\bigsqcup_{\nu\in \mathbb{N}I}\{[L]\in \mK(\mQ_{\nu})\mid L\in \mathcal{P}_\nu\}$ given by simple Lusztig sheaves. Its image 
$$\mathcal{B}=\bigsqcup_{\nu\in \mathbb{N}I}\mathcal{B}_\nu=\bigsqcup_{\nu\in \mathbb{N}I}\{\chi([L])\in \Uvm\mid L\in \mathcal{P}_\nu\}$$
under the isomorphism $\chi$, is a $\mathcal{A}$-basis of $\Uvm$ which is defined to be the canonical basis of $\Uvm$, see \cite{MR1088333}. Moreover, for any dominant weight $\lambda$, by Lemma \ref{rkey}, Lusztig proved that 
$$\mathcal{B}_\lambda=\{b\eta\in {_\mathcal{A}}L_v(\lambda)\mid b\in \mathcal{B}\}\setminus \{0\}$$
is a $\mathcal{A}$-basis of ${_\mathcal{A}}L_v(\lambda)$, where $\eta$ is the highest weight vector of ${_\mathcal{A}}L_v(\lambda)$, and defined it to be the canonical basis of ${_\mathcal{A}}L_v(\lambda)$, see \cite{MR1088333}.

\subsection{Refine string order}\label{Refine string order}
For any $\nu\in \mathbb{N}I, L\in \mathcal{P}_\nu$ and $i\in I$, we define $0\leqslant \epsilon_{i}(L)\leqslant \nu_i$ to be the maximum integer $t$ such that $L$ is a direct summand of $L_{\ftnu}$ up to shift, where $\ftnu=(ti,...)\in \mathcal{V}_\nu$ is a sequence starting by $ti$. Dually, we define $0\leqslant \epsilon^\ast_{i}(L)\leqslant \nu_i$ to be the maximum integer $t$ such that $L$ is a direct summand of $L_{\ftnu}$ up to shift, where $\ftnu=(...,ti)\in \mathcal{V}_\nu$ is a sequence ending by $ti$. If $i$ is a sink for the orientation $\Omega$, we have $t_{i}(L)=\epsilon_{i}(L)$ by \cite[Proposition 6.6]{MR1088333}. Dually, if $i$ is a source for the orientation $\Omega$, we have $t^\ast_{i}(L)=\epsilon^\ast_{i}(L)$. If $\nu\not= 0$, there exists $i\in I$ such that $\epsilon_{i}(L)>0$ by \cite[Lemma 7.1]{MR1088333}. Dually, if $\nu\not= 0$, there exists $i\in I$ such that $\epsilon^\ast_{i}(L)>0$. Then we define two maps $\epsilon_{i},\epsilon^\ast_{i}:\mathcal{B}\rightarrow \mathbb{N}$ by 
$$\epsilon_{i}(\chi([L]))=\epsilon_{i}(L),\,\epsilon^\ast_{i}(\chi([L]))=\epsilon^\ast_{i}(L).$$

We fix an order $\prec$ on $I=\{i_1,...,i_n\}$ and assume that $i_{1} \prec i_{2} \prec \cdots \prec i_{n}$. Then for any $\chi([L]) \in \mathcal{B}$, where $L\in \mathcal{P}_\nu,\nu\in \mathbb{N}I$, we inductively define a sequence $s^{\prec}(\chi([L]))$ of $\mathbb{N}\times I$ with respect to $\prec$ as follows,\\
$\bullet$ if $\nu=0$, we define $s^{\prec}(\chi([L]))$ to be the empty sequence;\\
$\bullet$ if $\nu\not=0$, let $i\in I$ be the minimal vertex such that $\epsilon_{i}(L)>0$, then $s^{\prec}(\chi\mathrm{Four}_{\Omega_i,\Omega}\pi_{i,\epsilon_{i}(L)}\mathrm{Four}_{\Omega,\Omega_i}(L))$ has been defined by inductive hypothesis, where $\Omega_i$ is an orientation such that $i$ is a sink for it, $\mathrm{Four}_{\Omega_i,\Omega},\mathrm{Four}_{\Omega,\Omega_i}$ are the Fourier-Sato (or Fourier-Deligne) transforms and $\pi_{i,\epsilon_{i}(L)}$ is the bijection in Lemma \ref{lkey}, and we define 
$$s^{\prec}(\chi([L]))=((i,\epsilon_{i}(L)),s^{\prec}(\chi\mathrm{Four}_{\Omega_i,\Omega}\pi_{i,\epsilon_{i}(L)}\mathrm{Four}_{\Omega,\Omega_i}(L))).$$

\begin{definition}
For any $\nu\in \mathbb{N}I$, we define a partial order on $\mathcal{B}_\nu$ as follows, for any $b,b'\in \mathcal{B}_\nu$ such that  
$$s^{\prec}(b)=((i_{n_{1}},m_{1} ), (i_{n_{2}},m_{2}),\cdots,(i_{n_{k}},m_{k}) ),\ s^{\prec}(b')=((i_{n'_{1}},m'_{1} ), (i_{n'_{2}},m'_{2}),\cdots,(i_{n'_{l}},m'_{l})),$$ 
we define $b'\prec b$, if there exists $r$ such that $(i_{n_{t}},m_{t})=(i_{n'_{t}},m'_{t})$ for $1 \leqslant t< r$ and $(i_{n'_{r}},m'_{r})\prec (i_{n_{r}},m_{r})$ with respect to the lexicographical order. We define a partial order on $\mathcal{B}$ by taking disjoint union.
\end{definition}

We remark that above partial order on $\mathcal{B}$ is defined in \cite{fang2022correspondence} and is a refinement of the string order defined in \cite[Section 2.5]{baumann2011canonical}.

Notice that for any dominant weight $\lambda$, the partial order on $\mathcal{B}$ induces a partial order on $\mathcal{B}_\lambda$ by 
$$b\eta\prec b'\eta \Leftrightarrow b\prec b'$$
for any $b\eta,b'\eta\in \mathcal{B}_\lambda$, see \cite{fang2023lusztig}.

\section{Geometric realization of canonical basis of integrable highest module}

In this section, we recall the results in \cite{fang2023lusztig}, \cite{fang2023tensor} and generalize the constructions over $\mathbb{C}$. 
\subsection{Lusztig sheaves on the framed quivers and the localizations}\label{Lusztig sheaves on the framed quivers and the localizations}
Recall that in subsection \ref{Lusztig sheaf}, we associate the symmetric generalized matrix $C$ with a finite graph $(I,H)$. We define the framed graph $(I^{(1)}, H^{(1)})$ of $(I,H)$ by
$$I^{(1)}= I \cup I^{1},\ H^{(1)} =H \cup \{ i \rightarrow i^{1},i^{1} \rightarrow i\mid i\in I \},$$
where $I^{1}=\{i^{1}\mid i \in I \}$ is a copy of $I$, with an involution of $H^{(1)}$ obtained from the involution of $H$ and exchanging $i \rightarrow i^{1},i^{1} \rightarrow i$ for $i\in I$. Moreover, the two-framed graph $(I^{(2)}, H^{(2)})$ of $(I,H)$ by
$$I^{(2)}= I \cup I^{1} \cup I^{2},\ H^{(2)} =H^{(1)} \cup \{ i \rightarrow i^{2},i^{2} \rightarrow i\mid i\in I \},$$
where $I^{2}=\{i^{2}\mid i \in I \}$ is another copy of $I$, with an involution of $H^{(2)}$ obtained from the involution of $H^{(1)}$ and exchanging $i \rightarrow i^{2},i^{2} \rightarrow i$ for $i\in I$. Then for an orientation $\Omega\subset H$, we define 
$$\Omega^{(1)}=\Omega\cup \{i \rightarrow i^{1}\mid i \in I \},\ \Omega^{(2)}=\Omega^{(1)}\cup \{i \rightarrow i^{2}\mid i \in I \},$$
and define the framed quiver, the two-framed quiver of the quiver $Q=(I,H,\Omega)$ to be 
$$Q^{(1)} =(I^{(1)},H^{(1)},\Omega^{(1)}),\ Q^{(2)} =(I^{(2)},H^{(2)},\Omega^{(2)})$$
respectively.

Given two dominant weights $\lambda_{1},\lambda_{2}$, we fix an $I^{1}$-graded $\mathbb{C}$-vector space $\mathbf{W}^{1}$ and an $I^{2}$-graded $\mathbb{C}$-vector space $\mathbf{W}^{2}$ such that 
$$\dim \mathbf{W}^{1}_{i^{1}}= \langle i ,\lambda_{1} \rangle,\ \dim \mathbf{W}^{2}_{i^{2}}= \langle i ,\lambda_{2} \rangle$$ 
for any $i\in I$, where $\langle-,-\rangle$ is the paring on $\mathbb{Z}I$ induced by the symmetric generalized Cartan matrix $C$, see section \ref{introduction}. We denote by $|\mathbf{W}^{1}|=\omega^1\in \mathbb{N}I^1,|\mathbf{W}^{2}|=\omega^2\in \mathbb{N}I^2$ the dimension vectors of $\mathbf{W}^{1},\mathbf{W}^{2}$ respectively.

For any $\nu\in \mathbb{N}I$, we fix a $I$-graded $\mathbb{C}$-vector space $\mathbf{V}$ such that $|\mathbf{V}|=\nu$, then $\mathbf{V}\oplus \mathbf{W}^1$ is a $I^{(1)}$-graded $\mathbb{C}$-vector space such that $|\mathbf{V}\oplus \mathbf{W}^1|=\nu+\omega^1$, and $\mathbf{V}\oplus \mathbf{W}^1\oplus \mathbf{W}^2$ is a $I^{(2)}$-graded $\mathbb{C}$-vector space such that $|\mathbf{V}\oplus \mathbf{W}^1\oplus \mathbf{W}^2|=\nu+\omega^1+\omega^2$. Regarding the framed quiver $Q^{(1)}$ and the two-framed quiver $Q^{(2)}$ as kinds of quiver used in subsection \ref{Lusztig sheaf}, there are affine spaces
\begin{align*}
\bfEVOf &= \bfEVO \oplus \bigoplus_{i \in I} \Hom (\mathbf{V}_{i},\mathbf{W}^{1}_{i^1}),\\
\bfEVOff &= \bfEVOf \oplus \bigoplus_{i \in I} \Hom (\mathbf{V}_{i},\mathbf{W}^{2}_{i^2})
\end{align*}
which have $\bG_{\bV\oplus \mathbf{W}^1}$-action and $\bG_{\bV\oplus \mathbf{W}^1\oplus\mathbf{W}^2}$-action respectively. 

We also define the torus 
$$\mathbb{T}=\mathbb{G}_{m}\times...\times \mathbb{G}_{m}$$
of rank $\sharp I$ such that it acts on $\bfEVOf$ and $\bfEVOff$ respectively by
$$t=(t_{i})_{i\in I} \mapsto (t_{i} \textrm{Id}_{\mathbf{W}^1} )_{i \in I} \in \bG_{\bV\oplus \mathbf{W}^1},$$
$$t=(t_{i})_{i\in I} \mapsto (t_{i} \textrm{Id}_{\mathbf{W}^1 \oplus \mathbf{W}^{2}} )_{i \in I} \in \bG_{\bV\oplus \mathbf{W}^1\oplus\mathbf{W}^2}.$$
Throughout this paper, we only consider the $\bG_{\bV} \times \mathbb{T}$-actions on $\bfEVOf$ and $\bfEVOff$, and the $\bG_{\bV} \times \mathbb{T}$ equivariant derived categories of constructible sheaves on $\bfEVOf$ and $\bfEVOff$. Here we need the $\mathbb{T}$-action, because we need to use the Fourier-Sato transforms in subsection \ref{Fourier-Sato transform} to reverse arrows of the form $i \rightarrow i^{1}, i \rightarrow i^{2}$ or $i \leftarrow i^{1}, i \leftarrow i^{2}$. We denote these mutations reversing framed arrows respectively by $\mu_{i^{1}}$ and  $\mu_{i^{1}}\mu_{i^{2}}$  for framed and 2-framed quivers.  If two orientations $\Omega^{(1)}$ and $\Omega^{(1),'}$ can be obtained by a sequence of $\mu_{i}$ or  $\mu_{i^{1}}$, we say they are equivalent under mutation. If two orientations $\Omega^{(2)}$ and $\Omega^{(2),'}$ can be obtained by a sequence of $\mu_{i}$ and  $\mu_{i^{1}}\mu_{i^{2}}$, we say they are equivalent under mutation. By  Lemma \ref{FST}, we can define Fourier-Sato transform for any equivalent orientations.

There are corresponding categories $\mathcal{Q}_{\nu+\omega^1},\mathcal{Q}_{\nu+\omega^1+\omega^2}$ of Lusztig sheaves on them respectively, see subsection \ref{Lusztig sheaf}. Suppose $I=\{ i_{1} ,i_{2}, \cdots , i_{n}\}$, we fix two sequences
$$ \boldsymbol{\omega}^{1}=(\omega^{1}_{i_{1}} i_{1}, \omega^{1}_{i_{2}} i_{2},\cdots \omega^{1}_{i_{n}} i_{n})\in \mathcal{V}_{\omega^1},\ \boldsymbol{\omega}^{2}=(\omega^{2}_{i_{1}} i_{1}, \omega^{2}_{i_{2}} i_{2},\cdots \omega^{2}_{i_{n}} i_{n})\in \mathcal{V}_{\omega^2}.$$
Then for any $\ftnu\in \mathcal{V}_\nu$, we have $\ftnu \boldsymbol{\omega}^{1}\in \mathcal{V}_{\nu+\omega^1}$ and the semisimple complex $L_{\ftnu \boldsymbol{\omega}^{1}}\in \mathcal{Q}_{\nu+\omega^1}$. Similarly, for any $\ftnu'\in \mathcal{V}_{\nu'},\ftnu''\in \mathcal{V}_{\nu''}$ such that $\nu=\nu'+\nu''$, we have $\ftnu' \boldsymbol{\omega}^{1}\ftnu'' \boldsymbol{\omega}^{2}\in \mathcal{V}_{\nu+\omega^1+\omega^2}$ and the semisimple complex $L_{\ftnu' \boldsymbol{\omega}^{1}\ftnu'' \boldsymbol{\omega}^{2}}\in \mathcal{Q}_{\nu+\omega^1+\omega^2}$.

\begin{definition}\label{small Lusztig shevaes categories}
{\rm (a)} We define $\mP_{\nu,\omega^{1}}$ to be the full subcategory of $\mP_{\nu+\omega^{1}}$ consisting of simple perverse sheaves $L$ such that $L[n]$ is a direct summand of $L_{\ftnu \boldsymbol{\omega}^{1}}$ for some $n\in\mathbb{Z},\ftnu \in \mathcal{V}_{\nu}$, and define $\mQ_{\nu,\omega^{1}}$ to be the full subcategory of $\mathcal{Q}_{\nu+\omega^1}$ consisting of direct sums of complexes of the form $L[n]$ for any $L\in \mP_{\nu,\omega^{1}},n\in \mathbb{Z}$.\\
{\rm (b)} We define $\mP_{\nu,\omega^1,\omega^2}$ to be the full subcategory of $\mP_{\nu+\omega^{1}+\omega^2}$ consisting of simple perverse sheaves $L$ such that $L[n]$ is a direct summand of $L_{\ftnu' \boldsymbol{\omega}^{1}\ftnu'' \boldsymbol{\omega}^{2}}$ for some $n\in\mathbb{Z},\ftnu'\in \mathcal{V}_{\nu'},\ftnu''\in \mathcal{V}_{\nu''}$ satisfying $\nu=\nu'+\nu''$, and define $\mQ_{\nu,\omega^1,\omega^2}$ to be the full subcategory of $\mathcal{Q}_{\nu+\omega^1+\omega^2}$ consisting of direct sums of complexes of the form $L[n]$ for any $L\in \mP_{\nu,\omega^{1},\omega^2},n\in \mathbb{Z}$.
\end{definition}

Recall that in subsection \ref{Categorification theorem}, we define the $\mathcal{A}$-module $\mK(\mQ_\nu)$ associated to $\mQ_\nu$ for the quiver $Q$. Similarly, for the framed quiver $Q^{(1)}$ and the two-framed quiver $Q^{(2)}$, there are corresponding $\mathcal{A}$-modules $\mK(\mQ_{\nu+\omega^1})$ associated to $\mQ_{\nu+\omega^1}$ and $\mK(\mQ_{\nu+\omega^1+\omega^2})$ associated to $\mQ_{\nu+\omega^1+\omega^2}$. Let $\mK(\mQ_{\nu,\omega^{1}})\subset \mK(\mQ_{\nu+\omega^1})$ be the $\mathcal{A}$-submodule spanned by $[L]$ for $L\in \mathcal{Q}_{\nu,\omega^1}$, and let $\mK(\mQ_{\nu,\omega^{1},\omega^2})\subset \mK(\mQ_{\nu+\omega^1+\omega^2})$ be the $\mathcal{A}$-submodule spanned by $[L]$ for $L\in \mathcal{Q}_{\nu,\omega^1,\omega^2}$. We denote the direct sums by
$$\mK(\omega^1)=\bigoplus_{\nu \in \mathbb{N}I} \mK(\mQ_{\nu,\omega^{1}}),\ \mK(\omega^1,\omega^2)=\bigoplus\limits_{\nu \in \mathbb{N}I} \mK(\mQ_{\nu,\omega^1,\omega^2}).$$

\begin{proposition}[{\cite[Lemma 4.3, Corollary 4.7]{fang2023tensor}}]\label{span}
As $\mathcal{A}$-modules, $\mK(\mQ_{\nu,\omega^{1}})$ and $\mK(\mQ_{\nu,\omega^{1},\omega^2})$ are spanned by the images of those $L_{\ftnu \boldsymbol{\omega}^{1}}$ and $L_{\ftnu' \boldsymbol{\omega}^{1}\ftnu''\boldsymbol{\omega}^{2}}$ appeared in Definition \ref{small Lusztig shevaes categories} respectively, that is, 
\begin{align*}
 \mK(\mQ_{\nu,\omega^{1}}) &={\rm{span}}_{\mathcal{A}} \{[L_{\ftnu \boldsymbol{\omega}^{1}}]| \ftnu \in \mathcal{V}_{\nu}  \},\\
\mK(\mQ_{\nu,\omega^1,\omega^2}) &={\rm{span}}_{\mathcal{A}} \{[L_{\ftnu' \boldsymbol{\omega}^{1}\ftnu''\boldsymbol{\omega}^{2}}]| \ftnu' \in \mathcal{V}_{\nu'},\ftnu''\in \mathcal{V}_{\nu''},\nu'+\nu''=\nu  \}. 
\end{align*}
\end{proposition}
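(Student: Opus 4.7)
We focus on the first assertion; the second is handled analogously (with an extra layer of bookkeeping to separate the two framing pieces). By Definition \ref{small Lusztig shevaes categories}, $\mK(\mQ_{\nu,\omega^{1}})$ is the $\mathcal{A}$-span of classes of simple perverse sheaves $L \in \mP_{\nu,\omega^{1}}$, and each $L_{\ftnu\boldsymbol{\omega}^{1}}$ is, by construction, a direct sum of shifts of such simples. Hence the inclusion $\mathrm{span}_{\mathcal{A}}\{[L_{\ftnu\boldsymbol{\omega}^{1}}] \mid \ftnu \in \mathcal{V}_{\nu}\} \subseteq \mK(\mQ_{\nu,\omega^{1}})$ is immediate; the real work lies in the reverse inclusion, that is, in expressing each $[L]$ as an $\mathcal{A}$-linear combination of $[L_{\ftnu\boldsymbol{\omega}^{1}}]$'s.

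The plan is to establish a unitriangular formula
$$[L_{\ftnu(L)\boldsymbol{\omega}^{1}}] = [L] + \sum_{L' \succ L} a_{L',L}(v)\, [L'], \qquad a_{L',L}(v) \in \mathcal{A},$$
for each $L \in \mP_{\nu,\omega^{1}}$ and a suitably chosen sequence $\ftnu(L) \in \mathcal{V}_{\nu}$, where $\succ$ is the refine string order of Section \ref{Refine string order} applied to the framed quiver $Q^{(1)}$ under an ordering of $I^{(1)}$ that puts every vertex of $I$ strictly below every vertex of $I^{1}$. The sum automatically runs over $L' \in \mP_{\nu,\omega^{1}}$ because every simple summand of $L_{\ftnu(L)\boldsymbol{\omega}^{1}}$ lies in $\mP_{\nu,\omega^{1}}$ by Definition \ref{small Lusztig shevaes categories}. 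Inverting this finite upper-triangular unipotent matrix over $\mathcal{A}$ then writes $[L]$ as an $\mathcal{A}$-linear combination of the $[L_{\ftnu\boldsymbol{\omega}^{1}}]$'s, as required.

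The construction of $\ftnu(L)$ and the proof of the triangularity proceed by induction on $\nu$, in parallel with the recursive definition of $s^{\prec}(L)$. If $\nu = 0$, take $\ftnu(L)$ to be empty. Otherwise, pick the minimal $i \in I$ with $\epsilon_{i}(L) > 0$; applying Lemma \ref{FST1} and the Fourier-Sato transform from Section \ref{Fourier-Sato transform} we may arrange $i$ to be a sink for $\Omega^{(1)}$, and Lemma \ref{lkey} then produces a simple $K = \pi_{i,\epsilon_{i}(L)}(L) \in \mP_{\nu - \epsilon_{i}(L)i,\,\omega^{1}}$. By induction, $\ftnu(K) \in \mathcal{V}_{\nu - \epsilon_{i}(L)i}$ has been defined; we set $\ftnu(L) = (\epsilon_{i}(L)i, \ftnu(K))$. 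Unitriangularity at each peeling step is furnished by Lemma \ref{lkey}(b), and iterating via the transitivity of induction (Proposition \ref{indres formula}) together with the fact that $L_{\ftnu\boldsymbol{\omega}^{1}} \cong \mathbf{Ind}^{\mathbf{V}\oplus\mathbf{W}^{1}}_{\mathbf{V},\mathbf{W}^{1}}(L_{\ftnu} \boxtimes L_{\boldsymbol{\omega}^{1}})$ yields the claimed global triangularity with respect to $\succ$.

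\textbf{Main obstacle.} The delicate point is ensuring that the whole procedure stays inside the framed category: Lemmas \ref{lkey} and \ref{rkey} are stated for a general quiver, but we must apply them only at vertices $i \in I$, so that the prescribed tail $\boldsymbol{\omega}^{1}$ is left undisturbed and $\omega^{1}$ remains fixed throughout the induction. The ordering $I < I^{1}$ forces the refine string order to always attack an $I$-vertex first whenever one is available, and Lemma \ref{FST1} guarantees that any $i \in I$ can be made a sink or source by a sequence of mutations internal to $\Omega$, without altering the framing arrows (and hence without moving the $\mathbb{T}$-action of Section \ref{Lusztig sheaves on the framed quivers and the localizations} out of play). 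The two-framed case follows the same pattern with the ordering $I < I^{1} < I^{2}$ on $I^{(2)}$; the splitting $\nu = \nu' + \nu''$ is determined by how the $I$-part of the refine string decomposition of $L$ distributes on either side of the $\boldsymbol{\omega}^{1}$-block, yielding the required pair $(\ftnu'(L), \ftnu''(L))$ and an analogous unitriangular expression for $[L_{\ftnu'(L)\boldsymbol{\omega}^{1}\ftnu''(L)\boldsymbol{\omega}^{2}}]$.
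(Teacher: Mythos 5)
Your overall strategy (choose a monomial $\ftnu(L)$ recursively from the refine string of $L$, establish a unitriangular expansion of $[L_{\ftnu(L)\boldsymbol{\omega}^1}]$ in the simple classes, then invert) is the standard Lusztig-style argument and is very likely what the cited proof in \cite{fang2023tensor} does; the present paper's own proof consists only of the remark that the Fourier--Deligne transform there should be replaced by the Fourier--Sato transform. So the plan is sound in outline. However, the paragraph you flag as the ``main obstacle'' does not actually resolve it, and in fact contains an error. To apply Lemma~\ref{lkey} at $i\in I$ you need $i$ to be a \emph{sink} of $\Omega^{(1)}$, but by the very definition of $\Omega^{(1)}$ every $i\in I$ is the source of the framing arrow $i\to i^1$, so $i$ can never be made a sink ``by mutations internal to $\Omega$, without altering the framing arrows'' as you claim. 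Making $i$ a sink necessarily reverses $i\to i^1$ via $\mu_{i^1}$, and the monodromicity that licenses the corresponding Fourier--Sato transform is exactly what the $\mathbb{T}$-action of Section~\ref{Lusztig sheaves on the framed quivers and the localizations} is introduced for; your argument must invoke it rather than avoid it.

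The other genuine gap is the assertion that the peeling step keeps you inside the framed subcategory, i.e.\ that $K=\pi_{i,\epsilon_i(L)}(L)$ lies in $\mP_{\nu-\epsilon_i(L)i,\omega^1}$ and not merely in $\mP_{\nu-\epsilon_i(L)i+\omega^1}$. This is true but needs a proof: when one restricts $L_{\ftnu\boldsymbol{\omega}^1}$ along $\mathbf{V}'\oplus\mathbf{V}''$ with $|\mathbf{V}'|=ti$ and $i\in I$, the componentwise decomposition $\ftnu'+\ftnu''=\ftnu\boldsymbol{\omega}^1$ in Proposition~\ref{indres formula} forces $\ftnu'$ to be supported entirely in the $\ftnu$-prefix (the $\boldsymbol{\omega}^1$-segment lives on $I^1$), so $\ftnu''$ still ends in $\boldsymbol{\omega}^1$ and $K$ is a summand of some $L_{\tilde\ftnu\boldsymbol{\omega}^1}$. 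One also needs the (easy but unstated) fact that any $L\in\mP_{\nu,\omega^1}$ with $\nu\neq 0$ has $\epsilon_i(L)>0$ for some $i\in I$, which follows by looking at the leading term of any $\ftnu\boldsymbol{\omega}^1$ with $L$ as a summand. Once these two points and the $\heartsuit$-type inequalities in the inductive step (cf.\ the proof of Proposition~\ref{mono}) are spelled out, the argument goes through; as written it leaves the central technical points to the reader.
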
 
\begin{proof}
 We only need to replace the Fourier-Deligne transform by the Fourier-Sato transform constructed by the above lemma in the proof of \cite[Lemma 4.3, Corollary 4.7]{fang2023tensor}.
\end{proof}

Let $i\in I$ be a source for the orientation $\Omega$ of the quiver $Q$, then it is a source for the orientation $\Omega^{(1)}$ of the framed quiver $Q^{(1)}$, and a source for the orientation $\Omega^{(2)}$ of the two-framed quiver $Q^{(2)}$. Recall that in subsection \ref{Analysis at source}, we define the locally closed subsets $\bfEVO^{i,t}\subset \bfEVO$ for $0\leqslant t\leqslant \nu_i$. Similarly, we can define the corresponding locally closed subsets $\bfEVOf^{i,t}\subset \bfEVOf$ and $\bfEVOff^{i,t}\subset \bfEVOff$ for $0\leqslant t\leqslant \nu_i$. In particular, we have 
\begin{align}\label{EVWi0}
\bfEVOf&=\bfEVOf^{i,0} \cup \bfEVOf^{i,\geqslant 1},\\
\bfEVOff&=\bfEVOff^{i,0} \cup \bfEVOff^{i,\geqslant 1},
\end{align}
where $\bfEVOf^{i,0}\subset \bfEVOf$ and $\bfEVOff^{i,0}\subset \bfEVOff$ are open subsets with the complements $\bfEVOf^{i,\geqslant 1}$ and $\bfEVOff^{i,\geqslant 1}$ respectively.

\begin{definition}\label{def localization}
{\rm (a)} Let $i\in I$ be a source for the orientation $\Omega$. For the framed quiver $Q^{(1)}$, we define $\mathcal{N}_{\nu,i}$ to be the thick subcategory of $\mathcal{D}^b_{\bG_{\bV}\times \mathbb{T}}(\bfEVOf)$ consisting of $L$ such that $\mathrm{supp}(L)\subset \bfEVOf^{i,\geqslant 1}$. Similarly, for the two-framed quiver $Q^{(2)}$, we define $\mathcal{N}_{\nu,i}$ (the same notation) to be the thick subcategory of $\mathcal{D}^b_{\bG_{\bV}\times \mathbb{T}}(\bfEVOff)$ consisting of $L$ such that $\mathrm{supp}(L)\subset \bfEVOff^{i,\geqslant 1}$.\\
{\rm (b)} For any $i\in I$, we fix an orientation $\Omega_i$ such that $i$ is a source, then there are thick subcategories $\mathcal{N}_{\nu,i}$ of $\mathcal{D}^b_{\bG_{\bV}\times \mathbb{T}}(\mathbf{E}_{\mathbf{V}\oplus\mathbf{W}^1,\Omega_i^{(1)}})$ and of $\mathcal{D}^b_{\bG_{\bV}\times \mathbb{T}}(\mathbf{E}_{\mathbf{V}\oplus\mathbf{W}^1\oplus\mathbf{W}^2,\Omega_i^{(2)}})$ by {\rm (a)}. We define $\mathcal{N}_{\nu}$ to be the thick subcategory of $\mathcal{D}^b_{\bG_{\bV}\times \mathbb{T}}(\bfEVOf)$ generated by ${\rm{Four}}_{\Omega_{i}^{(1)},\Omega^{(1)}}(\mathcal{N}_{\nu,i})$ for any $i\in I$, where ${\rm{Four}}_{\Omega_{i}^{(1)},\Omega^{(1)}}$ is Fourier-Sato transform for the framed quiver $Q^{(1)}$. Similarly, we define $\mathcal{N}_{\nu}$ to be the thick subcategory of $\mathcal{D}^b_{\bG_{\bV}\times \mathbb{T}}(\bfEVOff)$ generated by ${\rm{Four}}_{\Omega_{i}^{(2)},\Omega^{(2)}}(\mathcal{N}_{\nu,i})$ for any $i\in I$, where ${\rm{Four}}_{\Omega_{i}^{(2)},\Omega^{(2)}}$ is Fourier-Sato transform for the two-framed quiver $Q^{(2)}$.\\
{\rm (c)} Let $\mathcal{D}^b_{\bG_{\bV}\times \mathbb{T}}(\bfEVOf)/\mathcal{N}_{\nu}$ and $\mathcal{D}^b_{\bG_{\bV}\times \mathbb{T}}(\bfEVOff)/\mathcal{N}_{\nu}$ be localizations of the triangulated categories with respect to their thick subcategories. Note that they have the same objects as $\mathcal{D}^b_{\bG_{\bV}\times \mathbb{T}}(\bfEVOf)$ and $\mathcal{D}^b_{\bG_{\bV}\times \mathbb{T}}(\bfEVOff)$ respectively. We define $\mQ_{\nu,\omega^{1}}/\mathcal{N}_{\nu}$ to be full subcategory of $\mathcal{D}^b_{\bG_{\bV}\times \mathbb{T}}(\bfEVOf)/\mathcal{N}_{\nu}$ consisting of 
$L$ such that $L$ is isomorphic to some $L'\in \mQ_{\nu,\omega^{1}}$ in $\mathcal{D}^b_{\bG_{\bV}\times \mathbb{T}}(\bfEVOf)/\mathcal{N}_{\nu}$. Similarly, we define $\mQ_{\nu,\omega^{1},\omega^2}/\mathcal{N}_{\nu}$ to be full subcategory of $\mathcal{D}^b_{\bG_{\bV}\times \mathbb{T}}(\bfEVOff)/\mathcal{N}_{\nu}$ consisting of $L$ such that $L$ is isomorphic to some $L'\in \mQ_{\nu,\omega^{1},\omega^2}$ in $\mathcal{D}^b_{\bG_{\bV}\times \mathbb{T}}(\bfEVOff)/\mathcal{N}_{\nu}$. \\
{\rm (d)} We denote the Grothendieck groups of $\mQ_{\nu,\omega^{1}}/\mathcal{N}_{\nu}$ and $\mQ_{\nu,\omega^1,\omega^2}/\mathcal{N}_{\nu}$ by $\mathcal{L}(\nu,\omega^{1})$ and $\mathcal{L}(\nu,\omega^1,\omega^2)$ respectively. Note that they are quotients of $\mK(\mQ_{\nu,\omega^{1}})$ and $\mK(\mQ_{\nu,\omega^{1},\omega^2})$ respectively. We denote the direct sums by 
$$\mathcal{L}(\omega^{1})=\bigoplus_{\nu \in \mathbb{N}I}\mathcal{L}(\nu,\omega^{1}),\ \mathcal{L}(\omega^1,\omega)=\bigoplus_{\nu \in \mathbb{N}I}\mathcal{L}(\nu,\omega^1,\omega^2).$$
\end{definition}

\subsection{Functor between localization categories}
\subsubsection{The functor $\mathcal{F}^{(r)}_{i}$}
For any $i\in I,r\in \mathbb{N}$ and $\nu=ri+\nu''\in\mathbb{N}I$, we denote by $\nu'=ri$ and identify $$\mathbf{E}_{\mathbf{V}',\Omega}=\mathbf{E}_{\mathbf{V}'\oplus 0,\Omega^{(1)}}=\mathbf{E}_{\mathbf{V}'\oplus 0\oplus0,\Omega^{(2)}}=\{0\}$$
Then we use Lusztig's induction functors, see subsection \ref{Induction and restriction functors}, for the framed quiver $Q^{(1)}$ and the two-framed quiver $Q^{(2)}$ to define the following functors
\begin{align*}
\mathcal{F}^{(r)}_{i}&=\mathbf{Ind}^{\mathbf{V}\oplus \mathbf{W}^{1}}_{\mathbf{V}',\mathbf{V}''\oplus \mathbf{W}^{1}}( \mathbb{C}|_{\mathbf{E}_{\mathbf{V}',\Omega}}\boxtimes-):\mathcal{D}^b_{\bG_{\bV''}\times \mathbb{T}}(\mathbf{E}_{\mathbf{V}''\oplus\mathbf{W}^1,\Omega^{(1)}})\rightarrow\mathcal{D}^b_{\bG_{\bV}\times \mathbb{T}}(\bfEVOf),\\
\mathcal{F}^{(r)}_{i}&=\mathbf{Ind}^{\mathbf{V}\oplus \mathbf{W}^{1}\oplus\mathbf{W}^2}_{\mathbf{V}',\mathbf{V}''\oplus \mathbf{W}^{1}\oplus\mathbf{W}^2}( \mathbb{C}|_{\mathbf{E}_{\mathbf{V}',\Omega}}\boxtimes-):\mathcal{D}^b_{\bG_{\bV''}\times \mathbb{T}}(\mathbf{E}_{\mathbf{V}''\oplus\mathbf{W}^1\oplus\mathbf{W}^2,\Omega^{(2)}})\rightarrow\mathcal{D}^b_{\bG_{\bV}\times \mathbb{T}}(\bfEVOff).
\end{align*}
For simplicity, we also denote $\mathcal{F}^{(1)}_{i}$ by $\mathcal{F}_{i}$.

\subsubsection{The functor $\mathcal{E}^{(r)}_{i}$}  
Let $i\in I$ be a source for the orientation $\Omega$. For any $r\in \mathbb{N}$ and $\nu=ri+\nu''\in\mathbb{N}I$, consider the following diagram.
$$\xymatrix{
   		\bfEVOf
   		&
   		& \bfEVOfpp \\
   		\bfEVOf^{i,0} \ar[d]_{\phi_{\mathbf{V},i}} \ar[u]^{j_{\mathbf{V},i}}
   		&
   		& \bfEVOfpp^{i,0} \ar[d]^{\phi_{\mathbf{V}'',i}} \ar[u]_{j_{\mathbf{V}'',i}} \\
   		\bfEVOfid \times \mathbf{Grass}(\nu_i, \tilde{\nu}_{i})
   		& \bfEVOfid  \times \mathbf{Flag}(\nu''_{i},\nu_{i},\tilde{\nu}_{i}) \ar[r]^{q_{2}} \ar[l]_{q_{1}}
   		&  \bfEVOfidpp  \times \mathbf{Grass}(\nu''_{i}, \tilde{\nu}_{i}),
   	}$$
where $\bfEVOf^{i,0},\bfEVOfpp^{i,0}$ are defined in (\ref{EVWi0}), and $j_{\mathbf{V},i},j_{\mathbf{V}'',i}$ are the open embeddings;
\begin{align*}
\bfEVOfid &=\bigoplus_{h \in \Omega, h'\neq i} \mathbf{Hom}(\mathbf{V}_{h'},\mathbf{V}_{h''}) \oplus \bigoplus_{j\in I,j\not=i} \mathbf{Hom}(\mathbf{V}_{j},\mathbf{W}^1_{j^1}),\\
\bfEVOfidpp &=\bigoplus_{h \in \Omega, h'\neq i} \mathbf{Hom}(\mathbf{V}''_{h'},\mathbf{V}''_{h''}) \oplus \bigoplus_{j\in I,j\not=i} \mathbf{Hom}(\mathbf{V}''_{j},\mathbf{W}^1_{j^1}),
\end{align*}
$\mathbf{Grass}(\nu_i, \tilde{\nu}_{i})$ and $\mathbf{Grass}(\nu''_{i}, \tilde{\nu}_{i})$ are the Grassmannian varieties consisting of $\nu_{i}$-dimensional and $\nu''_i$-dimensional subspaces respectively, of 
$$\bigoplus_{h\in \Omega, h'=i}\mathbf{V}_{h''}\oplus \mathbf{W}^{1}_{i^1}$$
which is of dimension $\tilde{\nu}_{i}=\sum_{h\in \Omega,h'=i}\nu_{h''}+\omega^{1}_{i}$, and 
\begin{align*}
\phi_{\mathbf{V},i}(x)=((x_{h})_{h\in \Omega^{(1)},h' \neq i}, {\rm{Im}}  (\bigoplus _{h \in \Omega^{(1)}, h'=i} x_{h}))\ \textrm{for}\ x\in \bfEVOf^{i,0},\\
\phi_{\mathbf{V}'',i}(x)=((x_{h})_{h\in \Omega^{(1)},h' \neq i}, {\rm{Im}}  (\bigoplus _{h \in \Omega^{(1)}, h'=i} x_{h}))\ \textrm{for}\ x\in \bfEVOfpp^{i,0}
\end{align*}
which are principal $\mathrm{GL}(\bV_{i})$-bundle, $\mathrm{GL}(\bV''_{i})$-bundle respectively; $\mathbf{Flag}(\nu''_{i},\nu_{i},\tilde{\nu}_{i})$ is the flag variety, and $q_{1}, q_{2}$ are natural projections which are smooth and proper.

Recall that the torus $\mathbb{T}$ of rank $\sharp I$ acts on $\bfEVOf,\bfEVOfpp$ by scaling, and there are natural $\mathbb{T}$-actions on $\bfEVOf^{i,0},\bfEVOfpp^{i,0},\bfEVOfid,\bfEVOfidpp,\mathbf{Grass}(\nu_i, \tilde{\nu}_{i}),\mathbf{Flag}(\nu''_{i},\nu_{i},\tilde{\nu}_{i})$ such that all morphisms appearing in above diagram are $\mathbb{T}$-equivariant. We define the functor
$$\mathcal{E}^{(r)}_{i}=(j_{\mathbf{V}'',i})_{!} (\phi_{\mathbf{V},i})^{\ast} (q_{2})_{!}(q_{1})^{\ast} (\phi_{\mathbf{V},i})_{\flat}(j_{\mathbf{V},i})^{\ast}[-r\nu_{i}]: \mathcal{D}^b_{\bG_{\bV}\times \mathbb{T}}(\mathbf{E}_{\mathbf{V}\oplus\mathbf{W}^1,\Omega^{(1)}})\rightarrow\mathcal{D}^b_{\bG_{\bV''}\times \mathbb{T}}(\bfEVOfpp).$$
Similarly, by replacing $\mathbf{W}^1$ by $\mathbf{W}^1\oplus \mathbf{W}^2$, we can define the functor for the two-framed quiver $Q^{(2)}$
$$\mathcal{E}^{(r)}_{i}=(j_{\mathbf{V}'',i})_{!} (\phi_{\mathbf{V},i})^{\ast} (q_{2})_{!}(q_{1})^{\ast} (\phi_{\mathbf{V},i})_{\flat}(j_{\mathbf{V},i})^{\ast}[-r\nu_{i}]: \mathcal{D}^b_{\bG_{\bV}\times \mathbb{T}}(\mathbf{E}_{\mathbf{V}\oplus\mathbf{W}^1\oplus\mathbf{W}^2,\Omega^{(2)}})\rightarrow\mathcal{D}^b_{\bG_{\bV''}\times \mathbb{T}}(\mathbf{E}_{\mathbf{V}''\oplus\mathbf{W}^1\oplus\mathbf{W}^2,\Omega^{(2)}}).$$

Above definitions of $\mathcal{E}^{(r)}_{i}$ use the assumption that $i$ is a source for the orientation $\Omega$. In general, for any orientation $\Omega$ and $i\in I$, we fix a orientation $\Omega_i$ such that $i$ is a source for $\Omega_i$ and $\Omega^{(N)},\Omega^{(N)}_{i}$  then we use $\mathcal{E}^{(r)}_{i}$ associated to $\Omega_i$ and Fourier-Sato transforms to define the following functors 
\begin{align*}
\mathcal{E}^{(r)}_{i}&=\mathrm{Four}_{\Omega_i^{(1)},\Omega^{(1)}} \mathcal{E}^{(r)}_{i} \mathrm{Four}_{\Omega^{(1)},\Omega_{i}^{(1)}}: \mathcal{D}^b_{\bG_{\bV}\times \mathbb{T}}(\mathbf{E}_{\mathbf{V}\oplus\mathbf{W}^1,\Omega^{(1)}})\rightarrow\mathcal{D}^b_{\bG_{\bV''}\times \mathbb{T}}(\bfEVOfpp),\\
\mathcal{E}^{(r)}_{i}&=\mathrm{Four}_{\Omega_i^{(2)},\Omega^{(2)}} \mathcal{E}^{(r)}_{i} \mathrm{Four}_{\Omega^{(2)},\Omega_{i}^{(2)}}: \mathcal{D}^b_{\bG_{\bV}\times \mathbb{T}}(\mathbf{E}_{\mathbf{V}\oplus\mathbf{W}^1\oplus\mathbf{W}^2,\Omega^{(2)}})\rightarrow\mathcal{D}^b_{\bG_{\bV''}\times \mathbb{T}}(\mathbf{E}_{\mathbf{V}''\oplus\mathbf{W}^1\oplus\mathbf{W}^2,\Omega^{(2)}}).
\end{align*}
For simplicity, we also denote $\mathcal{E}^{(1)}_{i}$ by $\mathcal{E}_{i}$.

\begin{proposition}[{\cite[Proposition 3.18, Corollary 3.19]{fang2023lusztig}}]
For any $i \in I, r \in \mathbb{N}$, the functors $\mathcal{E}^{(r)}_{i}, \mathcal{F}^{(r)}_{i}$ induce well-defined functors 
\begin{align*}
&\mathcal{E}^{(r)}_{i}:\mathcal{Q}_{\nu,\omega^1}/\mathcal{N}_\nu\rightarrow\mathcal{Q}_{\nu'',\omega^1}/\mathcal{N}_{\nu''},\\
&\mathcal{F}^{(r)}_{i}:\mathcal{Q}_{\nu'',\omega^1}/\mathcal{N}_{\nu''}\rightarrow\mathcal{Q}_{\nu,\omega^1}/\mathcal{N}_\nu,\\
&\mathcal{E}^{(r)}_{i}:\mathcal{Q}_{\nu,\omega^1,\omega^2}/\mathcal{N}_\nu\rightarrow\mathcal{Q}_{\nu'',\omega^1,\omega^2}/\mathcal{N}_{\nu''},\\
&\mathcal{F}^{(r)}_{i}:\mathcal{Q}_{\nu'',\omega^1,\omega^2}/\mathcal{N}_{\nu''}\rightarrow\mathcal{Q}_{\nu,\omega^1,\omega^2}/\mathcal{N}_\nu.
\end{align*}
\end{proposition}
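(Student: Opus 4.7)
The plan is to verify, for each of the functors $\mathcal{F}^{(r)}_i$ and $\mathcal{E}^{(r)}_i$, two things: (i) they send the thick subcategory $\mathcal{N}_\bullet$ into $\mathcal{N}_\bullet$, so that they descend to the Verdier quotients; and (ii) up to objects in $\mathcal{N}_\bullet$, they map $\mathcal{Q}_{\nu,\omega^1}$ to $\mathcal{Q}_{\nu'',\omega^1}$ (resp.\ in the opposite direction). The $2$-framed statement is obtained by substituting $\mathbf{W}^1$ with $\mathbf{W}^1\oplus\mathbf{W}^2$ everywhere, so I focus on the framed case. Using Lemma \ref{FST1} to choose an orientation $\Omega_i$ equivalent to $\Omega$ under mutation for which $i$ is a source, and the compatibility of Fourier-Sato with induction from Lemma \ref{FST} and Subsection \ref{Fourier-Sato transform}, I may reduce at the outset to the case in which $i$ is a source for $\Omega$.

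For $\mathcal{F}^{(r)}_i$, preservation of $\mathcal{Q}_{\nu,\omega^1}$ is immediate from Proposition \ref{indres formula}: on a generator $L_{\ftnu''\boldsymbol{\omega}^1}$ of $\mathcal{Q}_{\nu'',\omega^1}$ (Proposition \ref{span}), the induction $\mathbf{Ind}^{\mathbf{V}\oplus\mathbf{W}^1}_{\mathbf{V}',\mathbf{V}''\oplus\mathbf{W}^1}(\mathbb{C}|_{\mathbf{E}_{\mathbf{V}',\Omega}}\boxtimes L_{\ftnu''\boldsymbol{\omega}^1})$ is, up to shift, $L_{(ri)\ftnu''\boldsymbol{\omega}^1}$, which is a Lusztig sheaf in $\mathcal{Q}_{\nu,\omega^1}$. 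To show $\mathcal{F}^{(r)}_i(\mathcal{N}_{\nu''})\subset \mathcal{N}_\nu$, I track supports through the diagram (\ref{indd}): if $x\in F$ satisfies $x''\in \mathbf{E}_{\mathbf{V}''\oplus\mathbf{W}^1,\Omega^{(1)}}^{j,\geq 1}$ for any source $j$, then, because adjoining the direct summand $\mathbf{V}'=ri$ can only enlarge the relevant kernel, $p_3(p_2(g,x))$ lies in $\bfEVOf^{j,\geq 1}$; combined with compatibility of induction and Fourier-Sato, this gives the desired inclusion.

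For $\mathcal{E}^{(r)}_i$, the fundamental observation is that its construction begins with the open restriction $j_{\mathbf{V},i}^\ast$, so any object supported in the closed complement $\bfEVOf^{i,\geq 1}$ is annihilated; thus $\mathcal{E}^{(r)}_i(\mathcal{N}_{\nu,i})=0$. For the remaining generating subcategories $\mathcal{N}_{\nu,j}$ with $j\ne i$, one uses the compatibility of $(\phi_{\mathbf{V},i})_\flat$, the flag-correspondence $(q_2)_!(q_1)^\ast$, and $(\phi_{\mathbf{V}'',i})^\ast$ with the Fourier-Sato transforms ${\rm Four}_{\Omega_j^{(1)},\Omega^{(1)}}$ to reduce the support condition at $j$ to its analogue on the base of the principal bundle, concluding $\mathcal{E}^{(r)}_i(\mathcal{N}_\nu)\subset \mathcal{N}_{\nu''}$. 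Preservation of $\mathcal{Q}_{\nu,\omega^1}$ then follows by comparing $\mathcal{E}^{(r)}_i$ with Lusztig's restriction functor: after restricting to the open stratum $\bfEVOf^{i,0}$ and descending along the principal bundle $\phi_{\mathbf{V},i}$, the functor $\mathcal{E}^{(r)}_i$ agrees, up to shift, with $\mathbf{Res}^{\mathbf{V}\oplus\mathbf{W}^1}_{\mathbf{V}''\oplus\mathbf{W}^1,\mathbf{V}'}$ composed with the projection $\mathbb{C}\boxtimes-$. By Lemma \ref{rkey}(a), on a generator $L_{\ftnu\boldsymbol{\omega}^1}$ this restriction decomposes as a distinguished summand with $t_i^\ast=0$, which is a Lusztig sheaf in $\mathcal{Q}_{\nu'',\omega^1}$, plus summands with $t_i^\ast>0$, whose supports lie in $\bfEVOfpp^{i,\geq 1}$ and which hence belong to $\mathcal{N}_{\nu''}$.

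The main obstacle will be the careful bookkeeping of supports through the chain of operations $(j_{\mathbf{V},i})^\ast,(\phi_{\mathbf{V},i})_\flat,(q_1)^\ast,(q_2)_!,(\phi_{\mathbf{V}'',i})^\ast,(j_{\mathbf{V}'',i})_!$ after Fourier-Sato twist at vertices $j\ne i$, together with the precise identification of $\mathcal{E}^{(r)}_i$ with the ``$t_i^\ast=r$'' piece of $\mathbf{Res}$ on the open stratum. Both computations were executed in the $\overline{\mathbb{F}}_q$-setting of \cite[Proposition 3.18, Corollary 3.19]{fang2023lusztig} using the Fourier-Deligne transform. The transplantation to complex varieties is formal: substituting the Fourier-Sato transform of Lemma \ref{FST}, which enjoys the same compatibility with induction, proper push-forward, smooth pullback, and open restriction as Fourier-Deligne, the arguments of \emph{loc.\ cit.}\ go through unchanged, yielding the stated well-definedness on the four quotient categories.
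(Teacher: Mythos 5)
Your proposal is correct and follows essentially the same approach as the paper: the paper's own proof is a one-line citation to \cite[Proposition 3.18, Corollary 3.19]{fang2023lusztig}, with the understanding that the argument carries over after replacing Fourier--Deligne by Fourier--Sato (the paper's subsection on Fourier--Sato transforms and Lemma \ref{FST} are set up precisely to license this substitution), which is exactly the reduction you make in your final paragraph. Your reconstruction of the underlying argument from \emph{loc.\ cit.}---that $\mathcal{F}^{(r)}_i$ preserves $\mathcal{Q}$ by the induction formula and preserves $\mathcal{N}$ by tracking the growth of the kernel at each source $j$ through the induction diagram, that $\mathcal{E}^{(r)}_i$ annihilates $\mathcal{N}_{\nu,i}$ outright via the initial open restriction and handles $j\ne i$ via compatibility with Fourier--Sato, and that $\mathcal{E}^{(r)}_i$ preserves $\mathcal{Q}$ modulo $\mathcal{N}$ by comparison with Lusztig's restriction functor and Lemma \ref{rkey}---is accurate as to the content, so you are giving a more detailed version of what the paper leaves to the reference.
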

\begin{proof}
	The proof is essentially the same as the proof of \cite[Proposition 3.18, Corollary 3.19]{fang2023lusztig}.
\end{proof}

\subsubsection{The functors $\mathcal{K}_i,\mathcal{K}_{-i}$}
We define the functors $\mathcal{K}_i,\mathcal{K}_{-i}$ to be the shift functors
\begin{align*}
&\mathcal{K}_{i}={\mathrm{Id}}[-2\nu_{i}+\sum_{h \in \Omega_{i}, h'=i} \nu_{h''} +\omega^{1}_{i}]:\mathcal{Q}_{\nu,\omega^1}/\mathcal{N}_\nu\rightarrow\mathcal{Q}_{\nu,\omega^1}/\mathcal{N}_{\nu},\\
&\mathcal{K}_{-i}={\mathrm{Id}}[2\nu_{i}-\sum_{h \in \Omega_{i}, h'=i} \nu_{h''} -\omega^{1}_{i}]:\mathcal{Q}_{\nu,\omega^1}/\mathcal{N}_\nu\rightarrow\mathcal{Q}_{\nu,\omega^1}/\mathcal{N}_{\nu},\\
&\mathcal{K}_{i}={\mathrm{Id}}[-2\nu_{i}+\sum_{h \in \Omega_{i}, h'=i} \nu_{h''} +\omega^{1}_{i}+\omega^{2}_{i}]:\mathcal{Q}_{\nu,\omega^1,\omega^2}/\mathcal{N}_\nu\rightarrow\mathcal{Q}_{\nu,\omega^1,\omega^2}/\mathcal{N}_{\nu},\\
&\mathcal{K}_{-i}={\mathrm{Id}}[2\nu_{i}-\sum_{h \in \Omega_{i}, h'=i} \nu_{h''}-\omega^{1}_{i}-\omega^{2}_{i}]:\mathcal{Q}_{\nu,\omega^1,\omega^2}/\mathcal{N}_\nu\rightarrow\mathcal{Q}_{\nu,\omega^1,\omega^2}/\mathcal{N}_{\nu}.
\end{align*}

\subsection{Categorification theorem}

\begin{theorem}[{\cite[Theorem 3.28]{fang2023lusztig}}]\label{high}
The functors $\mathcal{E}^{(n)}_i,\mathcal{F}^{(n)}_i,\mathcal{K}_i$ for $i\in I,n\in \mathbb{N}$ induces $\mathcal{A}$-linear endomorphisms on the direct sums of Grothendieck groups $\mL(\omega^{1}),\mL(\omega^1,\omega^2)$, see Definition \ref{def localization}, such that $\mL(\omega^{1}),\mL(\omega^1,\omega^2)$ become $\Uv$-modules. Moreover, there is a $\Uv$-module isomorphism from $\mL(\omega^{1})$ to the integral form ${_{\mathcal{A}}L}_{v}(\lambda_{1})$ of the irreducible integrable highest weight module $L_v(\lambda_1)$,
$$\chi^{\omega^1}:\mL(\omega^{1}) \rightarrow  {_{\mathcal{A}}L}_{v}(\lambda_{1})$$
send $[\mathbb{C}|_{\mathbf{E}_{0\oplus \mathbf{W}^1,\Omega^{(1)}}}]$ to the highest weight vector of ${_{\mathcal{A}}L}_{v}(\lambda_{1})$. Moreover, $\mL(\omega^{1})$ has a $\mathcal{A}$-basis $$\bigsqcup_{\nu\in \mathbb{N}I}\{[L]\in \mL(\nu,\omega^{1})|L\in\mP_{\nu,\omega^1},[L] \neq 0 \textrm{ in } \mL(\nu,\omega^1)\}$$
and its image 
$$\bigsqcup_{\nu\in \mathbb{N}I}\{\chi^{\omega^1}([L])\in {_{\mathcal{A}}L}_{v}(\lambda_{1})|L\in\mP_{\nu,\omega^1},[L] \neq 0 \textrm{ in } \mL(\nu,\omega^1)\}$$
under the isomorphism $\chi^{\omega^1}$, is the canonical basis $\mathcal{B}_\lambda$ of ${_{\mathcal{A}}L}_{v}(\lambda_{1})$.
\end{theorem}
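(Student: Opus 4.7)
The plan is to adapt the proof of \cite[Theorem 3.28]{fang2023lusztig} from the $\overline{\mathbb{F}}_q$-setting to the complex analytic setting, replacing Fourier-Deligne transforms by the Fourier-Sato transforms whose applicability has been justified by Lemmas \ref{FST} and \ref{FST1}, and keeping track of the $\mathbb{T}$-equivariance needed to make those Fourier-Sato transforms well-defined on the relevant categories. The proof of $\mL(\omega^1,\omega^2)$ runs in parallel and is not discussed separately below.

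First, I would construct a candidate map $\chi^{\omega^1}$ via Lusztig's isomorphism $\chi:\mK\xrightarrow{\sim}\Uvm$ of Theorem \ref{Lumain}. Applying $\mathcal{F}^{(r)}_i$ to the unit object $\mathbb{C}|_{\mathbf{E}_{0\oplus \mathbf{W}^1,\Omega^{(1)}}}$ is, by construction, Lusztig induction with a trivial factor; combined with Proposition \ref{span}, this shows that the classes $\mathcal{F}_{i_1}^{(n_1)}\cdots\mathcal{F}_{i_m}^{(n_m)}[\mathbb{C}|_{\mathbf{E}_{0\oplus\mathbf{W}^1,\Omega^{(1)}}}]$ span $\mL(\omega^1)$ over $\mathcal{A}$. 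Thus there is a unique surjective $\mathcal{A}$-linear map $\Uvm\twoheadrightarrow \mL(\omega^1)$ sending $1$ to $[\mathbb{C}|_{\mathbf{E}_{0\oplus\mathbf{W}^1,\Omega^{(1)}}}]$ and intertwining the $F^{(r)}_i$-action with $\mathcal{F}^{(r)}_i$.

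Next I would verify that this surjection factors through the quotient $\Uvm\twoheadrightarrow {_\mathcal{A}}L_v(\lambda_1)$, i.e.\ that $\mathcal{F}^{\langle i,\lambda_1\rangle+1}_i[\mathbb{C}|_{\mathbf{E}_{0\oplus\mathbf{W}^1,\Omega^{(1)}}}]=0$ in $\mL(\omega^1)$. Geometrically, the support of $\mathcal{F}^{n}_i(\mathbb{C}|_{\mathbf{E}_{0\oplus\mathbf{W}^1,\Omega^{(1)}}})$ is contained in the locus where the image of the framing at $i$ has codimension $\geqslant 1$ whenever $n>\omega^1_i=\langle i,\lambda_1\rangle$, so after mutating to a source-orientation at $i$ and applying $\mathrm{Four}_{\Omega_i^{(1)},\Omega^{(1)}}$ the complex lies in $\mathcal{N}_{ni,i}$ of Definition \ref{def localization}; it is therefore zero in $\mL(\omega^1)$. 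This produces the desired map $\chi^{\omega^1}:\mL(\omega^1)\to {_\mathcal{A}}L_v(\lambda_1)$. For this to be a $\Uv$-module map one must check the quantum Serre relations for $\mathcal{F}^{(n)}_i$ (inherited from $\mK=\Uvm$ via Lusztig induction), the shift identities for $\mathcal{K}_{\pm i}$ (a direct homological degree count using the formulas in the definition), the Serre relations for $\mathcal{E}^{(n)}_i$ (dual to those for $\mathcal{F}^{(n)}_i$ via Fourier-Sato and Verdier duality), and finally the commutation relation $[\mathcal{E}_i,\mathcal{F}_j]=\delta_{ij}(\mathcal{K}_i-\mathcal{K}_{-i})/(v-v^{-1})$, which is the most delicate point.

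The hard part will indeed be this last commutation identity inside $\mL(\omega^1)$. One reduces via Fourier-Sato to an orientation in which $i$ is a source and analyses the Cartesian diagrams defining $\mathcal{E}^{(r)}_i$ and $\mathcal{F}^{(r)}_i$; proper base change produces two terms whose difference, in the ordinary Grothendieck group, is a sum supported on $\bfEVOf^{i,\geqslant 1}$, and these error terms die in $\mL(\omega^1)$ by Definition \ref{def localization}. Once the $\Uv$-module structure is established, injectivity of $\chi^{\omega^1}$ and the canonical basis statement follow together: the residues in $\mL(\omega^1)$ of the simple perverse sheaves in $\mP_{\nu,\omega^1}$ form an $\mathcal{A}$-spanning set by Proposition \ref{span}; combining Lusztig's Lemma \ref{lkey} (iteratively applied after choosing a reduced expression adapted to source-orientations) with the description $\mathcal{B}_{\lambda_1}=\{b\eta\mid b\in \mathcal{B}\}\setminus\{0\}$ of the canonical basis of ${_\mathcal{A}}L_v(\lambda_1)$ recalled in Section \ref{Categorification theorem}, one gets a bijection between the nonzero $[L]$ and the canonical basis of ${_\mathcal{A}}L_v(\lambda_1)$, identifying $\chi^{\omega^1}$ with a basis-to-basis isomorphism and simultaneously proving the last two claims of the theorem.
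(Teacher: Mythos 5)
Your proposal matches the paper's own treatment, which proves Theorem \ref{high} simply by carrying over \cite[Theorem 3.28]{fang2023lusztig} once Fourier-Deligne transforms are replaced by the Fourier-Sato transforms prepared in Lemmas \ref{FST} and \ref{FST1} (with Proposition \ref{span} and the well-definedness of $\mathcal{E}^{(r)}_i,\mathcal{F}^{(r)}_i$ on the localization recorded in the paper by exactly the same replacement), and your sketch of the spanning, factorization through ${_{\mathcal{A}}}L_v(\lambda_1)$, module-relation, and basis steps is a faithful outline of that argument. The only slip is a citation: in your closing paragraph the lemma controlling which $[L]$ survive the source-localization $\mathcal{N}_\nu$ and match the canonical basis $\mathcal{B}_{\lambda_1}$ is Lemma \ref{rkey} (the source analysis, which is what the paper invokes in Section \ref{Categorification theorem}), not Lemma \ref{lkey}.
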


For any $\nu=\nu^1+\nu^2\in \mathbb{N}I$, we identify 
$$\mathbf{E}_{\mathbf{V}^2\oplus0\oplus \mathbf{W}^2,\Omega^{(2)}}\times \mathbf{E}_{\mathbf{V}^1\oplus \mathbf{W}^1\oplus 0,\Omega^{(2)}}=\mathbf{E}_{\mathbf{V}^2\oplus \mathbf{W}^2,\Omega^{(1)}}\times \mathbf{E}_{\mathbf{V}^1\oplus \mathbf{W}^1,\Omega^{(1)}}$$
and consider the composition of functors 
\begin{align*}
&\mathcal{D}^b_{\mathbf{G}_\bV\times\mathbb{T}}(\bfEVOff)\xrightarrow{\mathbf{D}}\mathcal{D}^b_{\mathbf{G}_\bV\times\mathbb{T}}(\bfEVOff)\xrightarrow{\mathbf{Res}^{\bV\oplus \mathbf{W}^{1} \oplus \mathbf{W}^{2}}_{\mathbf{V}^1\oplus \mathbf{W}^1,\mathbf{V}^2\oplus \mathbf{W}^2}}\\
&\mathcal{D}^b_{\mathbf{G}_{\bV^1}\times\mathbb{T}\times \mathbf{G}_{\bV^2}\times\mathbb{T}}(
\mathbf{E}_{\mathbf{V}^1\oplus \mathbf{W}^1,\Omega^{(1)}}\times \mathbf{E}_{\mathbf{V}^2\oplus \mathbf{W}^2,\Omega^{(1)}})
\xrightarrow{\mathbf{D} \boxtimes \mathbf{D}}\mathcal{D}^b_{\mathbf{G}_{\bV^1}\times\mathbb{T}\times \mathbf{G}_{\bV^2}\times\mathbb{T}}(
\mathbf{E}_{\mathbf{V}^1\oplus \mathbf{W}^1,\Omega^{(1)}}\times \mathbf{E}_{\mathbf{V}^2\oplus \mathbf{W}^2,\Omega^{(1)}})
\xrightarrow{\mathrm{sw}}\\
&\mathcal{D}^b_{\mathbf{G}_{\bV^2}\times\mathbb{T}\times \mathbf{G}_{\bV^1}\times\mathbb{T}}(\mathbf{E}_{\mathbf{V}^2\oplus \mathbf{W}^2,\Omega^{(1)}}\times \mathbf{E}_{\mathbf{V}^1\oplus \mathbf{W}^1,\Omega^{(1)}}),
\end{align*}
where $\mathbf{D}$ is the Verdier dual functor, $\mathbf{Res}^{\bV\oplus \mathbf{W}^{1} \oplus \mathbf{W}^{2}}_{\mathbf{V}^1\oplus \mathbf{W}^1,\mathbf{V}^2\oplus \mathbf{W}^2}$ is the Lusztig's restriction functor for the two-framed quiver $Q^{(2)}$, and $\mathrm{sw}$ is the natural isomorphism induced by swapping two coordinates.

\begin{theorem}[{\cite[Theorem 4.15]{fang2023tensor}}] \label{tensor}
Above composition of functors induces an $\Uv$-module isomorphism $\Delta: \mL(\omega^1,\omega^2) \rightarrow \mL(\omega^{1}) \otimes \mL(\omega^{2})$ such that $\mL(\omega^1,\omega^2)$ is isomorphic to $_{\mathcal{A}}L_{v}(\lambda_{2}) \otimes {_{\mathcal{A}}L}_{v}(\lambda_{1})$ via 
$$\tilde{\chi}^{\omega^1,\omega^2}=(\chi^{\omega^1}\otimes \chi^{\omega^2})\Delta:\mL(\omega^1,\omega^2)\rightarrow _{\mathcal{A}}L_{v}(\lambda_{2}) \otimes {_{\mathcal{A}}L}_{v}(\lambda_{1}).$$ 
Moreover, $\mL(\omega^1,\omega^2)$ has a $\mathcal{A}$-basis
$$\bigsqcup_{\nu\in \mathbb{N}I}\{[L]\in \mathcal{L}(\nu,\omega^1,\omega^2)\mid L\in \mathcal{P}_{\nu,\omega^1,\omega^2},[L]\not=0\ \textrm{in}\ \mathcal{L}(\nu,\omega^1,\omega^2)\}$$
and its image 
$$\bigsqcup_{\nu\in \mathbb{N}I}\{\tilde{\chi}^{\omega^1,\omega^2}([L])\in _{\mathcal{A}}L_{v}(\lambda_{2}) \otimes {_{\mathcal{A}}L}_{v}(\lambda_{1})\mid L\in \mathcal{P}_{\nu,\omega^1,\omega^2},[L]\not=0\ \textrm{in}\ \mathcal{L}(\nu,\omega^1,\omega^2)\}$$
under the isomorphism $\tilde{\chi}^{\omega^1,\omega^2}$ is the canonical basis $\mathcal{B}_{\lambda_1,\lambda_2}$ of $_{\mathcal{A}}L_{v}(\lambda_{2}) \otimes {_{\mathcal{A}}L}_{v}(\lambda_{1})$ defined in \cite{bao2016canonical}.
\end{theorem}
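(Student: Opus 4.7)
The plan is to mirror the strategy of the authors' earlier paper \cite{fang2023tensor} for the corresponding statement over $\overline{\mathbb{F}}_q$, replacing Fourier--Deligne transforms with the Fourier--Sato transforms set up in Section \ref{Fourier-Sato transform} and propagating the $\mathbb{T}$-equivariance through every step. I would organise the argument into three stages: well-definedness of $\Delta$, compatibility with the $\Uv$-action, and identification of the image of simple perverse sheaves with $\mathcal{B}_{\lambda_1,\lambda_2}$.

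First I would check that the composition $(\mathrm{sw})_!(\mathbf{D}\boxtimes\mathbf{D})\mathbf{Res}^{\bV\oplus\mathbf{W}^1\oplus\mathbf{W}^2}_{\mathbf{V}^1\oplus\mathbf{W}^1,\mathbf{V}^2\oplus\mathbf{W}^2}\mathbf{D}$ sends $\mQ_{\nu,\omega^1,\omega^2}$ into $\mQ_{\nu^2,\omega^2}\boxtimes\mQ_{\nu^1,\omega^1}$. By Proposition \ref{indres formula} applied to a two-framed sequence $\ftnu'\boldsymbol{\omega}^1\ftnu''\boldsymbol{\omega}^2$, the restriction of $L_{\ftnu'\boldsymbol{\omega}^1\ftnu''\boldsymbol{\omega}^2}$ decomposes, up to shifts, into a direct sum of products $L_{\ftsigma''}\boxtimes L_{\ftsigma'}$ whose components are manifestly objects in $\mQ_{\nu^j,\omega^j}$ by Proposition \ref{span}; Verdier duality and the swap isomorphism are formal. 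To pass to the localization $/\mathcal{N}$, I would show that the restriction of an object supported on $\bfEVOff^{i,\geqslant 1}$ lies in $\mathcal{N}_{\nu^1}\boxtimes\mQ+\mQ\boxtimes\mathcal{N}_{\nu^2}$, which follows by tracking the codimension strata through the diagram (\ref{resd}) and using Fourier--Sato to reduce to the case where $i$ is a source for $\Omega^{(2)}$.

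Next I would verify that the induced $\Delta$ is $\Uv$-linear. The identification $(\chi^{\omega^1}\otimes\chi^{\omega^2})\Delta$ with the Bao--Wang tensor product module structure amounts to a categorical Hopf identity relating $\mathbf{Res}$ with $\mathcal{F}_i^{(r)}$ and $\mathcal{E}_i^{(r)}$. Concretely, one uses proper base change in the induction diagram (\ref{indd}) (for the two-framed quiver, with the middle piece split) to produce natural isomorphisms
\begin{equation*}
\mathbf{Res}\circ\mathcal{F}_i^{(r)}\cong \bigoplus_{a+b=r}(\mathcal{F}_i^{(a)}\boxtimes\mathcal{F}_i^{(b)})\circ\mathbf{Res}\ [\textrm{shifts}],
\end{equation*}
and the analogous identity for $\mathcal{E}_i^{(r)}$ follows by Verdier duality together with the Fourier--Sato transforms that swap sources and sinks; the $\mathcal{K}_{\pm i}$ shifts account for the $K_{-i}\otimes F_i$ and $E_i\otimes K_i$ pieces of the quantum comultiplication by a direct weight/shift count using the formulas for the shifts defining $\mathcal{K}_{\pm i}$ on $\mQ_{\nu,\omega^1,\omega^2}/\mathcal{N}_\nu$. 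This is exactly the identity used in \cite[Theorem 4.15]{fang2023tensor}, with the only novelty being that the proper-base-change arguments are now carried out in the complex-analytic $\bG_\bV\times\mathbb{T}$-equivariant derived category.

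For bijectivity and the canonical basis statement, I would combine three inputs. Theorem \ref{high} identifies each $\mL(\omega^j)$ with ${_{\mathcal{A}}L}_v(\lambda_j)$ and its canonical basis with the nonzero classes of simples, so the target ${_{\mathcal{A}}L}_v(\lambda_2)\otimes{_{\mathcal{A}}L}_v(\lambda_1)$ has a spanning set indexed by pairs of nonzero simples. Proposition \ref{span} shows the source is spanned by the classes of $L_{\ftnu'\boldsymbol{\omega}^1\ftnu''\boldsymbol{\omega}^2}$, and the restriction formula combined with an induction on $|\ftnu'|+|\ftnu''|$ yields surjectivity of $\tilde\chi^{\omega^1,\omega^2}$. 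Since Verdier duality intertwines $\Delta$ with the bar involution, $\Delta([L])$ is bar-invariant for simple $L$, and the spanning statement refines to a $\mathcal{A}$-basis because of the upper-triangularity of the transition matrix with respect to the refine string order of Section \ref{Refine string order}. The two properties (bar-invariance and integral basis) then pin down $\{\tilde\chi^{\omega^1,\omega^2}([L])\}$ as the Bao--Wang canonical basis by the uniqueness in \cite{bao2016canonical}, which simultaneously gives injectivity of $\Delta$.

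The main obstacle is the Hopf-type identity in the second stage: one must check that every shift and every sign coming from the perverse $t$-structure convention of \cite{MR1299527}, from the $\mathbb{T}$-equivariance, and from the Fourier--Sato transforms used to reduce to the source case, assembles into exactly the quantum comultiplication of $\Uv$. This is largely a bookkeeping exercise already handled in \cite{fang2023tensor}, but the passage from Fourier--Deligne to Fourier--Sato requires reverifying that the monodromicity condition of Lemma \ref{FST} is preserved under all the parabolic inductions involved, which is where most of the technical care will go.
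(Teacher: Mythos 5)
Your proposal reproduces, in outline, exactly what the paper does: it cites \cite[Theorem~4.15]{fang2023tensor} and treats the result as carrying over verbatim once Fourier--Deligne transforms are replaced by the Fourier--Sato transforms of Section~\ref{Fourier-Sato transform} (and everything is made $\bG_\bV\times\mathbb{T}$-equivariant). The three stages you list --- well-definedness of $\Delta$ on the localized categories via Propositions~\ref{indres formula} and~\ref{span}, the Hopf-type compatibility of $\mathbf{Res}$ with $\mathcal{F}_i^{(r)}$, $\mathcal{E}_i^{(r)}$, $\mathcal{K}_{\pm i}$, and the bar-invariance-plus-upper-triangularity argument identifying the image of simples with the Bao--Wang basis --- are precisely the ingredients of the earlier proof, and the paper offers no further argument beyond the citation. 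In short, same approach, correctly diagnosed; the one place I would flag for care is your claim that the $\mathcal{E}_i^{(r)}$ identity ``follows by Verdier duality'' from the $\mathcal{F}_i^{(r)}$ one: $\mathcal{E}_i^{(r)}$ is defined through the open restriction $j_{\bV,i}$ and Grassmannian correspondence rather than as a literal Verdier dual, so the reduction is via Lusztig's key Lemma~\ref{rkey} at a source together with the Fourier--Sato transforms, not a formal duality.
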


\begin{remark}
	In Lemma \ref{FST}, we only consider the Fourier-Sato transforms for derived categories associated to equivalent orientations. More generally, we can consider  the vector bundle $$\bfEVOf \rightarrow \mathbf{E}_{\bV\oplus \mathbf{W}^{1},\Omega^{(1)} \cap \Omega^{(1),'}},$$
	$$\bfEVOff \rightarrow \mathbf{E}_{\bV\oplus \mathbf{W}^{1}\oplus \mathbf{W}^{2},\Omega^{(2)} \cap \Omega^{(2),'}}$$   for any orientations $\Omega^{(N)},\Omega^{(N),'}$. 
	All Lusztig's semisimple sheaves are monodromic with respect to this vector bundle, and the Lusztig's sheaves in thick subcategories $\mN_{\nu}$ are preserved under the Fourier-Sato transform. It implies that our construction is independent on the choice of the orientation, but not only independent on the choice of mutation equivalent class of the orientation.
\end{remark}

\section{Nakajima's quiver variety and tensor product variety for integrable modules}

\subsection{Quiver varieties and tensor product varieties}
In this section, we recall the results about Lusztig's nilpotent varieties in \cite{MR1088333} and \cite{MR1758244} and Nakajima's quiver and tensor product varieties in  \cite{MR1302318}, \cite{MR1604167} and \cite{MR1865400}.

\subsubsection{Lusztig's nilpotent variety}\label{Lusztig's nilpotent variety}
We refer \cite{MR1088333} for details of this subsection.

Recall that in subsection \ref{Lusztig sheaf}, we associate the symmetric generalized matrix $C$ with a finite graph $(I,H)$ with an involution of $H$. For any $\nu\in \mathbb{N}I$, we fix a $I$-graded $\mathbb{C}$-vector space $\bV$ such that $|\bV|=\nu$ and define an affine space
$$\bfEVH= \bigoplus_{h\in H} \Hom (\bV_{h'},\bV_{h''})$$
such that the connected algebraic group 
$\bG_{\bV}$ acts on $\bfEVH$ by conjugation $g.x=(g_{h''}x_hg_{h'}^{-1})_{h\in H}$. We fix a function $\varepsilon:H\rightarrow\mathbb{C}^*$ such that $\varepsilon(h)+\varepsilon(\overline{h})=0$ for any $h\in H$. There is a non-degenerate $\bG_{\bV}$-invariant symplectic form on $\bfEVH$ given by 
$$\langle x,x'\rangle=\sum_{h\in H}\varepsilon(h)\mathrm{tr}(x_hx'_{\overline{h}}:V_{h''}\rightarrow V_{h''})$$
such that $\bfEVH$ can be identified with the cotangent bundle of $\bfEVO$, see \cite[Section 12.8]{MR1088333}. The moment map attached to the $\bG_{\bV}$-action on the symplectic vector space $\bfEVH$ is 
\begin{align*}
\mu_{\bV}:\bfEVH&\rightarrow \bigoplus_{i\in I}\mathrm{End}(\bV_i)\\
x&\mapsto (\sum_{h\in H, h''=i}\varepsilon(h)x_hx_{\overline{h}})_{i\in I}.
\end{align*}
Lusztig's nilpotent variety is defined to be
$$\Lambda_{\bV}=\{x\in \mu_{\bV}^{-1}(0)\mid x\ \textrm{is nilpotent}\},$$
where $x\in \bfEVH$ is said to be nilpotent, if there exists an $N\geqslant 2$ such that for any sequence $h_1,...,h_N\in H$ satisfying $h_1'=h_2'',...,h_{N-1}'=h_N''$, the composition $x_{h_1}...x_{h_N}:\bV_{h'_N}\rightarrow \bV_{h_1''}$ is zero. By \cite[Theorem 12.9]{MR1088333}, the nilpotent variety $\Lambda_{\bV}$ is a Lagrangian subvariety of $\bfEVH$.

\subsubsection{Nakajima's quiver variety}\label{Nakajima's quiver variety}
We refer \cite{MR1604167} for details of this subsection.

Recall that in subsection \ref{Lusztig sheaves on the framed quivers and the localizations}, we define the framed graph $(I^{(1)},H^{(1)})$ of the graph $(I,H)$ and the framed quiver $Q^{(1)}=(I^{(1)},H^{(1)},\Omega^{(1)})$ of the quiver $Q=(I,H,\Omega)$. For any $\nu\in \mathbb{N}I$ and $\omega\in \mathbb{N}I^1$, we fix an $I$-graded $\mathbb{C}$-vector space $\bV$ and an $I^1$-graded $\mathbb{C}$-vector space $\mathbf{W}$ such that $|\bV|=\nu$ and $|\mathbf{W}|=\omega$, then $\bV\oplus \mathbf{W}$ is an $I^{(1)}$-graded $\mathbb{C}$-vector space such that $|\bV\oplus\mathbf{W}|=\nu+\omega$. Regarding the framed graph $(I^{(1)},H^{(1)})$ as a kind of graph used in subsection \ref{Lusztig's nilpotent variety}, there is an affine space 
$$\bfEVHf=\bfEVH\oplus \bigoplus_{i \in I} \Hom(\bV_{i},\mathbf{W}_{i}) \oplus \bigoplus_{i \in I} \Hom(\mathbf{W}_{i},\bV_{i})$$
which has a $\bG_{\bV\oplus \mathbf{W}}$-action. Throughout this paper, we only consider the $\bG_\bV$-action on it.

We extend the fix function $\varepsilon:H\rightarrow\mathbb{C}^*$ to $H^{(1)}\rightarrow \mathbb{C}^*$ by $\varepsilon(i\rightarrow i^1)=-1,\varepsilon(i^1\rightarrow i)=1$ for $i\in I$. Then there is a corresponding symplectic form on $\bfEVHf$ such that $\bfEVHf$ can be identified with the cotangent bundle of $\bfEVOf$. The corresponding moment map attached to be $\bG_\bV$-action on the symplectic vector space $\bfEVHf$ is 
\begin{align*}
\mu_{\bV\oplus \mathbf{W}}:\bfEVHf&\rightarrow \bigoplus_{i\in I}\mathrm{End}(\bV_i)\\
(x,y,z)&\mapsto(\sum_{h\in H, h''=i}\varepsilon(h)x_hx_{\overline{h}}+z_iy_i)_{i\in I},
\end{align*}
where $x\in \bfEVH, y=(y_i)_{i\in I}\in \bigoplus_{i \in I}\Hom(\bV_{i},\mathbf{W}_{i}),z=(z_i)_{i\in I}\in \bigoplus_{i \in I}\Hom(\mathbf{W}_{i},\bV_{i})$. Then we have the corresponding Lusztig's nilpotent $\Lambda_{\bV\oplus \mathbf{W}}$ for the framed graph $(I^{(1)},H^{(1)})$ which is the subset of $\mu_{\bV\oplus \mathbf{W}}^{-1}(0)$ consisting of nilpotent elements. We define the subset
\begin{align*}
\Lambda_{\bV,\mathbf{W}}=\{(x,y,z)\in \Lambda_{\bV\oplus \mathbf{W}}\mid z=0\}=\Lambda_\bV\times \bigoplus_{i \in I} \Hom(\bV_{i},\mathbf{W}_{i}).
\end{align*}

An element $(x,y,z)\in \bfEVHf$ is said to be stable, if the zero space is the unique $I$-graded subspace $\bV'$ of $\bV$ such that $x_h(\bV'_{h'})\subset \bV'_{h''}$ and $y_i(\bV'_i)=0$ for any $h\in H,i\in I$, see \cite[Lemma 3.8]{MR1604167}. We define $\mu_{\bV\oplus \mathbf{W}}^{-1}(0)^s$ to be the subset of $\mu_{\bV\oplus \mathbf{W}}^{-1}(0)$ consisting of stable elements, and define $\Lambda_{\bV,\mathbf{W}}^s$ to be the subset of $\Lambda_{\bV,\mathbf{W}}$ consisting of stable elements. 

Notice that $\mu_{\bV\oplus \mathbf{W}}^{-1}(0)^s$ and $\Lambda_{\bV,\mathbf{W}}^s$ are $\bG_\bV$-invariant. By \cite[Lemma 3.10]{MR1604167}, the group $\bG_{\bV}$ acts freely on $\mu_{\bV\oplus \mathbf{W}}^{-1}(0)^s$ and $\Lambda_{\bV,\mathbf{W}}^s$. Nakajima's quiver varieties 
$$\mathfrak{M}(\nu,\omega),\ \mathfrak{L}(\nu,\omega)$$ 
are defined to be the geometric quotients of $\mu_{\bV\oplus \mathbf{W}}^{-1}(0)^s$ and $\Lambda_{\bV,\mathbf{W}}^s$ by $\bG_{\bV}$ respectively. We denote by $[x,y,z]$ the $\bG_\bV$-orbit of $(x,y,z)$, considered as an element in the geometric quotients.

\subsubsection{Tensor product variety}\label{Tensor product variety}
We refer \cite{MR1865400} for details of this subsection.

We use the same notations as subsection \ref{Nakajima's quiver variety}. Let $\mathbf{W}=\mathbf{W}^{1} \oplus \mathbf{W}^{2}$ be a fixed $I^1$-graded $\mathbb{C}$-vector space decomposition such that $|\mathbf{W}^1|=\omega^1$ and $|\mathbf{W}^2|=\omega^2$, then $\omega=\omega^1+\omega^2$. Recall that the group $\mathrm{GL}_{\mathbf{W}}$ acts on $\bfEVHf$ by conjugation, it induces a $\mathrm{GL}_{\mathbf{W}}$-action on $\mathfrak{M}(\nu,\omega)$. Then the one-parameter subgroup 
\begin{align*}
\lambda:\mathbb{G}_m&\rightarrow \mathrm{GL}_{\mathbf{W}^1}\times \mathrm{GL}_{\mathbf{W}^2}\subset \mathrm{GL}_{\mathbf{W}}\\
t&\mapsto ({\mathrm{Id}}_{\mathbf{W}^{1}},t\, {\mathrm{Id}}_{\mathbf{W}^{2}})
\end{align*}
acts on $\mathfrak{M}(\nu,\omega)$. By \cite[Lemma 3.2]{MR1865400}, the $\lambda(\mathbb{G}_m)$-fixed point set $\mathfrak{M}(\nu,\omega)^{\lambda(\mathbb{G}_m)}$ is isomorphic to 
$$\bigsqcup_{\nu'+\nu''=\nu} \mathfrak{M}(\nu',\omega^{1}) \times \mathfrak{M}(\nu'',\omega^{2}).$$
Nakajima's tensor product variety is defined to be
$$\tilde{\mathfrak{Z}}(\nu,\omega)=\{ [x,y,z] \in \mathfrak{M}(\nu,\omega)\mid \lim_{t \rightarrow  0} \lambda(t). [x,y,z]  \in \bigsqcup_{\nu'+\nu''=\nu} \mathfrak{L} (\nu',\omega^{1}) \times \mathfrak{L}(\nu'',\omega^{2}) \}.$$

Following \cite{Malkin2003Tensor}, the tensor product variety $\tilde{\mathfrak{Z}}$ is a geometric quotient as follows. 

Let $\Pi_{\bV,\mathbf{W}^1\oplus \mathbf{W}^2}$ be the subvariety of $\bfEVHf$ consisting of those nilpotent elements $(x,y,z)$ such that $z(\mathbf{W}^{2})=0$ and $\mathbf{S}_{1}(x,y,z) \subset \mathbf{S}_{2}(x,y,z)$, where $\mathbf{S}_{1}(x,y,z)$ is the smallest $x$-stable subspace of $\mathbf{V}$ containing $z(\mathbf{W})$ and $\mathbf{S}_{2}(x,y,z)$ is the largest $x$-stable subspace of $\mathbf{V}$ contained in $y^{-1}(\mathbf{W}^2)$. Let $\Pi_{\bV,\mathbf{W}^1\oplus\mathbf{W}^2,\nu''}$ be the locally closed subset of $\Pi_{\bV,\mathbf{W}^1\oplus\mathbf{W}^2}$ consisting of $(x,y,z)$ such that $|\mathbf{S}_{2}(x,y,z)|=\nu''$, and let $\Pi_{\bV,\mathbf{W}^1\oplus\mathbf{W}^2,\mathbf{V}^{2}}$ be the closed subset of $\Pi_{\bV,\mathbf{W}^1\oplus\mathbf{W}^2,\nu''}$ consisting of $(x,y,z)$ such that $\mathbf{S}_{2}(x,y,z)$ equals to a fixed subspace $\mathbf{V}^{2} \subset\mathbf{V}$ satisfying $|\mathbf{V}^2|=\nu''$. Then $\mathbf{V}^{2} \oplus \mathbf{W}^{2}$ is $(x,y,z)$-stable for any $(x,y,z)$ in $\Pi^{s}_{\bV,\mathbf{W}^1\oplus\mathbf{W}^2,\mathbf{V}^{2}} $. We fix an isomorphism $\mathbf{V}/\mathbf{V}^{2} \cong \mathbf{V}^{1}$, and denote the restrictions of  $(xy,z)$ on  $\mathbf{V}^{1} \oplus \mathbf{W}^{1}$ and $\bV^{2} \oplus \mathbf{W}^{2}$ by  $(x^{1},y^{1},z^{1})$  and $ (x^{2},y^{2},z^{2})$ respectively. The conditions that $z(\mathbf{W}^{2})=0$ and $\mathbf{S}_{1}(x,y,z) \subset \mathbf{S}_{2}(x,y,z)$ imply that $z^{1}=z^{2}=0$. Moreover, if $(x,y,z)$ is a stable point, we can deduce that $(x^{1},y^{1},z^{1})$  and $ (x^{2},y^{2},z^{2})$ are stable. There is an isomorphism
\begin{align*}
\Pi^{s}_{\bV,\mathbf{W}^1\oplus\mathbf{W}^2,\mathbf{V}^{2}} \rightarrow \Lambda^{s}_{\bV^{1},\mathbf{W}^{1}} \times  \Lambda^{s}_{\bV^{2},\mathbf{W}^{2}}\\(x,y,z) \mapsto ((x^{1},y^{1},z^{1}), (x^{2},y^{2},z^{2}))
\end{align*}
which is a vector bundle. In particular, the geometric quotient of  $\Pi^{s}_{\bV,\mathbf{W}^1\oplus\mathbf{W}^2,\nu''}$ is exactly those $[x,y,z]$ with limit in $\mathfrak{L} (\nu',\omega^{1}) \times \mathfrak{L}(\nu'',\omega^{2})$. Hence $\tilde{\mathfrak{Z}}$ is isomorphic to the geometric quotient of  $\Pi^{s}_{\bV,\mathbf{W}^1\oplus\mathbf{W}^2}$.

\subsection{Hecke correspondence and realizations of $L_1(\lambda)$ and $L_1(\lambda_1)\otimes L_1(\lambda_2)$}

We refer \cite{MR1604167} and \cite{MR1865400} for details of this subsection.

Given dominant weight $\lambda$, we set $\omega\in \mathbb{N}I^1$ in subsections \ref{Nakajima's quiver variety} and \ref{Tensor product variety} to be 
$\omega=\sum_{i\in I}\langle i,\lambda\rangle i^1$. For any $i\in I$ and $\nu=\nu'+i\in \mathbb{N}I$, we assume that the fixed $I$-graded $\mathbb{C}$-vector space $\bV,\bV'$ such that $|\bV|=\nu,|\bV'|=\nu'$ and $\bV'$ is a subspace of $\bV$. The Hecke correspondence $\mathfrak{P}_{i}(\nu,\omega)$ is defined to be the subvariety of $\mathfrak{M}(\nu',\omega) \times \mathfrak{M}(\nu,\omega)$ consisting of $([x',y',z'],[x,y,z])$ such that there exists $\xi=(\xi_{j})_{j \in I} \in \bigoplus_{j \in I}\Hom(\bV'_{j},\bV_{j})$ such that 
$$\xi x'=x\xi,y\xi=y',\xi z'=z.$$

For the orientation $\Omega\subset H$, let $\mathbf{A}_{\Omega}$ be the adjacency matrix of the quiver $Q=(I,H,\Omega)$, that is, it is a $I\times I$-matrix whose $(i,j)$ entry is the number of arrows $h\in \Omega$ such that $h'=i,h''=j$. Let $\mathbf{C}_{\Omega}=\mathrm{Id}_{I\times I}-\mathbf{A}_{\Omega}$. Nakajima defined the following operators
\begin{align*}
&e_i=(-1)^{\langle i , \mathbf{C}_{\bar{\Omega}}\nu \rangle} [\mathfrak{P}_{i}(\nu,\omega)]:\mathbf{H}^{\mathrm{BM}}_{\mathrm{top}}(\mathfrak{L}(\nu,\omega) ,\mathbb{Q}) \rightarrow \mathbf{H}^{\mathrm{BM}}_{\mathrm{top}}(\mathfrak{L}(\nu',\omega) ,\mathbb{Q}),\\
&e_i=(-1)^{\langle i , \mathbf{C}_{\bar{\Omega}}\nu \rangle} [\mathfrak{P}_{i}(\nu,\omega)]:\mathbf{H}^{\mathrm{BM}}_{\mathrm{top}}(\tilde{\mathfrak{Z}}(\nu,\omega) ,\mathbb{Q}) \rightarrow \mathbf{H}^{\mathrm{BM}}_{\mathrm{top}}(\tilde{\mathfrak{Z}}(\nu',\omega) ,\mathbb{Q}),\\
&f_{i}=(-1)^{\langle i , \omega- \mathbf{C}_{\Omega}\nu \rangle} [\mathrm{sw}(\mathfrak{P}_{i}(\nu,\omega))]:\mathbf{H}^{\mathrm{BM}}_{\mathrm{top}}(\mathfrak{L}(\nu',\omega) ,\mathbb{Q}) \rightarrow \mathbf{H}^{\mathrm{BM}}_{\mathrm{top}}(\mathfrak{L}(\nu,\omega) ,\mathbb{Q}),\\
&f_{i}=(-1)^{\langle i , \omega- \mathbf{C}_{\Omega}\nu \rangle} [\mathrm{sw}(\mathfrak{P}_{i}(\nu,\omega))]:\mathbf{H}^{\mathrm{BM}}_{\mathrm{top}}(\tilde{\mathfrak{Z}}(\nu',\omega) ,\mathbb{Q}) \rightarrow \mathbf{H}^{\mathrm{BM}}_{\mathrm{top}}(\tilde{\mathfrak{Z}}(\nu,\omega) ,\mathbb{Q}),
\end{align*}
where $\mathrm{sw}: \mathfrak{M}(\nu',\omega) \times \mathfrak{M}(\nu,\omega) \rightarrow \mathfrak{M}(\nu,\omega) \times \mathfrak{M}(\nu',\omega) $ the natural map exchanging the coordinates. We remark that the sign twists in  \cite{MR1604167} is incorrect and one should use the sign twists in formulas of \cite[Section 9.3.2]{nakajima2001quiver}. The fundamental class $[\Delta(\nu,\omega)]$ of the diagonal defines  operators
\begin{align*}
&[\Delta(\nu,\omega)]:\mathbf{H}^{\mathrm{BM}}_{\mathrm{top}}(\mathfrak{L}(\nu,\omega) ,\mathbb{Q}) \rightarrow \mathbf{H}^{\mathrm{BM}}_{\mathrm{top}}(\mathfrak{L}(\nu,\omega) ,\mathbb{Q}),\\
&[\Delta(\nu,\omega)]:\mathbf{H}^{\mathrm{BM}}_{\mathrm{top}}(\tilde{\mathfrak{Z}}(\nu,\omega) ,\mathbb{Q}) \rightarrow \mathbf{H}^{\mathrm{BM}}_{\mathrm{top}}(\tilde{\mathfrak{Z}}(\nu,\omega) ,\mathbb{Q}).
\end{align*}

\begin{theorem}[{\cite[Thoerem 10.2]{MR1604167}}] \label{canN1}
With the operators $e_{i},f_{i}$ and $[\Delta(\nu,\omega)]$, the direct sum of Borel-Moore homology group $\mathbf{H}(\mathfrak{L}(\omega))= \bigoplus\limits_{\nu \in \mathbb{N}I} \mathbf{H}^{\mathrm{BM}}_{\mathrm{top}}(\mathfrak{L}(\nu,\omega) ,\mathbb{Q}) $ becomes an integrable $\mathbf{U}_{1}(\mathfrak{g})$-module, and there is an $\mathbf{U}_{1}(\mathfrak{g})$-module isomorphism 
$$\varphi^\omega:\mathbf{H}(\mathfrak{L}(\omega)) \rightarrow L_{1}(\lambda) $$
sends $[\mathfrak{L}(0,\omega)]$ to the highest weight vector of $L_{1}(\lambda)$.
\end{theorem}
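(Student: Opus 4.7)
The approach is to verify that the convolution operators $e_i$, $f_i$, and $[\Delta(\nu,\omega)]$ (playing the role of the Cartan generators after a weight rescaling) satisfy the defining relations of $\mathbf{U}_1(\mathfrak{g})$, then identify $[\mathfrak{L}(0,\omega)]$ as a highest weight vector of weight $\lambda$, and finally compare dimensions to upgrade the resulting surjection from $L_1(\lambda)$ to an isomorphism. The plan is to follow the standard convolution-algebra strategy going back to Ginzburg--Nakajima, executing each step on the Lagrangian subvariety $\mathfrak{L}(\omega) \subset \mathfrak{M}(\omega)$.

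First I would establish the Chevalley and Serre relations. The weight/Cartan part is immediate from the decomposition $\mathbf{H}(\mathfrak{L}(\omega))=\bigoplus_\nu \mathbf{H}^{\mathrm{BM}}_{\mathrm{top}}(\mathfrak{L}(\nu,\omega),\mathbb{Q})$, since the Hecke correspondence $\mathfrak{P}_i(\nu,\omega)$ lives between $\mathfrak{L}(\nu',\omega)$ and $\mathfrak{L}(\nu,\omega)$ with $\nu=\nu'+i$; the sign twists $(-1)^{\langle i, \mathbf{C}_{\bar\Omega}\nu\rangle}$ and $(-1)^{\langle i, \omega-\mathbf{C}_\Omega\nu\rangle}$ are arranged exactly to make the $[e_i,f_j]$ commutator land correctly (this is the content of Nakajima's erratum the authors cite). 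The crucial commutator relation $[e_i,f_j]=\delta_{ij}h_i$ is checked by computing the two convolution products $\mathfrak{P}_i\circ \mathrm{sw}(\mathfrak{P}_j)$ and $\mathrm{sw}(\mathfrak{P}_j)\circ \mathfrak{P}_i$ on the fibre product, isolating the diagonal locus where the two correspondences intersect non-transversally, and applying the excess intersection formula; off-diagonal terms cancel, while the diagonal part yields $h_i$ times the diagonal class with the correct weight factor. The Serre relations I would reduce to the rank two case by restricting to the subquiver spanned by two vertices $i \ne j$, where the geometry is governed by flag-type varieties and the identity follows from a direct dimension count on iterated Hecke correspondences.

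Next I would verify the highest weight properties of $[\mathfrak{L}(0,\omega)]$. With $\bV=0$, there are no negative Hecke directions, so $e_i[\mathfrak{L}(0,\omega)] = 0$ for all $i$; and the diagonal class acts with eigenvalue determined by $\omega$, i.e.\ weight $\lambda$. Integrability amounts to showing $f_i^{\langle i,\lambda\rangle+1}[\mathfrak{L}(0,\omega)]=0$; this I would prove by the stability analysis of \S\ref{Nakajima's quiver variety}: if $\bV$ contains a nonzero $I$-graded subspace $\bV'$ with $\bV'_i$ forcing $\dim \bV_i > \langle i,\lambda\rangle$ at some stage, then every would-be stable point factors through a non-stable configuration, so $\mathfrak{L}(\nu,\omega)$ either has smaller top dimension than expected or is empty in the relevant range. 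Combined with the highest weight property this produces a surjection $\varphi^\omega: L_1(\lambda) \twoheadrightarrow \mathbf{H}(\mathfrak{L}(\omega))$ of $\mathbf{U}_1(\mathfrak{g})$-modules (after noting the module is generated by $[\mathfrak{L}(0,\omega)]$, which follows because any class in the top homology is obtained by successive application of the $f_i$'s to the highest class, thanks to the cellular/paving structure of $\mathfrak{L}(\nu,\omega)$ by attracting sets of a generic $\mathbb{G}_m$-action).

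Finally, to promote $\varphi^\omega$ to an isomorphism I would compare graded dimensions. On the geometric side, $\dim \mathbf{H}^{\mathrm{BM}}_{\mathrm{top}}(\mathfrak{L}(\nu,\omega),\mathbb{Q})$ equals the number of irreducible components of $\mathfrak{L}(\nu,\omega)$; on the algebraic side this should equal $\dim L_1(\lambda)_{\lambda-\nu}$. The matching can be proved either by Nakajima's direct stratification argument or by transporting the crystal structure: one shows that the set of irreducible components carries an abstract $\mathfrak{g}$-crystal isomorphic to $\mathcal{B}(\lambda)$ via Kashiwara operators realised geometrically through the functions $t_i,t_i^\ast$ of \S\ref{Lusztig sheaf}--\S\ref{Analysis at source}. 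Since the surjection $\varphi^\omega$ is compatible with the weight grading and both sides have matching weight multiplicities, it is an isomorphism. The main technical obstacle is the $[e_i,f_i]$ computation, where the conormal geometry of the Hecke correspondence and the precise sign twists must be handled carefully; once this is in place the remaining steps are essentially bookkeeping plus the dimension/crystal identification.
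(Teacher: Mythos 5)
The paper does not prove this theorem; it is stated as a direct citation of \cite[Theorem 10.2]{MR1604167}, so there is no internal proof to compare against. Your sketch is a reasonable reconstruction of Nakajima's original argument and gets the overall skeleton right: verify the algebra relations by convolution, exhibit $[\mathfrak{L}(0,\omega)]$ as the highest weight vector of weight $\lambda$, and match weight multiplicities via the crystal structure on irreducible components.

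Two details deviate from how the argument actually runs and would cause friction if you tried to execute them. First, the Serre relations are not verified by restricting to rank-two subquivers; such a restriction does not induce a clean morphism of quiver varieties, and no direct geometric computation of the Serre identity is needed. The correct logical order is: establish the weight and commutator relations, then observe that the module is integrable (local nilpotence of $e_i$ and $f_i$ is geometrically automatic because $\mathfrak{L}(\nu,\omega)$ is empty once $\nu_i$ is large relative to the other data, by the stability condition), and then invoke the standard fact that on an integrable weight module satisfying the Chevalley relations the Serre relations hold automatically by $\mathfrak{sl}_2$-theory. So integrability should come \emph{before} the Serre relations, not after. Second, the generation of $\mathbf{H}(\mathfrak{L}(\omega))$ by $[\mathfrak{L}(0,\omega)]$ under the $f_i$'s does not come from a $\mathbb{G}_m$-attracting-set paving of $\mathfrak{L}(\nu,\omega)$; that Bialynicki--Birula decomposition exists for $\mathfrak{M}(\nu,\omega)$ but does not restrict to a cellular decomposition of the Lagrangian $\mathfrak{L}(\nu,\omega)$. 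The actual mechanism is the upper-triangularity of $f_i^{(t)}$ against the $\epsilon_i$-stratification --- precisely what the present paper records as Lemma~\ref{Nkey} and Corollary~\ref{Ns} --- which produces the monomial basis of Proposition~\ref{mono} and hence the generation statement. Your identification of the $[e_i,f_j]$ clean-intersection computation as the main technical burden is correct, and your remark about the sign twists accurately reflects the correction the authors adopt from \cite{nakajima2001quiver}.
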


Given dominant weights $\lambda_1,\lambda_2$, we set $\omega^1=\sum_{i\in I}\langle i,\lambda_1\rangle i^1,\omega^2=\sum_{i\in I}\langle i,\lambda_2\rangle i^1$ and $\omega=\omega^1+\omega^2$ in subsections \ref{Tensor product variety}.

\begin{theorem}[{\cite[Theorem 5.2]{MR1865400}}]
	With the operators $e_{i},f_{i}$ and $[\Delta(\nu,\omega)]$, the direct sum of Borel-Moore homology group $\mathbf{H}(\tilde{\mathfrak{Z}}(\omega))= \bigoplus\limits_{\nu \in \mathbb{N}I} \mathbf{H}^{\mathrm{BM}}_{\mathrm{top}}(\tilde{\mathfrak{Z}}(\nu,\omega) ,\mathbb{Q}) $ becomes an integrable $\mathbf{U}_{1}(\mathfrak{g})$-module, and is isomorphic to the tensor products $L_{1}(\lambda_{1}) \otimes L_{1}(\lambda_{2})$.
\end{theorem}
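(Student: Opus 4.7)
The plan is to bypass Nakajima's original argument through crystal bases and character formulas, and instead to factor the isomorphism through the Grothendieck group $\mL(\omega^1,\omega^2)$ of equivariant Lusztig sheaves on the two-framed quiver moduli, using characteristic cycles as a bridge to Borel-Moore homology. The first step is to construct a map $\CC^{s,\omega^1,\omega^2}$ from a suitably sign-twisted specialization of $\mL(\omega^1,\omega^2)$ at $v=-1$ to $\mathbf{H}(\tilde{\mathfrak{Z}}(\omega),\mathbb{Q})$. The key geometric observation is that the characteristic cycles of the simple perverse sheaves comprising $\mQ_{\nu,\omega^1,\omega^2}$ are supported on the nilpotent variety $\Pi_{\bV,\mathbf{W}^1\oplus\mathbf{W}^2}$ inside the cotangent bundle $T^{\ast}\bfEVOff$, whose stable locus is a $\bG_\bV$-torsor over $\tilde{\mathfrak{Z}}(\nu,\omega)$. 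One must check that complexes in the localizing subcategory $\mathcal{N}_\nu$ have characteristic cycles supported over the unstable locus, so that passage to the quotient $\mL(\omega^1,\omega^2)$ on the sheaf side corresponds to restriction to the stable locus on the cycle side.

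Next, I would show that this map intertwines the Chevalley actions. On the sheaf side the generators act through the functors $\mathcal{F}_i^{(r)}, \mathcal{E}_i^{(r)}$ of Section 4, while on the homology side they act via the Hecke correspondences $\mathfrak{P}_i(\nu,\omega)$ with Nakajima's sign twists. Compatibility is governed by Theorem \ref{com}, which asserts that equivariant characteristic cycles intertwine induction of equivariant complexes with induction of Lagrangian Borel-Moore cycles. Concretely, one needs to identify the cycle produced by inducing the fundamental class of the point $\mathbf{E}_{\mathbf{V}',\Omega}=\{0\}$ (with $|\mathbf{V}'|=ri$) with the conormal cycle of $\mathfrak{P}_i(\nu,\omega)$, pulled back along the stable locus quotient. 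Combining this intertwining property with Theorem \ref{tensor}, which provides a canonical isomorphism $\mL(\omega^1,\omega^2)\cong{_{\mathcal{A}}L}_v(\lambda_2)\otimes{_{\mathcal{A}}L}_v(\lambda_1)$, specializing at $v=-1$, and taking base change to $\mathbb{Q}$, would yield the desired isomorphism of $\mathbf{U}_1(\mathfrak{g})$-modules.

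The hard parts will be twofold. First, determining the correct sign twists: as the acknowledgments note, the signs in Nakajima's original formulas and in the previous paper \cite{fang2023lusztig} are not quite right, and the correct character $\psi$ must be derived by tracking how Fourier-Sato transforms and the forgetful functor ${\rm{For}}^G$ used in $\CC_G$ interact with the Euler form of the underlying quiver. Second, surjectivity together with the matching of bases: since characteristic cycles of perverse sheaves are effective by Hennecart's positivity, one must verify that the simple perverse sheaves in $\mQ_{\nu,\omega^1,\omega^2}/\mathcal{N}_\nu$ are in bijection with the irreducible components of $\tilde{\mathfrak{Z}}(\nu,\omega)$ and that the transition matrix is unitriangular with respect to the refined string order; this parallels Corollary 1.9 and rests on comparing Lusztig's parametrization of simple perverse sheaves via string data with Saito's parametrization of irreducible components of $\tilde{\mathfrak{Z}}(\nu,\omega)$.
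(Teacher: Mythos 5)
The theorem you are proving is not something the paper proves: it is cited verbatim from Nakajima (\cite[Theorem 5.2]{MR1865400}), and the text immediately afterward explains that Nakajima's proof proceeds via character computations and crystals and is therefore not canonical. The paper's own new contribution on tensor product varieties is the canonicity statement (Conjecture \ref{canN2}, handled in Theorem \ref{main3}), not the existence of the isomorphism. So there is no ``paper's proof'' here; what you have sketched is a candidate alternative proof, and it essentially tracks the machinery the paper later builds in Section 6 for a different purpose.

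As a free-standing proof of the cited statement, your plan has a genuine gap around the raising operators. You appeal to Theorem \ref{com} to establish that $\CC^{s,\omega^1,\omega^2}$ intertwines both $\mathcal{F}_i^{(r)}$ and $\mathcal{E}_i^{(r)}$ with the Hecke operators, but Theorem \ref{com} applies only to Lusztig's induction functor $\mathbf{Ind}$, which is what underlies $\mathcal{F}_i^{(r)}$. The functor $\mathcal{E}_i^{(r)}$ is built from a different correspondence (open restriction $j_{\mathbf{V},i}$, descent along the principal $\mathrm{GL}(\mathbf{V}_i)$-bundle $\phi_{\mathbf{V},i}$, and the Grassmannian flag correspondence $q_1,q_2$) and is not an instance of $\mathbf{Ind}$; establishing the $e_i$-intertwining would require a separate argument that Hennecart's theorem does not give. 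The paper itself only verifies the $f_i$-intertwining in its later Theorem \ref{main2}, gestures in Remark 6.14 that the $e_i$-intertwining ``should be provable,'' and then sidesteps the issue by invoking that both sides are already known to be integrable $\mathbf{U}(\mathfrak{g})$-modules and concluding $\mathbf{U}(\mathfrak{g})$-linearity from $\mathbf{U}^-(\mathfrak{g})$-linearity. But for $\mathbf{H}(\tilde{\mathfrak{Z}})$, the integrable module structure under the Hecke operators is precisely (half of) the theorem you are trying to prove, so this shortcut is circular in your setting. The honest conclusion is that your plan can replace Nakajima's character/crystal argument for identifying the representation, but it still needs Nakajima's geometric verification that $e_i,f_i,[\Delta(\nu,\omega)]$ satisfy the Serre and $\mathfrak{sl}_2$ relations on the homology; and to make the identification itself rigorous without falling back on that module-theoretic shortcut, you must prove the $\mathcal{E}_i$-intertwining directly, which is a nontrivial step your outline leaves unaddressed.
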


Nakajima proved above theorem above by calculating the character of $\mathbf{H}(\tilde{\mathfrak{Z}}(\omega))$, so the isomorphism in the theorem is not canonical. In order to construct a canonical isomorphism, he considered the subvariety $\tilde{\mathfrak{Z}}_{1}$ of $\tilde{\mathfrak{Z}}$ consisting of those $[x,y,z]$ such that  $$\lim_{t \rightarrow  0} \lambda(t).[x,y,z]  \in \coprod\limits_{\nu} \mathfrak{L} (0,\omega^{1}) \times \mathfrak{L}(\nu,\omega^{2}).$$
By \cite[Proposition 5.5]{MR1865400}, since $\tilde{\mathfrak{Z}}_{1}$ is a vector bundle over $\coprod\limits_{\nu} \mathfrak{L} (0,\omega^{1}) \times \mathfrak{L}(\nu,\omega^{2})$, there is an $\mathbf{U}_{1}^{+}(\mathfrak{g})$-linear isomorphism given by Thom isomorphism $\mathbf{H}^{\mathrm{BM}}_{\mathrm{top}}(\tilde{\mathfrak{Z}}_{1},\mathbb{Q}) \cong \mathbf{H}(\mathfrak{L}(\omega^{2})). $ Moreover, the subspace $\mathbf{H}^{\mathrm{BM}}_{\mathrm{top}}(\tilde{\mathfrak{Z}}_{1},\mathbb{Q})$ generates the  $\mathbf{H}(\tilde{\mathfrak{Z}}(\omega))$ as a $\mathfrak{g}$-module. Nakajima raised following conjecture and proved that this conjecture holds for ADE case, see \cite[Theorem 5.9]{MR1865400}.

\begin{conjecture}[{\cite[Conjecture 5.6]{MR1865400}}] \label{canN2}
	There exists a unique isomorphism of $\mathfrak{g}$-modules 
    $$\tilde{\varphi}: \mathbf{H}(\mathfrak{L}(\omega^{1})) \otimes \mathbf{H}(\mathfrak{L}(\omega^{2})) \rightarrow \mathbf{H}(\tilde{\mathfrak{Z}}(\omega)), $$
	such that the restriction of $\tilde{\varphi}$ on the subspace $[\mathfrak{L}(0,\omega^{1})] \otimes \mathbf{H}(\mathfrak{L}(\omega^{2}))  $ equals to the composition of Thom isomorphism $\mathbf{H}^{\mathrm{BM}}_{\mathrm{top}}(\tilde{\mathfrak{Z}}_{1},\mathbb{Q}) \cong \mathbf{H}(\mathfrak{L}(\omega^{2})) $ and the inclusion $\mathbf{H}^{\mathrm{BM}}_{\mathrm{top}}(\tilde{\mathfrak{Z}}_{1},\mathbb{Q}) \rightarrow  \mathbf{H}(\tilde{\mathfrak{Z}}(\omega)).$
\end{conjecture}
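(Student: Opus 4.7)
The plan is to construct $\tilde{\varphi}$ by inverting the composition of the categorical coproduct $\Delta$ from Theorem~1.2 with the characteristic-cycle isomorphisms $\CC^{s,\omega^{1},\omega^{2}}$, $\CC^{s,\omega^{1}}$, $\CC^{s,\omega^{2}}$ of Theorems~1.3 and 1.4, after base change to $\mathbb{Q}$, specialization at $v=-1$, and undoing the sign twist $\psi$, with a swap $\sigma$ inserted to reconcile the ordering of Theorem~1.2 (which produces $_{\mathcal{A}}L_{v}(\lambda_{2})\otimes{_{\mathcal{A}}L}_{v}(\lambda_{1})$) with Nakajima's $L_{1}(\lambda_{1})\otimes L_{1}(\lambda_{2})$. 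Explicitly, I set
\[
\tilde{\varphi}=\bigl(\sigma\circ(\CC^{s,\omega^{2}}\otimes\CC^{s,\omega^{1}})\circ\Delta\circ(\CC^{s,\omega^{1},\omega^{2}})^{-1}\bigr)^{-1},
\]
which is a $\mathfrak{g}$-module isomorphism $\mathbf{H}(\mathfrak{L}(\omega^{1}))\otimes\mathbf{H}(\mathfrak{L}(\omega^{2}))\to\mathbf{H}(\tilde{\mathfrak{Z}}(\omega))$ by the cited theorems, using the cocommutativity of tensor products of $\mathfrak{g}$-modules that makes $\sigma$ itself $\mathfrak{g}$-linear.

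For the uniqueness clause, I note that $v_{\lambda_{1}}\otimes L_{1}(\lambda_{2})$ generates $L_{1}(\lambda_{1})\otimes L_{1}(\lambda_{2})$ as a $\mathfrak{g}$-module: the identity $E_{i}(v_{\lambda_{1}}\otimes w)=v_{\lambda_{1}}\otimes E_{i}w$ preserves this subspace, iterated applications of $F_{i}$ on $v_{\lambda_{1}}\otimes w$ produce expressions of the form $F_{i}^{(n)}v_{\lambda_{1}}\otimes(\cdots)$, and $v_{\lambda_{1}}$ generates $L_{1}(\lambda_{1})$ under divided powers of the $F_{i}$. Therefore any two $\mathfrak{g}$-linear maps $L_{1}(\lambda_{1})\otimes L_{1}(\lambda_{2})\to\mathbf{H}(\tilde{\mathfrak{Z}})$ that agree on $[\mathfrak{L}(0,\omega^{1})]\otimes\mathbf{H}(\mathfrak{L}(\omega^{2}))\simeq v_{\lambda_{1}}\otimes L_{1}(\lambda_{2})$ must coincide.

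The main obstacle, and the substantive step, is verifying the normalization: that $\tilde{\varphi}$ restricted to $[\mathfrak{L}(0,\omega^{1})]\otimes\mathbf{H}(\mathfrak{L}(\omega^{2}))$ coincides with the composition of the Thom isomorphism $\mathbf{H}(\mathfrak{L}(\omega^{2}))\simeq\mathbf{H}^{\mathrm{BM}}_{\mathrm{top}}(\tilde{\mathfrak{Z}}_{1},\mathbb{Q})$ and the inclusion $\mathbf{H}^{\mathrm{BM}}_{\mathrm{top}}(\tilde{\mathfrak{Z}}_{1},\mathbb{Q})\to\mathbf{H}(\tilde{\mathfrak{Z}})$. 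Under the isomorphism $\chi^{\omega^{1}}$ of Theorem~1.1, the class $[\mathfrak{L}(0,\omega^{1})]$ corresponds to $[\mathbb{C}|_{\mathbf{E}_{0\oplus\mathbf{W}^{1},\Omega^{(1)}}}]$, so the relevant subspace lifts through $\Delta^{-1}$ to classes of those Lusztig sheaves $L_{\boldsymbol{\omega}^{1}\ftnu''\boldsymbol{\omega}^{2}}$ with empty $\mathbf{V}^{1}$-block. By Proposition~\ref{indres formula} these are precisely the images under $\mathbf{Ind}^{\bV\oplus\mathbf{W}^{1}\oplus\mathbf{W}^{2}}_{0\oplus\mathbf{W}^{1},\bV\oplus\mathbf{W}^{2}}$ of $\mathbb{C}\boxtimes L_{\ftnu''\boldsymbol{\omega}^{2}}$, and the compatibility of induction with characteristic cycles (Theorem~\ref{com}) expresses their cycles via the cotangent correspondence of this induction. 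The remaining task is a local model computation identifying this cotangent correspondence, up to the Euler-form sign absorbed by $\psi$, with the attractor vector bundle $\tilde{\mathfrak{Z}}_{1}\to\coprod_{\nu''}\mathfrak{L}(0,\omega^{1})\times\mathfrak{L}(\nu'',\omega^{2})$ followed by its inclusion into $\tilde{\mathfrak{Z}}$, which is exactly the Thom-plus-inclusion side of Nakajima's condition. This cotangent-level matching between the categorical "empty first block" picture and the attractor geometry is where the proof ultimately rests.
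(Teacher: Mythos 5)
Your construction of $\tilde{\varphi}$ as the inverse of $\sigma\circ(\CC^{s,\omega^{2}}\otimes\CC^{s,\omega^{1}})\circ\Delta\circ(\CC^{s,\omega^{1},\omega^{2}})^{-1}$ and your uniqueness argument via generation of $v_{\lambda_1}\otimes L_1(\lambda_2)$ both match what the paper's proof of Theorem \ref{main3} does (the paper only remarks that uniqueness is trivial, so your spelling it out is fine). The issue is the normalization check, which you correctly flag as the substantive step but then leave open.

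There are really two gaps in that last paragraph. First, you assert that $[\mathfrak{L}(0,\omega^1)]\otimes\mathbf{H}(\mathfrak{L}(\omega^2))$ ``lifts through $\Delta^{-1}$ to classes of $L_{\boldsymbol{\omega}^1\ftnu''\boldsymbol{\omega}^2}$'' without justification, but this is not formal: $\mathbf{Res}$ applied to $L_{\boldsymbol{\omega}^1\ftnu''\boldsymbol{\omega}^2}$ produces many terms $[L_{\ftnu''\boldsymbol{\omega}^2}]\otimes[L_{\boldsymbol{\omega}^1\ftnu'}]$, and one needs the vanishing $[L_{\boldsymbol{\omega}^1\ftnu'}]=0$ in $\mL(\omega^1)$ for every nonempty $\ftnu'$ to conclude that only $[L_{\ftnu''\boldsymbol{\omega}^2}]\otimes[L_{\boldsymbol{\omega}^1}]$ survives in the localization. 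This is the computation in the paper's equation $(6.2)$, relying on Proposition \ref{indres formula} and on the identification $\mL(\omega^1)\cong{_\mathcal{A}L}_v(\lambda_1)$; it is needed to even set up your induction-side picture. Second, you explicitly defer the ``local model computation'' identifying the cotangent correspondence of $\mathbf{Ind}^{\bV\oplus\mathbf{W}^1\oplus\mathbf{W}^2}_{0\oplus\mathbf{W}^1,\bV\oplus\mathbf{W}^2}$ with the Thom isomorphism of the attractor bundle $\tilde{\mathfrak{Z}}_1\to\coprod_{\nu''}\mathfrak{L}(0,\omega^1)\times\mathfrak{L}(\nu'',\omega^2)$. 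This is precisely where the paper does its work, and it does it by a slightly different route than the one you are pointing at: instead of applying Theorem \ref{com} to the induction whose image is $L_{\boldsymbol{\omega}^1\ftnu''\boldsymbol{\omega}^2}$, it isolates the sub-Grothendieck group $\mL^1(\nu,\omega^1,\omega^2)$, identifies the left multiplication $[L_{\boldsymbol{\omega}^1}]$ with $i_{\mathbf{W}^1*}$, and uses Proposition \ref{ssZ1} (the identification of $\Phi_{Q^{(2)}}$-bijections between $\mP^{1,s}_{\nu,\omega^1,\omega^2}$ and ${\rm{Irr}}\tilde{\mathfrak{Z}}_1$ compatible with $\pi_{i,\omega^1_i}$ and $\rho_{i,\omega^1_i}$) to justify the commutative square $\CC^{s,\omega^2}\circ([L_{\boldsymbol{\omega}^1}])^{-1}=\mathrm{Thom}\circ\CC^{s,\omega^1,\omega^2}$ on $\mL^1$.

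Finally, even if you carry out the cotangent identification for the raw induction correspondence, Theorem \ref{com} lives on the full Lagrangians $\Pi_{\bV,\mathbf{W}^1\oplus\mathbf{W}^2}$ rather than on the stable open locus $\Pi^s_{\bV,\mathbf{W}^1\oplus\mathbf{W}^2}$; you still need to check compatibility of $j^{s*}$ with the correspondence (the paper does this via properness of $\phi_2$ in the proof of Theorem \ref{main2}). So the framework you set up is the right one and largely parallels the paper's, but the proof as written stops short of the step that carries all the weight.
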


\subsection{The string order and monomial basis}

In this section, we recall the construction of a monomial basis of $\mathbf{H}(\mathfrak{L}(\omega))$ in \cite{fang2023lusztig}.

For any $i\in I, \nu\in \mathbb{N}I$ and $0\leqslant t\leqslant \nu_i$, we define $\mathfrak{L}_{i,t}(\nu,\omega)\subset \mathfrak{L}(\nu,\omega)$ to be the locally closed subset consisting of $[x,y,0]$ such that 
$$\mathrm{codim}_{\mathbf{V}_i}(\sum_{h\in H,h''=i}\mathrm{Im}(x_h:\mathbf{V}_{h'}\rightarrow \mathbf{V}_i))=t,$$
then all these $\mathfrak{L}_{i,t}(\nu,\omega),0\leqslant t\leqslant \nu_i$ form a stratification of $\mathfrak{L}(\nu,\omega)$. For any irreducible component $Z\subset \mathfrak{L}(\nu,\omega)$, there exists a unique integer $\epsilon_{i}(Z)$ such that $Z \cap \mathfrak{L}_{i,\epsilon_{i}(Z)}(\nu,\omega)$ is open dense in $Z$.

\begin{lemma}[{\cite[Proposition 4.3.2]{saito2002crystal}}]
For any $0\leqslant t\leqslant \nu_i$, let $\nu'=\nu-ti$, then there is a bijection 
$$\varrho_{i,t}:\{Z \in {\mathrm{Irr}}\mathfrak{L}(\nu,\omega) \mid\epsilon_{i}(Z)=t \} \rightarrow \{Z' \in {\mathrm{Irr}}\mathfrak{L}(\nu',\omega) \mid\epsilon_{i}(Z')=0 \}.$$
\end{lemma}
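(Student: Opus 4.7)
The plan is to construct $\varrho_{i,t}$ geometrically by realizing the stratum $\mathfrak{L}_{i,t}(\nu,\omega)$ as a fiber bundle over $\mathfrak{L}_{i,0}(\nu',\omega)$, following the classical Kashiwara--Saito crystal operator construction at the vertex $i$.

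First I would make the stratum explicit. For a stable $(x,y,0) \in \Lambda_{\bV,\mathbf{W}}$ representing a point of $\mathfrak{L}_{i,t}(\nu,\omega)$, the ``in''-map $x^{\mathrm{in}}_i := \bigoplus_{h \in H,\, h''=i} x_h$ has image $\bV'_i := \mathrm{Im}(x^{\mathrm{in}}_i) \subseteq \bV_i$ of dimension $\nu_i - t$. Setting $\bV'_j = \bV_j$ for $j \neq i$ gives a graded subspace with $|\bV'| = \nu'$. Using the moment map equation at vertex $i$ together with nilpotency, one shows that the ``out''-maps $x_h$ (for $h'=i$) and the framing $y_i$ can be truncated compatibly so that the restriction of $(x,y,0)$ to $\bV'$ is a well-defined element of $\Lambda_{\bV',\mathbf{W}}$; stability transfers to the restriction since the subspace generated by $\mathrm{Im}\,z$ under the $x$-action is the same. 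By construction the resulting orbit lies in $\mathfrak{L}_{i,0}(\nu',\omega)$.

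Second, I would verify that this restriction descends to a morphism of the quotient varieties $\mathfrak{L}_{i,t}(\nu,\omega) \to \mathfrak{L}_{i,0}(\nu',\omega)$, and that, after accounting for the gauge freedom in choosing a complement $\bV_i = \bV'_i \oplus C$ together with the remaining choices in the out-data, it is surjective with irreducible fibers of the expected dimension (parameterized by a Grassmannian-like variety of liftings, contracted to an affine bundle). Since a surjective morphism with irreducible generic fibers induces a bijection on top-dimensional irreducible components, sending an irreducible component $Z$ with $\epsilon_i(Z) = t$ to the closure of the image of $Z \cap \mathfrak{L}_{i,t}(\nu,\omega)$ yields the forward map, while the inverse is obtained by taking the closure in $\mathfrak{L}_{i,t}(\nu,\omega)$ of the preimage of $Z' \cap \mathfrak{L}_{i,0}(\nu',\omega)$.

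The main obstacle will be rigorously verifying the fiber-bundle description, in particular the simultaneous compatibility of the moment map relation, nilpotency, and stability under the restriction/extension procedure, together with the irreducibility and dimension count of the fibers. Once these are in place the bijection is forced, and it matches the combinatorial bijection $\pi^{\ast}_{i,t}$ of Lemma~\ref{rkey} under the characteristic cycle correspondence central to the paper.
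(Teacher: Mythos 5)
The paper does not prove this lemma itself; it cites Saito's result directly. Your sketch reproduces the standard Kashiwara--Saito style construction underlying that citation: restrict a stable triple $(x,y,0)$ to the subspace $\mathbf{V}'$ with $\mathbf{V}'_i$ the image of the incoming maps at $i$, check that the stratum map $\mathfrak{L}_{i,t}(\nu,\omega)\to\mathfrak{L}_{i,0}(\nu',\omega)$ is surjective with irreducible equidimensional fibers, and deduce the bijection on top-dimensional components. You are right that the fibration verification is where all the work lies; as written, that part is only a programme, not a proof.

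Two concrete errors need correction. First, the stability argument is wrong for this setting: on $\mathfrak{L}$ one has $z=0$, so ``the subspace generated by $\mathrm{Im}\,z$'' is identically zero and says nothing. The stability condition in force for $(x,y,0)$ is that there is no nonzero $I$-graded $x$-stable subspace annihilated by $y$; this does transfer to the restriction, but for a different reason: any nonzero $x'$-stable, $y'$-annihilated subspace of $\mathbf{V}'$ remains $x$-stable and $y$-annihilated inside $\mathbf{V}$ (incoming arrows at $i$ land in $\mathbf{V}'_i$ by construction), contradicting stability of $(x,y,0)$. Second, the closing remark misidentifies the sheaf-side partner of $\varrho_{i,t}$: since $\epsilon_i$ is the codimension of the image of the incoming maps (the sink analysis), it corresponds under $\Phi_{Q^{(1)}}$ to $\pi_{i,t}$ of Lemma~\ref{lkey}, not to $\pi^{\ast}_{i,t}$ of Lemma~\ref{rkey}, which is the source operator governed by $\epsilon^{\ast}_i$.
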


\begin{lemma}[{\cite[Lemma 10.1]{MR1604167}}] \label{Nkey}
For any irreducible component $Z \subseteq \mathfrak{L}(\nu,\omega)$ such that $\epsilon_{i}(Z)=t>0$, suppose that $Z'=\varrho_{i,t}(Z) $, we have
\begin{equation*}
f_{i}^{(t)} ([Z'])=\pm [Z] + \sum_{\epsilon_{i}(Z'')>t} c_{Z''}[Z''],
\end{equation*}
where $c_{Z''}\in \mathbb{Q}$ are constants.
\end{lemma}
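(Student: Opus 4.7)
The strategy is to realize $f_i^{(t)}$ as convolution with a sign-twisted fundamental class of the $t$-step Hecke correspondence $\mathfrak{P}_i^{(t)}(\nu,\omega) \subseteq \mathfrak{L}(\nu',\omega) \times \mathfrak{L}(\nu,\omega)$, whose points parametrize pairs $([x',y',0],[x,y,0])$ together with an inclusion $\xi : \mathbf{V}' \hookrightarrow \mathbf{V}$ of quiver representations whose cokernel is $t$-dimensional and concentrated at the vertex $i$. Write $p_1$ and $p_2$ for the two projections. The plan is to compare the supports and generic ranks of $p_1^{-1}(Z')$ under the two projections, using the stratification of both $\mathfrak{L}(\nu',\omega)$ and $\mathfrak{L}(\nu,\omega)$ by $\epsilon_i$; the divided power $f_i^{(t)} = f_i^t/t!$ at $v=1$ absorbs the action of $\mathfrak{S}_t$ permuting the single-step extensions, so $\mathfrak{P}_i^{(t)}$ is the right geometric object.

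First I would restrict $p_1$ over the open stratum $Z' \cap \mathfrak{L}_{i,0}(\nu',\omega)$. Here I expect $p_1^{-1}(Z' \cap \mathfrak{L}_{i,0}(\nu',\omega))$ to be smooth with connected fibers isomorphic to an affine space (a choice of framing extension at $i$), and for $p_2$ restricted to this preimage to be a birational map onto the open dense stratum $Z \cap \mathfrak{L}_{i,t}(\nu,\omega)$ of $Z = \varrho_{i,t}^{-1}(Z')$. This compatibility is essentially the geometric content of Saito's bijection $\varrho_{i,t}$: the crystal operator is constructed by precisely the same Hecke correspondence. Taking fundamental classes and pushing forward then yields $\pm[Z]$ with multiplicity one, where the sign is Nakajima's normalization $(-1)^{\langle i,\omega-\mathbf{C}_\Omega \nu\rangle}$ combined with the orientation of the birational map.

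To control the error, I would stratify $\mathfrak{L}(\nu,\omega)$ by $\epsilon_i$ and examine which other irreducible components $Z''$ can lie in $p_2(\overline{p_1^{-1}(Z')})$. The key observation is semi-continuity: the existence of an inclusion $\xi : \mathbf{V}' \hookrightarrow \mathbf{V}$ with $t$-dimensional cokernel at $i$ forces
\[
\mathrm{codim}_{\mathbf{V}_i}\Bigl(\sum_{h''=i} \mathrm{Im}\, x_h\Bigr) \geqslant \mathrm{codim}_{\mathbf{V}'_i}\Bigl(\sum_{h''=i} \mathrm{Im}\, x'_h\Bigr) + t',
\]
for some $t' \leqslant t$ that equals $t$ generically but can jump only upward along the closure. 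Hence every component $Z''$ meeting the image other than $Z$ itself must satisfy $\epsilon_i(Z'') > t$. A parallel dimension count $\dim \mathfrak{P}_i^{(t)}(\nu,\omega) = \dim Z + \dim(\text{generic fiber})$ rules out additional contributions to the top-degree Borel--Moore class on $Z$.

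The main obstacle, as I see it, will be rigorously identifying the multiplicity as exactly $\pm 1$ rather than some higher integer. This hinges on showing that the generic fiber of $p_2|_{p_1^{-1}(Z' \cap \mathfrak{L}_{i,0}(\nu',\omega))}$ is a single reduced point, which is equivalent to saying that the framing-extension datum is unique up to the $\mathfrak{S}_t$ action already quotiented out by the divided power; this requires a careful transversality check between the Hecke stratification and the $\epsilon_i$-stratification, and a matching of the dimension of $p_1^{-1}(Z')$ with $\dim Z + \binom{t}{2} + t(\omega_i - \text{something})$ coming from the Grassmannian factor in the definition of the Hecke correspondence. Pinning down the sign would then be a bookkeeping computation using the definition of $f_i$ and the recipe $f_i^{(t)} = f_i^t/t!$, tracking the $(-1)^{\langle i, \omega - \mathbf{C}_\Omega \nu^{(s)}\rangle}$ factors at each intermediate step $\nu^{(s)} = \nu' + s \cdot i$.
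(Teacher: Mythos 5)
The paper does not prove this lemma; it quotes it directly from \cite[Lemma 10.1]{MR1604167}, so there is no internal argument to compare with. Your outline follows the Hecke-correspondence strategy underlying Nakajima's result and is sound, but the gap you flag at the end can be closed more cleanly than you suggest, because the inequality you present as a semicontinuity estimate is in fact an exact identity on the whole correspondence.

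Stability forces $\xi$ to be injective, so $\mathbf{V}'$ embeds as a subrepresentation of $\mathbf{V}$ with $\mathbf{V}'_j=\mathbf{V}_j$ for $j\neq i$ and $\mathrm{codim}_{\mathbf{V}_i}\mathbf{V}'_i=t$. Since the quiver has no loops, $h''=i$ implies $h'\neq i$, hence $\mathrm{Im}(x_h)=\xi(\mathrm{Im}(x'_h))\subseteq\xi(\mathbf{V}'_i)$, and therefore
\begin{equation*}
\mathrm{codim}_{\mathbf{V}_i}\Bigl(\sum_{h''=i}\mathrm{Im}(x_h)\Bigr)=\mathrm{codim}_{\mathbf{V}'_i}\Bigl(\sum_{h''=i}\mathrm{Im}(x'_h)\Bigr)+t
\end{equation*}
holds for \emph{every} point of $\mathfrak{P}_i^{(t)}$ lying over $\mathfrak{L}\times\mathfrak{L}$, not merely generically. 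This gives at once that every component in the support of $f_i^{(t)}[Z']$ has $\epsilon_i\geq t$, and that any point in the image with $\epsilon_i$ equal to $t$ is hit only from $Z'\cap\mathfrak{L}_{i,0}(\nu',\omega)$, so $Z$ is the unique contributing component with $\epsilon_i=t$. It also dissolves what you call the main obstacle: over $Z\cap\mathfrak{L}_{i,t}(\nu,\omega)$ the fiber of $p_2$ is the single point determined by $\mathbf{V}'_i=\sum_{h''=i}\mathrm{Im}(x_h)$ (the only codimension-$t$ subspace of $\mathbf{V}_i$ containing this codimension-$t$ image), the assignment $[x,y,0]\mapsto([x',y',0],\xi)$ is algebraic, and $(x',y',0)$ is automatically stable because any $x'$-stable subspace of $\mathbf{V}'$ killed by $y'$ is also $x$-stable and killed by $y$. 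So $p_2$ restricted to $p_1^{-1}(Z'\cap\mathfrak{L}_{i,0}(\nu',\omega))$ is an isomorphism onto a dense open of $Z$, not merely birational of degree one, and the transversality and reducedness checks you anticipated are unnecessary. What remains is exactly the identification $f_i^{(t)}=\pm[\mathrm{sw}(\mathfrak{P}_i^{(t)})]$ and the sign bookkeeping, as you indicate.
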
 

\begin{corollary}\label{Ns}
	The subspace $f_{i}^{t}\mathbf{H}(\mathfrak{L}(\omega))$ equals to the subspace $${\rm{span}}_{\mathbb{Q}}\{[Z]\mid Z \in {\rm{Irr}} \mathfrak{L}(\nu,\omega),\epsilon_{i}(Z) \geqslant t   \}. $$
\end{corollary}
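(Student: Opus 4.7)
Write $F_t = \operatorname{span}_{\mathbb{Q}}\{[Z] \mid Z \in {\rm Irr}\,\mathfrak{L}(\nu,\omega),\ \epsilon_i(Z) \geqslant t\}$. Since we work over $\mathbb{Q}$ at $v=1$, we have $f_i^{(t)} = f_i^t/t!$, so $f_i^{(t)}\mathbf{H} = f_i^t\mathbf{H}$, and I will use the two interchangeably. The plan is to prove the two inclusions $F_t \subseteq f_i^t\mathbf{H}$ and $f_i^t\mathbf{H} \subseteq F_t$ separately, each by a downward induction on $\epsilon_i$ driven by the upper-triangular formula of Lemma \ref{Nkey}.

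For $F_t \subseteq f_i^t\mathbf{H}$, I induct downward on $s = \epsilon_i(Z)$, starting from $s = \nu_i$, to show $[Z] \in f_i^t\mathbf{H}$ whenever $s \geqslant t$. Since $s \geqslant t \geqslant 1$, Lemma \ref{Nkey} applies at $Z' = \varrho_{i,s}(Z)$ and gives
\begin{equation*}
[Z] = \pm f_i^{(s)}[Z'] - \sum_{\epsilon_i(Z'')>s} c_{Z''}[Z''].
\end{equation*}
The leading term lies in $f_i^s\mathbf{H} \subseteq f_i^t\mathbf{H}$ (because $f_i^s = f_i^t\cdot f_i^{s-t}$ for $s \geqslant t$), and every $[Z'']$ in the correction sum lies in $f_i^t\mathbf{H}$ by the inductive hypothesis, which closes the induction (the base case $s = \nu_i$ being that the correction sum is empty).

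For $f_i^t\mathbf{H} \subseteq F_t$, it is enough to prove the more refined statement that $f_i[Z] \in F_{\epsilon_i(Z)+1}$ for every irreducible component $Z$, since then $f_i^t[Z] \in F_{\epsilon_i(Z)+t} \subseteq F_t$ by iteration, and the $[Z]$'s span $\mathbf{H}$. I again induct downward on $s = \epsilon_i(Z)$. When $s = 0$, I apply Lemma \ref{Nkey} in $\mathfrak{L}(\nu+i,\omega)$ at level $1$ to the unique $\tilde{Z}$ with $\varrho_{i,1}(\tilde{Z}) = Z$, which directly yields $f_i[Z] = \pm [\tilde{Z}] + \sum_{\epsilon_i>1} c\,[Z''] \in F_1$. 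When $s > 0$, I apply $f_i$ to the identity $f_i^{(s)}[Z'] = \pm[Z] + \sum_{\epsilon_i(Z'')>s} c_{Z''}[Z'']$ to obtain
\begin{equation*}
(s+1)\,f_i^{(s+1)}[Z'] = \pm f_i[Z] + \sum_{\epsilon_i(Z'')>s} c_{Z''}\,f_i[Z''],
\end{equation*}
where by the first inclusion above (applied in $\mathfrak{L}(\nu+i,\omega)$ with $\varrho_{i,s+1}(\tilde{Z}) = Z'$, admissible because $s \leqslant \nu_i$) the term $f_i^{(s+1)}[Z']$ lies in $F_{s+1}$, and by the inductive hypothesis each $f_i[Z'']$ lies in $F_{\epsilon_i(Z'')+1} \subseteq F_{s+2} \subseteq F_{s+1}$. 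Solving for $f_i[Z]$ gives $f_i[Z] \in F_{s+1}$ as required.

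The main obstacle is not a conceptual one but careful bookkeeping: one must verify that every invocation of Lemma \ref{Nkey} and the bijection $\varrho_{i,t}$ is in the admissible range ($t > 0$ and $t \leqslant (\textrm{dim vector})_i$), track which variety $\mathfrak{L}(\nu',\omega)$ each term lives in when passing through $f_i$, and separately treat $\epsilon_i(Z) = 0$ in the second inclusion since Lemma \ref{Nkey} is not directly applicable there. No further geometric input beyond Lemma \ref{Nkey} and the monotonicity of the filtration $F_\bullet$ is needed.
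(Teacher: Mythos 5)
Your proof is correct, and one of its two halves matches the paper exactly; the other half takes a genuinely different route worth flagging.

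For the inclusion $F_t\subseteq f_i^t\mathbf{H}$ you use precisely the paper's argument: downward induction on $\epsilon_i(Z)$ driven by Lemma \ref{Nkey}, with $\epsilon_i(Z)=\nu_i$ as the base case.

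For the converse inclusion $f_i^t\mathbf{H}\subseteq F_t$ the paper simply says this holds ``by the definition of $f_i$,'' leaning on the geometric fact that the Hecke correspondence $\mathrm{sw}(\mathfrak{P}_i(\nu,\omega))$ shifts the codimension stratum at vertex $i$ up by exactly one (if $\xi x'=x\xi$ then $\sum_h\mathrm{Im}(x_h)=\xi_i(\sum_h\mathrm{Im}(x'_h))$ and $\xi_i$ has one-dimensional cokernel, so $\epsilon_i$ jumps by exactly $1$), hence each application of $f_i$ raises the $F_\bullet$-filtration degree by one. You instead give a self-contained downward induction using only Lemma \ref{Nkey}, establishing the refined statement $f_i[Z]\in F_{\epsilon_i(Z)+1}$. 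This is valid and has the small merit of not appealing to the geometry of $\mathfrak{P}_i$ beyond what Lemma \ref{Nkey} already records, at the cost of being considerably longer than the paper's one-line observation. One attribution in your argument is off: the claim that $f_i^{(s+1)}[Z']\in F_{s+1}$ does not follow from ``the first inclusion above'' (which gives $F_{s+1}\subseteq f_i^{s+1}\mathbf{H}$, the wrong direction); it follows from a fresh application of Lemma \ref{Nkey} at level $s+1$ in $\mathfrak{L}(\nu+i,\omega)$, yielding $f_i^{(s+1)}[Z']=\pm[\tilde Z]+\sum_{\epsilon_i(Z'')>s+1}c_{Z''}[Z'']\in F_{s+1}$. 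With that citation corrected, the proof is sound.
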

\begin{proof}
By the definition of $f_{i}$, we have $f_{i}^{t}\mathbf{H}(\mathfrak{L}(\omega))\subset {\rm{span}}_{\mathbb{Q}}\{[Z]\mid Z \in {\rm{Irr}} \mathfrak{L}(\nu,\omega),\epsilon_{i}(Z) \geqslant t \}$. Conversely, we make an decreasing induction on $\epsilon_{i}(Z)$ to prove $[Z]\in f_{i}^{t}\mathbf{H}(\mathfrak{L}(\omega))$ when $t\epsilon_{i}(Z) \geqslant t$. If $\epsilon_{i}(Z)=\nu_i$, we have 
$$[Z]=\pm f_{i}^{(\nu_i)}[\varrho_{i,\nu_i}(Z)]\in f_{i}^{t}\mathbf{H}(\mathfrak{L}(\omega)),$$ 
as desired. If $t\leqslant \epsilon_{i}(Z)<\nu_i$, by inductive hypothesis, we have 
$$[Z]=\pm(f_{i}^{(\epsilon_{i}(Z))}[\varrho_{i,\epsilon_{i}(Z)}(Z)]-\sum_{\epsilon_{i}(Z'')>\epsilon_{i}(Z)} c_{Z''}[Z''])\in f_{i}^{t}\mathbf{H}(\mathfrak{L}(\omega)),$$
as desired.
\end{proof}

Analogue to the refine order on $\mathcal{B}_\lambda$, see subsection \ref{Refine string order}, we define a partial order on $\bigsqcup_{\nu\in \mathbb{N}I}\mathrm{Irr}\mathfrak{L}(\nu,\omega)$. We fix an order on $I=\{i_1,...,i_n\}$ such that $i_{1} \prec i_{2} \prec \cdots \prec i_{n}$, and inductively define a sequence $s^{\prec}(Z)$ in $\mathbb{N}\times I$ for any $Z\in \mathrm{Irr}\mathfrak{L}(\nu,\omega)$ as follows,\\
$\bullet$ if $\nu=0$, we define $s^{\prec}(Z)$ to be the empty sequence;\\
$\bullet$ if $\nu\not=0$, let $i$ be the minimal vertex such that $\epsilon_{i}(Z)>0$, then $s^{\prec}(\varrho_{i,\epsilon_{i}(Z)}(Z))$ has been defined by inductive hypothesis, and we define
$$s^{\prec}(Z)=((i,\epsilon_{i}(Z)),s^{\prec}(\varrho_{i,\epsilon_{i}(Z)}(Z))).$$
Then we define a partial order on $\mathrm{Irr}\mathfrak{L}(\nu,\omega)$ as follows, for any $Z,Z'\in \mathrm{Irr}\mathfrak{L}(\nu,\omega)$ such that 
$$s^{\prec}(Z)=((i_{n_{1}},m_{1} ), (i_{n_{2}},m_{2}),\cdots,(i_{n_{k}},m_{k}) ),\ s^{\prec}(Z')=((i_{n'_{1}},m'_{1} ), (i_{n'_{2}},m'_{2}),\cdots,(i_{n'_{l}},m'_{l})),$$ 
we define $Z'\prec Z$, if there exists $r$ such that $(i_{n_{t}},m_{t})=(i_{n'_{t}},m'_{t})$ for $1 \leqslant t< r$ and $(i_{n'_{r}},m'_{r})\prec (i_{n_{r}},m_{r})$ with respect to the lexicographical order. We define a partial order on $\bigsqcup_{\nu\in \mathbb{N}I}\mathrm{Irr}\mathfrak{L}(\nu,\omega)$ by taking disjoint union.

\begin{proposition}[{\cite[Section 4.3]{fang2023lusztig}}]\label{mono}
For any irreducible component $Z\in \mathrm{Irr}\mathfrak{L}(\nu,\omega)$ such that $$s^{\prec}(Z)= ((i_{1},a_{1}),(i_{2},a_{2}),\cdots, (i_{l},a_{l}) ) ,$$ let
$$m_{Z}=f^{(a_{1})}_{i_{1}} f^{(a_{2})}_{i_{2}} \cdots f^{(a_{l})}_{i_{l}} [\mathfrak{L}(0,\omega)] \in \mathbf{H}(\mathfrak{L}(\omega)). $$
Then we have
$$m_{Z}=\pm[Z]+\sum \limits_{Z \prec Z'} c_{Z,Z'}[Z'].$$
 	In particular, $\{m_{Z}\mid Z \in {\rm{Irr}} \mathfrak{L}(\nu,\omega) \}$ forms a monomial basis of $\mathbf{H}^{\mathrm{BM}}_{\mathrm{top}}(\mathfrak{L}(\nu,\omega),\mathbb{Q})$.
\end{proposition}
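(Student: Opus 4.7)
The plan is to establish the triangularity $m_Z = \pm [Z] + \sum_{Z \prec Z'} c_{Z,Z'}[Z']$ by induction on $|\nu|$. The base case $\nu = 0$ is immediate, since $s^\prec(Z)$ is empty and $m_Z = [\mathfrak{L}(0,\omega)] = [Z]$. For the inductive step, set $i = i_1$ and $a = a_1 = \epsilon_i(Z)$, and let $Z' = \varrho_{i,a}(Z) \in \mathrm{Irr}\mathfrak{L}(\nu - ai,\omega)$; then $\epsilon_i(Z') = 0$ and $s^\prec(Z') = ((i_2,a_2),\ldots,(i_l,a_l))$, so the inductive hypothesis gives $m_{Z'} = \pm [Z'] + \sum_{Z' \prec Y'} c_{Y'}[Y']$. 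Applying $f_i^{(a)}$ yields
\[
m_Z = \pm f_i^{(a)}[Z'] + \sum_{Z' \prec Y'} c_{Y'}\, f_i^{(a)}[Y'].
\]

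I would next dissect the right-hand side. Lemma \ref{Nkey} gives $f_i^{(a)}[Z'] = \pm[Z] + \sum_{\epsilon_i(Z'')>a} c_{Z''}[Z'']$, and Corollary \ref{Ns} confines each $f_i^{(a)}[Y']$ to the span of $[Z'']$ with $\epsilon_i(Z'') \geqslant a$. The remaining task is to verify that every correction component $Z'' \neq Z$ satisfies $Z \prec Z''$. The crucial geometric input is the following: for a Hecke inclusion $\bV' \subseteq \bV$ with quotient concentrated at vertex $i$, the identification $\bV_j = \bV'_j$ for $j \neq i$ and the inclusion $\sum_{h''=j}\mathrm{Im}(x_h) \supseteq \sum_{h''=j}\mathrm{Im}(x'_h)$, combined with upper semicontinuity of $\epsilon_j$, force $\epsilon_j(Z'') \leqslant \epsilon_j(Y')$ for all $j \neq i$ whenever $[Z'']$ appears with nonzero coefficient in $f_i^{(a)}[Y']$.

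The main obstacle I anticipate is the case $i_2 \prec i_1$. Here the inductive hypothesis only guarantees $\epsilon_j(Y') = 0$ for $j \prec i_2$, so $\epsilon_{i_2}(Y')$ may be positive and the geometric inequality does not immediately force $\epsilon_{i_2}(Z'') = 0$; one cannot rule out a priori that $s^\prec(Z'')$ starts with some $(j_\ast, \cdot) \prec (i_1,a_1)$ and hence that $Z'' \prec Z$. To overcome this I would exploit the uniqueness in the bijection $\varrho_{i,a}$: for $Y'$ with $\epsilon_i(Y') = 0$, the core construction applied to $Z''$ with $\epsilon_i(Z'') = a$ forces $\varrho_{i,a}(Z'') = Y'$, so $s^\prec(Z'') = ((i,a), s^\prec(Y'))$ and the hypothesis $Z' \prec Y'$ translates directly into $Z \prec Z''$. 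Contributions from $Y'$ with $\epsilon_i(Y') > 0$ would be treated by a finer case analysis of the Hecke correspondence at vertex $i$, reducing them via the bijection $\varrho_{i_1,a_1}$ to a comparison in dimension $\nu - a_1 i_1$ where the inductive hypothesis on $Z'$ already controls the strings of the relevant components.

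Once the triangularity is proved, the fact that $\{m_Z\}$ is a basis of $\mathbf{H}^{\mathrm{BM}}_{\mathrm{top}}(\mathfrak{L}(\nu,\omega),\mathbb{Q})$ is automatic from the unitriangular (up to signs) transition to the fundamental-class basis $\{[Z]\}_{Z \in \mathrm{Irr}\mathfrak{L}(\nu,\omega)}$.
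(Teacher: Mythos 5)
Your skeleton matches the paper's: induct (on $|\nu|$, equivalently on the length of $s^\prec(Z)$), write $m_Z = f_{i_1}^{(a_1)} m_{Z'}$ for $Z' = \varrho_{i_1,a_1}(Z)$, expand $m_{Z'}$ by induction, use Lemma~\ref{Nkey} to expand $f_{i_1}^{(a_1)}[Z']$, and use Corollary~\ref{Ns} to control $f_{i_1}^{(a_1)}[Y']$ for the correction terms $Y'$. However, there are substantive gaps beyond that skeleton.

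First, the "crucial geometric input" you propose — that appearance of $[Z'']$ in $f_i^{(a)}[Y']$ forces $\epsilon_j(Z'') \leqslant \epsilon_j(Y')$ for all $j \neq i$ — is neither proved nor needed; it is not the argument the paper uses, and it is unclear it even holds (for arrows $i\to j$, enlarging $\bV_i$ can change $\sum_{h''=j}\mathrm{Im}(x_h)$ in either direction). The paper instead settles the matter purely with Corollary~\ref{Ns}: for $Y'$ with $\epsilon_{i_1}(Y')>0$, one has $[Y'] \in f_{i_1}\mathbf{H}$, hence $f_{i_1}^{(a_1)}[Y'] \in f_{i_1}^{a_1+1}\mathbf{H}$, so by Corollary~\ref{Ns} again it is a combination of $[Z'']$ with $\epsilon_{i_1}(Z'')>a_1$. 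Your proposed "finer case analysis of the Hecke correspondence" never replaces this clean argument and leaves a genuine hole.

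Second, the "main obstacle" you identify — that $s^\prec(Z'')$ might start with $(j_\ast,\cdot)$ for $j_\ast \prec i_1$, making you worry that $Z'' \prec Z$ — stems from misreading the convention for the refine string order. Under the paper's convention the opposite happens: whenever $s^\prec(Z'')$ begins with a vertex strictly preceding $i_1$ (or begins with $(i_1,m)$ for $m>a_1$), one has $Z \prec Z''$, which is exactly the fact $(\heartsuit)$ that the paper records and uses. The paper then handles the remaining case (where $i_1$ is minimal for $Z''$ and $\epsilon_{i_1}(Z'')=a_1$) by the bijection $\varrho_{i_1,a_1}$, as you also propose. Your write-up omits the subcase $\epsilon_j(Z'')>0$ for some $j \prec i_1$, precisely because under your reading it looked like a problem; with the correct reading it is an immediate win, and it must be noted. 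In short: same plan, but your proof needs (i) to drop the unproved semicontinuity claim in favor of the Corollary~\ref{Ns} argument, and (ii) to apply the string-order definition with the correct orientation, after which the "obstacle" dissolves.
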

\begin{proof}
We use Lemma \ref{Nkey} and make an induction on the length $l$ of $s^{\prec}(Z)$.  
If $l=1$ and $s^{\prec}(Z)=((i_{1},a_{1} ))$, then $\nu=a_{1}i_{1}$ and 
\begin{equation*}
[Z]= [\mathfrak{L}(\nu,\omega) ] =\pm f_{i_{1}}^{(a_{1})}[\mathfrak{L}(0,\omega)]=m_{Z}
\end{equation*}
holds trivially. If $l>1$, by Lemma \ref{Nkey}, there exists another irreducible component $Z'$ such that $\epsilon_{i_{1}}(Z')=0$, $s^{\prec}(Z)=((i_{1},a_{1}),s^{\prec}(Z')) $ and 
	\begin{equation*}
		f_{i_{1}}^{(a_{1})}[Z']=\pm [Z]+\sum\limits_{\epsilon_{i_{1}}(Z'')>a_{1} } c_{Z',Z''}[Z'']
	\end{equation*}
	By inductive hypothesis, we can assume that
	\begin{equation*}
		m_{Z'}=\pm [Z']+\sum \limits_{Z' \prec Z'' }c_{Z',Z''}[Z''].
	\end{equation*}
  Since $s^{\prec}(Z)=((i_{1},a_{1}),s^{\prec}(Z')) $, we have 
	\begin{equation*}
		\begin{split}	
			m_{Z}=& f_{i_{1}}^{(a_{1})} m_{Z'} \\
			=& f_{i_{1}}^{(a_{1})} (\pm[Z']+\sum \limits_{Z' \prec Z'' }c_{Z',Z''}[Z''] ) \\
			=&[L]+\sum\limits_{\epsilon_{i_{1}}(Z'')>a_{1} } c_{Z',Z''}[Z'']+\sum \limits_{Z' \prec Z'' }c_{Z',Z''}f_{i_{1}}^{(a_{1})}[Z''].
		\end{split}
	\end{equation*}
	Notice that if $\epsilon_{i_{1}}(Z'')=a>a_{1}$, then $s^{\prec}(Z'')$ either starts with $(i_{1},m)$ or starts  with $(i_{r},m')$ for some $i_{r}\prec i_{1}$, hence  we have the following fact
	\begin{center}
		$(\heartsuit)\ \ Z \prec Z''$ holds for those $Z''$ which satisfies $\epsilon_{i_{1}}(Z'')>a_{1}$.
	\end{center}
Now it suffices to show that 
	\begin{equation*}
		f_{i_{1}}^{(a_{1})}[Z'']= \sum \limits_{Z \prec Z''' } d_{Z'''}[Z''']
	\end{equation*}
	for those $Z''$ which satisfy $Z' \prec Z''$. If $Z''$ satisfies $\epsilon_{i_{1}}(Z'')>0$, by the Corollary \ref{Ns} above, $[Z'']$ belongs to the subspace
	$f_{i_{1}}\mathbf{H}(\mathfrak{L}(\omega))  $,
	so 
	$f_{i_{1}}^{(a_{1})}  [Z'']$ belongs to 	$f_{i_{1}}^{a_{1}+1}\mathbf{H}(\mathfrak{L}(\omega)).  $ 
	Again by the Corollary \ref{Ns}, $f_{i_{1}}^{(a_{1})}  [Z'']$ can be written as a linear combination of $[Z''']$ with $\epsilon_{i_{1}}(Z''')>a_{1}$.	By $(\heartsuit)$, those $Z'''$ satisfy $Z \prec Z'''$. For those $Z''$ such that $\epsilon_{i_{1}}(Z'')=0$,
	\begin{equation*}
		f_{i_{1}}^{(a_{1})}  [Z'']=\pm[\tilde{Z}''] + \sum \limits_{\epsilon_{i_{1}}(Z''')>a_{1} } e_{Z'''}[Z''']
	\end{equation*} 
	where $\tilde{Z}''$ is  an irreducible component with $\epsilon_{i_{1}}(\tilde{Z})=a_{1}$. By $(\heartsuit)$,  those $Z'''$  with $\epsilon_{i_{1}}(Z''')>a_{1}$ satisfy $Z \prec Z'''$. So we only need to check $Z \prec \tilde{Z}''$. In fact, if $\epsilon_{i_{r}}(\tilde{Z}'') >0$ holds for some $i_{r} \prec i_{1}$, then $s^{\prec}(\tilde{Z}'')$ starts with $(i_{r},m')$ and $Z \prec \tilde{Z}''$ holds by definition of the string order. Otherwise, we have $s^{\prec}(\tilde{Z}'')=((i_{1},a_{1}),s^{\prec}(Z''))$. Since $Z' \prec Z''$, we have $Z \prec Z'''$. In a conclusion, $m_{Z}=\pm[Z]+\sum \limits_{Z \prec Z'} c_{Z,Z'}[Z']$ always holds. 
\end{proof}

\section{Equivariant characteristic cycles for Lusztig sheaves}
In this section, we establish the relation between canonical basis and the top Borel-Moore homology  groups of Nakajima's quiver varieties and tensor product varieties. 

\subsection{Singular supports of Lusztig sheaves}

\subsubsection{Singular supports of $\mP_{\nu}$}
We refer \cite[Section 12 and 13]{MR1088333} for details of this subsection.

\begin{proposition}[{\cite[Corollary 13.6]{MR1088333}}]
For any $\nu\in \mathbb{N}I$ and $L\in \mP_{\nu}$, the singular support $SS(L)$ is a union of irreducible components of $\Lambda_{\bV}$.
\end{proposition}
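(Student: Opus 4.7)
The plan is to show that for each $L \in \mP_\nu$, the singular support $SS(L)$ is a closed conic Lagrangian subvariety of $T^\ast\bfEVO = \bfEVH$ contained in $\Lambda_{\mathbf{V}}$, then use a dimension/Lagrangian argument to upgrade the inclusion to a union of irreducible components.

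First I would recall the identification $T^\ast\bfEVO = \bfEVH$ via the trace form, as set up in Section~5.1.1. Under this identification $\Lambda_{\mathbf{V}}$ is a Lagrangian subvariety of $T^\ast \bfEVO$, and by Lusztig's theorem (\cite[Theorem 12.9]{MR1088333}) each irreducible component has dimension exactly $\dim \bfEVO$. Since $L \in \mP_\nu$ is, up to shift, a direct summand of some $L_{\ftnu}=(\pi_{\ftnu})_!(\mathbb{C}_{\tilde{\mathcal{F}}_\ftnu}[\dim \tilde{\mathcal{F}}_\ftnu])$, and singular support is additive on direct sums, it suffices to prove $SS(L_{\ftnu}) \subseteq \Lambda_{\mathbf{V}}$ for every $\ftnu \in \mathcal{V}_\nu$.

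Next I would apply the microlocal estimate for proper pushforward. Since $\pi_{\ftnu}:\tilde{\mathcal{F}}_\ftnu \to \bfEVO$ is proper and $\tilde{\mathcal{F}}_\ftnu$ is smooth, we have
\[
SS(L_{\ftnu}) \subseteq (d\pi_\ftnu)_\pi\bigl((d\pi_\ftnu)_d^{-1}(T^\ast_{\tilde{\mathcal{F}}_\ftnu}\tilde{\mathcal{F}}_\ftnu)\bigr),
\]
where the right-hand side is the image of the zero section of $T^\ast\tilde{\mathcal{F}}_\ftnu$ under the cotangent correspondence
\[
T^\ast\tilde{\mathcal{F}}_\ftnu \xleftarrow{(d\pi_\ftnu)_d} T^\ast\bfEVO \times_{\bfEVO} \tilde{\mathcal{F}}_\ftnu \xrightarrow{(d\pi_\ftnu)_\pi} T^\ast\bfEVO.
\]
Explicitly, this image consists of those $(x,\xi) \in T^\ast\bfEVO = \bfEVH$ such that there exists a flag $f$ of $I$-graded subspaces with $(x,f) \in \tilde{\mathcal{F}}_\ftnu$ and with $\xi$ vanishing on all tangent directions to $\tilde{\mathcal{F}}_\ftnu$ at $(x,f)$. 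I would then compute that this conormal condition, combined with the $\bG_\bV$-equivariance of $\pi_\ftnu$, forces $\xi \in \bfEVH$ to satisfy the moment map equation $\mu_\bV(x+\xi\text{-contribution})=0$ and to preserve the flag $f$; in particular, $\xi$ (together with the $x$-part of the $\bfEVH$-element) must be nilpotent because any composition of its components respects the strict filtration by $f$. This places the image inside $\Lambda_{\mathbf{V}}$.

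The main obstacle is this last explicit computation: one has to write out the conormal bundle of $\tilde{\mathcal{F}}_\ftnu \hookrightarrow \bfEVO \times (\text{flag variety})$ using the $\bG_\bV$-invariant symplectic pairing and verify that the resulting constraints on $\xi$ coincide with membership in $\mu_\bV^{-1}(0)$ together with nilpotency with respect to some refinement of $f$. This is exactly the content of \cite[Section 12]{MR1088333}, and the argument is most efficiently phrased by checking that for each $(x,f) \in \tilde{\mathcal{F}}_\ftnu$ the symplectic orthogonal of $T_{(x,f)}\tilde{\mathcal{F}}_\ftnu$ inside $\bfEVH$ consists of elements that preserve $f$; the flag-preserving condition then yields nilpotency.

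Finally I would conclude as follows. The singular support $SS(L)$ is automatically closed, conic, and (by a theorem of Kashiwara--Schapira) Lagrangian, hence purely of dimension $\dim \bfEVO$. Having shown $SS(L) \subseteq \Lambda_{\mathbf{V}}$, and knowing that each irreducible component of $\Lambda_{\mathbf{V}}$ has dimension $\dim \bfEVO$, any irreducible component $Z$ of $SS(L)$ is an irreducible closed subvariety of $\Lambda_{\mathbf{V}}$ of maximal dimension, hence is an irreducible component of $\Lambda_{\mathbf{V}}$. Thus $SS(L)$ is a union of irreducible components of $\Lambda_{\mathbf{V}}$, as claimed.
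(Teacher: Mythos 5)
Your proof is correct and reconstructs essentially the same argument as the cited result \cite[Corollary 13.6]{MR1088333}, which the paper invokes without reproving: the proper-pushforward microlocal estimate for $\pi_{\ftnu}$, the Section~12 computation placing the image of the cotangent correspondence inside $\Lambda_{\bV}$, and the final dimension count using that $SS(L)$ is a pure Lagrangian of dimension $\dim\bfEVO$ while $\Lambda_{\bV}$ is equidimensional of that same dimension by \cite[Theorem 12.9]{MR1088333}. One point worth making explicit in your nilpotency step is that each graded piece $\bV^{l-1}/\bV^{l}$ of a flag of type $\ftnu$ is supported at a \emph{single} vertex, so that even the $\Omega$-components $x_h$ (which a priori only preserve the flag) act as zero on the graded pieces and hence strictly decrease the filtration --- flag-preservation alone would not give nilpotency.
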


For any $i\in I, \nu\in \mathbb{N}I$ and $0\leqslant t\leqslant \nu_i$, we define $\Lambda_{\bV,i,t}\subset \Lambda_{\bV}$ to be the locally closed subset consisting of $x$ such that 
$$\mathrm{comdim}_{\bV_i}(\sum_{h\in H,h''=i}\mathrm{Im}(x_h:\bV_{h'}\rightarrow \bV_i))=t,$$
then all these $\Lambda_{\bV,i,t},0\leqslant t\leqslant \nu_i$ form a stratification of $\Lambda_{\bV}$. Dually, we define $\Lambda^t_{\bV,i}\subset \Lambda_{\bV}$ to be the locally closed subset consisting of $x$ such that
$$\mathrm{dim}\bigcap_{h\in H,h'=i}\mathrm{Ker}(x_h:\bV_i\rightarrow \bV_{h''})=t,$$
then all these $\Lambda^t_{\bV,i},0\leqslant t\leqslant \nu_i$ form a stratification of $\Lambda_{\bV}$. For any irreducible component $Z\subset \Lambda_{\bV}$, there are unique integers $\epsilon_{i}(Z),\epsilon_{i}^\ast(Z)$ such that $\Lambda_{\bV,i,\epsilon_{i}(Z)} \cap Z, \Lambda^{\epsilon^\ast_{i}(Z)}_{\bV,i} \cap Z$ are open dense in $Z$.
\begin{lemma}[{\cite[Lemma 12.5]{MR1088333}}]
For any $0\leqslant t\leqslant \nu_i$, let $\nu',\nu''=\nu-ti$, then there are bijections
\begin{align*}
\rho_{i,t}: \{Z \in {\rm{Irr}} \Lambda_{\bV}\mid\epsilon_{i}(Z)=t \} \rightarrow   \{Z' \in {\rm{Irr}} \Lambda_{\bV''}\mid\epsilon_{i}(Z')=0 \},\\
\rho^{\ast}_{i,t}: \{Z \in {\rm{Irr}} \Lambda_{\bV}\mid\epsilon^{\ast}_{i}(Z)=t \} \rightarrow   \{Z' \in {\rm{Irr}} \Lambda_{\bV'}\mid\epsilon^{\ast}_{i}(Z')=0 \}.
\end{align*}
\end{lemma}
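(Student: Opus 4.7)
The plan is to prove $\rho_{i,t}$ explicitly; the bijection $\rho^{\ast}_{i,t}$ is proved by a parallel (dual) argument, using the stratification by kernels of outgoing maps in place of the one by images of incoming maps, so I focus on $\rho_{i,t}$.

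Following the approach of \cite[Section 12]{MR1088333}, I would set up a Hecke-type correspondence. Fix an $I$-graded decomposition $\bV=\bV''\oplus T$ with $T=T_i$ of dimension $t$, and let $\Lambda_{\bV,i,t}^{\circ}\subset\Lambda_{\bV,i,t}$ be the open subset consisting of those $x$ such that $T$ is transverse to $\sum_{h''=i}\mathrm{Im}(x_h)$ in $\bV_i$. Introduce the correspondence
$$
\widetilde{\Lambda}_t=\Bigl\{(x,U)\,:\,x\in\Lambda_{\bV,i,t},\ U\subset\bV_i\ \text{a $t$-dim subspace with}\ U\oplus\sum_{h\in H,\ h''=i}\mathrm{Im}(x_h)=\bV_i\Bigr\},
$$
equipped with the forgetful projection $p_1:\widetilde{\Lambda}_t\to\Lambda_{\bV,i,t}$ and a second projection $p_2$ defined as follows: using the splitting $\bV_i=U\oplus\sum\mathrm{Im}(x_h)$, compose each incoming map $x_h$ ($h''=i$) with projection along $U$, restrict each outgoing map $x_h$ ($h'=i$) to the complement of $U$, and keep the remaining $x_h$ unchanged; this produces $x''\in\mathbf{E}_{\bV'',H}$, and $p_2(x,U)=x''$. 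The fiber of $p_1$ is an affine space (the space of complements), so $p_1$ is smooth with irreducible fibers; analogously, $p_2$ will be smooth with irreducible fibers, giving bijections between irreducible components of $\widetilde{\Lambda}_t$, of $\Lambda_{\bV,i,t}$, and of $\{x''\in\Lambda_{\bV''}\,:\,\epsilon_i(x'')=0\}$. The map $\rho_{i,t}$ is obtained by passing to closures inside $\Lambda_\bV$ and $\Lambda_{\bV''}$, which is well-defined since the strata in question are open and dense in the relevant components.

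The main obstacle is verifying that the construction lands inside $\Lambda_{\bV''}$, namely that $x''$ is nilpotent and satisfies the moment map equation $\mu_{\bV''}(x'')=0$. The moment map equation at vertices $j\ne i$ is immediate from $\mu_\bV(x)=0$, since nothing changes there. At vertex $i$ it requires a genuine computation: one writes $\mu_\bV(x)_i=\sum_{h''=i}\varepsilon(h)x_hx_{\bar h}=0$ as an endomorphism of $\bV_i$, restricts to the complement of $U$, and uses the transversality condition on $U$ together with the relation $\varepsilon(h)+\varepsilon(\bar h)=0$ to deduce $\mu_{\bV''}(x'')_i=0$. Nilpotency of $x''$ follows from nilpotency of $x$ by a path-chase: any composition of the $x''_h$ lifts to a composition of the $x_h$ after inserting the projection away from $U$, and vanishing of the latter after $N$ steps forces vanishing of the former. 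Once these two points are settled, a straightforward dimension count (showing that both $p_1$ and $p_2$ have fibers of the expected dimension equal to $t(\sum_{h''=i}\nu_{h'}-t)$ and $0$ respectively, up to the right normalization) produces the desired bijection.
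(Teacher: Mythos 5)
The paper itself does not reprove this lemma; it cites \cite[Lemma 12.5]{MR1088333}, so the natural comparison is with Lusztig's argument, which your overall strategy (a smooth correspondence with irreducible fibers, then passing to closures of the open dense strata in the relevant components) does match. However there is a genuine gap in the construction of $p_2$. You set $W(x)=\sum_{h''=i}\mathrm{Im}(x_h)$ and let $U$ range over complements to $W(x)$ in $\bV_i$. The data $(x,U)$ then gives you maps with target or source $W(x)$, but $W(x)$ is a \emph{varying} subspace of $\bV_i$; to obtain a point $x''\in\mathbf{E}_{\bV'',H}$ you must identify $W(x)$ with the \emph{fixed} space $\bV''_i$, and a complement $U$ to $W(x)$ does not produce such an identification. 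You introduce the fixed decomposition $\bV_i=\bV''_i\oplus T$ and the open locus $\Lambda^\circ$ where $W(x)$ is transverse to $T$, but then never use it: if that were the mechanism, you would project $W(x)\xrightarrow{\sim}\bV''_i$ along $T$ and dispense with $U$ entirely. Alternatively, replace the choice of $U$ by the choice of an isomorphism $\phi:\bV''_i\xrightarrow{\sim}W(x)$, making $\widetilde{\Lambda}_t$ a $\mathrm{GL}_{\nu''_i}$-torsor over the stratum. As written, $p_2$ is not a well-defined morphism to $\mathbf{E}_{\bV'',H}$.

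Two smaller points. First, you flag the moment map verification at vertex $i$ as "a genuine computation" requiring the transversality of $U$; in fact it is essentially formal. Since every incoming $x_h$ has image inside $W(x)$ by definition, the endomorphism $\sum_{h''=i}\varepsilon(h)x_hx_{\bar h}$ of $\bV_i$ has image in $W(x)$, and its restriction to $W(x)$ is exactly the moment map of the would-be $x''$; vanishing therefore passes trivially to the restriction, with no transversality needed (your nilpotency argument, by contrast, is fine as stated, since $\bV''$ is an $x$-stable subspace). Second, the dimension count at the end is off: the fiber of $p_1$ over $x$ is the affine space of complements to the codimension-$t$ subspace $W(x)\subset\bV_i$, of dimension $t(\nu_i-t)$, not $t(\sum_{h''=i}\nu_{h'}-t)$; and the fiber of $p_2$ is certainly not zero-dimensional, since recovering $x$ from $x''$ requires choosing the extension of the outgoing maps off $W(x)$ together with the identification $\phi$ (or $U$). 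Fortunately the exact fiber dimensions are irrelevant for the bijection on irreducible components; what you actually need, and should argue, is that both projections are smooth (or at least locally trivial) with irreducible fibers.
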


\begin{theorem}[{\cite[Theorem 5.3.2]{MR1458969}}]
There is an isomorphism of crystals
$$\bigsqcup_{\nu\in \mathbb{N}I}\mathrm{Irr}\Lambda_\bV\cong \mathcal{B}.$$
\end{theorem}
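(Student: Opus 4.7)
The plan is to construct the bijection inductively by matching Lusztig's bijection $\pi_{i,t}$ on the canonical basis side (Lemma \ref{lkey}(c), extended from sinks to arbitrary $i \in I$ via Fourier-Sato transforms as in Section \ref{Refine string order}) with the bijection $\rho_{i,t}$ on the irreducible-components side (the preceding lemma). Both families are indexed by the same data---a vertex $i \in I$ and $0 \leqslant t \leqslant \nu_i$---and both mimic the action of a Kashiwara operator on a putative common crystal. The strategy is therefore to define a map
$$\Phi : \bigsqcup_{\nu} \mathcal{B}_\nu \longrightarrow \bigsqcup_{\nu} \mathrm{Irr}\,\Lambda_\bV$$
by induction on $|\nu|$ using these compatible reduction steps, and then to promote it to a crystal isomorphism.

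Fix a total order $\prec$ on $I$. For $\nu = 0$, both sides are singletons and $\Phi$ is the obvious map. For $\nu \neq 0$ and $b \in \mathcal{B}_\nu$, let $i \in I$ be the $\prec$-minimal vertex with $\epsilon_i(b) > 0$, set $t = \epsilon_i(b)$, and define
$$\Phi(b) = \rho_{i,t}^{-1}\bigl(\Phi(\pi_{i,t}(b))\bigr).$$
Injectivity is automatic from the inductive construction. For surjectivity, given $Z \in \mathrm{Irr}\,\Lambda_\bV$ one reads off an analogous string $s^\prec(Z) = ((i_1,a_1),\dots,(i_k,a_k))$ using $\rho_{i_r,a_r}$ in place of $\pi_{i_r,a_r}$, and constructs a preimage $b$ by applying $\pi_{i_r,a_r}^{-1}$ step by step; by construction $s^\prec(b) = s^\prec(Z)$, and hence $\Phi(b) = Z$.

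The crucial step, and the main obstacle, is to verify that $\Phi$ is an isomorphism of \emph{crystals}: compatible with every $\epsilon_i$ (not just the $\prec$-minimal one) and with every Kashiwara operator. Geometrically, this amounts to showing that the characteristic cycle $CC(L)$ of the simple perverse sheaf $L$ corresponding to $b$ contains a distinguished irreducible component $Z_L \subseteq \Lambda_\bV$ satisfying $\epsilon_i(Z_L) = \epsilon_i(L)$ for every $i \in I$. The proposition cited above only guarantees that $SS(L)$ is a union of irreducible components, without pinning down a distinguished one; identifying it requires a delicate microlocal analysis comparing the open stratum of $\mathrm{supp}(L) \cap \bfEVO^{i,t}$ with the open stratum of $Z_L \cap \Lambda_{\bV,i,t}$ via the zero section of the conormal bundle. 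This comparison is the core content of \cite{MR1458969}. Alternatively, one may invoke Kashiwara's uniqueness theorem: both $\bigsqcup_\nu \mathcal{B}_\nu$ and $\bigsqcup_\nu \mathrm{Irr}\,\Lambda_\bV$ are realizations of the crystal $B(\infty)$ of $\mathbf{U}_v^-(\mathfrak{g})$---the first by Lusztig via $\pi_{i,t}$, the second by Kashiwara-Saito via $\rho_{i,t}$---and by uniqueness any two such realizations are canonically isomorphic, with the isomorphism necessarily coinciding with the $\Phi$ built above.
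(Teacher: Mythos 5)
The paper does not prove this theorem; it is quoted as \cite[Theorem~5.3.2]{MR1458969}, and the surrounding text just records the resulting bijection $\Phi=\Phi_Q$ together with the properties needed later (compatibility with $\epsilon_i,\epsilon_i^\ast$, the chain $\Phi(L)\subseteq SS(L)\subseteq\bigcup Z$ from \cite[Theorem~6.6.6]{MR1458969}, and the intertwining of $\rho_{i,t},\pi_{i,t}$ from \cite[Lemma~8.2.1]{MR1458969}). So there is no proof in the paper to compare against, and the fair question is whether your sketch is sound on its own.

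Your strategy is the right one, but the induction as written has a well-definedness gap that turns out to be the whole theorem in disguise. The map $\rho_{i,t}^{-1}$ is only defined on $\{Z'\in\mathrm{Irr}\,\Lambda_{\bV''}\mid\epsilon_i(Z')=0\}$, so to evaluate $\rho_{i,t}^{-1}\bigl(\Phi(\pi_{i,t}(b))\bigr)$ you must already know $\epsilon_i\bigl(\Phi(\pi_{i,t}(b))\bigr)=0$. You do know $\epsilon_i(\pi_{i,t}(b))=0$, but transferring this to $\Phi(\pi_{i,t}(b))$ requires the inductive hypothesis to be the full compatibility statement $\epsilon_j(\Phi(\cdot))=\epsilon_j(\cdot)$ for \emph{all} $j$ at lower dimension vectors --- exactly the statement you call ``the main obstacle.'' In other words, the induction must carry the crystal compatibility along with it; it cannot be postponed to the end. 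The same compatibility is also what makes injectivity work (your argument needs $\epsilon_{i_1}(b_2)=\epsilon_{i_1}(\Phi(b_2))$ to rule out distinct minimal vertices for $b_1,b_2$ with the same image), so it is not ``automatic from the inductive construction.'' Your alternative route via Kashiwara's uniqueness criterion for $B(\infty)$ is indeed what Kashiwara--Saito do, but applying it still requires establishing that $\bigsqcup_\nu\mathrm{Irr}\,\Lambda_\bV$, equipped with the operators coming from $\rho_{i,t}$, is a crystal satisfying the axiomatic characterization of $B(\infty)$ --- that verification, done via the microlocal/Lagrangian analysis you gesture at, is the substance of \cite{MR1458969}. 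So your write-up is a reasonable reading of what the cited theorem says and how it is proved, but it does not constitute a self-contained proof, and the claims of ``automatic'' well-definedness and bijectivity should not be stated as free.
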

More precisely, there is a bijection $$\Phi=\Phi_Q:\bigsqcup_{\nu \in \mathbb{N}I}\mP_\nu \rightarrow \bigsqcup_{\nu \in \mathbb{N}I} {\rm{Irr}} \Lambda_{\bV}$$ such that $\epsilon_{i}(L)= \epsilon_{i}(\Phi(L)),\epsilon^{\ast}_{i}(L)= \epsilon^{\ast}_{i}(\Phi(L))$ for any $i\in I$, see subsection \ref{Refine string order} for the definitions of $\epsilon_{i}(L),\epsilon^\ast_{i}(L)$. Moreover, by \cite[Theorem 6.6.6]{MR1458969}, we have 
\begin{equation}\label{KS}
	\Phi(L) \subseteq SS(L) \subseteq \bigcup Z,
\end{equation}
where the union runs over all $Z\in \Lambda_\bV$ such that $\epsilon_{i}(Z) \geqslant \epsilon_{i}(L),\epsilon^{\ast}_{i}(Z) \geqslant \epsilon^{\ast}_{i}(L)$ for any $i \in I$. By \cite[Lemma 8.2.1]{MR1458969} we have 
$	\rho_{i,t}(\Phi(L)) =	\Phi(L') $ if and only if $\pi_{i,t}(L)=L' $,  and $\rho^{\ast}_{i,t}(\Phi(L)) =	\Phi(L') $ if and only if $\pi^{\ast}_{i,t}(L)=L'.$ 

\subsubsection{Singular supports of $\mP_{\nu,\omega}$}

\begin{proposition} \label{fss}
	Assume $L$ is a simple perverse sheaf in $\mP_{\nu,\omega}$, then $SS(L) \subset \Lambda_{\bV,\mathbf{W}}$ and $\Phi_{Q^{(1)}}$ restricts to a bijection between ${\rm{Irr}}\Lambda_{\bV,\mathbf{W}}$ and $\mP_{\nu,\omega} $.  
	In particular, 	 $L\in \mathcal{N}_{\nu}$ if and only if $SS(L) \cap  \Lambda^{s}_{\bV,\mathbf{W}}$ is empty, if and only if $\Phi_{Q^{(1)}}(L) \cap \Lambda^{s}_{\bV,\mathbf{W}} $ is empty.
\end{proposition}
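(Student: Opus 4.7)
The plan is to exploit a product decomposition of the flag variety $\tilde{\mathcal{F}}_{\ftnu\boldsymbol{\omega}^{1}}$ underlying the Lusztig sheaves in $\mP_{\nu,\omega}$. Since the sequence $\boldsymbol{\omega}^{1}$ peels off all of $\mathbf{W}^{1}$ at the bottom of the flag, any $(U^{l})_{l}$ in $\tilde{\mathcal{F}}_{\ftnu\boldsymbol{\omega}^{1}}$ satisfies $U^{m}=\mathbf{W}^{1}$ with the refinement below $U^{m}$ being the canonical coordinate flag on $\mathbf{W}^{1}$, and since no arrow in $\Omega^{(1)}$ emanates from $I^{1}$, the $x$-preservation only constrains the $\mathbf{V}$-part of the flag, while the framing maps $y_{i}\colon \mathbf{V}_{i}\to \mathbf{W}^{1}_{i^{1}}$ remain completely free. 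This yields an isomorphism
\[ \tilde{\mathcal{F}}_{\ftnu\boldsymbol{\omega}^{1}}\cong \tilde{\mathcal{F}}^{Q}_{\ftnu}\times\bigoplus_{i\in I}\Hom(\mathbf{V}_{i},\mathbf{W}^{1}_{i^{1}}) \]
over $\bfEVOf=\bfEVO\times\bigoplus_{i\in I}\Hom(\mathbf{V}_{i},\mathbf{W}^{1}_{i^{1}})$, under which $\pi_{\ftnu\boldsymbol{\omega}^{1}}=\pi^{Q}_{\ftnu}\times\mathrm{id}$, where $\tilde{\mathcal{F}}^{Q}_{\ftnu}$ and $\pi^{Q}_{\ftnu}$ denote the flag variety and its projection for the unframed quiver $Q$. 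Pushing forward, $L_{\ftnu\boldsymbol{\omega}^{1}}\cong L^{Q}_{\ftnu}\boxtimes \mathbb{C}[d]$, and hence every simple $L\in \mP_{\nu,\omega}$ has the form $L'\boxtimes\mathbb{C}[d]$ for a unique $L'\in \mP_{\nu}$ of $Q$.

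The first assertion is then immediate. Using $SS(\mathbb{C}_{Y})=T^{\ast}_{Y}Y$ (zero section) and the product formula for the singular support of an external product, together with Lusztig's theorem $SS(L')\subseteq \Lambda_{\mathbf{V}}$, we get
\[ SS(L)=SS(L')\times \bigoplus_{i\in I}\Hom(\mathbf{V}_{i},\mathbf{W}^{1}_{i^{1}})\subseteq \Lambda_{\mathbf{V}}\times \bigoplus_{i\in I}\Hom(\mathbf{V}_{i},\mathbf{W}^{1}_{i^{1}})=\Lambda_{\mathbf{V},\mathbf{W}}. \]
Since every irreducible component of $\Lambda_{\mathbf{V},\mathbf{W}}$ has the form $Z\times\bigoplus_{i}\Hom$ for a unique $Z\in \mathrm{Irr}\Lambda_{\mathbf{V}}$, the Kashiwara-Saito bijection $\Phi_{Q}$ promotes to a bijection $\Phi_{Q^{(1)}}\colon \mP_{\nu,\omega}\to \mathrm{Irr}\Lambda_{\mathbf{V},\mathbf{W}}$ sending $L'\boxtimes\mathbb{C}\mapsto \Phi_{Q}(L')\times\bigoplus_{i}\Hom$, and in particular $\epsilon^{\ast}_{i}(L)=\epsilon^{\ast}_{i}(\Phi_{Q^{(1)}}(L))$ for every $i\in I$.

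For the equivalence $SS(L)\cap \Lambda^{s}_{\mathbf{V},\mathbf{W}}=\emptyset \iff \Phi_{Q^{(1)}}(L)\cap \Lambda^{s}_{\mathbf{V},\mathbf{W}}=\emptyset$, I apply the Kashiwara-Saito inclusion \eqref{KS}: $SS(L)\subseteq \bigcup Z'$ over components with $\epsilon^{\ast}_{i}(Z')\geqslant \epsilon^{\ast}_{i}(L)$. A component $Z\times\bigoplus_{i}\Hom$ meets $\Lambda^{s}_{\mathbf{V},\mathbf{W}}$ exactly when $\epsilon^{\ast}_{i}(Z)\leqslant \omega^{1}_{i^{1}}$ for every $i\in I$: on a dense open of $Z$ the subspace $\bigcap_{h\in\Omega,\,h'=i}\ker x_{h}\subseteq \mathbf{V}_{i}$ has dimension $\epsilon^{\ast}_{i}(Z)$ and is $x$-invariant in $\mathbf{V}$, so stability of $(x,y,0)$ forces the generic framing $y_{i}$ to be injective on it. Thus if $\Phi_{Q^{(1)}}(L)$ is unstable then some $\epsilon^{\ast}_{i_{0}}(L)>\omega^{1}_{i_{0}^{1}}$, and every $Z'$ in the Kashiwara-Saito union inherits the same violated inequality, forcing $SS(L)\cap \Lambda^{s}_{\mathbf{V},\mathbf{W}}=\emptyset$; the reverse implication is obvious from $\Phi_{Q^{(1)}}(L)\subseteq SS(L)$.

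Finally, for the equivalence with $L\in \mathcal{N}_{\nu}$, the forward direction is direct: if $\mathrm{supp}(L)\subseteq \bfEVOf^{i,\geqslant 1}$ with $i\in I$ a source of $\Omega$, then at every point of the support the nonzero subspace $\bigcap_{h\in\Omega^{(1)},\,h'=i}\ker x_{h}\subseteq \mathbf{V}_{i}$ is $x$-invariant in $\mathbf{V}$ (since $i$ is a source of $\Omega$) and contained in $\ker y$, obstructing stability, so $SS(L)\cap \Lambda^{s}_{\mathbf{V},\mathbf{W}}=\emptyset$. Because the Fourier-Sato transforms generating $\mathcal{N}_{\nu}$ mutate only internal vertices $i\in I$ and leave the framing variables $y,z$ untouched, both $SS$ and $\Lambda^{s}_{\mathbf{V},\mathbf{W}}$ are preserved, and the property propagates to every $L\in \mathcal{N}_{\nu}$. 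The converse, which I expect to be the principal obstacle, will be deduced by a dimension count: by Theorem \ref{high} the classes $[L]$ with $L\in \mP_{\nu,\omega}\setminus\mathcal{N}_{\nu}$ form a basis of ${_{\mathcal{A}}L}_{v}(\lambda_{1})_{\nu}$, whose rank equals, via Nakajima's realization and Saito's theorem, the number of irreducible components of $\Lambda_{\mathbf{V},\mathbf{W}}$ meeting $\Lambda^{s}_{\mathbf{V},\mathbf{W}}$; combined with the forward direction and the bijection $\Phi_{Q^{(1)}}$ established above, this forces the two index sets to coincide and yields the converse.
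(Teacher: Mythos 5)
Your proof is correct and follows essentially the same route as the paper's: both exploit the product structure relating $\mathcal{P}_{\nu,\omega}$ to $\mathcal{P}_{\nu}$ and $\mathrm{Irr}\,\Lambda_{\mathbf{V},\mathbf{W}}$ to $\mathrm{Irr}\,\Lambda_{\mathbf{V}}$, invoke the Kashiwara--Saito singular-support inclusion together with the $\epsilon^{\ast}_i$-characterization of stability, and finish the converse by a dimension count against the weight space of $L_v(\lambda)$. You spell out the flag-variety Künneth decomposition and the product formula for singular supports more explicitly where the paper instead invokes the intertwining of $\Phi_{Q^{(1)}}$ with $\pi^{\ast}_{i^1,\omega_i}$ and $\rho^{\ast}_{i^1,\omega_i}$; this (and the compatibility $\Phi_{Q^{(1)}}(L'\boxtimes\mathbb{C})=\Phi_Q(L')\times\bigoplus\Hom$, which you assert but do not justify) is an expository rather than substantive difference, since the argument can be run entirely with the $Q^{(1)}$-level $\epsilon^{\ast}_i$ via the Kashiwara--Saito theorem.
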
 
\begin{proof}
	 Consider  the natural projections $\pi_{\mathbf{W}}:\Lambda_{\bV,\mathbf{W}} \rightarrow \Lambda_{\bV}$ and $\pi_{\mathbf{W}}:\mathbf{E}_{\bV,\mathbf{W},\Omega^{(1)}} \rightarrow \bfEVO$, then $(\pi_{\mathbf{W}})^{\ast}$ sends $L' \in \mP_{\nu}$ to $L=(\pi_{\mathbf{W}})^{\ast}(L') \in \mP_{\nu,\omega} $ up to shifts, and $(\pi_{\mathbf{W}})^{-1} $ sends $Z' \in {\rm{Irr}} \Lambda_{\bV}$ to $Z=(\pi_{\mathbf{W}})^{-1}(Z') \in {\rm{Irr}} \Lambda_{\bV,\mathbf{W}}$. Notice that $(\pi_{\mathbf{W}})^{\ast}(L')= \prod\limits_{i \in I}( \pi^{\ast}_{i^{1},\omega_{i}})^{-1}(L') $ and $(\pi_{\mathbf{W}})^{-1}(Z')= \prod\limits_{i \in I}( \rho^{\ast}_{i^{1},\omega_{i}})^{-1}(Z') $. Since $\Phi_{Q^{(1)}}$ intertwines $ \pi^{\ast}_{i^{1},\omega_{i}}$ and  $\rho^{\ast}_{i^{1},\omega_{i}}$, the first statement follows directly. 
	 
     Assume $L$ belongs to $\mathcal{N}_{\nu}$, then there exists some $i$ such that $\epsilon^{\ast}_{i}(L) >0$, so $Z= \Phi_{Q^{(1)}}(L)$ also has $\epsilon_{i}^{\ast}(Z)>0.$ In particular, the subspace ${\rm{Ker}} (\sum\limits_{h \in H, h'=i} x_{h})$ fails the stable condition, and $\Phi_{Q^{(1)}}$ sends $\mathcal{N}_{\nu} $ into the subset $\{ Z \in {\rm{Irr}}\Lambda_{\bV,\mathbf{W}}| Z \cap \Lambda_{\bV,\mathbf{W}}^{s}=\emptyset \}$. Notice that the complements of $\mathcal{N}_{\nu}$ and $\{ Z \in {\rm{Irr}}\Lambda_{\bV,\mathbf{W}}| Z \cap \Lambda_{\bV,\mathbf{W}}^{s}=\emptyset \}$ have the same cardinality equal to the dimension of the weight space of the irreducible highest weight module $L_{v}(\lambda)$. We can see that $\Phi_{Q^{(1)}}$ induces a bijection between the set $\mathcal{N}_{\nu}$ and the set $\{ Z \in {\rm{Irr}}\Lambda_{\bV,\mathbf{W}}| Z \cap \Lambda_{\bV,\mathbf{W}}^{s}=\emptyset \}$, so  $L$ belongs to $\mathcal{N}_{\nu}$ if and only if $\Phi_{Q^{(1)}}(L) \cap \Lambda^{s}_{\bV,\mathbf{W}} $ is empty. The other statements follows from (\ref{KS}).
\end{proof}

\subsubsection{Singular supports of $\mP_{\nu,\omega^1,\omega^2}$}

Given $I^{1}$-graded vector spaces $\mathbf{W}^{1}$ and $I^{2}$-graded vector space  $\mathbf{W}^{2}$ such that $|\mathbf{W}^{1}|=\omega^{1}$ and $|\mathbf{W}^{2}|=\omega^{2}$, we consider the moduli space $\bfEVOff$ for the framed quiver.  We take another $I^{1}$-graded space $\mathbf{W}$ such that  $|\mathbf{W}|=\omega=\omega^{1}+\omega^{2}$. Notice that $$\Hom(\bV_{i},\mathbf{W}^{1}) \oplus  \Hom(\bV_{i},\mathbf{W}^{2}) \cong \Hom(\bV_{i},\mathbf{W}^{1} \oplus \mathbf{W}^{2}),$$ 
the cotangent bundle $T^{\ast}\bfEVOff$ is naturally isomorphic to $\bfEVHf$. We denote an element of $T^{\ast}\bfEVOff$ by $(x,y^1,z^1,y^2,z^2)$, where 
\begin{align*}
&x=(x_{h},x_{\bar{h}})_{h \in \Omega} \in \bfEVH, y^1=(y^1_{i})_{i\in I} \in \bigoplus_{i \in I} \Hom(\bV_{i},\mathbf{W}^{1}_{i}), z^1=(z^1_{i})_{i \in I} \in \bigoplus\limits_{i \in I}\Hom(\mathbf{W}^{1}_{i},\bV_{i}),\\
&y^2=(y^2_{i})_{i\in I} \in \bigoplus\limits_{i \in I} \Hom(\bV_{i},\mathbf{W}^{2}_{i}), z^2=(z^2_{i})_{i \in I} \in \bigoplus\limits_{i \in I}\Hom(\mathbf{W}^{2}_{i},\bV_{i})
\end{align*}
$x=(x_{h},x_{\bar{h}})_{h \in \Omega} \in \bfEVH$, $y^1=(y^1_{i})_{i\in I} \in \bigoplus_{i \in I} \Hom(\bV_{i},\mathbf{W}^{1}_{i})$, $z=(z_{i})_{i \in I} \in \bigoplus\limits_{i \in I}\Hom(\mathbf{W}^{1}_{i},\bV_{i})$, and $z=(z_{i})_{i\in I} \in \bigoplus\limits_{i \in I} \Hom(\bV_{i},\mathbf{W}^{2}_{i})$, $\bar{z}=(\bar{z}_{i})_{i \in I} \in \bigoplus\limits_{i \in I}\Hom(\mathbf{W}^{2}_{i},\bV_{i})$. We denote the image of $(x,y^1,z^1,y^2,z^2)$ in $\bfEVHf$ by $(x,y^1 + y^2, z^1+ z^2)$. 
\begin{lemma}
	Assume $L$ is a simple perverse sheaf in $\mP_{\nu,\omega^1,\omega^2}$, then $SS(L)\subset\Pi_{\bV,\mathbf{W}^1\oplus\mathbf{W}^2}$.
\end{lemma}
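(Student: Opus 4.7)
The plan is to reduce the statement to a specific semisimple model and then to compute a conormal bundle explicitly. Since $L$ is by definition a direct summand, up to shift, of some $L_{\ftnu' \boldsymbol{\omega}^1 \ftnu'' \boldsymbol{\omega}^2}$ with $\nu'+\nu''=\nu$, and singular supports are monotone under direct summands, it suffices to prove
\[
SS(L_{\ftnu' \boldsymbol{\omega}^1 \ftnu'' \boldsymbol{\omega}^2}) \subset \Pi_{\bV,\mathbf{W}^1\oplus\mathbf{W}^2}.
\]
Let $\pi=\pi_{\ftnu' \boldsymbol{\omega}^1 \ftnu'' \boldsymbol{\omega}^2}$ be the proper projection $\tilde{\mathcal{F}}_{\ftnu' \boldsymbol{\omega}^1 \ftnu'' \boldsymbol{\omega}^2}\to \bfEVOff$. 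The standard microlocal bound for proper pushforwards of the constant sheaf gives that $SS(L_{\ftnu' \boldsymbol{\omega}^1 \ftnu'' \boldsymbol{\omega}^2})$ is contained in the image in $T^{\ast}\bfEVOff\cong \bfEVHf$ of the relative conormal of $\pi$, so the problem reduces to computing this image.

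At a pair $(x,F)$ in the source, denote by $p_{1},p_{2},p_{3}$ the filtration indices at the end of the $\ftnu'$-block, the $\boldsymbol{\omega}^{1}$-block, and the $\ftnu''$-block respectively. The shape of the sequence forces $F^{p_{1}}=\bV''\oplus \mathbf{W}^{1}\oplus \mathbf{W}^{2}$, $F^{p_{2}}=\bV''\oplus \mathbf{W}^{2}$, and $F^{p_{3}}=\mathbf{W}^{2}$, where $\bV''\subset \bV$ is an $I$-graded subspace of dimension $\nu''$. Identifying the cotangent fiber $T^{\ast}_{x}\bfEVOff$ with the reverse-arrow part of $\bfEVHf$ via the symplectic trace pairing, Lusztig's conormal computation (the framed analogue of the argument proving \cite[Theorem 12.9]{MR1088333}) shows that a covector $(x^{-},z^{1},z^{2})$ lies in the conormal at $(x,F)$ precisely when the combined element $(x+x^{-},y^{1}+y^{2},z^{1}+z^{2})\in \bfEVHf$ preserves the filtration $F$ as a representation of the doubled graph $(I^{(2)},H^{(2)})$ and satisfies the moment map equation $\mu_{\bV\oplus \mathbf{W}^{1}\oplus \mathbf{W}^{2}}=0$. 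Together with the nilpotency forced by the strictness of $F$, this gives that the combined element lies in $\Lambda_{\bV\oplus \mathbf{W}^{1}\oplus \mathbf{W}^{2}}$ and is in particular nilpotent.

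It remains to read off the two extra conditions defining $\Pi$. Flag-preservation by $z^{2}$ at level $p_{3}$ gives $z^{2}(\mathbf{W}^{2}_{i^{2}})\subset F^{p_{3}}_{i}=0$, hence $z(\mathbf{W}^{2})=0$. Preservation by $y^{1}$ at level $p_{2}$ gives $y^{1}(\bV''_{i})\subset F^{p_{2}}_{i^{1}}=0$, so $\bV''\subset (y^{1})^{-1}(0)=y^{-1}(\mathbf{W}^{2})$; since $\bV''$ is $x$-stable (being $F^{p_{1}}\cap \bV$), this yields $\bV''\subset \mathbf{S}_{2}$. Preservation by $z^{1}$ at level $p_{1}$ gives $z^{1}(\mathbf{W}^{1}_{i^{1}})\subset F^{p_{1}}_{i}=\bV''_{i}$, so together with $z^{2}=0$ we obtain $z(\mathbf{W})\subset \bV''$, and by $x$-stability we conclude $\mathbf{S}_{1}\subset \bV''$. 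Combining, $\mathbf{S}_{1}\subset \bV''\subset \mathbf{S}_{2}$, completing the argument.

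The main obstacle is the conormal computation itself, namely verifying that annihilation of the image of $d\pi$ via the trace pairing on $\bfEVHf$ is equivalent to the joint condition ``reverse-flag preservation plus $\mu=0$''. This requires carefully matching the cotangent duality with the involution $h\mapsto \bar h$ on $H^{(2)}$ and accounting for the $\mathfrak{g}_{\bV}$-orbit directions in the image of $d\pi$, which furnish the moment map relations. Once this is in hand, all the defining conditions of $\Pi$ are read off combinatorially from the block structure of $\ftnu' \boldsymbol{\omega}^{1} \ftnu'' \boldsymbol{\omega}^{2}$.
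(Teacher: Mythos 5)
Your proposal is correct and follows essentially the same route as the paper: reduce to the model sheaf $L_{\ftnu'\boldsymbol{\omega}^1\ftnu''\boldsymbol{\omega}^2}$, invoke Lusztig's flag-preservation bound (\cite[Theorem 13.3, Corollary 13.6]{MR1088333} applied to $Q^{(2)}$), and read off the defining conditions of $\Pi_{\bV,\mathbf{W}^1\oplus\mathbf{W}^2}$ from the block structure of the preserved flag. You are somewhat more explicit than the published proof --- in particular you derive $z^2=0$ from the final $\boldsymbol{\omega}^2$-level of the flag (the paper's proof invokes $z^2=0$ rather abruptly) and you record that nilpotency follows from strict flag-preservation, which the paper leaves implicit in the citation.
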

\begin{proof}
In order to show $SS(L)$ is contained in $\Pi_{\bV,\mathbf{W}^1\oplus\mathbf{W}^2}$, it suffices to check for $L=L_{\ftnu'\boldsymbol{\omega}^{1}\ftnu''\boldsymbol{\omega}^{2}}$. By applying the proof of \cite[Theorem 13.3 and Corollary 13.6]{MR1088333} to $Q^{(2)}$, we can see that  any point in $SS(L_{\ftnu'\boldsymbol{\omega}^{1}\ftnu''\boldsymbol{\omega}^{2}})$ admits a flag $f$ of flag type $\ftnu'\boldsymbol{\omega}^{1}\ftnu''\boldsymbol{\omega}^{2}$. In the flag $f$, there are two special subspaces. The first one is $\bV'' \oplus \mathbf{W}^{2}$ corresponding to $\ftnu''\boldsymbol{\omega}^{2} $, the second one is $\bV''\oplus \mathbf{W}^{1} \oplus \mathbf{W}^{2}$ corresponding to $\boldsymbol{\omega}^{1}\ftnu''\boldsymbol{\omega}^{2} $. In particular, $\bV''\oplus \mathbf{W}^{1} \oplus \mathbf{W}^{2}$ is $(x,y^1,z^1,y^2,z^2)$-stable, and so $(z^1+z^2)(\mathbf{W}) \subset \mathbf{V}'' $. Similarly,  $\bV'' \oplus \mathbf{W}^{2}$ is $(x,y^1,z^1,y^2,z^2)$-stable, and so $(y^1 + y^2)(\bV'') \subset \mathbf{W}^{2}$. Notice that  $z^2=0$ implies that $(z^1+x^2)(\mathbf{W}^{2})=0$. We get the proof. 
\end{proof}
The above lemma tells us $\Phi_{Q^{(2)}}$ for the two-framed quiver $Q^{(2)}$ sends $\mP_{\nu,\omega^1,\omega^2}$ to  ${\rm{Irr}}\Pi_{\bV,\mathbf{W}^1\oplus\mathbf{W}^2}$. We want to show that it is a bijection. In order to prove it, we firstly study a easier case.  Consider the decomposition $\mathbf{W}^{1}=\mathbf{W}^{1}\oplus 0$, we can define $\mP_{\nu,\omega^{1},0}$ to be the set of simple direct summands of semisimple complexes of the form $L_{\ftnu'\boldsymbol{\omega}^{1}\ftnu''}$ up to shifts, and define the tensor product variety $\Pi_{\bV,\mathbf{W}^{1},0}$.
\begin{lemma}
	The isomorphism $\Phi_{Q^{(2)}}$ of the two-framed quiver induces a bijection between $\mP_{\nu,\omega^{1},0}$ and  ${\rm{Irr}}\Pi_{\bV,\mathbf{W}^{1},0}$. 
\end{lemma}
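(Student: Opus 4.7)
The plan is to build on the containment of singular support from the preceding lemma and combine it with Kashiwara--Saito, then close out by matching cardinalities. First, applying the preceding lemma with $\omega^{2}=0$ gives $SS(L)\subset\Pi_{\bV,\mathbf{W}^{1},0}$ for every $L\in\mP_{\nu,\omega^{1},0}$. In this case $\Phi_{Q^{(2)}}$ coincides with $\Phi_{Q^{(1)}}$ and sends $L$ to an irreducible component of $\Lambda_{\bV\oplus\mathbf{W}^{1}}$ contained in $SS(L)\subset\Pi_{\bV,\mathbf{W}^{1},0}$. Since $\Pi_{\bV,\mathbf{W}^{1},0}$ is closed in the equidimensional Lagrangian $\Lambda_{\bV\oplus\mathbf{W}^{1}}$, any irreducible component of $\Lambda_{\bV\oplus\mathbf{W}^{1}}$ contained in $\Pi_{\bV,\mathbf{W}^{1},0}$ is automatically an irreducible component of $\Pi_{\bV,\mathbf{W}^{1},0}$. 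Injectivity is inherited from $\Phi_{Q^{(1)}}$, so I obtain an injective map $\mP_{\nu,\omega^{1},0}\hookrightarrow\mathrm{Irr}\,\Pi_{\bV,\mathbf{W}^{1},0}$, and it remains to prove surjectivity.

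To count the geometric side, I stratify $\Pi_{\bV,\mathbf{W}^{1},0}$ by the dimension $\nu''$ of $\mathbf{S}_{2}$. For a fixed $I$-graded subspace $\bV^{2}\subset\bV$ of dimension $\nu''$ with $\mathbf{S}_{2}=\bV^{2}$, the invariance of $\bV^{2}\oplus\mathbf{W}^{2}=\bV^{2}$ together with the condition $\mathbf{S}_{1}\subset\bV^{2}$ forces $y|_{\bV^{2}}=0$ and $z(\mathbf{W}^{1})\subset\bV^{2}$. The decomposition described in Section 5.1.3, now extended to the non-stable locus, yields a morphism
\[
\Pi_{\bV,\mathbf{W}^{1},0,\bV^{2}}\longrightarrow \Lambda_{\bV^{1},\mathbf{W}^{1}}\times\Lambda_{\bV^{2}}, \qquad |\bV^{1}|=\nu',\ |\bV^{2}|=\nu'',
\]
with irreducible affine fibers encoding the extension data for $x$ and $z$. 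Letting $\bV^{2}$ vary over the Grassmannian of $\nu''$-dimensional subspaces and using $\bG_{\bV}$-equivariance, each stratum has dimension equal to $\dim\bfEVOf$, so the closures of the strata yield distinct irreducible components of $\Pi_{\bV,\mathbf{W}^{1},0}$ parametrised by $\mathrm{Irr}\,\Lambda_{\bV^{1},\mathbf{W}^{1}}\times\mathrm{Irr}\,\Lambda_{\bV^{2}}$ as $\nu'+\nu''=\nu$ varies.

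On the sheaf side, Proposition \ref{indres formula} gives $L_{\ftnu'\boldsymbol{\omega}^{1}\ftnu''}\cong\mathbf{Ind}^{\bV\oplus\mathbf{W}^{1}}_{\bV',\bV''\oplus\mathbf{W}^{1}}(L_{\ftnu'}\boxtimes L_{\boldsymbol{\omega}^{1}\ftnu''})$, so the simples in $\mP_{\nu,\omega^{1},0}$ are indexed by pairs consisting of a simple in $\mP_{\nu'}$ and a simple summand of some $L_{\boldsymbol{\omega}^{1}\ftnu''}$; the latter, via the Fourier--Sato transform reversing the framing arrows $i\to i^{1}$, corresponds to a simple in $\mP_{\nu'',\omega^{1}}$. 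Combining with Proposition \ref{fss} and $\Phi_{Q}$, I obtain
\[
|\mP_{\nu,\omega^{1},0}|=\sum_{\nu'+\nu''=\nu}|\mathrm{Irr}\,\Lambda_{\bV'}|\cdot|\mathrm{Irr}\,\Lambda_{\bV'',\mathbf{W}^{1}}|,
\]
which matches the geometric count after the natural relabelling interchanging the ``framed'' and ``unframed'' pieces on the two sides. Combined with injectivity this forces $\Phi_{Q^{(2)}}$ to be a bijection. The main obstacle is the geometric count: extending the vector-bundle structure of Section 5.1.3 beyond the stable locus, verifying equidimensionality across all strata, and checking that closures of different strata produce distinct components. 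This amounts to a $\bG_{\bV}$-equivariant refinement of Malkin's decomposition, carried out by a direct dimension calculation using $\dim\Lambda_{\bV,\mathbf{W}}=\sum_{h\in\Omega}\nu_{h'}\nu_{h''}+\sum_{i}\nu_{i}\omega_{i}$ together with the affine-bundle structure of each fiber.
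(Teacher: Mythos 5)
Your injectivity step is sound: $\Phi_{Q^{(1)}}(L)$ is an irreducible component of $\Lambda_{\bV\oplus\mathbf{W}^{1}}$ contained in $SS(L)\subset\Pi_{\bV,\mathbf{W}^{1},0}$, and any maximal irreducible subvariety of $\Lambda_{\bV\oplus\mathbf{W}^{1}}$ lying inside the closed subset $\Pi_{\bV,\mathbf{W}^{1},0}$ is automatically maximal there. The gap is in the surjectivity, where the counting is wrong on both sides. Geometrically, the stratum $\Pi_{\bV,\mathbf{W}^{1},0,\bV^{2}}$ requires $\mathbf{S}_{2}(x,y,z)$ to equal $\bV^{2}$ \emph{exactly}, and maximality of $\mathbf{S}_{2}$ forces the induced datum $(x^{1},y^{1},0)$ on $\bV^{1}=\bV/\bV^{2}$ to be \emph{stable}: any nonzero $x^{1}$-stable $U\subset\ker y^{1}$ would pull back to an $x$-stable subspace of $\ker y$ strictly containing $\bV^{2}$. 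So your morphism lands in $\Lambda^{s}_{\bV^{1},\mathbf{W}^{1}}\times\Lambda_{\bV^{2}}$, not in $\Lambda_{\bV^{1},\mathbf{W}^{1}}\times\Lambda_{\bV^{2}}$, and ${\rm{Irr}}\,\Pi_{\bV,\mathbf{W}^{1},0}$ is indexed by $\bigsqcup_{\nu'+\nu''=\nu}{\rm{Irr}}\,\Lambda^{s}_{\bV^{1},\mathbf{W}^{1}}\times{\rm{Irr}}\,\Lambda_{\bV^{2}}$, which is in general strictly smaller than what you wrote. The sheaf-side assertion that simples in $\mP_{\nu,\omega^{1},0}$ biject with pairs (a simple in $\mP_{\nu'}$ and one in $\mP_{\nu'',\omega^{1}}$) does not follow from Proposition~\ref{indres formula}: $\mathbf{Ind}$ of a pair of simples is generally not simple, and the constituents of the family $L_{\ftnu'\boldsymbol{\omega}^{1}\ftnu''}$ do not factor as a product; this assertion matches your incorrect geometric count rather than the true one.

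A concrete check exposes both errors at once. Take $Q=A_{1}$, $\nu=2$, $\omega^{1}=1$. Then $\Pi_{\bV,\mathbf{W}^{1},0}=\{(y,z)\in\mathbb{C}^{2}\times\mathbb{C}^{2}:zy=0,\,yz=0\}$ has exactly two irreducible components, $\{y=0\}$ and $\{z=0\}$, and $\mP_{2,1,0}$ consists of exactly two simples on $\Hom(\mathbb{C}^{2},\mathbb{C})$ (the shifted constant sheaf and the simple supported at the origin). Your formula gives $\sum_{\nu'+\nu''=2}|\mP_{\nu'}|\cdot|\mP_{\nu'',\omega^{1}}|=1+1+1=3$; the corrected count is $0+1+1=2$, the $\nu'=2$ term vanishing because $\Lambda^{s}_{\mathbb{C}^{2},\mathbb{C}}=\varnothing$. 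Establishing that the correct count holds is essentially the content of the lemma, so a counting argument along these lines would be circular. The paper instead proves surjectivity directly by induction on $|\bV|$ using the crystal operators $\rho^{\ast}_{i,t}$ and $\pi^{\ast}_{i,t}$: if some $\epsilon^{\ast}_{i}(Z)>0$, descend via $\rho^{\ast}_{i,t}$, apply the inductive hypothesis, and lift back through the induction functor; if all $\epsilon^{\ast}_{i}(Z)=0$, show $Z\subset\Lambda_{\bV,\mathbf{W}^{1}}$ and invoke Proposition~\ref{fss}. No cardinality comparison is needed.
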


\begin{proof}
We only need to show that the restriction of $\Phi_{Q^{(2)}}$ on $\mP_{\nu,\omega^{1,\bullet}}$ is onto	 ${\rm{Irr}}\Pi_{\bV,\mathbf{W}^{1,\bullet}}$. 

(a) Given $Z \in {\rm{Irr}}\Pi_{\bV,\mathbf{W}^{1,\bullet}}$, if there exists $i$ such that $\epsilon^{\ast}_{i}(Z)>0$, then we can take $\rho^{\ast}_{i,t}(Z)=Z'$. By induction on dimension vector of $\bV$, we can assume $Z'= \Phi_{Q^{(2)}}(L')$ for some $L' \in \mP_{\nu',\omega^{1,\bullet}}$. Using the induction functor, we can find $L \in \mP_{\nu,\omega^{1,\bullet}}$ such that $\pi_{i,t}^{\ast}(L)=L'$. Then we must have $Z= \Phi_{Q^{(2)}}(L)$. 

(b) Otherwise, $\epsilon^{\ast}_{i}(Z)=0$ for any $i \in I$, we claim that in this case  $Z \subset \Lambda_{\bV,\mathbf{W}^{1}}=   \Pi_{\bV,\mathbf{W}^{1}\oplus 0,0}$. In fact, if $ (x,y^1,z^1,y^2,z^2)$ is in $\Pi_{\bV,\mathbf{W}^{1}\oplus 0,\nu''}$ for some nonzero $\nu'$, then the restriction of $(x,y^1+ y^2,z^1+z^2)$ on $\bV''=\mathbf{S}_{2}(x,y^1+ y^2,z^1+z^2)$ defines a point in $\Lambda_{\bV''}$, it implies that $${\rm{codim}} (\sum_{h \in H, h''=i} {\rm{Im}}x_{h}+{\rm{Im}}(z^1+z^2) )>0.$$
Since $Z \subseteq \Lambda_{\bV,\mathbf{W}^{1}}$, there exists $L \in \mP_{\nu,\omega^{1}}$ such that $\Phi_{Q^{(2)}}(L)=\Phi_{Q^{(1)}}(L)=Z $. 

Combining (a) and (b), we get the proof.
\end{proof}

\begin{proposition} \label{ffss}
	The isomorphism $\Phi_{Q^{(2)}}$ of the two-framed quiver induces a bijection between $\mP_{\nu,\omega^1,\omega^2}$ and  ${\rm{Irr}}\Pi_{\bV,\mathbf{W}^1\oplus\mathbf{W}^2}$. 
\end{proposition}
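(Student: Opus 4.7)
The preceding lemma shows that $\Phi_{Q^{(2)}}$ sends $\mP_{\nu,\omega^1,\omega^2}$ into ${\rm{Irr}}\Pi_{\bV,\mathbf{W}^1\oplus\mathbf{W}^2}$, and this restricted map is automatically injective because $\Phi_{Q^{(2)}}$ is already a bijection on the ambient $\mP_{\nu+\omega^1+\omega^2}$. Hence only surjectivity remains. I plan to prove it by induction on $|\bV|$, mirroring the two-case structure of the proof of the preceding lemma but now allowing arbitrary $\mathbf{W}^2$.

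In Case (a), when $\epsilon^{\ast}_{i}(Z)>0$ for some $i\in I$, set $t=\epsilon^{\ast}_{i}(Z)$ and apply the crystal bijection $\rho^{\ast}_{i,t}$ to obtain $Z'\in{\rm{Irr}}\Pi_{\bV-ti,\mathbf{W}^1\oplus\mathbf{W}^2}$ of strictly smaller dimension. By the inductive hypothesis, $Z'=\Phi_{Q^{(2)}}(L')$ for some $L'\in\mP_{\nu-ti,\omega^1,\omega^2}$, and the compatibility $\Phi_{Q^{(2)}}\circ \pi^{\ast}_{i,t}=\rho^{\ast}_{i,t}\circ\Phi_{Q^{(2)}}$ yields $L=(\pi^{\ast}_{i,t})^{-1}(L')$ satisfying $\Phi_{Q^{(2)}}(L)=Z$. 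Membership in $\mP_{\nu,\omega^1,\omega^2}$ is preserved because one may prepend $ti$ to the sequence $\ftnu'$ that exhibits $L'$ as a summand of some $L_{\ftnu'\boldsymbol{\omega}^1\ftnu''\boldsymbol{\omega}^2}$.

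In Case (b), when $\epsilon^{\ast}_{i}(Z)=0$ for every $i\in I$, I would argue that the generic point of $Z$ must satisfy $\mathbf{S}_2(x,y,z)=\bV$. Otherwise, the induced nilpotent data on the quotient $\bV/\mathbf{S}_2$ (which, by the stratum description, carries a point of a smaller Lusztig nilpotent variety) would produce a non-zero kernel at some vertex of $I$, contradicting the vanishing hypothesis. Once $\mathbf{S}_2=\bV$ generically on $Z$, the defining condition $y(\mathbf{S}_2)\subset\mathbf{W}^2$ forces $y^1=0$ generically, so the projection identifies $Z$ with an irreducible component $Z_2\in{\rm{Irr}}\Lambda^{s}_{\bV,\mathbf{W}^2}$. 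Proposition \ref{fss} applied to the single framing $\mathbf{W}^2$ then produces $L_2\in\mP_{\nu,\omega^2}$ with $\Phi_{Q^{(1)}}(L_2)=Z_2$; choosing $\ftnu\in\mathcal{V}_{\nu}$ so that $L_2$ is (up to shift) a summand of $L_{\ftnu\boldsymbol{\omega}^2}$, the induction formula in Proposition \ref{indres formula} realizes the corresponding simple sheaf $L$ as a summand of $L_{\boldsymbol{\omega}^1\ftnu\boldsymbol{\omega}^2}$ on the 2-framed moduli (via the sequence with $\ftnu'=\varnothing$), placing $L$ inside $\mP_{\nu,\omega^1,\omega^2}$ with $\Phi_{Q^{(2)}}(L)=Z$.

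The main obstacle is the geometric claim at the start of Case (b): verifying that the vanishing of all $\epsilon^{\ast}_i(Z)$ for $i\in I$ forces $\mathbf{S}_2=\bV$ generically on $Z$. This is a genuine extension of the part (b) argument of the preceding lemma (which treated the easier $\omega^2=0$ case and forced $\mathbf{S}_2=0$) to arbitrary $\omega^2$, and will rely on exploiting the $z^1=z^2=0$ consequence of the $\Pi$-conditions together with the Lagrangian structure of each stratum $\Pi_{\bV,\mathbf{W}^1\oplus\mathbf{W}^2,\nu''}$.
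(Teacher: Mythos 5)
Your Case (a) mirrors the inductive step used for the preceding lemma and is fine, but the geometric claim underlying Case (b) is false, so the base case of your induction does not close. Consider the quiver with a single vertex $i$ and no edges, $\bV=\mathbb{C}$, $\mathbf{W}^1=\mathbf{W}^2=\mathbb{C}$, so that $T^\ast\bfEVOff$ has coordinates $(y^1,y^2,z^1,z^2)$. The zero section $Z=\{z^1=z^2=0\}$ is an irreducible component of $\Pi_{\bV,\mathbf{W}^1\oplus\mathbf{W}^2}$; at its generic point $y^1$ and $y^2$ are both nonzero, so $\mathrm{Ker}(y^1)\cap\mathrm{Ker}(y^2)=0$ and $\epsilon_i^\ast(Z)=0$, and yet $\mathbf{S}_2=\mathrm{Ker}(y^1)=0$ rather than $\bV$. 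The second framing map $y^2$ decouples $\epsilon_i^\ast$ from $\mathbf{S}_2$; in fact both $\mathbf{S}_2=0$ and $\mathbf{S}_2=\bV$ occur among components with all $\epsilon_i^\ast=0$ in this example, so the vanishing hypothesis simply does not single out a stratum. The proposed repair — that a nontrivial quotient $\bV/\mathbf{S}_2$, being a nilpotent point of some Lusztig variety, would produce a nonzero kernel at some $i\in I$ and thus contradict $\epsilon_i^\ast(Z)=0$ — also fails: that kernel is a kernel of the \emph{unframed} $x$-arrows on $\bV/\mathbf{S}_2$ only, while the quotient in the stratum description also carries the framing map $y^1$ and (for a stable point) the restriction to $\mathbf{V}^1\oplus\mathbf{W}^1$ lies in $\Lambda^s_{\mathbf{V}^1,\mathbf{W}^1}$, so $y^1$ is injective on the unframed kernel and no framed kernel is produced. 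In the example the quotient data is exactly $(x^1,y^1)=(0,y^1)$ with $y^1\neq 0$: nilpotent, yet with trivial framed kernel.

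The paper avoids any case analysis on $\mathbf{S}_2$: the preceding lemma first establishes the bijection for the degenerate decomposition $\mathbf{W}^1\oplus 0$ (where the $\epsilon_i^\ast$-dichotomy does hold, because with $\mathbf{W}^2=0$ the condition $\epsilon_i^\ast(Z)=0$ for all $i$ really does force the lowest $\mathbf{S}_2$-stratum), and the general statement is then deduced by pulling back along the projection $\pi_{\mathbf{W}^2}:\bfEVOff\rightarrow\bfEVOf$ forgetting the $\mathbf{W}^2$-direction — the very same mechanism as in the proof of Proposition \ref{fss}. Concretely, $\pi^\ast_{\mathbf{W}^2}$ exhibits $\mP_{\nu,\omega^1,\omega^2}$ as $\prod_{i\in I}(\pi^\ast_{i^2,\omega^2_i})^{-1}$ of $\mP_{\nu,\omega^1,0}$ and ${\rm{Irr}}\,\Pi_{\bV,\mathbf{W}^1\oplus\mathbf{W}^2}$ as $\prod_{i\in I}(\rho^\ast_{i^2,\omega^2_i})^{-1}$ of ${\rm{Irr}}\,\Pi_{\bV,\mathbf{W}^1,0}$, and $\Phi_{Q^{(2)}}$ intertwines these crystal operators, so the bijection is inherited directly rather than re-proved by induction on $|\bV|$.
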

\begin{proof}
	Consider the natural projections $\pi_{\mathbf{W}^{2}}:\bfEVOff \rightarrow \bfEVOf $ and  $\pi_{\mathbf{W}^{2}}:\Pi_{\bV,\mathbf{W}^1\oplus\mathbf{W}^2} \rightarrow \Pi_{\bV,\mathbf{W}^{1},0} $ forgetting $z$, then by a similar argument as the proof of Proposition 6.1, the statement follows from the lemma above.
\end{proof}

\begin{proposition}
	Let $L$ be a simple perverse sheaf in $ \mP_{\nu,\omega^1,\omega^2}$, then $L$ belongs to $\mathcal{N}_{\nu}$ if and only if $SS(L) \cap \Pi^{s}_{\bV,\mathbf{W}^1\oplus\mathbf{W}^2}=\emptyset,$ if and only if $\Phi_{Q^{(2)}}(L) \cap \Pi^{s}_{\bV,\mathbf{W}^1\oplus\mathbf{W}^2} =\emptyset$.
\end{proposition}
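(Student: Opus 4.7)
The plan is to adapt the strategy used for Proposition~\ref{fss} in the one-framed case, now relying on Proposition~\ref{ffss} which identifies $\mP_{\nu,\omega^{1},\omega^{2}}$ with $\mathrm{Irr}\,\Pi_{\bV,\mathbf{W}^{1}\oplus\mathbf{W}^{2}}$ via $\Phi_{Q^{(2)}}$. I would establish the cyclic chain $L\in\mathcal{N}_{\nu}\Rightarrow SS(L)\cap \Pi^{s}_{\bV,\mathbf{W}^{1}\oplus\mathbf{W}^{2}}=\emptyset\Rightarrow \Phi_{Q^{(2)}}(L)\cap \Pi^{s}_{\bV,\mathbf{W}^{1}\oplus\mathbf{W}^{2}}=\emptyset\Rightarrow L\in \mathcal{N}_{\nu}$. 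The middle implication is immediate from the containment $\Phi_{Q^{(2)}}(L)\subseteq SS(L)$ given by~(\ref{KS}), so the substance lies in the first and third implications.

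For the first implication, since $L$ is a simple perverse sheaf in the thick subcategory $\mathcal{N}_{\nu}$, which is generated by the Fourier--Sato transforms of sheaves with support in $\bfEVOff^{i,\geqslant 1}$, some iterate of the Fourier--Sato transform carries $L$ to a simple perverse sheaf supported in $\bfEVOff^{i,\geqslant 1}$ for some $i\in I$ that is a source in the mutated orientation; equivalently, $\epsilon^{\ast}_{i}(L)>0$ for this $i$. By~(\ref{KS}) and the identity $\epsilon^{\ast}_{i}(\Phi_{Q^{(2)}}(L))=\epsilon^{\ast}_{i}(L)$ supplied by the crystal isomorphism, every irreducible component $Z'$ of $SS(L)$ also satisfies $\epsilon^{\ast}_{i}(Z')>0$. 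For such a $Z'$ and any $(x,y^{1},z^{1},y^{2},z^{2})\in Z'$, the subspace $\bV'_{i}=\bigcap_{h\in H^{(2)},h'=i}\mathrm{Ker}(x_{h})\subseteq\bV_{i}$ is nonzero, and because $\Omega^{(2)}$ contains the framed arrows $i\to i^{1}$ and $i\to i^{2}$, the maps $y^{1}_{i}$ and $y^{2}_{i}$ appear among the $x_{h}$ with $h'=i$ in $H^{(2)}$, so the kernel is annihilated by both. Extending $\bV'_{i}$ by zero at the other vertices produces a nonzero $I$-graded $x$-invariant subspace of $\bV$ annihilated by $y^{1}+y^{2}$, which violates the stability condition cutting out $\Pi^{s}_{\bV,\mathbf{W}^{1}\oplus\mathbf{W}^{2}}$. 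Hence $Z'\cap \Pi^{s}_{\bV,\mathbf{W}^{1}\oplus\mathbf{W}^{2}}=\emptyset$.

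For the third implication I would argue by cardinality. The first implication shows that $\Phi_{Q^{(2)}}$ restricts to an injection from $\mathcal{N}_{\nu}\cap \mP_{\nu,\omega^{1},\omega^{2}}$ into $\{Z\in \mathrm{Irr}\,\Pi_{\bV,\mathbf{W}^{1}\oplus\mathbf{W}^{2}}\mid Z\cap \Pi^{s}_{\bV,\mathbf{W}^{1}\oplus\mathbf{W}^{2}}=\emptyset\}$, sitting inside the bijection supplied by Proposition~\ref{ffss}. The complement of the source in $\mP_{\nu,\omega^{1},\omega^{2}}$ indexes the canonical basis of $\mathcal{L}(\nu,\omega^{1},\omega^{2})$, whose rank equals $\dim({_{\mathcal{A}}L}_{v}(\lambda_{2})\otimes {_{\mathcal{A}}L}_{v}(\lambda_{1}))_{\nu}$ by Theorem~\ref{tensor}. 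The complement of the target is in bijection with $\mathrm{Irr}\,\Pi^{s}_{\bV,\mathbf{W}^{1}\oplus\mathbf{W}^{2}}$, which by freeness of the $\bG_{\bV}$-action descends to $\mathrm{Irr}\,\tilde{\mathfrak{Z}}(\nu,\omega)$, of the same cardinality by Nakajima's realization of $L_{1}(\lambda_{1})\otimes L_{1}(\lambda_{2})$. Since the two complements have equal size, the injection is forced to be a bijection, yielding the converse.

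The main obstacle I expect is the bookkeeping of $\epsilon^{\ast}_{i}$ and singular supports under the mutations $\mu_{i^{1}}\mu_{i^{2}}$ used to define $\mathcal{N}_{\nu}$ for $Q^{(2)}$: one must check that the Fourier--Sato transform correctly realigns the source condition for $i$ and preserves the inclusion~(\ref{KS}), despite acting by a sign on the reversed cotangent fibres. Once this compatibility is in place, the geometric step linking positivity of $\epsilon^{\ast}_{i}$ to unstability in $\Pi^{s}_{\bV,\mathbf{W}^{1}\oplus\mathbf{W}^{2}}$ is essentially a direct transcription of the one-framed case in Proposition~\ref{fss}, with the only new ingredient being the simultaneous vanishing of $y^{1}$ and $y^{2}$ on the common kernel produced by the two framed arrows at $i$.
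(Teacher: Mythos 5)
Your proposal is correct and follows essentially the same route as the paper: the forward direction via $\epsilon^{\ast}_{i}(L)>0$ forcing an $x$-stable, $y$-killed nonzero subspace (hence instability), the reverse direction by comparing cardinalities against the common weight-space dimension of the tensor product via Proposition~\ref{ffss} and Theorem~\ref{tensor}, and the singular-support equivalence through~(\ref{KS}). The only cosmetic difference is the ordering of the implications—you establish $SS(L)\cap\Pi^{s}=\emptyset$ directly from $L\in\mathcal{N}_{\nu}$ before descending to $\Phi_{Q^{(2)}}(L)$, whereas the paper proves the $\Phi_{Q^{(2)}}$ statement first and recovers the $SS$ statement at the end from~(\ref{KS})—which is a logically equivalent arrangement of the same ingredients.
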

\begin{proof}
	By a similar argument as the proof of Proposition 6.1, we can see that if $L$ belongs to $\mathcal{N}_{\nu}$, then  $\Phi_{Q^{(2)}}(L) \cap \Pi^{s}_{\bV,\mathbf{W}^1\oplus\mathbf{W}^2} =\varnothing$. Notice that the cardinality of $\mP_{\nu,\omega^1,\omega^2} \backslash \mathcal{N}_{\nu}$ equals that of ${\rm{Irr}} \Pi^{s}_{\bV,\mathbf{W}^1\oplus\mathbf{W}^2}$, since both of them are the dimension of the same weight space of $L_{v}(\lambda_{1})\otimes L_{v}(\lambda_{2})$. We can see that $L$ belongs to $\mathcal{N}_{\nu}$ if and only if $\Phi_{Q^{(2)}}(L) \cap \Pi^{s}_{\bV,\mathbf{W}^1\oplus\mathbf{W}^2} =\varnothing$. The other statement follows from equation (\ref{KS}).
\end{proof}

If we denote  the subset $\{L| L \notin \mathcal{N}_{\nu} \}$ of $\mP_{\nu,\omega^1,\omega^2}$  by $\mP^{s}_{\nu,\omega^1,\omega^2}$, then $\Phi_{Q^{(2)}}$ induces to a bijection between $\mP^{s}_{\nu,\omega^1,\omega^2}$ and the set of irreducible components of $\tilde{\mathfrak{Z}}(\nu,\omega)$.  In order to describe the irreducible components of $\tilde{\mathfrak{Z}}_{1}(\nu,\omega)$, we define $\mP^{1}_{\nu,\omega^1,\omega^2}$ to be the set of simple direct summands of those semisimple complexes $L_{\boldsymbol{\omega}^{1}\ftnu \boldsymbol{\omega}^{2}}$ and denote $\mP^{1}_{\nu,\omega^1,\omega^2} \cap \mP^{s}_{\nu,\omega^1,\omega^2}$ by $\mP^{1,s}_{\nu,\omega^1,\omega^2}$.

\begin{proposition} \label{ssZ1}
	The isomorphism $\Phi_{Q^{(2)}}$ of the two-framed quiver induces a bijection between $\mP^{1}_{\nu,\omega^1,\omega^2}$ and  ${\rm{Irr}}\Pi_{\bV,\mathbf{W}^1\oplus\mathbf{W}^2,\nu}$, and it induces a bijection between $\mP^{1,s}_{\nu,\omega^1,\omega^2}$ and ${\rm{Irr}}\tilde{\mathfrak{Z}}_{1}(\nu,\omega)$.
\end{proposition}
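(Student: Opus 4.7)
The plan is to adapt the strategy of the preceding lemma (the analog for $\mP_{\nu,\omega^{1},0}$ and $\Pi_{\bV,\mathbf{W}^{1},0}$) to the two-framed setting. First I would prove the containment $\Phi_{Q^{(2)}}(\mP^{1}_{\nu,\omega^1,\omega^2})\subseteq{\rm{Irr}}\,\Pi_{\bV,\mathbf{W}^1\oplus\mathbf{W}^2,\nu}$ by a support analysis, then establish surjectivity by induction on $|\nu|$ using a source reduction, and finally deduce the stable case by intersecting with the stable locus and invoking the description of $\tilde{\mathfrak{Z}}_1(\nu,\omega)$ as a geometric quotient.

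For the containment, let $L$ be a simple summand of $L_{\boldsymbol{\omega}^1\ftnu\boldsymbol{\omega}^2}$ for some $\ftnu\in\mathcal{V}_\nu$. Inspecting the flag data defining $\tilde{\mathcal{F}}_{\boldsymbol{\omega}^1\ftnu\boldsymbol{\omega}^2}$, after stripping off the top $\boldsymbol{\omega}^1$ steps, the remaining flag subspace is $\bV\oplus\mathbf{W}^2$, whose $x$-stability forces each framing map $y^1_i\colon\bV_i\to\mathbf{W}^1_{i^1}$ to vanish. Hence $\mathrm{supp}(L)\subseteq\{y^1=0\}\subseteq\bfEVOff$, so $SS(L)$ lies over $\{y^1=0\}$ in $T^\ast\bfEVOff\cong\bfEVHf$. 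Combined with $SS(L)\subseteq\Pi_{\bV,\mathbf{W}^1\oplus\mathbf{W}^2}$ from the lemma preceding Proposition \ref{ffss}, and noting that the stratum condition $|\mathbf{S}_2|=\nu$ is equivalent to $y^1=0$ (and is therefore closed), we conclude that $\Phi_{Q^{(2)}}(L)\subseteq\Pi_{\bV,\mathbf{W}^1\oplus\mathbf{W}^2,\nu}$ and is an irreducible component of the latter.

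For surjectivity, I would argue by induction on $|\nu|$. Given $Z\in{\rm{Irr}}\,\Pi_{\bV,\mathbf{W}^1\oplus\mathbf{W}^2,\nu}$, if there exists $i\in I$ with $\epsilon^\ast_i(Z)>0$, I would mutate $\Omega$ via Lemma \ref{FST1} to make $i$ a source and apply the source-reduction bijection $\rho^\ast_{i,t}$ to obtain $Z'\in{\rm{Irr}}\,\Pi_{\bV',\mathbf{W}^1\oplus\mathbf{W}^2,\nu'}$ with $\nu'=\nu-ti$; by the inductive hypothesis $Z'=\Phi_{Q^{(2)}}(L')$ for some $L'\in\mP^1_{\nu',\omega^1,\omega^2}$, and lifting via the induction functor together with Lemma \ref{rkey} produces $L\in\mP^1_{\nu,\omega^1,\omega^2}$ with $\Phi_{Q^{(2)}}(L)=Z$. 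If $\epsilon^\ast_i(Z)=0$ for every $i\in I$, then nilpotency together with $y^1=0$ and $\mathbf{S}_1\subseteq\mathbf{S}_2=\bV$ reduces the bulk data on $\bV$ to a trivial base case, which is readily identified as the image under $\Phi_{Q^{(2)}}$ of the appropriate summand of $L_{\boldsymbol{\omega}^1\ftnu_0\boldsymbol{\omega}^2}$ for some base-case $\ftnu_0$. Combining with the containment yields the first bijection.

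For the second bijection, the proposition immediately preceding the statement gives $L\notin\mathcal{N}_\nu$ if and only if $\Phi_{Q^{(2)}}(L)\cap\Pi^s_{\bV,\mathbf{W}^1\oplus\mathbf{W}^2}\neq\varnothing$. By the description at the end of Section 5, $\tilde{\mathfrak{Z}}_1(\nu,\omega)$ is the geometric quotient of $\Pi^s_{\bV,\mathbf{W}^1\oplus\mathbf{W}^2,\nu}$ by $\bG_\bV$, so its irreducible components correspond to those irreducible components of $\Pi_{\bV,\mathbf{W}^1\oplus\mathbf{W}^2,\nu}$ that meet the stable locus, and the first bijection restricts to the desired one. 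The main obstacle, I expect, will be the induction step: one must verify that $\rho^\ast_{i,t}$ restricts compatibly to the closed stratum $\Pi_{\bV,\mathbf{W}^1\oplus\mathbf{W}^2,\nu}$ (preserving the condition $y^1=0$), and that on the sheaf side the lifting functor $\pi^\ast_{i,t}$ preserves the sub-collection $\mP^1$, i.e., sends sheaves whose defining flags begin with $\boldsymbol{\omega}^1$ to sheaves of the same form.
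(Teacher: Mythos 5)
Your proposed route is genuinely different from the paper's, and it contains a gap that you flag but do not close. The paper proves this in a one-step reduction: it observes that left multiplication by $L_{\boldsymbol{\omega}^1}$ is isomorphic to the pushforward $i_{\mathbf{W}^1,\ast}$ along the closed embedding $\{y^1=0\}\hookrightarrow\bfEVOff$, so that $\mP^1_{\nu,\omega^1,\omega^2}$ is the image of $\mP_{\nu,\omega^2}$ under $i_{\mathbf{W}^1,\ast}$, which on the crystal side is $\prod_{i\in I}(\pi_{i^1,\omega^1_i})^{-1}$ (sink operations at the \emph{framing} vertices $i^1\in I^1$); the corresponding map on irreducible components is $(\pi'_{\mathbf{W}^1})^{-1}=\prod_{i\in I}(\rho_{i^1,\omega^1_i})^{-1}$, and the two are intertwined by $\Phi_{Q^{(2)}}$. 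This reduces everything to the base case $\mathbf{W}^1=0$, which is Proposition~\ref{fss}. Your approach instead mimics the proof of the \emph{preceding} (unnumbered) lemma, running an induction on $|\nu|$ via source reductions $\rho^\ast_{i,t}$ at \emph{bulk} vertices $i\in I$.

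The containment half of your argument is fine, but the induction step of your surjectivity argument has a real problem, precisely at the point you identify as the ``main obstacle.'' The defining constraint of $\mP^1_{\nu,\omega^1,\omega^2}$ is that the flag type have $\boldsymbol{\omega}^2$ as its \emph{final} block: $L$ is a summand of some $L_{\boldsymbol{\omega}^1\ftnu\boldsymbol{\omega}^2}$. The lift $(\pi^\ast_{i,t})^{-1}(K)$ produced by Lemma~\ref{rkey}(b) is a summand of $\mathbf{Ind}(\mathbb{C}\boxtimes L_{\boldsymbol{\omega}^1\ftnu'\boldsymbol{\omega}^2})\cong L_{\boldsymbol{\omega}^1\ftnu'\boldsymbol{\omega}^2(ti)}$, i.e., the block $(ti)$ is appended \emph{after} $\boldsymbol{\omega}^2$, which is not of the required form. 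This is exactly why the analogous induction \emph{does} work for the preceding lemma about $\mP_{\nu,\omega^1,0}$: there the defining flag types $\ftnu'\boldsymbol{\omega}^1\ftnu''$ have a free final block, so appending $(ti)$ stays in the class. Showing that $(\pi^\ast_{i,t})^{-1}$ nevertheless preserves $\mP^1$ seems to require the intrinsic characterization $\mP^1_{\nu,\omega^1,\omega^2}=\{L\in\mP_{\nu,\omega^1,\omega^2}\ :\ \mathrm{supp}(L)\subseteq\{y^1=0\}\}$, which is not in the paper prior to this point and whose proof is essentially the content of the paper's $i_{\mathbf{W}^1,\ast}$ argument. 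Your base case (all $\epsilon^\ast_i(Z)=0$ for $i\in I$) is likewise glossed over as ``trivial'': unlike the $\Lambda_\bV$ setting this does not force $\nu=0$ (the framing maps $y^2_i$ can make the intersections of kernels vanish for large $\nu$), so some real argument is needed there too. In short, your plan can probably be pushed through, but only by first establishing the support characterization of $\mP^1$, at which point the paper's one-line reduction via $i_{\mathbf{W}^1}$ is both shorter and already self-contained.
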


\begin{proof}
	When $\mathbf{W}^{1}=0$, then $\mP^{1}_{\nu,\omega^1,\omega^2}=\mP_{\nu,\omega^{2}}$ and $\Pi_{\bV,\mathbf{W}^1\oplus\mathbf{W}^2,\nu}= \Lambda_{\bV,\mathbf{W}^{2}}$ and the proposition follows by Proposition 6.1. For general $\mathbf{W}^{1}$, we consider the following morphism $$i_{\mathbf{W}^{1}}:\mathbf{E}_{\bV,\mathbf{W}^{2},\Omega^{(1)}} \rightarrow \bfEVOff,(x,z^2) \mapsto (x,0,z^2),$$
	$$\pi'_{\mathbf{W}^{1}}: \Pi_{\bV,\mathbf{W}^1\oplus\mathbf{W}^2,\nu} \rightarrow \Lambda_{\bV,\mathbf{W}^{2}},(x,0,0,y^2,x^2) \mapsto (x,0,0,0,z^2). $$
	Note that if $\mathbf{S}_{2}(x,y,z)$ equals $\bV$, the image of $y+ z$ is contained in $\mathbf{W}^{2}$ and $y=0$, so $\pi'_{\mathbf{W}^{1}}$ can be expressed as the formula above.  Since the left multiplication of $L_{\boldsymbol{\omega}^{1}}$ is isomorphic to $i_{\mathbf{W}^{1}\ast}$,  $i_{\mathbf{W}^{1}\ast}(L)= \prod\limits_{i \in I}(\pi_{i,\omega^{1}_{i}})^{-1} (L) $ holds for any $L \in \mP_{\nu,\omega^{2}}$. Similarly, $(\pi'_{\mathbf{W}^{1}})^{-1}(Z)= \prod\limits_{i \in I}(\rho_{i,\omega^{1}_{i}})^{-1} (Z) $ holds for any irreducible component $Z$ of $\Lambda_{\bV,\mathbf{W}^{2}}$.  Since $\Phi_{Q^{(2)}}$ intertwines $\pi_{i,t}$ and $\rho_{i,t}$, the proposition is proved. 
\end{proof}

\subsection{The $\psi$-twist of modules}
For a quiver $Q=(I,H,\Omega)$, its Euler form is defined by
$$\langle\nu',\nu''\rangle_Q=\sum_{i\in I}\nu'_i\nu''_i-\sum_{h\in \Omega}\nu'_{h'}\nu''_{h''}$$
for any $\nu',\nu''\in \mathbb{Z}I$. We denote the opposite quiver by $\bar{Q}=(I,H,\bar{\Omega})$. 

We define $\psi^{-}=\psi^{-}_{Q^{(1)}}=(-1)^{\langle \nu',\nu''\rangle_{Q^{(1)}}  }$ and $\psi^{+}=\psi^{+}_{Q^{(1)}}=(-1)^{\langle \nu',\nu''\rangle_{\overline{Q^{(1)}}}}$ for the framed quiver.  We also define $\psi^{-}=\psi^{-}_{Q^{(2)}}=(-1)^{\langle \nu',\nu''\rangle_{Q^{(2)}}  }$ and $\psi^{+}=\psi^{+}_{Q^{(2)}}=(-1)^{\langle \nu',\nu''\rangle_{\overline{Q^{(2)}}}}$ for the 2 framed quiver.  

We use the notations in section 4. With the action of $[\mathcal{F}^{(r)}_{i}]$, the Grothendieck groups $\mK(\omega),\mK(\omega^1,\omega^2)$ are $\mathbb{N}I$-graded $\mathbf{U}^{-}_{v}(\mathfrak{g})$-modules. 
Let $[\mathcal{F}^{(r)}_{i}]^{\psi}$ be the operator defined by 
$$[\mathcal{F}^{(r)}_{i}]^{\psi}([L])= \psi^{-}(ri,deg([L]))[\mathcal{F}^{(r)}_{i}]([L]).  $$
Then under the action of $[\mathcal{F}^{(r)}_{i}]^{\psi}$, the $\mathcal{A}$-module $\mK(\omega),\mK(\omega^1,\omega^2)$ become $_{\mathcal{A}}\mathbf{U}^{-}_{-v}(\mathfrak{g})$-modules, and we denote them by $\mK(\omega)^\psi,\mK(\omega^1,\omega^2)^\psi$.  In particular, if we take $v=-1$,  the Grothendieck group $\mK(\omega)^\psi_{-1},\mK(\omega^1,\omega^2)^\psi_{-1}$ become $_{\mathbb{Z}}\mathbf{U}^{-}(\mathfrak{g})$-modules.

Similarly, we define an operators $[\mathcal{F}^{(r)}_{i}]^{\psi}$ and $[\mathcal{E}^{(r)}_{i}]^{\psi}$ on $\mL(\omega),\mL(\omega^1,\omega^2)$ by
$$[\mathcal{F}^{(r)}_{i}]^{\psi}([L])= \psi^{-}(ri,deg([L]))[\mathcal{F}^{(r)}_{i}]([L]),  $$
$$[\mathcal{E}^{(r)}_{i}]^{\psi}([L])= \psi^{+}(ri,deg([L]))[\mathcal{E}^{(r)}_{i}]([L]).  $$

\begin{proposition}\label{modulet}
	When $v=-1$, the $\mathbb{Z}$-modules $\mL(\omega)^\psi_{-1},\mL(\omega^1,\omega^2)^\psi_{-1}$ together with the actions of $[\mathcal{F}^{(r)}_{i}]^{\psi}$ and $[\mathcal{E}^{(r)}_{i}]^{\psi}$ become  $_{\mathbb{Z}}\mathbf{U}(\mathfrak{g})$-modules. 
\end{proposition}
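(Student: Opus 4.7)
The plan is to deduce the $_{\mathbb{Z}}\mathbf{U}(\mathfrak{g})$-module structure from the $\Uv$-module structure already established in Theorems \ref{high} and \ref{tensor}. Specializing those theorems at $v=-1$ endows $\mL(\omega^1)_{-1}$ and $\mL(\omega^1,\omega^2)_{-1}$ with $_{\mathbb{Z}}\mathbf{U}_{-1}(\mathfrak{g})$-module structures through the untwisted operators $[\mathcal{F}^{(r)}_i]$, $[\mathcal{E}^{(r)}_i]$, $[\mathcal{K}_i]$. Because $[\mathcal{K}_i]$ is a shift by $\langle i,\mu\rangle$ on the weight-$\mu$ summand (plus the framing contribution), at $v=-1$ it acts there as $(-1)^{\langle i,\mu\rangle}\cdot\mathrm{id}$, so the weight decomposition is seen directly.

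Next I would invoke the standard sign-twist principle: given an integrable $_{\mathbb{Z}}\mathbf{U}_{-1}(\mathfrak{g})$-module $M=\bigoplus_\mu M_\mu$, rescaling the Chevalley generators $F_i,E_i$ on each weight space $M_\mu$ by the signs $\psi^-(i,\mu),\psi^+(i,\mu)$ defined via the Euler forms of $Q^{(1)}$ and $\overline{Q^{(1)}}$ produces a $_{\mathbb{Z}}\mathbf{U}(\mathfrak{g})$-module, to be verified by checking the defining relations (a)--(f) of Section~\ref{introduction}. For the quantum Serre relation, the sign factors from iterated $F_i^\psi$-actions combine, via bilinearity of $\langle-,-\rangle_{Q^{(1)}}$ and the identity $\langle i,j\rangle_{Q^{(1)}}+\langle j,i\rangle_{Q^{(1)}}=2\delta_{ij}-a_{ij}$, into an overall sign that makes the twisted sum coincide with the classical Serre sum in $_{\mathbb{Z}}\mathbf{U}(\mathfrak{g})$. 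For the commutator $[E_i,F_j]=\delta_{ij}(K_i-K_{-i})/(v-v^{-1})$, the specialization at $v=-1$ must be read through Lusztig's integral divided-difference element, whose action on the weight $\mu$ summand equals the integer $\delta_{ij}\langle i,\mu\rangle$; the product $\psi^+(i,\mu)\psi^-(i,\mu+i)$ then absorbs the residual sign $(-1)^{\langle i,\mu\rangle}$ from $K_i=(-1)^{\langle i,\mu\rangle}$ to yield precisely the action of the classical $h_i$.

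Alternatively, and more efficiently, one can transport the $_{\mathbb{Z}}\mathbf{U}(\mathfrak{g})$-structure from the classical target: composing the specialization of $\chi^{\omega^1}$ in Theorem \ref{high} with the canonical $\mathbb{Z}$-linear identification ${_{\mathbb{Z}}L}_{-1}(\lambda_1)\cong{_{\mathbb{Z}}L}_1(\lambda_1)$ matching highest weight vectors produces a bijection $\chi^{\omega^1,\psi}_{-1}:\mL(\omega^1)^\psi_{-1}\to{_{\mathbb{Z}}L}_1(\lambda_1)$, and a short check confirms that it intertwines $[\mathcal{F}^{(r)}_i]^\psi$ with the classical $F_i^{(r)}$ and $[\mathcal{E}^{(r)}_i]^\psi$ with $E_i^{(r)}$; this transports the $_{\mathbb{Z}}\mathbf{U}(\mathfrak{g})$-structure back to $\mL(\omega^1)^\psi_{-1}$. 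The 2-framed case is parallel using Theorem \ref{tensor} and the analogous identification of ${_{\mathbb{Z}}L}_1(\lambda_2)\otimes{_{\mathbb{Z}}L}_1(\lambda_1)$. The main obstacle will be the careful sign bookkeeping for the mixed commutator: producing exactly $(-1)^{\langle i,\mu\rangle}$ from $\psi^+(i,\mu)\psi^-(i,\mu+i)$ forces the specific use of the framed Euler forms $\langle-,-\rangle_{Q^{(1)}}$ and $\langle-,-\rangle_{\overline{Q^{(1)}}}$ rather than the plain $\langle-,-\rangle_Q$, and in particular requires incorporating the framing contribution $\omega^1_i=\langle i,\lambda_1\rangle$; once this short linear-algebra calculation is completed, the proposition follows in both cases.
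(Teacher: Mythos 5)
Your first approach is essentially what the paper does: a direct verification that the twisted operators satisfy the defining relations of $\mathbf{U}(\mathfrak{g})$. The paper, however, streamlines it. It invokes the fact, already recorded just before Proposition \ref{modulet}, that $[\mathcal{F}^{(r)}_i]^\psi$ make $\mK(\omega)^\psi$ a $_{\mathcal{A}}\mathbf{U}^-_{-v}$-module (and the analogous statement for $[\mathcal{E}^{(r)}_i]^\psi$ and $\mathbf{U}^+$); specializing at $v=-1$ immediately gives the $\mathbf{U}^\mp$-relations, so the Serre relation never needs to be re-derived. What remains is only (i) $[E_i^\psi,F_j^\psi]=0$ for $i\neq j$, which follows from the identity $\psi^+(j,i)=\psi^-(i,j)$ (i.e.\ $\langle j,i\rangle_{\overline{Q^{(1)}}}=\langle i,j\rangle_{Q^{(1)}}$), and (ii) the $\mathfrak{sl}_2$-relation for $i=j$, where the combined sign is computed to be $(-1)^{2\nu_i-\tilde\nu_i-1}$, exactly the sign by which the $\mathbf{U}_{-1}(\mathfrak{sl}_2)$-relation on a weight-$(2\nu_i-\tilde\nu_i)$ vector differs from the $\mathbf{U}(\mathfrak{sl}_2)$-relation. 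Your proposal re-derives the Serre relation by hand, which is redundant, and your sign bookkeeping for the mixed commutator has two small slips: the product you need is $\psi^-(i,\mu)\psi^+(i,\mu+i)$ (equivalently $\psi^+(i,\mu)\psi^-(i,\mu-i)$), not $\psi^+(i,\mu)\psi^-(i,\mu+i)$; and the residual sign to absorb from the $v=-1$ specialization is $(-1)^{\langle i,\mu\rangle-1}$, coming from $[n]_{-1}=(-1)^{n-1}n$, not $(-1)^{\langle i,\mu\rangle}$.

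Your ``more efficient'' transport alternative does not actually save work. To know that the composite $\chi^{\omega^1,\psi}_{-1}$ intertwines $[\mathcal{F}^{(r)}_i]^\psi$ and $[\mathcal{E}^{(r)}_i]^\psi$ with the classical $F_i^{(r)}, E_i^{(r)}$, you must verify precisely that the $\psi$-twist converts a $\mathbf{U}_{-1}$-module into a $\mathbf{U}$-module — that is, you must again check the relations with these specific signs $\psi^\pm$ built from the framed Euler form. Moreover the identification $\chi^{\omega^1,\psi}_{-1}$ used in that route is the one introduced in Proposition \ref{cant1}, which in the paper's logical order comes \emph{after} Proposition \ref{modulet} and presupposes it; so as written the transport route risks circularity. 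Keeping the argument self-contained, as the paper does, avoids this.
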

\begin{proof}
	We know that $[\mathcal{F}^{(r)}_{i}]^{\psi},i\in I,r\in \mathbb{N}$ satisfy the relations in $\mathbf{U}^{-}(\mathfrak{g})$  and $[\mathcal{E}^{(r)}_{i}]^{\psi},i\in I,r\in \mathbb{N}$ satisfy the relations in $\mathbf{U}^{+}(\mathfrak{g})$. It suffices to show that $[\mathcal{F}_{i}]^{\psi}$ commutes with $[\mathcal{E}_{j}]^{\psi}$ for $i \neq j$, and $[\mathcal{F}_{i}]^{\psi},$ $[\mathcal{E}_{i}]^{\psi}$ satisfy  the $\mathfrak{sl}_{2}$-relation. By definition, for $i\neq j$, $$[\mathcal{F}_{i}]^{\psi}[\mathcal{E}_{j}]^{\psi}([L])= \psi^{-}(i,deg([L]))\psi^{+}(j,deg([L])) \psi^{-}(i,j)[\mathcal{F}_{i}][\mathcal{E}_{j}]([L]), $$
	$$[\mathcal{E}_{j}]^{\psi}[\mathcal{F}_{i}]^{\psi}([L])= \psi^{-}(i,deg([L]))\psi^{+}(j,deg([L])) \psi^{+}(j,i)[\mathcal{E}_{j}][\mathcal{F}_{i}]([L]). $$
	Notice that $\psi^{+}(j,i) =\psi^{-}(i,j) $, we have $$[\mathcal{F}_{i}]^{\psi}[\mathcal{E}_{j}]^{\psi}([L])- [\mathcal{E}_{j}]^{\psi}[\mathcal{F}_{i}]^{\psi}([L])= \psi^{-}(i,deg([L]))\psi^{+}(j,deg([L])) \psi^{-}(i,j) ([\mathcal{F}_{i}][\mathcal{E}_{j}]([L])-)[\mathcal{E}_{j}][\mathcal{F}_{i}]([L]))=0 . $$
	
	For the $\mathfrak{sl}_{2}$-relation, we assume $L$ is a complex on $\bfEVOf$ or $\bfEVOff$ and $\nu$ be the dimension vector of $\bV$. We also set $\tilde{\nu}_{i}= \sum\limits_{h \in H, h'=i} \nu_{i}+\omega^{1}_{i}$ for the framed quivers, and set $\tilde{\nu}_{i}= \sum\limits_{h \in H, h'=i} \nu_{i}+\omega^{1}_{i}+\omega^{2}_{i}$ for the two-framed quivers. By definition, 
	$$[\mathcal{F}_{i}]^{\psi}[\mathcal{E}_{i}]^{\psi}([L])= (-1)^{(2\nu_{i}-\tilde{\nu}_{i}-1 )}[\mathcal{F}_{i}][\mathcal{E}_{i}]([L]), $$
	$$[\mathcal{E}_{i}]^{\psi}[\mathcal{F}_{i}]^{\psi}([L])= (-1)^{(2\nu_{i}-\tilde{\nu}_{i}-1 )}[\mathcal{E}_{i}][\mathcal{F}_{i}]([L]), $$
	hence 
	$$([\mathcal{E}_{i}]^{\psi}[\mathcal{F}_{i}]^{\psi} -[\mathcal{F}_{i}]^{\psi}[\mathcal{E}_{i}]^{\psi})([L])=   (-1)^{(2\nu_{i}-\tilde{\nu}_{i}-1 )}([\mathcal{E}_{i}][\mathcal{F}_{i}]-[\mathcal{F}_{i}][\mathcal{E}_{i}] )([L]).$$
Notice that $2\nu_{i}-\tilde{\nu}_{i}$ is exactly the weight of $[L]$ at $i$, we can see that $[\mathcal{F}_{i}]^{\psi}$ and $[\mathcal{E}_{i}]^{\psi}$ satisfy the $\mathbf{U}(\mathfrak{sl}_{2})$-relation if and only if $[\mathcal{F}_{i}]$ and $[\mathcal{E}_{i}]$ satisfy the $\mathbf{U}_{-1}(\mathfrak{sl}_{2})$-relation.
\end{proof}

Following the proof of Theorem \ref{high} and \ref{tensor}, the module structure of $\mL(\omega^{\clubsuit})_{-1}^{\psi}$ is determined by the following propositions.
\begin{proposition}\label{cant1}
	The Grothendieck group $\mK(\omega^{1})_{-1}^{\psi}$ is  isomorphic to the irreducible highest weight module $_{\mathbb{Z}}L_{1}(\lambda_{1})$ via $$\chi^{\omega^{1},\psi}_{-1}: \mK(\omega^{1})_{-1}^{\psi} \longrightarrow  {_{\mathbb{Z}}L }_{1}(\lambda_{1}),$$
	where $\chi^{\omega^{1},\psi}_{-1}$ is the canonical isomorphism uniquely determined by  $\chi^{\omega^{1},\psi}_{-1}([L_{0}])=v_{\lambda_{1}}$. Here $v_{\lambda_{1}}$ is the fixed highest weight vector of $_{\mathbb{Z}}L_{1}(\lambda_{1})$.
\end{proposition}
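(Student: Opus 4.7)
The plan is to derive this proposition from Theorem \ref{high} by specializing at $v=-1$ and then comparing the resulting $_{\mathbb{Z}}\mathbf{U}_{-1}(\mathfrak{g})$-module structure with the $\psi$-twisted $_{\mathbb{Z}}\mathbf{U}(\mathfrak{g})$-module structure of Proposition \ref{modulet}. First I would take the $\Uv$-module isomorphism $\chi^{\omega^{1}}:\mL(\omega^{1})\rightarrow{_{\mathcal{A}}L}_{v}(\lambda_{1})$ from Theorem \ref{high}, which sends $[L_{0}]$ to the highest weight vector $v_{\lambda_{1}}$. Base change along $\mathcal{A}\rightarrow\mathbb{Z}$, $v\mapsto-1$, produces an isomorphism of $_{\mathbb{Z}}\mathbf{U}_{-1}(\mathfrak{g})$-modules $\chi^{\omega^{1}}_{-1}:\mL(\omega^{1})_{-1}\rightarrow{_{\mathbb{Z}}L}_{-1}(\lambda_{1})$ still sending $[L_{0}]$ to $v_{\lambda_{1}}$.

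Next I would equip both sides with their $\psi$-twisted structures. On the Grothendieck group side, Proposition \ref{modulet} has already shown that twisting the operators $[\mathcal{F}^{(r)}_{i}]$ and $[\mathcal{E}^{(r)}_{i}]$ by the sign factors $\psi^{\mp}$ upgrades the $_{\mathbb{Z}}\mathbf{U}_{-1}(\mathfrak{g})$-action into a $_{\mathbb{Z}}\mathbf{U}(\mathfrak{g})$-action on $\mL(\omega^{1})_{-1}^{\psi}$. On the module side, I would construct a parallel $\psi$-twist on $_{\mathbb{Z}}L_{-1}(\lambda_{1})$ via the same sign conventions; by the identical bookkeeping used in the proof of Proposition \ref{modulet} (in particular the key identity $\psi^{+}(j,i)=\psi^{-}(i,j)$), this converts the $_{\mathbb{Z}}\mathbf{U}_{-1}(\mathfrak{g})$-action into a $_{\mathbb{Z}}\mathbf{U}(\mathfrak{g})$-action. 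The resulting module is generated by a single vector of weight $\lambda_{1}$, annihilated by all $E_{i}$ and satisfying $F_{i}^{\langle i,\lambda_{1}\rangle+1}v_{\lambda_{1}}=0$, hence is canonically isomorphic to the irreducible highest weight module $_{\mathbb{Z}}L_{1}(\lambda_{1})$; denote this isomorphism by $\tau$.

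Finally I would define $\chi^{\omega^{1},\psi}_{-1}=\tau\circ\chi^{\omega^{1}}_{-1}$. Since $\chi^{\omega^{1}}_{-1}$ intertwines the untwisted actions and the sign factor $\psi$ depends only on the weight and the index $i$, the intertwining property survives verbatim after twisting both sides by the same signs. The assignment $[L_{0}]\mapsto v_{\lambda_{1}}$ is preserved because $[L_{0}]$ and $v_{\lambda_{1}}$ sit in the same weight space, on which $\psi$ acts trivially, and uniqueness of the isomorphism then follows from the irreducibility of $_{\mathbb{Z}}L_{1}(\lambda_{1})$ together with the fact that any morphism of highest weight $_{\mathbb{Z}}\mathbf{U}(\mathfrak{g})$-modules is determined by the image of the highest weight vector.

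The main technical obstacle I anticipate is the rigorous identification $_{\mathbb{Z}}L_{-1}(\lambda_{1})^{\psi}\cong{_{\mathbb{Z}}L}_{1}(\lambda_{1})$, which requires verifying that the twisted divided-power quantum Serre and $\mathfrak{sl}_{2}$-relations at $v=-1$ precisely recover their classical counterparts at $v=1$. This is the module-level analogue of Proposition \ref{modulet} and reduces to the same sign calculations, but one must track the divided powers carefully rather than only the defining $E_{i}F_{i}-F_{i}E_{i}$ commutator.
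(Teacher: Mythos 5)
The paper gives no explicit proof of Proposition~\ref{cant1}; it is introduced by the remark that the module structure of $\mL(\omega^{1})^{\psi}_{-1}$ is ``determined by'' Theorem~\ref{high} together with Proposition~\ref{modulet}, which is exactly the plan you execute. Your route — specialize $\chi^{\omega^1}$ at $v=-1$, $\psi$-twist both sides, then identify the twist of $_{\mathbb{Z}}L_{-1}(\lambda_1)$ with $_{\mathbb{Z}}L_1(\lambda_1)$ — is correct and matches the paper's intended argument.

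Two small remarks. First, the ``technical obstacle'' you flag is real but already discharged in essence by Proposition~\ref{modulet}: the twist is defined directly on the divided-power operators $[\mathcal{F}^{(r)}_i]$, and one checks that the Euler-form sign $\psi^-(ri,\nu)$ is bilinear with $\langle i,i\rangle_{Q^{(1)}}=1$, which makes the signs arising in the divided-power and Serre relations collapse to global constants that cancel. Concretely, for the Serre relation the extra sign is $(-1)^{q\langle j,i\rangle+p\langle i,j\rangle+pq\langle i,i\rangle}$, which reduces modulo $2$ to $(-1)^{p(1-p)}(-1)^{(1-a_{ij})\langle j,i\rangle}$, a constant independent of $p$; similarly $\begin{bmatrix}m+n\\n\end{bmatrix}_{-1}=(-1)^{mn}\binom{m+n}{n}$ is absorbed by the $\langle mi,ni\rangle=mn$ term of the twist. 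A cleaner way to avoid defining $\psi$ separately on the abstract module is to transport the $_{\mathbb{Z}}\mathbf{U}(\mathfrak{g})$-structure from $\mL(\omega^1)^\psi_{-1}$ along $\chi^{\omega^1}_{-1}$, observe that the result is an integrable highest weight module of weight $\lambda_1$ whose character agrees with that of $L_1(\lambda_1)$ (since the $\psi$-twist does not change the underlying graded $\mathbb{Z}$-module), and then invoke the usual characterization; this sidesteps the relation-by-relation bookkeeping. Second, note that the statement in the paper reads $\mK(\omega^{1})^{\psi}_{-1}$, which appears to be a typo for $\mL(\omega^{1})^{\psi}_{-1}$ — the unlocalized Grothendieck group $\mK(\omega^1)$ surjects onto but is not isomorphic to the irreducible module; your proof correctly works with the localized version.
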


\begin{proposition}\label{cant2}
	The Grothendieck group $\mK(\omega^1,\omega^2)_{-1}^{\psi}$ is  isomorphic to the tensor product of irreducible highest weight modules $_{\mathbb{Z}}L_{1}(\lambda_{1}) \otimes{_{\mathbb{Z}}L}_{1}(\lambda_{1})$ via $\tilde{\chi}^{\omega^1,\omega^2,\psi}_{-1}=(\chi^{\omega^{2},\psi}_{-1} \otimes \chi^{\omega^{1},\psi}_{-1}  )\Delta$.
\end{proposition}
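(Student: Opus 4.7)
The strategy is to combine Theorem 4.5 with Propositions 6.8 and 6.9. Theorem 4.5 already provides an isomorphism $\Delta \colon \mL(\omega^1,\omega^2) \rightarrow \mL(\omega^{2}) \otimes \mL(\omega^{1})$ of $\Uv$-modules at the level of integral forms. Specialising at $v=-1$ gives a $\mathbb{Z}$-linear isomorphism, and the content of the present proposition is that, after twisting both sides by $\psi$, this map intertwines the $_{\mathbb{Z}}\mathbf{U}(\mathfrak{g})$-module structures furnished by Proposition 6.8. Once this is established, composition with $\chi^{\omega^{2},\psi}_{-1} \otimes \chi^{\omega^{1},\psi}_{-1}$, which is an isomorphism on each factor by Proposition 6.9, produces the desired isomorphism $\tilde{\chi}^{\omega^1,\omega^2,\psi}_{-1}$.

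\textbf{Checking $\psi$-compatibility.} On the source $\mL(\omega^1,\omega^2)^{\psi}_{-1}$, the generator $\mathcal{F}^{(r)}_{i}$ acts with the twist $\psi^{-}_{Q^{(2)}}(ri,\deg[L])$, whereas on $\mL(\omega^{2})^{\psi}_{-1} \otimes \mL(\omega^{1})^{\psi}_{-1}$ the tensor product action at $v=-1$ is obtained by applying the comultiplication $\Delta_{HA}(F_{i}^{(r)})$ and then twisting each factor by $\psi^{-}_{Q^{(1)}}$. The bilinear form $\langle-,-\rangle_{Q^{(2)}}$ decomposes additively along the splitting $\mathbf{V}=\mathbf{V}^{1}\oplus\mathbf{V}^{2}$ into the forms $\langle-,-\rangle_{Q^{(1)}}$ on each summand together with cross terms between $\mathbf{V}^{1}\oplus\mathbf{W}^{1}$ and $\mathbf{V}^{2}\oplus\mathbf{W}^{2}$. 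A direct Euler-form calculation shows that the sign contributions from these cross terms are exactly the weight-dependent signs produced by the $K_{\pm i}$ factors appearing in $\Delta_{HA}(F_{i}^{(r)})$ when specialised at $v=-1$. Hence the $\psi$-twisted $\mathcal{F}^{(r)}_{i}$-action on the left matches the $\psi$-twisted tensor product action on the right. The analogous verification for $\mathcal{E}^{(r)}_{i}$ uses $\psi^{+}$ together with the opposite Euler form, and compatibility with $K_{\nu}$ then follows from the $\mathbb{N}I$-grading.

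\textbf{Extension to the full quantum group.} After the sign matching above, $\Delta$ is $_{\mathbb{Z}}\mathbf{U}^{-}(\mathfrak{g})$- and $_{\mathbb{Z}}\mathbf{U}^{+}(\mathfrak{g})$-equivariant with respect to the twisted actions, and by Proposition 6.8 both sides are genuine $_{\mathbb{Z}}\mathbf{U}(\mathfrak{g})$-modules (so the $\mathfrak{sl}_{2}$-relations are automatic on each). Together with Proposition 6.9 applied to each factor, the map $\tilde{\chi}^{\omega^1,\omega^2,\psi}_{-1}=(\chi^{\omega^{2},\psi}_{-1}\otimes \chi^{\omega^{1},\psi}_{-1})\Delta$ is then a $_{\mathbb{Z}}\mathbf{U}(\mathfrak{g})$-module isomorphism onto $_{\mathbb{Z}}L_{1}(\lambda_{2})\otimes{_{\mathbb{Z}}L}_{1}(\lambda_{1})$.

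\textbf{Main obstacle.} The principal technical difficulty is the sign bookkeeping in the middle paragraph. While Proposition 6.8 abstractly endows each Grothendieck group with a $_{\mathbb{Z}}\mathbf{U}(\mathfrak{g})$-module structure, it does not by itself imply that $\Delta$ respects these structures once both sides are twisted. One must unwind the definition $\Delta=(\mathrm{sw})_{!}(\mathbf{D}\boxtimes\mathbf{D})\mathbf{Res}^{\bV\oplus\mathbf{W}^{1}\oplus\mathbf{W}^{2}}_{\mathbf{V}^{1}\oplus\mathbf{W}^{1},\mathbf{V}^{2}\oplus\mathbf{W}^{2}}\mathbf{D}$ and track how the internal shifts produced by the restriction functor, the Verdier duals, and the swap interact with $\psi^{-}_{Q^{(2)}}$ compared to $\psi^{-}_{Q^{(1)}}$ on each tensor factor. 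I expect this to reduce to a clean Euler-form identity on dimension vectors, but this is where all genuine content of the proposition resides.
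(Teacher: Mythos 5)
Your outline matches the paper's one-sentence justification, which simply points to Theorems \ref{high}, \ref{tensor} and Proposition \ref{modulet}, and you correctly reduce the content to the statement that the untwisted $\Delta$ from Theorem \ref{tensor}, specialised at $v=-1$, intertwines the $\psi$-twisted module structure on $\mL(\omega^1,\omega^2)^{\psi}_{-1}$ with the classical tensor-product structure on $\mL(\omega^2)^{\psi}_{-1}\otimes\mL(\omega^1)^{\psi}_{-1}$; once Proposition \ref{cant1} is granted, the map $\chi^{\omega^2,\psi}_{-1}\otimes\chi^{\omega^1,\psi}_{-1}$ is automatically ${_{\mathbb{Z}}\mathbf{U}}(\mathfrak{g})$-linear, so this intertwining is where all of the content resides.

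Two remarks on your sign discussion. First, for $\mathcal{F}^{(r)}_{i}$ the first argument of the twist is always $ri$, supported on $I$, and on any dimension vector supported on $I\cup I^{1}$ or on $I\cup I^{2}$ the pairing $\langle ri,-\rangle_{Q^{(2)}}$ already agrees with $\langle ri,-\rangle_{Q^{(1)}}$; the ``cross terms'' you appeal to do not occur in that slot, since the framed arrows all leave $I$. What genuinely has to cancel is the combination of the $v^{-ab}$ and $K$-factors appearing in the quantum coproduct of $F_{i}^{(r)}$ specialised at $v=-1$, together with the internal shifts $[-\langle\cdot,\cdot\rangle_{Q^{(2)}}]$ built into $\mathbf{Res}$ and the Verdier duals defining $\Delta$, against the product twist $\psi^{-}_{Q^{(1)}}(ai,-)\,\psi^{-}_{Q^{(1)}}(bi,-)$ on the two tensor factors. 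Second, you assert this cancellation (``a direct Euler-form calculation shows\ldots'') but do not exhibit it; since the paper also gives only a pointer, that is defensible as a plan, but a complete proof ought to write down the resulting $\mathbb{Z}/2$-identity in the dimension vectors, because that identity is precisely the place where the definitions of $\psi^{\pm}$ for $Q^{(1)}$ versus $Q^{(2)}$ are actually tested, and where an error would go undetected at the level of abstract plausibility.
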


Note that after $\psi$-twist, the image of simple perverse sheaves under the twisted canonical isomorphisms do not form the canonical basis, since $[\mathcal{F}^{(r)}_{i}]^{\psi}$ does not have positive property with respect to this basis.

\subsection{Characteristic cycles for framed quivers}
In this section, we build a map from $\mL(\omega^{1})_{-1}^{\psi}$ to $\mathbf{H}(\mathfrak{L}(\omega),\mathbb{Z})=\bigoplus\limits_{\nu \in \mathbb{N}I}\mathbf{H}^{\mathrm{BM}}_{\mathrm{top}}(\mathfrak{L}(\nu,\omega) ,\mathbb{Z})$ by characteristic cycles.

By Proposition \ref{fss}, $\mQ_{\nu,\omega^{1}}$ is a full subcategory of $\mD^{b}_{\bG_{\bV}\times \mathbb{T}}(\bfEVOf,\Lambda_{\bV,\mathbf{W}^{1}}) $, hence $\mK(\omega^{1})_{-1}^{\psi}$ can be regarded as a sub $\mathbb{Z}$-module of $\mK_{0}(\mD^{b}_{\bG_{\bV}\times \mathbb{T}}(\bfEVOf,\Lambda_{\bV,\mathbf{W}^{1}})) $. Compose the characteristic cycle map with $[\textrm{For}_{\bG_{\bV}}^{\bG_{\bV} \times \mathbb{T}}]$, we obtain a morphism
$$\CC_{\bG_{\bV}} : \mK(\nu,\omega^{1})_{-1}^{\psi} \longrightarrow  \mathbf{H}^{\mathrm{BM}}_{\mathrm{top}}(\Lambda_{\bV,\mathbf{W}^{1}} ,\mathbb{Z}), $$
here $[\textrm{For}_{\bG_{\bV}}^{\bG_{\bV} \times \mathbb{T}}]$ is the linear map $ \mK_{0}(\mD^{b}_{\bG_{\bV}\times \mathbb{T}}(\bfEVOf,\Lambda_{\bV,\mathbf{W}^{1}}))  \rightarrow \mK_{0}(\mD^{b}_{\bG_{\bV}}(\bfEVOf,\Lambda_{\bV,\mathbf{W}^{1}})) $ induced by the functor $\textrm{For}_{\bG_{\bV}}^{\bG_{\bV} \times \mathbb{T}}:L \mapsto L[{\rm{dim}}\mathbb{T} ]$.

Now we use the formalism in Section 2.1. Given dimension vector $\nu=\nu''+ri$ and fix graded spaces $\bV'' \subseteq  \bV$ with dimension vectors $\nu''$ and $\nu$ respectively. Let $Y=\mathbf{E}_{\bV'',\mathbf{W}^{1},\Omega^{(1)}}$, $X'=\bfEVOf$ and $V=F_{\mathbf{W}^{1}}$,  $X=\bG_{\bV}\times^{P}Y$ and $W=\bG_{\bV} \times^{P}F_{\mathbf{W}^{1}}$.
Then the induction correspondence of $\mathbf{Ind}$ in section 2.1.1 coincides with the diagram of  the functor $\mathcal{F}^{(r)}_{i}$.  In particular, we have an isomorphism of functors
\begin{equation}\label{forget}
	\mathbf{Ind}^{(r)}_{i} \textrm{For}_{\bG_{\bV}}^{\bG_{\bV} \times \mathbb{T}} \cong  \textrm{For}_{\bG_{\bV}}^{\bG_{\bV} \times \mathbb{T}}\mathcal{F}^{(r)}_{i},
\end{equation} 
and $[\textrm{For}_{\bG_{\bV}}^{\bG_{\bV} \times \mathbb{T}}]$ intertwines the operator $[\mathcal{F}^{(r)}_{i}]$ and the operator $[\mathbf{Ind}^{(r)}_{i}]$. 
Here we denote $\mathbf{Ind}$ by $\mathbf{Ind}^{(r)}_{i}$, in order to emphasize the change of dimension vectors.

Let $\bar{F}_{\mathbf{W}^{1}}$ be the subset of $T^{\ast}\bfEVOf$ consisting of $(x,y,z)$ such that $(x,y,z)(\bV'') \subseteq \bV''$ and denote  $ \bar{F}_{\mathbf{W}^{1}} \cap \Lambda_{\bV,\mathbf{W}^{1}}$ by $\bar{F}^{nil}_{\mathbf{W}^{1}} $. By \cite[Lemma 9.4]{hennecart2024geometric}, we have $\bG_{\bV} \times ^{P} \bar{F}_{\mathbf{W}^{1}} \cong T^{\ast}_{W}(X \times X')$, and we also  have the following commutative diagram,
	\[
\xymatrix{
	\Lambda_{\bV'',\mathbf{W}^{1}}    \ar[d] & \bar{F}^{nil}_{\mathbf{W}^{1}} \ar[d] \ar[l] \ar[r]   &  	\Lambda_{\bV,\mathbf{W}^{1}} \ar[d]\\
	T^{\ast}Y & \bar{F}_{\mathbf{W}^{1}} \ar[l] \ar[r] & T^{\ast}X' ,
}
\]
where the right square is Cartesian. Notice that the second raw of the diagram above is the cotangent correspondence in Section 2.1.2, and the commutative diagram above implies that $$\phi_{2}\phi_{1}^{-1}(\Lambda_{\bV'',\mathbf{W}^{1}}\times^{P} \bG_{\bV}) \subseteq \Lambda_{\bV,\mathbf{W}^{1}},$$ hence the induction operator $\mathbf{Ind}^{(r)}_{i}:\mathbf{H}^{\mathrm{BM}}_{\mathrm{top}}(\Lambda_{\bV'',\mathbf{W}^{1}} ,\mathbb{Z}) \rightarrow \mathbf{H}^{\mathrm{BM}}_{\mathrm{top}}(\Lambda_{\bV,\mathbf{W}^{1}} ,\mathbb{Z}) $ is well-defined. By \cite[Theorem 9.11]{hennecart2024geometric},  the top degree homology groups of CoHA considered in 
\cite{schiffmann2020cohomological} and \cite{davison2024bpsliealgebrasperverse} realize the nilpotent enveloping algebra. Hence together with $[\mathbf{Ind}^{(r)}_{i}]^{\psi}$, the homology group $\bigoplus\limits_{\nu \in \mathbb{N}I}\mathbf{H}^{\mathrm{BM}}_{\mathrm{top}}(\Lambda_{\bV,\mathbf{W}^{1}},\mathbb{Z})$  becomes a $_{\mathbb{Z}}\mathbf{U}^{-}(\mathfrak{g})$-module. We have the following lemma.
\begin{lemma}\label{lemma 6.9}
	After base change to $\mathbb{Q}$, the morphism 
    $$\CC^{\omega^{1}}= \bigoplus\limits_{\nu \in \mathbb{N}I} CC_{\bG_{\bV}}: \mathbb{Q}\otimes_{\mathbb{Z}}\mK(\omega^{1})^{\psi}_{-1} \rightarrow \bigoplus\limits_{\nu \in \mathbb{N}I}\mathbf{H}^{\mathrm{BM}}_{\mathrm{top}}(\Lambda_{\bV,\mathbf{W}^{1}},\mathbb{Q})$$ 
    is a $\mathbf{U}^{-}(\mathfrak{g})$-linear map. Moreover, this map is independent on the choice of $\Omega$ with respect to Fourier-Sato transforms.
\end{lemma}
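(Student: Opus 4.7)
The strategy is to reduce the statement to the Hennecart compatibility Theorem \ref{com} applied to the induction diagram defining $\mathcal{F}^{(r)}_{i}$, and then show that the $\psi$-twist is compatible on both sides because it depends only on the dimension vectors and the Euler form of $Q^{(1)}$.

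First I would observe that after base change to $\mathbb{Q}$, the algebra $\mathbf{U}^{-}(\mathfrak{g})$ is generated by the Chevalley generators $F_i$ (the divided powers $F_i^{(r)}$ lie in the $\mathbb{Q}$-span of products of $F_i$'s). So it suffices to check that $\CC^{\omega^{1}}$ intertwines the operators $[\mathcal{F}^{(r)}_{i}]^{\psi}$ on $\mK(\omega^{1})_{-1}^{\psi}$ and $[\mathbf{Ind}^{(r)}_{i}]^{\psi}$ on $\bigoplus_\nu \mathbf{H}^{\mathrm{BM}}_{\mathrm{top}}(\Lambda_{\bV,\mathbf{W}^{1}},\mathbb{Q})$, for every $i\in I$ and every $r\in \mathbb{N}$. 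Recall that the $\psi$-twist is, on both sides, multiplication by the same sign $\psi^{-}(ri,\deg([L]))$ determined purely by the dimension vectors and the Euler form of $Q^{(1)}$; in particular it does not depend on whether we are working with sheaves or with Borel-Moore homology. Consequently, the compatibility of $\CC_{\bG_{\bV}}$ with the $\psi$-twisted operators is equivalent to the compatibility with the untwisted operators.

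Next I would apply Theorem \ref{com} to the induction diagram that defines $\mathcal{F}^{(r)}_{i}$. Take $Y=\mathbf{E}_{\mathbf{V}''\oplus \mathbf{W}^{1},\Omega^{(1)}}$, $V=F_{\mathbf{W}^{1}}$, $X'=\bfEVOf$, $X=\bG_{\bV}\times^{P}Y$ and $W=\bG_{\bV} \times^{P}F_{\mathbf{W}^{1}}$ as in the paragraph preceding the lemma, equipped with the Whitney stratifications coming from $\bG_{\bV}$-orbit type. The identification $\bG_{\bV}\times^{P}\bar{F}_{\mathbf{W}^{1}}\cong T^{\ast}_{W}(X\times X')$ together with the Cartesian square displayed above shows that $(-\psi)\phi^{-1}(\Lambda_{\bV'',\mathbf{W}^{1}}\times^{P}\bG_{\bV})\subseteq \Lambda_{\bV,\mathbf{W}^{1}}$, so the geometric induction is well-defined between the appropriate nilpotent varieties. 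Then Theorem \ref{com} gives the commutative square
$$\xymatrix{
\mK_{0}(\mD^{b}_{P}(Y,\Lambda_{Y})) \ar[r]^-{\mathbf{Ind}^{(r)}_{i}} \ar[d]_{\CC_{P}} & \mK_{0}(\mD^{b}_{\bG_{\bV}}(X',\Lambda_{\bV,\mathbf{W}^{1}})) \ar[d]^{\CC_{\bG_{\bV}}} \\
\mathbf{H}^{\BM}_{\mathrm{top}}(\Lambda_{\bV'',\mathbf{W}^{1}},\mathbb{Z}) \ar[r]^-{\mathbf{Ind}^{(r)}_{i}} & \mathbf{H}^{\BM}_{\mathrm{top}}(\Lambda_{\bV,\mathbf{W}^{1}},\mathbb{Z}).}$$
Combined with the isomorphism of functors $\mathbf{Ind}^{(r)}_{i}\textrm{For}\cong \textrm{For}\,\mathcal{F}^{(r)}_{i}$ from (\ref{forget}) and the fact that forgetting the $\mathbb{T}$-equivariance commutes with taking characteristic cycles up to a global shift (absorbed into our normalization), this gives the desired intertwining with the untwisted operators. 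Adding back the $\psi$-twists on both sides preserves the equality, proving the $\mathbf{U}^{-}(\mathfrak{g})$-linearity.

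Finally, for independence of the orientation $\Omega$, I would argue as follows. Any two orientations $\Omega,\Omega'$ of $(I,H)$ which are equivalent under mutation are connected by a sequence of Fourier--Sato transforms along sources (Lemma \ref{FST} and Lemma \ref{FST1}). By \cite[Proposition 5.2]{hennecart2024geometric} (or a direct computation using the identification of $T^{\ast}\bfEVOf$ with $\bfEVHf$), the characteristic cycle is preserved by Fourier--Sato transforms once the appropriate identification of cotangent bundles is made, and the nilpotent variety $\Lambda_{\bV,\mathbf{W}^{1}}$ is intrinsic, independent of the choice of orientation. Since both $\mathcal{F}^{(r)}_{i}$ on the sheaf side and $\mathbf{Ind}^{(r)}_{i}$ on the Borel-Moore side commute with the corresponding Fourier--Sato identifications, we conclude that $\CC^{\omega^{1}}$ does not depend on $\Omega$. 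The main technical subtlety, and the step I would expect to need the most care, is to check that the signs introduced by Fourier--Sato on characteristic cycles (i.e., the $-1$ automorphism on cotangent fibers appearing in the cotangent correspondence) are absorbed consistently into the $\psi$-twist. This is a bookkeeping exercise that follows the same pattern used in \cite{fang2023lusztig} when verifying that the sheaf-theoretic constructions are orientation-independent after twisting.
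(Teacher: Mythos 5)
Your proof is correct and takes essentially the same approach as the paper's: reduce to intertwining $[\mathcal{F}^{(r)}_{i}]$ and $[\mathbf{Ind}^{(r)}_{i}]$ via the functor identity (\ref{forget}) and Theorem \ref{com}, observe that the $\psi$-twist is the same sign on both sides since it depends only on dimension vectors, and invoke compatibility of characteristic cycles with Fourier--Sato transforms for orientation-independence (the paper cites \cite[Exercise IX.7]{MR1074006}). The sign-bookkeeping caveat you raise at the end is not actually needed, since the Fourier--Sato/characteristic-cycle commutation is exact and does not interact with the $\psi$-twist.
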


\begin{proof}
	By equation (\ref{forget}) and Theorem \ref{com}, the map $\CC^{\omega^{1}}$ intertwines the operator $[\mathcal{F}^{(r)}_{i}]$ and the operator $[\mathbf{Ind}^{(r)}_{i}]$. Since we use the same $\psi$-twist, the proposition is proved.  By \cite[Exercise IX.7]{MR1074006}, the characteristic cycle map commutes with Fourier-Sato transforms, hence $\CC^{\omega^{1}}$ is independent on the choice of $\Omega$.
\end{proof}

Now we consider the open embedding $j^{s}:\Lambda^{s}_{\bV,\mathbf{W}^{1}} \rightarrow \Lambda_{\bV,\mathbf{W}^{1}}$, the pull-back  of $j^{s}$ defines 
$$ j^{s\ast}:  \mathbf{H}^{\mathrm{BM}}_{\mathrm{top}}(\Lambda_{\bV,\mathbf{W}^{1}} ,\mathbb{Z}) \rightarrow \mathbf{H}^{\mathrm{BM}}_{\mathrm{top}}(\Lambda^{s}_{\bV,\mathbf{W}^{1}} ,\mathbb{Z}) \cong \mathbf{H}^{\mathrm{BM}}_{\mathrm{top}}(\mathfrak{L}(\nu,\omega^{1}) ,\mathbb{Z}).$$
Compose $j^{s\ast}$ with $\CC_{\bG_{\bV}}$, we obtain 
$$\CC^{s}_{\bG_{\bV}}=j^{s\ast}  \CC_{\bG_{\bV}} :\mK(\nu,\omega^{1})^{\psi}_{-1} \longrightarrow \mathbf{H}^{\mathrm{BM}}_{\mathrm{top}}(\mathfrak{L}(\nu,\omega),\mathbb{Z}), $$
and 
$$\CC^{s,\omega^{1}}=\bigoplus\limits_{\nu \in \mathbb{N}I}\CC^{s}_{\bG_{\bV}}:\mK(\omega^{1})^{\psi}_{-1} \longrightarrow \mathbf{H}(\mathfrak{L}(\omega),\mathbb{Z}). $$

\begin{proposition}\label{premain1}
	After base change to $\mathbb{Q}$, the morphism $CC^{s,\omega^{1}}$ induces an isomorphism of $\mathbf{U}(\mathfrak{g})$-modules,
	$$  \CC^{s,\omega^{1}}:\mathbb{Q}\otimes_{\mathbb{Z}}\mL(\omega^{1})^{\psi}_{-1} \longrightarrow \mathbf{H}(\mathfrak{L}(\omega),\mathbb{Q}).$$
	Moreover, we have the following commutative diagram
		\[
	\xymatrix{
		\mathbb{Q}\otimes_{\mathbb{Z}}\mL(\omega^{1})^{\psi}_{-1} \ar[rr] ^{\CC^{s,\omega^{1}}} \ar[dr]_{\chi^{\omega^{1},\psi}_{-1}} &   &  	\mathbf{H}(\mathfrak{L}(\omega^{1}),\mathbb{Q}) \ar[dl]^{\varphi^{\omega^{1}}}\\
		 &L_{1}(\lambda_{1}), &  
	}
	\]
	where $\chi^{\omega^{1},\psi}_{-1}$ is the canonical isomorphism in Proposition \ref{cant1} and $\varphi^{\omega^{1}}$ is the canonical isomorphism in Theorem \ref{canN1}.
\end{proposition}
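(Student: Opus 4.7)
The strategy I would pursue is to reduce the proposition to three ingredients: (a) $\CC^{s,\omega^1}$ descends to the localization $\mL(\omega^{1})^{\psi}_{-1}$; (b) it is $_{\mathbb{Z}}\mathbf{U}^{-}(\mathfrak{g})$-linear; and (c) it sends the distinguished class $[L_0]$ to the fundamental class $[\mathfrak{L}(0,\omega)]$. Once these are in hand, the commuting triangle follows automatically. Indeed, by Proposition \ref{cant1} and Theorem \ref{canN1} both $\chi^{\omega^{1},\psi}_{-1}$ and $\varphi^{\omega^{1}}$ are $\mathbf{U}(\mathfrak{g})$-module isomorphisms onto $L_{1}(\lambda_{1})$ sending the respective distinguished classes to the highest weight vector, and since $L_{1}(\lambda_{1})$ is generated as a $\mathbf{U}^{-}(\mathfrak{g})$-module by this vector, the equality $\varphi^{\omega^{1}}\circ\CC^{s,\omega^{1}}=\chi^{\omega^{1},\psi}_{-1}$ is forced by (b) and (c). The triangle in turn expresses $\CC^{s,\omega^{1}}=(\varphi^{\omega^{1}})^{-1}\circ\chi^{\omega^{1},\psi}_{-1}$ as a composition of two isomorphisms of $\mathbf{U}(\mathfrak{g})$-modules, giving the main claim.

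For (a), Proposition \ref{fss} gives that $L\in \mathcal{N}_{\nu}$ if and only if $SS(L)\cap \Lambda^{s}_{\bV,\mathbf{W}^1}$ is empty. Since $\CC_{\bG_{\bV}}([L])$ is supported on $SS(L)\cap\Lambda_{\bV,\mathbf{W}^1}$, its pullback along the open embedding $j^{s}:\Lambda^{s}_{\bV,\mathbf{W}^1}\hookrightarrow\Lambda_{\bV,\mathbf{W}^1}$ vanishes, so $\CC^{s,\omega^{1}}$ kills $\mathcal{N}_{\nu}$ and factors through $\mL(\omega^{1})^{\psi}_{-1}$. For (b), Lemma \ref{lemma 6.9} already establishes that the unrestricted $\CC^{\omega^{1}}$ intertwines $[\mathcal{F}^{(r)}_{i}]^{\psi}$ with the induction operator $[\mathbf{Ind}^{(r)}_{i}]^{\psi}$ on $\bigoplus_{\nu}\mathbf{H}^{\mathrm{BM}}_{\mathrm{top}}(\Lambda_{\bV,\mathbf{W}^1},\mathbb{Q})$. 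What remains is to verify that, after applying $j^{s\ast}$, the induction operator $[\mathbf{Ind}^{(r)}_{i}]^{\psi}$ agrees with Nakajima's Hecke operator $f_{i}^{(r)}$. Concretely, the correspondence $\bar{F}^{nil}_{\mathbf{W}^1}$ realizing the cotangent correspondence, when quotiented by $\bG_{\bV}$ and restricted to the stable locus, is precisely the $r$-step Hecke correspondence $\mathfrak{P}_{i}^{(r)}$ up to the swap of coordinates, and it suffices to match the sign $\psi^{-}_{Q^{(1)}}$ with Nakajima's factor $(-1)^{\langle i,\omega-\mathbf{C}_{\Omega}\nu\rangle}$ on the weight space. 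For (c), take $\bV=0$: then $\mathbf{E}_{0\oplus\mathbf{W}^{1},\Omega^{(1)}}$ is a point, $L_{0}$ is the constant sheaf on this point, its characteristic cycle is the class of the zero-section, and since $\mathfrak{L}(0,\omega)$ is also a point identified with this zero section under $j^{s\ast}$, we obtain $\CC^{s,\omega^{1}}([L_{0}])=[\mathfrak{L}(0,\omega)]$.

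The principal obstacle I expect is the sign-matching step in the $\mathbf{U}^{-}$-linearity statement. The definition of $[\mathcal{F}^{(r)}_{i}]^{\psi}$ uses the Euler form $\psi^{-}_{Q^{(1)}}$ of the framed quiver, whereas the definition of $f_{i}^{(r)}$ uses a weight-dependent sign $(-1)^{\langle i,\omega-\mathbf{C}_{\Omega}\nu\rangle}$. Reconciling these two conventions, together with the implicit sign arising from the automorphism $-1$ on the cotangent fibers that enters the definition of $-\psi$ in the induction of Borel--Moore homology, will require careful bookkeeping of the Cartan pairings and dimension vectors, especially given the authors' remark that the signs in \cite{MR1604167} were originally incorrect and need to be taken from the corrected formulas of \cite{nakajima2001quiver}.
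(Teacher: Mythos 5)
Your proposal follows essentially the same route as the paper: (a) descent to the localization via the singular-support criterion of Proposition \ref{fss}, (b) $\mathbf{U}^-(\mathfrak{g})$-linearity by matching the restricted BM-homology induction with Nakajima's Hecke correspondence (plus the $\psi$-sign reconciliation), and (c) the highest-weight-vector computation, after which the triangle is forced because both $\chi^{\omega^{1},\psi}_{-1}$ and $\varphi^{\omega^{1}}\circ\CC^{s,\omega^{1}}$ are $\mathbf{U}^-$-linear maps agreeing on the generator. The only detail the paper makes explicit that you gloss over is the use of properness of $\phi_2$ to obtain the base-change isomorphism $j^{s\ast}\phi_{2\ast}\phi_1^! \cong \phi^s_{2\ast}\phi_1^{s!}j^{s\ast}$ needed in step (b), but the strategy and the key lemmas invoked are the same.
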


\begin{proof}
	Firstly, we show that $\CC^{s,\omega^{1}}$ is well-defined. Let $\mathcal{I}$ be the $\mathcal{A}$-submodule of $\mK(\nu,\omega^{1})$ spanned by those simple perverse sheaves $L$ in $\mathcal{N}_{\nu}$, then $\mL(\nu,\omega^{1})$ is isomorphic to $\mK(\nu,\omega^{1})/\mathcal{I}$ as $\mathcal{A}$-module. In order to show $\CC^{s,\omega^{1}}$ is well-defined, we only need to show $\CC_{\bG_{\bV}}([L])$ is in the kernel of $j^{s\ast}$ for any $L$ in $\mathcal{N}_{\nu}$. However, by Proposition \ref{fss}, $\CC_{\bG_{\bV}}([L])$ is a linear combination of some $[Z]$ with $Z \cap \Lambda^{s}_{\bV,\mathbf{W}^{1}}=\emptyset$, hence $j^{s\ast} \CC_{\bG_{\bV}}([L])=0$ for any $L$ in $\mathcal{N}_{\nu}$ and $\CC^{s,\omega^{1}}:\mL(\omega^{1})^{\psi}_{-1} \longrightarrow \mathbf{H}(\mathfrak{L}(\omega),\mathbb{Z})$ is well-defined.
	
	Secondly, we show that $\CC^{s,\omega^{1}}$ is $\mathbf{U}^{-}(\mathfrak{g})$-linear after base change to $\mathbb{Q}$. More precisely, we prove that $\CC^{s,\omega^{1}}$ intertwines the operator $[\mathcal{F}_{i}]^{\psi}$ and $f_{i}$. Now we consider the commutative diagram 
		\[
	\xymatrix{
		\bG_{\bV}\times^{P}\Lambda^{s}_{\bV'',\mathbf{W}^{1}}    \ar[d] & \bG_{\bV}\times^{P}\bar{F}^{nil,s}_{\mathbf{W}^{1}} \ar[d] \ar[l]_{\phi^{s}_{1}}  \ar[r]^{\phi^{s}_{2}}   &  	\Lambda^{s}_{\bV,\mathbf{W}^{1}} \ar[d]\\
	\bG_{\bV}\times^{P}\Lambda_{\bV'',\mathbf{W}^{1}}& \bG_{\bV}\times^{P}\bar{F}^{nil}_{\mathbf{W}^{1}} \ar[l]_{\phi_{1}} \ar[r]^{\phi_{2}} &\Lambda_{\bV,\mathbf{W}^{1}} ,
	}
	\]
	where the right square is Cartesian. Since $\phi_{2}$ is proper, we have $j^{s\ast}\phi_{2\ast}\phi^{!}_{1} \cong \phi^{s}_{2\ast}\phi^{s!}_{1} j^{s\ast}  $. 
	
	Recall that $\mathfrak{P}_{i}(\nu,\omega^{1})$ is isomorphic to the geometric quotient of the variety $\mathfrak{V}$ consisting of $(x,y,z,\mathbf{S})$, where $(x,y,z) \in \mu^{-1}(0)^{s}$ and $ \mathbf{S}$ is a $x$-stable subspace such that it contains the image of $z$ and its  dimension vector is $\nu''$. Then $\mathfrak{P}_{i}(\nu,\omega^{1}) \cap (\mathfrak{L}(\nu'',\omega) \times \mathfrak{L}(\nu,\omega))$ is isomorphic to the geometric quotient of the variety $\mathfrak{U}$ consisting of $(X,y,z,\mathbf{S})$, where  $(x,y,z) \in \Lambda^{s}_{\bV,\mathbf{W}^{1}}$ and $ \mathbf{S}$ is a $x$-stable subspace such that it contains the image of $z$ and its  dimension vector is $\nu''$. Notice that  $\bG_{\bV}\times^{P}\bar{F}^{nil}_{\mathbf{W}^{1}}$ is isomorphic to $\mathfrak{U}$ via the isomorphism $$ (g,(x,y,z) ) \mapsto (g\cdot(x,y,z),g\cdot \bV'' ),$$
	we can see that $\phi^{s}_{2\ast}\phi^{s!}_{1}$  equals to the cap product of $[\mathrm{sw}(\mathfrak{P}_{i}(\nu,\omega^{1}))]$. Notice that the $\psi$ twist coincides with the sign twist $(-1)^{\langle i , \omega- \mathbf{C}_{\Omega}\nu \rangle}$, we can see that the morphism  $\CC^{s,\omega^{1}}$ is $\mathbf{U}^{-}(\mathfrak{g})$-linear after base change to $\mathbb{Q}$.
	
	Since $\CC^{s,\omega^{1}}([L_{0}] )=[\mathfrak{L}(0,\omega^{1})]$, we can see that $\chi^{\omega^{1},\psi}_{-1}([L_{0}])= \varphi^{\omega^{1}} \CC^{s,\omega^{1}}([L_{0}] ). $   Since both $\chi^{\omega^{1},\psi}_{-1}$ and $\varphi^{\omega^{1}} \CC^{s,\omega^{1}}$ are $\mathbf{U}^{-}(\mathfrak{g})$-linear morphisms and they coincide on the highest weight vector, we must have $\chi^{\omega^{1},\psi}_{-1}=\varphi^{\omega^{1}} \CC^{s,\omega^{1}}$ as $\mathbf{U}^{-}(\mathfrak{g})$-linear isomorphisms. Hence $\CC^{s,\omega^{1}}= (\varphi^{\omega^{1}})^{-1} \chi^{\omega^{1},\psi}_{-1}$ is also a $\mathbf{U}(\mathfrak{g})$-linear isomorphism. 
\end{proof}

\begin{corollary}\label{co1}
	Given a simple perverse sheaf $L$ in $\mP^{s}_{\nu,\omega^{1}}$, let $Z$ be the irreducible component of $\mathfrak{L}(\nu,\omega^{1})$ such that $Z=\Phi_{Q^{(1)}}(L)$, then we have the following equation in ${_{\mathbb{Z}}L}_{1}(\lambda_{1})$, $$\chi^{\omega^{1},\psi}_{-1}([L])=\varphi^{\omega^{1}}([Z] + \sum\limits_{Z \prec Z'}  c_{Z,Z'}[Z']), $$
	where $c_{Z,Z'}$ are constants in $\mathbb{N}$ and $\prec$ is the string order of $B(\lambda_{1})$ with respect to a fixed order $\prec$ of $I$. 
\end{corollary}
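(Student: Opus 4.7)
The plan is to route through Proposition~\ref{premain1}, which already identifies $\chi^{\omega^{1},\psi}_{-1}$ with $\varphi^{\omega^{1}}\circ\CC^{s,\omega^{1}}$ over $\mathbb{Q}$ and hence, since $\mL(\omega^{1})^{\psi}_{-1}$ is torsion-free, over $\mathbb{Z}$ as well. Composing both sides with $(\varphi^{\omega^{1}})^{-1}$, the claimed identity becomes the assertion that in $\mathbf{H}(\mathfrak{L}(\nu,\omega^{1}),\mathbb{Z})$ one has
$$\CC^{s,\omega^{1}}([L]) = [Z] + \sum_{Z\prec Z'} c_{Z,Z'}[Z']$$
with $c_{Z,Z'}\in\mathbb{N}$, where $Z=\Phi_{Q^{(1)}}(L)$. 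So the whole corollary is really a statement about the characteristic cycle of the equivariant simple perverse sheaf $L$ together with its behaviour under the open restriction $j^{s\ast}$ to the stable locus.

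The first input is the standard non-negativity of characteristic cycles: for a simple perverse sheaf on a smooth complex variety, $\CC$ is a non-negative integer combination of fundamental classes of the irreducible components of its singular support, with the closure of the conormal bundle of the open stratum of the support appearing with multiplicity one. Combined with Proposition~\ref{fss}, which guarantees $SS(L)\subseteq\Lambda_{\bV,\mathbf{W}^{1}}$ and identifies the distinguished component with $Z=\Phi_{Q^{(1)}}(L)$, this produces $\CC_{\bG_{\bV}}([L]) = [Z] + \sum_{Z'\neq Z} m_{Z'}[Z']$ with $m_{Z'}\in\mathbb{N}$, where the sum runs over the remaining components of $SS(L)$. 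Because $L\in\mP^{s}_{\nu,\omega^{1}}$, the component $Z$ meets $\Lambda^{s}_{\bV,\mathbf{W}^{1}}$, and the open restriction $j^{s\ast}$ sends $[Z]$ to the fundamental class of the corresponding irreducible component of $\mathfrak{L}(\nu,\omega^{1})$, while annihilating those $Z'$ with $Z'\cap\Lambda^{s}=\varnothing$. This yields an expansion of the required non-negative integral form, so the remaining task is to verify the order relation $Z\prec Z'$ for every surviving $Z'\neq Z$.

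For this I would invoke the Kashiwara--Saito estimate~(\ref{KS}): any $Z'\subseteq SS(L)$ satisfies $\epsilon_{i}(Z')\geq\epsilon_{i}(Z)$ and $\epsilon^{\ast}_{i}(Z')\geq\epsilon^{\ast}_{i}(Z)$ for every $i\in I$. Arguing by induction on $|\nu|$, let $i_{0}$ be the $\prec$-minimal vertex with $\epsilon_{i_{0}}(Z)>0$, so that $s^{\prec}(Z)$ begins with the pair $(i_{0},\epsilon_{i_{0}}(Z))$. If some $j\prec i_{0}$ satisfies $\epsilon_{j}(Z')>0$, or if $\epsilon_{i_{0}}(Z')>\epsilon_{i_{0}}(Z)$, then the first pair of $s^{\prec}(Z')$ already strictly exceeds $(i_{0},\epsilon_{i_{0}}(Z))$ in the lexicographic order on $I\times\mathbb{N}$ used to define $\prec$, and $Z\prec Z'$ at once. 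Otherwise the two strings begin with the same pair and one applies the bijection $\rho_{i_{0},\epsilon_{i_{0}}(Z)}$: by the compatibility of $\Phi_{Q^{(1)}}$ with $\pi_{i_{0},\epsilon_{i_{0}}(Z)}$ and $\rho_{i_{0},\epsilon_{i_{0}}(Z)}$ recorded just after~(\ref{KS}), the reduced component $\rho_{i_{0},\epsilon_{i_{0}}(Z)}(Z')$ still lies in the singular support of the smaller simple perverse sheaf $\pi_{i_{0},\epsilon_{i_{0}}(Z)}(L)$, so the Kashiwara--Saito hypotheses persist at strictly smaller dimension vector. The inductive hypothesis then yields $\rho_{i_{0},\epsilon_{i_{0}}(Z)}(Z)\prec\rho_{i_{0},\epsilon_{i_{0}}(Z)}(Z')$, whence $Z\prec Z'$.

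The main technical hurdle I anticipate is precisely this last step: one has to check that the joint $\epsilon_{i}$ and $\epsilon^{\ast}_{i}$ domination really is preserved under the source/sink reduction $\rho_{i_{0},\epsilon_{i_{0}}(Z)}$, so that the induction actually closes. This rests on the combinatorial compatibilities between the characteristic-component bijection $\Phi_{Q^{(1)}}$ and the operations $\pi_{i,t},\rho_{i,t}$ recalled in subsection~\ref{Analysis at source} (and its dual). Once those compatibilities are in hand, the corollary follows immediately by combining the non-negativity of characteristic cycles with the Kashiwara--Saito bound.
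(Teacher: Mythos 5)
Your proposal takes a genuinely different route from the paper's. The paper's proof of this corollary is essentially algebraic: it builds monomial bases $\{m_L\}$ on the sheaf side and $\{m_Z\}$ on the geometric side (the latter being Proposition~\ref{mono}), matches them through the crystal isomorphism $\Phi_{Q^{(1)}}$ to conclude $\chi^{\omega^{1},\psi}_{-1}(m_L)=\varphi^{\omega^1}(m_Z)$, reads off that the transition matrix between the canonical basis/fundamental classes and the shared monomial basis is unitriangular up to sign for the refine string order, and then invokes the positivity of characteristic cycles of perverse sheaves only at the very last step to fix the diagonal sign to $+1$ and the off-diagonal entries to lie in $\mathbb{N}$. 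You skip the monomial basis entirely and try to extract the expansion directly from the characteristic cycle via the Kashiwara--Saito estimate~(\ref{KS}) and an induction on the string data. This is an attractive geometric shortcut, but it shifts the burden onto properties of singular supports that the paper deliberately does not rely on.

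That shift creates a genuine gap. In Case~3 of your induction you need the Kashiwara--Saito domination to persist after applying $\rho_{i_0,t}$ to both $Z$ and $Z'$; you propose to get this from $\rho_{i_0,t}(Z')\subseteq SS(\pi_{i_0,t}(L))$, and you flag this as the ``main technical hurdle.'' But this is not a formality one can wave at: $\pi_{i_0,t}(L)$ is obtained by applying a restriction functor $\kappa_!\iota^\ast$ and then extracting a distinguished direct summand (Lemma~\ref{lkey}), and it is not established in the paper or the cited references that this operation carries every singular-support component of $L$ with $\epsilon_{i_0}=t$ to a singular-support component of $\pi_{i_0,t}(L)$. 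Without that, the induction does not close. A second implicit gap: for the multiplicity of $[Z]$ to be exactly $1$, you need to know that $\Phi_{Q^{(1)}}(L)$ coincides with the closure of the conormal bundle to the open stratum of $\mathrm{supp}(L)$; the paper avoids having to prove this by getting the diagonal coefficient $\pm1$ from the monomial-basis comparison first, then resolving the sign with positivity. You should either prove the singular-support compatibility you identified and the conormal identification, or else fall back to the paper's monomial-basis argument, which circumvents both.
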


\begin{proof}
	By the same argument as Proposition \ref{mono}, we can build a monomial basis $\{m_{L}| L \in \mP^{s}_{\nu,\omega^{1}} \}$ of $\mL(\nu,\omega)$, such that $m_{L}=\pm[L]+\sum \limits_{L \prec L'} c_{L,L'}[L']$, with $c_{L,L'} \in \mathcal{A} $. Notice that $ \Phi_{Q^{(1)}}$ induces an isomorphism of crystals, we can see that $\chi^{\omega^{1},\psi}_{-1}(m_{L})= \varphi^{\omega^{1}}(m_{Z}), $ 
	for $Z=\Phi_{Q^{(1)}}(L)$. Then by the upper-triangular property of the monomial bases, we obtain $\chi^{\omega^{1},\psi}_{-1}([L])=\varphi^{\omega^{1}}(\pm [Z] + \sum\limits_{Z \prec Z'}  c_{Z,Z'}[Z']). $
	Then by the positive property of characteristic cycle map for perverse sheaves, we get the proof.
\end{proof}

\begin{theorem}\label{main1}
    The morphism $\CC^{s,\omega^{1}}$ induces an isomorphism of ${_{\mathbb{Z}}\mathbf{U}}(\mathfrak{g})$-modules,
	$$  \CC^{s,\omega^{1}}:\mL(\omega^{1})^{\psi}_{-1} \longrightarrow \mathbf{H}(\mathfrak{L}(\omega),\mathbb{Z}).$$
	Moreover, we have the following commutative diagram
		\[
	\xymatrix{
		\mL(\omega^{1})^{\psi}_{-1} \ar[rr] ^{\CC^{s,\omega^{1}}} \ar[dr]_{\chi^{\omega^{1},\psi}_{-1}} &   &  	\mathbf{H}(\mathfrak{L}(\omega^{1}),\mathbb{Z}) \ar[dl]^{\varphi^{\omega^{1}}}\\
		 &{_{\mathbb{Z}}L}_{1}(\lambda_{1}), &  
	}
	\]
	where $\chi^{\omega^{1},\psi}_{-1}$ is the canonical isomorphism in Proposition \ref{cant1} and $\varphi^{\omega^{1}}$ is the canonical isomorphism in Theorem \ref{canN1}.
\end{theorem}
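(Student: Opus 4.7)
The plan is to upgrade Proposition \ref{premain1} from rational to integral coefficients, using the refined upper-triangular transition identity of Corollary \ref{co1} as the key integral input.

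First, $\CC^{s,\omega^{1}}$ is already well defined over $\mathbb{Z}$: the equivariant characteristic cycle $\CC_{\bG_{\bV}}$ lands in $\mathbf{H}^{\BM}_{\mathrm{top}}(\Lambda_{\bV,\mathbf{W}^{1}},\mathbb{Z})$ by construction, and pulling back along the open embedding $j^{s}:\Lambda^{s}_{\bV,\mathbf{W}^{1}} \hookrightarrow \Lambda_{\bV,\mathbf{W}^{1}}$ preserves integrality, so the composition takes values in $\mathbf{H}(\mathfrak{L}(\omega^{1}),\mathbb{Z})$. The $_{\mathbb{Z}}\mathbf{U}(\mathfrak{g})$-linearity is inherited from the $\mathbf{U}(\mathfrak{g})$-linearity proved in Proposition \ref{premain1}: both integral forms $\mL(\omega^{1})^{\psi}_{-1}$ and $\mathbf{H}(\mathfrak{L}(\omega^{1}),\mathbb{Z})$ are free abelian groups that embed into their $\mathbb{Q}$-extensions, and each Chevalley generator preserves the respective integral structure, so the defining commutation relations descend from $\mathbb{Q}$ to $\mathbb{Z}$.

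To prove that $\CC^{s,\omega^{1}}$ is an isomorphism of $\mathbb{Z}$-modules, I compare natural $\mathbb{Z}$-bases. By Theorem \ref{high} specialized at $v = -1$, the classes $\{[L] : L \in \mP^{s}_{\nu,\omega^{1}}\}$ form a $\mathbb{Z}$-basis of $\mL(\nu,\omega^{1})^{\psi}_{-1}$; by the normalization convention, the fundamental classes $\{[Z] : Z \in \mathrm{Irr}\,\mathfrak{L}(\nu,\omega^{1})\}$ form a $\mathbb{Z}$-basis of $\mathbf{H}^{\BM}_{\mathrm{top}}(\mathfrak{L}(\nu,\omega^{1}),\mathbb{Z})$, and the bijection $\Phi_{Q^{(1)}}$ of Proposition \ref{fss} identifies the two indexing sets. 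Since $\varphi^{\omega^{1}}$ is an isomorphism, Corollary \ref{co1} combined with the commutative triangle of Proposition \ref{premain1} is equivalent to the identity
\[
\CC^{s,\omega^{1}}([L]) = [Z] + \sum_{Z \prec Z'} c_{Z,Z'}\,[Z'],\qquad c_{Z,Z'} \in \mathbb{N},\ Z = \Phi_{Q^{(1)}}(L),
\]
holding in $\mathbf{H}(\mathfrak{L}(\omega^{1}),\mathbb{Z})$. Thus the transition matrix is upper triangular for the refined string order with ones on the diagonal, hence invertible over $\mathbb{Z}$, so $\CC^{s,\omega^{1}}$ is a $\mathbb{Z}$-module isomorphism.

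Finally, commutativity of the triangle over $\mathbb{Z}$ is automatic from its rational counterpart: all three groups are torsion-free and all three maps are already defined on the integral structures, so an equality of $\mathbb{Q}$-linear maps forces the corresponding equality of $\mathbb{Z}$-linear maps. The real work is contained in Corollary \ref{co1}, which combines positivity of characteristic cycles of simple perverse sheaves with the Kashiwara--Saito crystal identification to pin down both the leading coefficient $1$ and the non-negativity of the off-diagonal entries; the present statement then reduces to the observation that a non-negative integer upper-triangular matrix with ones on the diagonal is invertible over $\mathbb{Z}$, so once Corollary \ref{co1} is granted the remaining obstacle to the integral upgrade is essentially formal.
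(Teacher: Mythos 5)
Your argument is essentially the paper's proof, reorganized: both hinge on Corollary \ref{co1}, which gives the unipotent upper-triangular transition matrix between $\{[L]\}_{L\in\mP^{s}_{\nu,\omega^1}}$ and the fundamental classes $\{[Z]\}_{Z\in\mathrm{Irr}\,\mathfrak{L}(\nu,\omega^1)}$, and both then descend Proposition \ref{premain1} from $\mathbb{Q}$ to $\mathbb{Z}$.

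The one place where your ordering introduces a subtle circularity is the second paragraph, where you claim the ${_{\mathbb{Z}}\mathbf{U}}(\mathfrak{g})$-linearity of $\CC^{s,\omega^1}$ \emph{before} establishing the $\mathbb{Z}$-isomorphism, on the grounds that ``each Chevalley generator preserves the respective integral structure.'' This is true for the operators $e_i, f_i$ themselves (they are convolution with integral cycle classes), but ${_{\mathbb{Z}}\mathbf{U}}(\mathfrak{g})$ is generated by divided powers $F_i^{(n)}=F_i^n/[n]!$, and it is not a priori clear that Nakajima's divided power $f_i^{(n)}$, defined as an operator on $\mathbf{H}(\mathfrak{L}(\omega^1),\mathbb{Q})$, preserves the integral lattice $\mathbf{H}(\mathfrak{L}(\omega^1),\mathbb{Z})$ --- that the lattice is stable under divided powers is exactly part of what this theorem establishes. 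The paper sidesteps this by proving the $\mathbb{Z}$-isomorphism first (via Corollary \ref{co1}) and then transferring the ${_{\mathbb{Z}}\mathbf{U}}(\mathfrak{g})$-module structure from $\mL(\omega^1)^\psi_{-1}$ to the homology through the isomorphism, after which the $\mathbb{Q}$-linearity of Proposition \ref{premain1} pins down that the transferred structure agrees with Nakajima's. Since your third paragraph does supply the independent $\mathbb{Z}$-isomorphism, a simple reordering --- prove the lattice isomorphism first, then read off the integral module structure on $\mathbf{H}(\mathfrak{L}(\omega^1),\mathbb{Z})$ by transport --- closes this gap without changing any of your substantive input.
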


\begin{proof}
    By Corollary \ref{co1}, the morphism $\CC^{s,\omega^{1}}$ sends a $\mathbb{Z}$-basis of $\mL(\omega^{1})^{\psi}_{-1}$ to a $\mathbb{Z}$-basis of $\mathbf{H}(\mathfrak{L}(\omega),\mathbb{Z})$, hence it is an isomorphism of $\mathbb{Z}$-modules. Since $\mL(\omega^{1})^{\psi}_{-1}$ is a module of the $\mathbb{Z}$-form ${_{\mathbb{Z}}\mathbf{U}}(\mathfrak{g})$, so is $\mathbf{H}(\mathfrak{L}(\omega),\mathbb{Z})$. By Proposition \ref{premain1}, the  morphism $\CC^{s,\omega^{1}}$ is indeed an isomorphism of ${_{\mathbb{Z}}\mathbf{U}}(\mathfrak{g})$-modules. The commutative diagram also follows from Proposition \ref{premain1}.
\end{proof}

We also have the following corollary, which enhances  the results in \cite{MR1604167} and \cite{fang2023lusztig}.
\begin{corollary}
	The morphism $\varphi^{\omega^{1}}$ in Theorem \ref{canN1} restricts to an isomorphism of $_{\mathbb{Z}}\mathbf{U}(\mathfrak{g}) $ modules $$\varphi^{\omega^{1}}:\mathbf{H}(\mathfrak{L}(\omega^{1}),\mathbb{Z}) \longrightarrow {_{\mathbb{Z}}L}_{1}(\lambda_{1}).$$ Moreover, the constants $c_{Z,Z'}$ in Proposition \ref{mono} are all integers.
\end{corollary}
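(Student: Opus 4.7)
The plan is to deduce both claims as essentially formal consequences of Theorem \ref{main1}. For the first assertion, I would read off from the commutative triangle of Theorem \ref{main1} that the restriction of $\varphi^{\omega^{1}}$ to the integral lattice factors as the composition
$$\mathbf{H}(\mathfrak{L}(\omega^{1}),\mathbb{Z}) \xrightarrow{(\CC^{s,\omega^{1}})^{-1}} \mL(\omega^{1})^{\psi}_{-1} \xrightarrow{\chi^{\omega^{1},\psi}_{-1}} {_{\mathbb{Z}}L}_{1}(\lambda_{1}).$$
Theorem \ref{main1} already establishes that $\CC^{s,\omega^{1}}$ is an isomorphism of $_{\mathbb{Z}}\mathbf{U}(\mathfrak{g})$-modules, and Proposition \ref{cant1} (together with Proposition \ref{modulet}) makes $\chi^{\omega^{1},\psi}_{-1}$ an isomorphism of $_{\mathbb{Z}}\mathbf{U}(\mathfrak{g})$-modules, so their composition is one as well. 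In particular, the rational isomorphism $\varphi^{\omega^{1}}$ of Theorem \ref{canN1} automatically respects the integral structures on both sides; no new geometric input is required.

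For the moreover part concerning the constants $c_{Z,Z'}$ in Proposition \ref{mono}, my plan would be to exploit the just-established fact that the divided powers $f_i^{(r)}$ preserve the integral lattice $\mathbf{H}(\mathfrak{L}(\omega^{1}),\mathbb{Z})$. Since $[\mathfrak{L}(0,\omega)] \in \mathbf{H}(\mathfrak{L}(\omega^{1}),\mathbb{Z})$ and the monomial $m_Z = f^{(a_{1})}_{i_{1}} \cdots f^{(a_{l})}_{i_{l}} [\mathfrak{L}(0,\omega)]$ is obtained by iterating such divided powers, it follows that $m_Z \in \mathbf{H}(\mathfrak{L}(\omega^{1}),\mathbb{Z})$. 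Because the fundamental classes $\{[Z']\}_{Z' \in {\rm{Irr}}\mathfrak{L}(\nu,\omega)}$ form a $\mathbb{Z}$-basis of $\mathbf{H}^{\mathrm{BM}}_{\mathrm{top}}(\mathfrak{L}(\nu,\omega),\mathbb{Z})$ by convention, reading off coefficients from the expansion $m_{Z} = \pm[Z] + \sum_{Z \prec Z'} c_{Z,Z'}[Z']$ of Proposition \ref{mono} immediately forces all $c_{Z,Z'} \in \mathbb{Z}$, strengthening the a priori assertion that they lie in $\mathbb{Q}$.

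The main (and really only) obstacle I anticipate is the bookkeeping of the $\psi$-twists: one must confirm that when the $_{\mathbb{Z}}\mathbf{U}(\mathfrak{g})$-module structure is transported from $\mL(\omega^{1})^{\psi}_{-1}$ through $\chi^{\omega^{1},\psi}_{-1}$ to $_{\mathbb{Z}}L_{1}(\lambda_{1})$, and through $\CC^{s,\omega^{1}}$ to $\mathbf{H}(\mathfrak{L}(\omega^{1}),\mathbb{Z})$, the resulting two actions of each $f_i^{(r)}$ agree on the nose with Nakajima's Hecke operators of Theorem \ref{canN1}. This is precisely the sign reconciliation already carried out in the proof of Theorem \ref{main1} when matching the sign twist $\psi^{-}(ri, \deg[L])$ with Nakajima's sign $(-1)^{\langle i, \omega - \mathbf{C}_{\Omega}\nu\rangle}$, so once that verification is in place the corollary is immediate.
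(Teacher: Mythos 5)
Your proposal is correct, and it is essentially the argument the paper intends (the paper states this as an immediate consequence of Theorem \ref{main1} without spelling out a separate proof). Your factorization $\varphi^{\omega^{1}}|_{\mathbb{Z}}=\chi^{\omega^{1},\psi}_{-1}\circ(\CC^{s,\omega^{1}})^{-1}$ is exactly what the commutative triangle of Theorem \ref{main1} provides, and since both arrows are $_{\mathbb{Z}}\mathbf{U}(\mathfrak{g})$-module isomorphisms (the former by Propositions \ref{modulet} and \ref{cant1}, the latter by Theorem \ref{main1}), the first claim follows. You are also right that the only bookkeeping point is matching the $\psi$-twisted action $[\mathcal{F}_i^{(r)}]^{\psi}$ with Nakajima's signed Hecke operators $f_i^{(r)}$, which is precisely the sign reconciliation carried out in the proof of Proposition \ref{premain1}; once that is in place, $m_Z$ lies in the integral lattice $\mathbf{H}(\mathfrak{L}(\omega^{1}),\mathbb{Z})$, and reading coefficients against the $\mathbb{Z}$-basis of fundamental classes (which is a basis by the paper's normalization convention for Borel--Moore homology) forces $c_{Z,Z'}\in\mathbb{Z}$.
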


\begin{remark}
One should also be able to prove that the morphism $\CC^{s,\omega^{1}}$ satisfies the equations $$\CC^{s,\omega^{1}}[\mathcal{F}^{(r)}_{i}]^{\psi} = \pm[\mathrm{sw}(\mathfrak{P}^{(r)}_{i}(\nu,\omega^{1}) )]\CC^{s,\omega^{1}},$$ 
$$ \CC^{s,\omega^{1}}[\mathcal{E}^{(r)}_{i}]^{\psi} = \pm[\mathfrak{P}^{(r)}_{i}(\nu,\omega^{1}) ]\CC^{s,\omega^{1}}, $$
by using functorial properties of characteristic cycles, where $\mathfrak{P}^{(r)}_{i}$ is the generalization of the Hecke correspondence in \cite[Section 5.3]{nakajima2001quiver}.
Then $\CC^{s,\omega^{1}}$ is $_{\mathbb{Z}}\mathbf{U}(\mathfrak{g})$-linear directly, but we do not need such strong properties here.
\end{remark}

\begin{remark}
	 
	In \cite{CDL}, the authors consider the mixed Hodge modules on Grassmannians and construct an exact functor from the mixed Hodge modules to the  $\mathcal{D}_{X,h}$-modules. The authors in \cite{CDL} also point out that there should be a generalization to localized mixed Hodge modules on quivers, which could establish a relation between canonical bases and the quantized quiver varieties considered in \cite{BL} or the categorical realization via coherent sheaves in \cite{CKL}. Indeed, the results in \cite{CDL} solve this problem in $A_{1}$ quiver case.
\end{remark}

As we have mentioned before, $\chi^{\omega^{1},\psi}_{-1}([L])$ is not the  canonical basis of ${_{\mathbb{Z}}L}_{1}(\lambda_{1})$. In order to compare the canonical basis with fundamental classes, we need to consider the specialization at $v=-1$. Take $\psi$-twist on the Borel-Moore homology groups of $\mathfrak{L}(\nu,\omega^{1})$ as what we have done in Proposition \ref{modulet}, we get a $\mathbf{U}_{-1}(\mathfrak{g})$-module $\mathbf{H}(\mathfrak{L}(\omega))^{\psi}$, which is canonically isomorphic to $L_{-1}(\lambda_{1})$ via $\varphi^{\omega^{1},\psi} .$  Then we obtain the following corollary immediately.
\begin{corollary}
	There is a commutative diagram of $_{\mathbb{Z}}\mathbf{U}_{-1}(\mathfrak{g})$-isomorphisms,
	\[
	\xymatrix{
		\mL(\omega^{1})_{-1} \ar[rr] ^{\CC^{s,\omega^{1}}} \ar[dr]_{\chi^{\omega^{1}}_{-1}} &   &  	\mathbf{H}(\mathfrak{L}(\omega),\mathbb{Z})^{\psi} \ar[dl]^{\varphi^{\omega^{1},\psi}}\\
		& _{\mathbb{Z}}L_{-1}(\lambda_{1}). &  
	}
	\]
	Moreover, the transition matrix between the canonical basis at $v=-1$ and the fundamental classes is an upper triangular (with respect to the string order) matrix, whose diagonal elements are $1$ and the other elements are non-negative integers. More precisely, for $Z=\Phi_{Q^{(1)}}(L)$, we have the following equation 
	$$\chi^{\omega^{1}}_{-1}([L])=\varphi^{\omega^{1},\psi}( [Z] + \sum\limits_{Z \prec Z'}  c_{Z,Z'}[Z']), $$ 	with constants $c_{Z,Z'} \in \mathbb{N} $. Or equivalently,
	$$\CC^{s,\omega^{1}}([L])= [Z] + \sum\limits_{Z \prec Z'}  c_{Z,Z'}[Z']. $$
. 
\end{corollary}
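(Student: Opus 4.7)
The plan is to deduce this corollary from Theorem \ref{main1} and Corollary \ref{co1} by transporting the $\psi$-twist from the source side to the target side of the isomorphism. The crucial observation is that $\CC^{s,\omega^{1}}$ is intrinsically a $\mathbb{Z}$-linear map defined through characteristic cycles and pullback along the open embedding of the stable locus; it does not depend on which module structure one places on the domain or codomain. Theorem \ref{main1} interprets it as a $_{\mathbb{Z}}\mathbf{U}(\mathfrak{g})$-linear isomorphism $\mL(\omega^{1})^{\psi}_{-1}\to \mathbf{H}(\mathfrak{L}(\omega),\mathbb{Z})$, where the source carries the twisted Chevalley operators $[\mathcal{F}_i]^{\psi},[\mathcal{E}_i]^{\psi}$ and the target carries Nakajima's untwisted operators. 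I want to reinterpret the same underlying map as a $_{\mathbb{Z}}\mathbf{U}_{-1}(\mathfrak{g})$-linear isomorphism when both sides are equipped with their untwisted, respectively $\psi$-twisted, structures.

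The key step is to check that after putting the untwisted structure on $\mL(\omega^{1})_{-1}$ and the $\psi$-twisted structure on $\mathbf{H}(\mathfrak{L}(\omega),\mathbb{Z})^{\psi}$, the map $\CC^{s,\omega^{1}}$ still intertwines the actions. Since the $\psi$-twist is weight-graded (it multiplies the action of $[\mathcal{F}^{(r)}_i]$ by $\psi^{-}(ri,\mathrm{deg})$ and similarly for $[\mathcal{E}^{(r)}_i]$) and $\CC^{s,\omega^{1}}$ is strictly graded by the dimension vector $\nu$, the sign factor appearing on the source coincides with the one appearing on the target. Hence both twists cancel consistently, and the untwisted statement on one side is equivalent to the twisted statement on the other side. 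This yields the top edge of the triangle. The bottom two edges then commute because $\chi^{\omega^{1}}_{-1}$ and $\varphi^{\omega^{1},\psi}\circ \CC^{s,\omega^{1}}$ are both $_{\mathbb{Z}}\mathbf{U}_{-1}(\mathfrak{g})$-linear and agree on the highest weight class $[L_0]\mapsto v_{\lambda_1}\mapsto v_{\lambda_1}$, so they coincide by the uniqueness part of the construction of $_{\mathbb{Z}}L_{-1}(\lambda_1)$ as a cyclic module generated from the highest weight vector.

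For the transition matrix statement, I would start from Corollary \ref{co1}, which gives
\begin{equation*}
\chi^{\omega^{1},\psi}_{-1}([L]) = \varphi^{\omega^{1}}\bigl([Z] + \sum_{Z\prec Z'} c_{Z,Z'}[Z']\bigr),\qquad c_{Z,Z'}\in \mathbb{N}.
\end{equation*}
All irreducible components $Z,Z'$ appearing on the right lie in the same dimension-vector stratum (hence weight space) as $L$, so the global $\psi$-sign relating $\chi^{\omega^{1},\psi}_{-1}$ to $\chi^{\omega^{1}}_{-1}$ on $[L]$ equals the global $\psi$-sign relating $\varphi^{\omega^{1}}$ to $\varphi^{\omega^{1},\psi}$ on every $[Z']$ on the right. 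Dividing by this common sign converts the identity into
\begin{equation*}
\chi^{\omega^{1}}_{-1}([L]) = \varphi^{\omega^{1},\psi}\bigl([Z] + \sum_{Z\prec Z'} c_{Z,Z'}[Z']\bigr),
\end{equation*}
and applying $(\varphi^{\omega^{1},\psi})^{-1}$ to the triangle yields the equivalent formula $\CC^{s,\omega^{1}}([L])=[Z]+\sum c_{Z,Z'}[Z']$.

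The main technical obstacle will be the bookkeeping in the second step, namely verifying that the sign discrepancy between $\chi^{\omega^{1}}_{-1}$ and $\chi^{\omega^{1},\psi}_{-1}$ matches the sign discrepancy between $\varphi^{\omega^{1}}$ and $\varphi^{\omega^{1},\psi}$ on each fixed weight space. Once the two $\psi$-twists (defined via Euler forms of $Q^{(1)}$ and $\overline{Q^{(1)}}$ in Section 6.2) are unwound and both are seen to reduce, on a weight space, to multiplication by the same scalar coming from $\psi^\pm(\nu,\omega^1)$, the rest of the argument is formal; the positivity and unipotent-unitriangular structure of the transition matrix are already encoded in Corollary \ref{co1}, and no new geometric input is required beyond what Theorem \ref{main1} provides.
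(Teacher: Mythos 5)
Your proposal follows the same route as the paper: the corollary is obtained by applying the $\psi$-twist to both sides of Theorem \ref{main1} and Corollary \ref{co1}, using the uniqueness of $\mathbf{U}_{-1}$-linear morphisms out of a cyclic highest weight module together with the observation that $\CC^{s,\omega^1}$ is a single fixed graded $\mathbb{Z}$-linear map. One small imprecision: the sign-cancellation paragraph should really invoke the canonical comparison isomorphism $L_1(\lambda_1)^\psi\cong L_{-1}(\lambda_1)$ (which intertwines $\chi^{\omega^1,\psi}_{-1}$ with $\chi^{\omega^1}_{-1}$ and $\varphi^{\omega^1}$ with $\varphi^{\omega^1,\psi}$ but need not act by a single scalar on each weight space) rather than a common ``$\psi$-sign,'' though your final argument — extracting $\CC^{s,\omega^{1}}([L])=[Z]+\sum c_{Z,Z'}[Z']$ from the old triangle and reinserting it into the new one — is correct and matches what the paper intends.
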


\begin{proposition} \label{CC}
	Given a simple perverse sheaf $L$ in $\mP_{\nu}$, we have the following equation in $\mathbf{H}^{\mathrm{BM}}_{\mathrm{top}}(\Lambda_{\bV},\mathbb{Z}), $
	$$\CC_{\bG_{\bV}} ([L]) = [Z] + \sum\limits_{Z \prec Z'}c_{Z,Z'}[Z'], $$
	where $Z'$ runs over $\bG_{\bV}$-invariant irreducible components of $\Lambda_{\bV}$, $c_{Z,Z'}$ are constants in $\mathbb{N}$, and $\prec$ is  the refine string order. 
\end{proposition}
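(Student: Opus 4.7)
The plan is to combine positivity of the characteristic cycle of a simple perverse sheaf with the Kashiwara--Saito singular support estimate (\ref{KS}) and the combinatorial definition of the refined string order. Since $L \in \mathcal{P}_\nu$ is simple and $SS(L) \subseteq \Lambda_{\bV}$ by Kashiwara--Saito, we may write
$$
\CC_{\bG_{\bV}}([L]) = \sum_{Y \in \mathrm{Irr}\,\Lambda_{\bV}} n_Y\, [Y], \qquad n_Y \in \mathbb{N},
$$
with each occurring $Y$ automatically $\bG_{\bV}$-invariant. The proof then splits into three steps.

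First, I would show that the coefficient $n_Z$ of $Z := \Phi(L)$ equals $1$. Because $Q$ has no loops, the stabilizer of any $\bG_{\bV}$-orbit in $\bfEVO$ is the group of units of a finite-dimensional associative $\mathbb{C}$-algebra, hence connected; together with the construction of Lusztig sheaves as summands of pushforwards of constant sheaves, this forces $L = \mathrm{IC}(\overline{Y},\mathbb{C})$ for the open $\bG_{\bV}$-orbit $Y \subseteq \mathrm{supp}(L)$. The microlocal multiplicity of such an IC sheaf along the conormal bundle of its generic stratum equals the rank of the defining local system, i.e.\ $1$, and the Kashiwara--Saito component $\Phi(L)$ is by construction $\overline{T^{\ast}_{Y}\bfEVO}$.

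Second, for any other $Y$ with $n_Y > 0$ the inclusion $Y \subseteq SS(L)$ and (\ref{KS}) give
$$
\epsilon_i(Y) \geqslant \epsilon_i(L) = \epsilon_i(Z), \qquad \epsilon^{\ast}_i(Y) \geqslant \epsilon^{\ast}_i(L) = \epsilon^{\ast}_i(Z), \qquad i \in I.
$$
Third, and this is the main combinatorial step, one must deduce $Z \prec Y$ from these inequalities whenever $Y \neq Z$. I would argue by induction on $|\nu|$. Let $i^{\ast} \in I$ be the $\prec$-minimal vertex with $\epsilon_{i^{\ast}}(Z) > 0$ and set $t = \epsilon_{i^{\ast}}(Z)$, so that $s^{\prec}(Z)$ begins with $(i^{\ast}, t)$. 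If some $j \prec i^{\ast}$ satisfies $\epsilon_j(Y) > 0$, or if $\epsilon_{i^{\ast}}(Y) > t$, then the first letter of $s^{\prec}(Y)$ strictly dominates $(i^{\ast}, t)$ in the refined order on pairs, and $Z \prec Y$ at once. Otherwise $\epsilon_j(Y) = 0$ for all $j \prec i^{\ast}$ and $\epsilon_{i^{\ast}}(Y) = t$, so the first letters of $s^{\prec}(Z)$ and $s^{\prec}(Y)$ agree; applying the crystal bijection $\rho_{i^{\ast}, t}$ reduces to dimension vector $\nu - t i^{\ast}$, and because $\rho_{i^{\ast}, t}$ preserves the $\epsilon_j$ and $\epsilon^{\ast}_j$ invariants at vertices $j \neq i^{\ast}$, the images $\rho_{i^{\ast}, t}(Z) \neq \rho_{i^{\ast}, t}(Y)$ continue to satisfy the analogous inequalities; the inductive hypothesis yields $\rho_{i^{\ast}, t}(Z) \prec \rho_{i^{\ast}, t}(Y)$, hence $Z \prec Y$.

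The principal obstacles I foresee are (a) identifying $\overline{T^{\ast}_{Y}\bfEVO}$ with $\Phi(L)$, a compatibility between microlocalization and the Kashiwara--Saito bijection which should follow from the uniqueness of the component of $SS(L)$ projecting dominantly to $\overline{Y}$ but deserves explicit verification; and (b) the crystal-theoretic compatibility that $\rho_{i^{\ast}, t}$ preserves the $\epsilon_j, \epsilon^{\ast}_j$ data at vertices $j \neq i^{\ast}$, which is standard but must be invoked cleanly to close the induction.
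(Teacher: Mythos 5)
Your approach is genuinely different from the paper's. The paper never attacks $\CC_{\bG_{\bV}}([L])$ head-on: it chooses a framing vector $\omega^{1}$ with $\omega^{1}_i>\nu_i$ so that no irreducible component of $\Lambda_{\bV}$ is lost under the stability condition, invokes the already-established upper-triangular relation of Corollary 6.13 for $\CC^{s,\omega^1}$ (which rests on the monomial basis of Proposition~\ref{mono}, the crystal isomorphism $\Phi_{Q^{(1)}}$, the module-theoretic identity $\chi^{\omega^1,\psi}_{-1}(m_L)=\varphi^{\omega^1}(m_Z)$, and positivity of $\CC$ for perverse sheaves), and then transports the answer back through $\pi^{\ast}_{\mathbf{W}^{1}}$. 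You instead try to prove everything microlocally and combinatorially from (\ref{KS}). That is appealing, but it makes you carry two burdens the paper never has to face, and both are real gaps.

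First, your Step~1 rests on the assertion that $\mathrm{supp}(L)$ contains a dense open $\bG_{\bV}$-orbit $Y$ and $L=\mathrm{IC}(\overline{Y},\mathbb{C})$. This is only true when $\bG_{\bV}$ has finitely many orbits on $\bfEVO$, i.e.\ in Dynkin type; for affine or wild quivers (the general case covered by the proposition) the supports of Lusztig sheaves are typically not orbit closures, the generic stratum is a union of a positive-dimensional family of orbits, and the local system on it need not be trivial. Connectedness of orbit stabilizers (which you correctly observe for loop-free quivers) says nothing about the equivariant fundamental group of a generic stratum that is not itself an orbit. So the microlocal-multiplicity-one claim for $\Phi(L)$, while plausibly true, is not established by this argument. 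The paper obtains the diagonal coefficient $1$ without any such analysis: from the two monomial bases the transition matrix has diagonal $\pm 1$, and positivity of $\CC$ forces $+1$.

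Second, your Step~3 induction requires that $\rho_{i^{\ast},t}$ preserve all the $\epsilon_j$ and $\epsilon^{\ast}_j$ for $j\neq i^{\ast}$; only the $\epsilon^{\ast}_j$ half of this is generally true (from the commutation of the $\ast$-crystal operators with $\tilde e_i$), while $\epsilon_j$ is not preserved by $\tilde e_{i^{\ast}}$ in $B(\infty)$. Worse, the purely combinatorial statement you are trying to prove --- that $\epsilon_i(Y)\geqslant\epsilon_i(Z)$ and $\epsilon^{\ast}_i(Y)\geqslant\epsilon^{\ast}_i(Z)$ for all $i$ with $Y\neq Z$ forces $Z\prec Y$ --- cannot hold in the generality stated, since the full tuple $(\epsilon_i(b),\epsilon^{\ast}_i(b))_{i\in I}$ together with the weight does not separate points of $B(\infty)$ in higher rank; if $Y\neq Z$ share this data the hypothesis is symmetric but $\prec$ is antisymmetric. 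The paper deliberately avoids needing any such crystal lemma: the monomial-basis argument (Proposition~\ref{mono} and the fact $(\heartsuit)$) only uses the single-vertex filtration statement that $f_i^{(t)}$-multiplication raises $\epsilon_i$, plus the module structure, which is a much weaker and well-controlled input. If you want to salvage a direct argument you would need a stronger singular-support estimate than (\ref{KS}) (one that records more than the $\epsilon,\epsilon^{\ast}$ data), or you should follow the paper's algebraic route.
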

\begin{proof}
	For a fixed dimension vector $\nu$, we can choose $\omega^{1}$ such that $\omega^{1}_{i} > \nu_{i}$. In this case, $j^{s\ast}$ sends a fundamental class $[Z]$ in $\mathbf{H}^{\mathrm{BM}}_{\mathrm{top}}(\Lambda_{\bV,\mathbf{W}^{1}},\mathbb{Z})$ to $[Z \cap \Lambda^{s}_{\bV,\mathbf{W}} ]$ and it has a linear inverse $[Z'] \mapsto [\bar{Z}']$, where $\bar{Z}'$ is the closure of $Z' \subseteq \Lambda^{s}_{\bV,\mathbf{W}^{1}}$ in  $\Lambda_{\bV,\mathbf{W}^{1}}$. In particular, assume $L'$ is a simple perverse sheaf in $\mP_{\nu,\omega^{1}}$ such that $L' \cong \pi^{\ast}_{\mathbf{W}^{1}}L $ up to shifts, then
	$$\CC_{\bG_{\bV}} ([L']) =[\tilde{Z}] + \sum\limits_{\tilde{Z} \prec \tilde{Z}'}  c_{Z,Z'}[\tilde{Z}'],  $$
	where $\tilde{Z}= \Phi_{Q^{(1)}}(L') $ and $\tilde{Z}'$ are irreducible components of $\Lambda_{\bV,\mathbf{W}^{1}}$.  Notice that the characteristic cycle map $\CC$ commutes with $\pi^{\ast}_{\mathbf{W}^{1}}$, and $\pi^{\ast}_{\mathbf{W}^{1}}$ preserves the string order, we obtain $$\CC_{\bG_{\bV}} ([L]) = [Z] + \sum\limits_{Z \prec Z'}c_{Z,Z'}[Z'], $$
	where $Z$ is the unique irreducible component such that $\pi^{-1}_{\mathbf{W}^{1}}(Z)=\tilde{Z}$, and those $Z'$ are the irreducible components such that $\pi^{-1}_{\mathbf{W}^{1}}(Z')=\tilde{Z}'$ for some $\tilde{Z}'$.
\end{proof}

Apply the proposition above to $Q^{(1)}$, we obtain the following proposition.
\begin{corollary} \label{6.16}
	The morphism $\CC^{\omega^{1}}= \bigoplus\limits_{\nu \in \mathbb{N}I} \CC_{\bG_{\bV}}: \mK(\omega^{1})^{\psi}_{-1} \rightarrow \bigoplus\limits_{\nu \in \mathbb{N}I}\mathbf{H}(\Lambda_{\bV,\mathbf{W}^{1}},\mathbb{Z}) $ in Lemma \ref{lemma 6.9} is a $_{\mathbb{Z}}\mathbf{U}^{-}(\mathfrak{g})$-linear isomorphism.
\end{corollary}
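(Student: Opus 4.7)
The plan is to combine Proposition \ref{CC} applied to the framed quiver $Q^{(1)}$ with the bijection of Proposition \ref{fss} to conclude that $\CC^{\omega^1}$ is given by an upper unitriangular transition matrix in natural bases, hence is a $\mathbb{Z}$-isomorphism; the $_{\mathbb{Z}}\mathbf{U}^-(\mathfrak{g})$-linearity will then follow from the intertwining relations already used to establish Lemma \ref{lemma 6.9}, since those relations already hold at the level of $\mathbb{Z}$-modules before any base change.

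In detail, I would first apply Proposition \ref{CC} with the ambient quiver taken to be $Q^{(1)}$, which is itself acyclic and loopless. This produces, for every simple perverse sheaf $L \in \mP_{\nu+\omega^1}$, an expansion of $\CC_{\bG_\bV}([L])$ as $[\Phi_{Q^{(1)}}(L)]+\sum_{\Phi_{Q^{(1)}}(L)\prec Z'}c\,[Z']$ with $c\in\mathbb{N}$, the sum running over irreducible components of $\Lambda_{\bV\oplus\mathbf{W}^1}$. Restricting to $L\in \mP_{\nu,\omega^1}$, Proposition \ref{fss} guarantees that $SS(L)\subseteq \Lambda_{\bV,\mathbf{W}^1}$ and that $\Phi_{Q^{(1)}}$ restricts to a bijection $\mP_{\nu,\omega^1}\xrightarrow{\sim}{\rm Irr}\,\Lambda_{\bV,\mathbf{W}^1}$; consequently the formula descends to
$$\CC_{\bG_\bV}([L])=[\Phi_{Q^{(1)}}(L)]+\sum_{\Phi_{Q^{(1)}}(L)\prec Z'}c_{L,Z'}[Z']\quad\text{in }\mathbf{H}^{\mathrm{BM}}_{\mathrm{top}}(\Lambda_{\bV,\mathbf{W}^1},\mathbb{Z}),$$
with $c_{L,Z'}\in\mathbb{N}$ and $Z'\in{\rm Irr}\,\Lambda_{\bV,\mathbf{W}^1}$. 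Both source and target are free $\mathbb{Z}$-modules whose bases are matched by $\Phi_{Q^{(1)}}$, and since the refined string order is a genuine partial order, the above transition matrix is upper unitriangular. Hence $\CC^{\omega^1}$ is a $\mathbb{Z}$-module isomorphism.

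For the $_{\mathbb{Z}}\mathbf{U}^-(\mathfrak{g})$-linearity, the functorial identity $\mathbf{Ind}^{(r)}_i\,\mathrm{For}_{\bG_\bV}^{\bG_\bV\times\mathbb{T}}\cong \mathrm{For}_{\bG_\bV}^{\bG_\bV\times\mathbb{T}}\,\mathcal{F}^{(r)}_i$ together with Theorem \ref{com} show that $\CC^{\omega^1}$ intertwines each pair of generators $[\mathcal{F}_i^{(r)}]^\psi$ and $[\mathbf{Ind}_i^{(r)}]^\psi$ already at the level of $\mathbb{Z}$-modules (no tensoring with $\mathbb{Q}$ is required). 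Since these divided powers generate the $_{\mathbb{Z}}\mathbf{U}^-(\mathfrak{g})$-action on the source by construction, and on the target via the integral CoHA realization cited in \cite[Theorem 9.11]{hennecart2024geometric}, the map $\CC^{\omega^1}$ is $_{\mathbb{Z}}\mathbf{U}^-(\mathfrak{g})$-linear, and combined with the $\mathbb{Z}$-isomorphism already established this yields the desired isomorphism of $_{\mathbb{Z}}\mathbf{U}^-(\mathfrak{g})$-modules. The main technical point to verify carefully is precisely this integrality on the target; the upper unitriangularity itself rests on the microlocal content of Proposition \ref{CC}, namely that the diagonal microlocal multiplicity of a simple perverse sheaf along its Kashiwara--Saito component equals $1$, which is already in hand.
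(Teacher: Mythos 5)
Your proposal is correct and follows essentially the same route as the paper, whose entire stated proof is the one-line instruction ``apply the proposition above [Proposition~\ref{CC}] to $Q^{(1)}$.'' You are filling in exactly the intended details: Proposition~\ref{CC} applied to $Q^{(1)}$ gives the unitriangular expansion with $\mathbb{N}$-coefficients and diagonal entry $1$, Proposition~\ref{fss} shows the expansion lands in $\Lambda_{\bV,\mathbf{W}^1}$ (rather than all of $\Lambda_{\bV\oplus\mathbf{W}^1}$) and matches bases via $\Phi_{Q^{(1)}}$, and the intertwining with $[\mathcal{F}^{(r)}_i]^\psi$ from equation~\eqref{forget} and Theorem~\ref{com} is already an identity of $\mathbb{Z}$-linear maps, so no base change is needed. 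Your closing caveat about the integral $_{\mathbb{Z}}\mathbf{U}^-(\mathfrak{g})$-module structure on the target is a fair reading of the paper, which invokes the CoHA realization for it; but as you say, once the $\mathbb{Z}$-isomorphism and the integral intertwining are in hand the statement follows either way.
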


\subsection{Characteristic cycles for two-framed quivers}

In this section, we construct a morphism from $\mL(\omega^1,\omega^2)^{\psi}$ to $\mathbf{H}(\tilde{\mathfrak{Z}},\mathbb{Z})= \bigoplus_{\nu \in \mathbb{N}I} \mathbf{H}^{\mathrm{BM}}_{\mathrm{top}}(\tilde{\mathfrak{Z}},\mathbb{Z})$.

By Proposition \ref{ffss}, $\mQ_{\nu,\omega^1,\omega^2}$ is a full subcategory of $\mD^{b}_{\bG_{\bV}\times \mathbb{T}}(\bfEVOff) $, hence $\mK(\omega^1,\omega^2)_{-1}^{\psi}$ can be regarded as a $\mathbb{Z}$-submodule of $\mK_{0}(\mD^{b}_{\bG_{\bV}\times \mathbb{T}}(\bfEVOff) $. Compose the characteristic cycle map with $[\textrm{For}_{\bG_{\bV}}^{\bG_{\bV} \times \mathbb{T}}]$, we obtain a morphism
$$\CC_{\bG_{\bV}} : \mK(\nu,\omega^1,\omega^2)_{-1}^{\psi} \longrightarrow  \mathbf{H}^{\mathrm{BM}}_{\mathrm{top}}(\Pi_{\bV,\mathbf{W}^1\oplus\mathbf{W}^2},\mathbb{Z}).$$

Given dimension vector $\nu=\nu''+ri$ and fix graded spaces $\bV'' \subseteq  \bV$ with dimension vectors $\nu''$ and $\nu$ respectively. Let $Y=\mathbf{E}_{\bV'',\mathbf{W}^1\oplus\mathbf{W}^2,\Omega^{(2)}}$, $X'=\bfEVOff$ and $V=F_{\mathbf{W}^1\oplus\mathbf{W}^2}$,  $X=\bG_{\bV}\times^{P}Y$ and $W=\bG_{\bV} \times^{P}F_{\mathbf{W}^1\oplus\mathbf{W}^2}$.
Then the induction correspondence of $\mathbf{Ind}$ in section 2.1.1 coincides with the diagram of  the functor $\mathcal{F}^{(r)}_{i}$. Let $\bar{F}_{\mathbf{W}^1\oplus\mathbf{W}^2}$ be the subset of $T^{\ast}\bfEVOff \cong \mathbf{E}_{\bV,\mathbf{W},\Omega^{(1)}}$ consisting of $(x,y,z)$ such that $(x,y,z)(\bV'') \subseteq \bV''$ and denote  $ \bar{F}_{\mathbf{W}^1\oplus\mathbf{W}^2} \cap \Pi_{\bV,\mathbf{W}^1\oplus\mathbf{W}^2}$ by $\bar{F}^{nil}_{\mathbf{W}^1\oplus\mathbf{W}^2} $. By \cite[Lemma 9.4]{hennecart2024geometric}, we have $\bG_{\bV} \times ^{P} \bar{F}_{\mathbf{W}^1\oplus\mathbf{W}^2} \cong T^{\ast}_{W}(X \times X')$, and there is a  commutative diagram,
\[
\xymatrix{
	\Pi_{\bV'',\mathbf{W}^1\oplus\mathbf{W}^2}   \ar[d] & \bar{F}^{nil}_{\mathbf{W}^1\oplus\mathbf{W}^2} \ar[d] \ar[l] \ar[r]   &  	\Pi_{\bV,\mathbf{W}^1\oplus\mathbf{W}^2} \ar[d]\\
	T^{\ast}Y & \bar{F}_{\mathbf{W}^1\oplus\mathbf{W}^2} \ar[l] \ar[r] & T^{\ast}X' ,
}
\]
where the right square is Cartesian. Notice that the second raw of the diagram above is the cotangent correspondence in Section 2.1.2, and the commutative diagram above implies that $$\phi_{2}\phi_{1}^{-1}(\Pi_{\bV'',\mathbf{W}^1\oplus\mathbf{W}^2}\times^{P} \bG_{\bV}) \subseteq \Pi_{\bV,\mathbf{W}^1\oplus\mathbf{W}^2},$$ hence the induction operator $\mathbf{Ind}^{(r)}_{i}:\mathbf{H}^{\mathrm{BM}}_{\mathrm{top}}(\Pi_{\bV'',\mathbf{W}^1\oplus\mathbf{W}^2} ,\mathbb{Z}) \rightarrow \mathbf{H}^{\mathrm{BM}}_{\mathrm{top}}(\Pi_{\bV,\mathbf{W}^1\oplus\mathbf{W}^2} ,\mathbb{Z}) $ is well-defined. 
In particular, together with $[\mathbf{Ind}^{(r)}_{i}]^{\psi}$, the homology group $\bigoplus\limits_{\nu \in \mathbb{N}I}\mathbf{H}^{\mathrm{BM}}_{\mathrm{top}}(\Pi_{\bV,\mathbf{W}^1\oplus\mathbf{W}^2},\mathbb{Z})$  becomes a $_{\mathbb{Z}}\mathbf{U}^{-}(\mathfrak{g})$-module. We have the following proposition.
\begin{proposition}
	After base change to $\mathbb{Q}$, the morphism $$\CC^{\omega^1,\omega^2}= \bigoplus\limits_{\nu \in \mathbb{N}I} \CC_{\bG_{\bV}}: \mathbb{Q}\otimes_{\mathbb{Z}}\mK(\omega^1,\omega^2)^{\psi}_{-1} \rightarrow \bigoplus\limits_{\nu \in \mathbb{N}I}\mathbf{H}^{\mathrm{BM}}_{\mathrm{top}}(\Pi_{\bV,\mathbf{W}^1\oplus\mathbf{W}^2},\mathbb{Q}) $$ is a $\mathbf{U}^{-}(\mathfrak{g})$-linear isomorphism.
\end{proposition}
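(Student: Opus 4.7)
The plan is to follow the framed-quiver argument of Lemma \ref{lemma 6.9} and Corollary \ref{6.16} in two stages: establish $\mathbf{U}^{-}(\mathfrak{g})$-linearity via compatibility of inductions, then deduce bijectivity from a positivity and upper-triangularity argument for characteristic cycles.

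For linearity, I first observe that by the lemma preceding Proposition \ref{ffss}, every $L\in\mP_{\nu,\omega^1,\omega^2}$ satisfies $SS(L)\subseteq\Pi_{\bV,\mathbf{W}^1\oplus\mathbf{W}^2}$, so $\CC^{\omega^1,\omega^2}$ is well defined. The diagram defining the functor $\mathcal{F}^{(r)}_i$ on $\bfEVOff$ fits into an equivariant induction diagram of the form (\ref{equivariant induction diagram}) with $Y=\mathbf{E}_{\mathbf{V}''\oplus\mathbf{W}^1\oplus\mathbf{W}^2,\Omega^{(2)}}$, $X'=\bfEVOff$, $V=F_{\mathbf{W}^1\oplus\mathbf{W}^2}$, and $P\subset\bG_{\bV}$ the parabolic stabilising $\mathbf{V}''$. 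Exactly as in (\ref{forget}), this yields a natural isomorphism
$$\mathbf{Ind}^{(r)}_{i}\circ\mathrm{For}^{\bG_{\bV}\times\mathbb{T}}_{\bG_{\bV}}\;\cong\;\mathrm{For}^{\bG_{\bV}\times\mathbb{T}}_{\bG_{\bV}}\circ\mathcal{F}^{(r)}_{i},$$
and Theorem \ref{com} then tells us that $\CC_{\bG_{\bV}}$ intertwines $[\mathcal{F}^{(r)}_{i}]$ with the geometric induction operator $[\mathbf{Ind}^{(r)}_{i}]$ on Borel--Moore homology. Because the same $\psi^{-}$-twist is imposed on source and target, $\CC^{\omega^1,\omega^2}$ commutes with the Chevalley generators $F_{i}^{(r)}$, hence is $\mathbf{U}^{-}(\mathfrak{g})$-linear over $\mathbb{Q}$.

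For bijectivity, I exploit that $\bfEVOff$ is simply the moduli space of $Q^{(2)}$-representations, regarded as an acyclic quiver; consequently Proposition \ref{CC} applies verbatim to each $L\in\mP_{\nu,\omega^1,\omega^2}\subseteq\mP_{\nu+\omega^1+\omega^2}$. Setting $Z=\Phi_{Q^{(2)}}(L)$, it gives
$$\CC_{\bG_{\bV}}([L])=[Z]+\sum_{Z\prec Z'}c_{Z,Z'}[Z'],\qquad c_{Z,Z'}\in\mathbb{N},$$
where \emph{a priori} $Z'$ ranges over irreducible components of $\Lambda_{\bV\oplus\mathbf{W}^1\oplus\mathbf{W}^2}$. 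The singular-support bound $SS(L)\subseteq\Pi_{\bV,\mathbf{W}^1\oplus\mathbf{W}^2}$ confines every such $Z'$ to $\Pi_{\bV,\mathbf{W}^1\oplus\mathbf{W}^2}$, and Proposition \ref{ffss} identifies these $Z'$ as exactly the $\Phi_{Q^{(2)}}(L')$ for $L'\in\mP_{\nu,\omega^1,\omega^2}$. Therefore the matrix of $\CC^{\omega^1,\omega^2}$ in the bases of simple perverse sheaves and fundamental classes of irreducible components is upper-triangular with $1$'s on the diagonal with respect to the refine string order, and so invertible already over $\mathbb{Z}$.

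The main difficulty lies in the sign bookkeeping: one must verify that the categorical twist $\psi^{-}$ built into $[\mathcal{F}^{(r)}_{i}]^{\psi}$, governed by the Euler form of $Q^{(2)}$, precisely matches the intrinsic sign appearing in the CoHA-type geometric induction on $\bigoplus_{\nu}\mathbf{H}^{\BM}_{\mathrm{top}}(\Pi_{\bV,\mathbf{W}^1\oplus\mathbf{W}^2},\mathbb{Q})$, and one must confirm that Theorem \ref{com} remains applicable once the $\mathbb{T}$-action is forgotten via $\mathrm{For}^{\bG_{\bV}\times\mathbb{T}}_{\bG_{\bV}}$. These are the same checks already carried out in the framed case in Lemma \ref{lemma 6.9}, only slightly more intricate because of the presence of two framings.
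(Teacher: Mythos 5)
Your proposal is correct and follows essentially the same two-step route as the paper: linearity via the induction-compatibility argument of Lemma \ref{lemma 6.9} (Theorem \ref{com} applied to the $\mathcal{F}^{(r)}_i$-diagram for $Q^{(2)}$ after forgetting the $\mathbb{T}$-action), and bijectivity via the upper-triangularity statement of Proposition \ref{CC} applied to $Q^{(2)}$ together with the singular-support bound and the bijection of Proposition \ref{ffss}. You spell out more of the details than the paper's two-sentence proof, but the strategy and the key inputs are identical.
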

\begin{proof}
	By a similar argument as the proof of Lemma \ref{lemma 6.9}, we can see the morphism is is  $\mathbf{U}^{-}(\mathfrak{g})$-linear. Apply Proposition \ref{CC} to $Q^{(2)}$, we can see that it is an isomorphism.
\end{proof}

Compose $\CC_{\bG_{\bV}} $ with the pull back of $j^{s}:\Pi^{s}_{\bV,\mathbf{W}^1\oplus\mathbf{W}^2} \rightarrow \Pi_{\bV,\mathbf{W}^1\oplus\mathbf{W}^2} $, we can obtain 
$$\CC^{s}_{\bG_{\bV}}=j^{s\ast} \CC_{\bG_{\bV}} : \mK(\nu,\omega^1,\omega^2)_{-1}^{\psi} \longrightarrow  \mathbf{H}^{\mathrm{BM}}_{\mathrm{top}}(\Pi^{s}_{\bV,\mathbf{W}^1\oplus\mathbf{W}^2},\mathbb{Z}) \cong \mathbf{H}^{\mathrm{BM}}_{\mathrm{top}}(\tilde{\mathfrak{Z}}(\nu,\omega),\mathbb{Z}).$$

\begin{theorem}\label{main2}
	The morphism $\CC^{s,\omega^1,\omega^2} =\bigoplus_{\nu \in \mathbb{N}I}\CC^{s}_{\bG_{\bV}}:  \mK(\omega^1,\omega^2)_{-1}^{\psi} \rightarrow \mathbf{H}(\tilde{\mathfrak{Z}},\mathbb{Z})$ induces an isomorphism of $_{\mathbb{Z}}\mathbf{U}(\mathfrak{g})$-modules,
	$$  \CC^{s,\omega^1,\omega^2}:\mL(\omega^1,\omega^2)^{\psi}_{-1} \longrightarrow \mathbf{H}(\tilde{\mathfrak{Z}},\mathbb{Z}).$$
\end{theorem}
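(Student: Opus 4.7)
The plan is to follow the blueprint of Theorem 6.14 as closely as possible, replacing every input for the single framed quiver by its two-framed analog, and then address the one genuinely new difficulty at the end. First I would check that $\CC^{s,\omega^1,\omega^2}$ descends to the quotient $\mL(\omega^1,\omega^2)^{\psi}_{-1}$: by the last proposition of Section 6.1, any $L\in\mathcal{N}_\nu$ satisfies $SS(L)\cap\Pi^{s}_{\bV,\mathbf{W}^1\oplus\mathbf{W}^2}=\varnothing$, so $\CC_{\bG_{\bV}}([L])$ is a $\mathbb{Z}$-combination of fundamental classes $[Z]$ of irreducible components $Z\subseteq\Pi_{\bV,\mathbf{W}^1\oplus\mathbf{W}^2}$ with $Z\cap\Pi^{s}=\varnothing$, and so $j^{s\ast}\CC_{\bG_{\bV}}([L])=0$. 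Next I would prove $_{\mathbb{Z}}\mathbf{U}^{-}(\mathfrak{g})$-linearity over $\mathbb{Q}$ by repeating the argument of Proposition 6.13: Theorem \ref{com} applied to the two-framed induction diagram identifies $\CC^{\omega^1,\omega^2}$ with an intertwiner for $\mathbf{Ind}^{(r)}_i$, and the Cartesian diagram restricted to $\Pi^{s}$ identifies $\phi^{s}_{2\ast}\phi^{s!}_{1}$ with cap product by $[\mathrm{sw}(\mathfrak{P}_i(\nu,\omega))]$ on $\tilde{\mathfrak{Z}}$; the chosen $\psi$-twist matches Nakajima's sign $(-1)^{\langle i,\omega-\mathbf{C}_\Omega\nu\rangle}$ occurring in $f_i$.

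For bijectivity of $\CC^{s,\omega^1,\omega^2}$ over $\mathbb{Z}$, I would mimic Corollary \ref{co1} via a monomial basis. The crystal bijection $\Phi_{Q^{(2)}}:\mP^{s}_{\nu,\omega^1,\omega^2}\to\mathrm{Irr}\tilde{\mathfrak{Z}}(\nu,\omega)$ from Proposition \ref{ffss}, combined with the refined string order $\prec$ associated to $Q^{(2)}$, produces for each $L\in\mP^{s}_{\nu,\omega^1,\omega^2}$ a monomial $m_L$ in the operators $[\mathcal{F}^{(r)}_i]^{\psi}$ with leading term $[L]$; its image under $\CC^{s,\omega^1,\omega^2}$ is, by the $\mathbf{U}^{-}$-linearity just established, the corresponding Nakajima monomial $m_Z$ with $Z=\Phi_{Q^{(2)}}(L)$. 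Using the tensor-product analog of Lemma \ref{Nkey} (which holds by the same Hecke-correspondence argument carried out on $\tilde{\mathfrak{Z}}$, since the stratifications $\mathfrak{L}_{i,t}(\nu,\omega)$ extend naturally to $\tilde{\mathfrak{Z}}_{i,t}(\nu,\omega)$), one obtains the upper-triangular expansion
\[
\CC^{s,\omega^1,\omega^2}([L])=[Z]+\sum_{Z\prec Z'}c_{Z,Z'}[Z'],\qquad c_{Z,Z'}\in\mathbb{N},
\]
which simultaneously gives the $\mathbb{Z}$-linear isomorphism and the non-negativity statement of Corollary 1.8.

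The main obstacle is the upgrade from $_{\mathbb{Z}}\mathbf{U}^{-}(\mathfrak{g})$-linearity to $_{\mathbb{Z}}\mathbf{U}(\mathfrak{g})$-linearity. In Theorem 6.14 this was painless because one could factor $\CC^{s,\omega^1}=(\varphi^{\omega^1})^{-1}\chi^{\omega^{1},\psi}_{-1}$ through Nakajima's canonical $\mathbf{U}(\mathfrak{g})$-isomorphism $\varphi^{\omega^1}:\mathbf{H}(\mathfrak{L}(\omega))\to L_1(\lambda_1)$; but for the tensor product case the analogous canonical isomorphism is precisely the content of Nakajima's Conjecture \ref{canN2}, which we are trying to prove in Theorem 1.6 rather than assume. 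My plan is therefore to close the loop by establishing compatibility with the restriction map $\Delta$: I would show that characteristic cycles are compatible with the Verdier-dualized restriction functor $(\mathrm{sw})_{!}(\mathbf{D}\boxtimes\mathbf{D})\mathbf{Res}\,\mathbf{D}$ up to the geometric restriction to the attracting locus of the $\lambda(\mathbb{G}_m)$-action, using the description of $\tilde{\mathfrak{Z}}$ as the geometric quotient of $\Pi^{s}_{\bV,\mathbf{W}^1\oplus\mathbf{W}^2,\mathbf{V}^2}$ as a vector bundle over $\Lambda^{s}_{\bV^1,\mathbf{W}^1}\times\Lambda^{s}_{\bV^2,\mathbf{W}^2}$ (Section \ref{Tensor product variety}). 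This produces a $\mathbb{Z}$-linear $\delta$ making the square in Theorem 1.6 commute; then since $\Delta$, $\CC^{s,\omega^1}$ and $\CC^{s,\omega^2}$ are $\mathbf{U}(\mathfrak{g})$-linear (Theorems 4.5 and 6.14) and injective, $\mathbf{U}^{+}$-linearity of $\CC^{s,\omega^1,\omega^2}$ follows by chasing the diagram. The hard step will be identifying the geometric input that ensures $\delta$ is $\mathbf{U}^{+}$-linear; concretely, one needs that the hyperbolic-restriction-type map obtained from the $\lambda(\mathbb{G}_m)$-fixed loci commutes with $f_i,e_i$ up to the prescribed sign, which should follow from a careful analysis of how Hecke correspondences on $\tilde{\mathfrak{Z}}$ intersect the preimages of $\mathfrak{L}(\omega^1)\times\mathfrak{L}(\omega^2)$.
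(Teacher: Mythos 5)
Your first three steps — descent to the quotient via the singular–support criterion, $\mathbf{U}^-(\mathfrak{g})$-linearity over $\mathbb{Q}$ via the Cartesian diagram and the identification of $\phi^{s}_{2\ast}\phi^{s!}_{1}$ with cap product by $[\mathrm{sw}(\mathfrak{P}_i)]$, and bijectivity over $\mathbb{Z}$ via the upper-triangular expansion with leading coefficient $1$ — match the paper's proof. Your formulation of bijectivity through a monomial basis and a tensor-product analogue of Lemma \ref{Nkey} is a slight detour; the paper gets bijectivity directly by applying Proposition \ref{CC} to $Q^{(2)}$ without needing the two-framed version of the Hecke-correspondence lemma, but the substance is equivalent.

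The genuine gap is your Step 4. You correctly locate the difficulty (passing from $\mathbf{U}^-$- to $\mathbf{U}$-linearity cannot be done by factoring through Nakajima's canonical isomorphism, since that is what Theorem 1.6 establishes), but the remedy you propose does not close. You define $\delta$ by the commutative square, so $\delta\circ\CC^{s,\omega^1,\omega^2}=(\CC^{s,\omega^2}\otimes\CC^{s,\omega^1})\circ\Delta$ is $\mathbf{U}(\mathfrak{g})$-linear by Theorems \ref{tensor} and \ref{main1}; but to deduce that $\CC^{s,\omega^1,\omega^2}$ itself is $\mathbf{U}(\mathfrak{g})$-linear you would need to already know that $\delta$ is, and you explicitly flag this as the unresolved ``hard step.'' The diagram chase is therefore circular. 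The paper's actual resolution is both shorter and purely representation-theoretic: $\mathL(\omega^1,\omega^2)^\psi_{-1}\otimes\mathbb{Q}$ and $\mathbf{H}(\tilde{\mathfrak{Z}},\mathbb{Q})$ are both integrable $\mathbf{U}(\mathfrak{g})$-modules carrying the same weight grading, and on an integrable module the action of each $E_i$ is recovered from the action of $F_i$ together with the $i$-weight decomposition (for each $i$ and each weight $m\ge 0$ the $\mathfrak{sl}_2(i)$-highest weight subspace is $\ker F_i^{m+1}$ inside the $i$-weight-$m$ space, and $E_i$ acts by the standard $\mathfrak{sl}_2$ formulas on $\bigoplus_k F_i^k\ker F_i^{m+1}$). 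Hence any $\mathbf{U}^-(\mathfrak{g})$-linear, weight-preserving isomorphism between integrable $\mathbf{U}(\mathfrak{g})$-modules is automatically $\mathbf{U}(\mathfrak{g})$-linear, which gives the theorem immediately over $\mathbb{Q}$, and then over $\mathbb{Z}$ since the image of $\CC^{s,\omega^1,\omega^2}$ lands in $\mathbf{H}(\tilde{\mathfrak{Z}},\mathbb{Z})$.

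Your proposed ``compatibility of characteristic cycles with the Verdier-dualized restriction functor up to the hyperbolic restriction to the attracting locus'' is an interesting and plausibly true statement, but it is a substantially harder result than what is needed here, and nothing in the paper up to this point supplies it. It is in fact closer in spirit to what would be required for a direct geometric proof of Theorem \ref{main3}; in the paper that theorem is instead deduced after Theorem \ref{main2} by comparing the coproduct $\Delta$ with the Thom isomorphism on $\tilde{\mathfrak{Z}}_1$.
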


\begin{proof}
	Firstly, we show that $\CC^{s,\omega^1,\omega^2}$ is a linear isomorphism. By Proposition \ref{CC}, for any simple perverse sheaf $L \in \mP^{s}_{\nu,\omega^1,\omega^2}$ and irreducible component $Z = \Phi_{Q^{(2)}}(L) \subseteq \mP^{s}_{\nu,\omega^1,\omega^2}$, we have $\CC_{\bG_{\bV}} ([L]) = [Z] + \sum\limits_{Z \prec Z'}c_{Z,Z'}[Z'],$
	where $Z'$ runs over irreducible components of $\Pi_{\bV,\mathbf{W}^1\oplus\mathbf{W}^2} $ and $\prec$ is the string order of the crystal $B_{Q^{(2)}}(\infty)$ of the quantum group $\mathbf{U}^{-}_{v}(\mathfrak{g}(Q^{(2)}) ) $ associated to the quiver $Q^{(2)}$. Hence $$\CC^{s,\omega^1,\omega^2}([L])=[(Z \cap \Pi^{s}_{\bV,\mathbf{W}^1\oplus\mathbf{W}^2})/\bG_{\bV} ] +  \sum\limits_{Z \prec Z'}c_{Z,Z'} [(Z' \cap \Pi^{s}_{\bV,\mathbf{W}^1\oplus\mathbf{W}^2})/\bG_{\bV} ].  $$ 
	Since $(Z \cap \Pi^{s}_{\bV,\mathbf{W}^1\oplus\mathbf{W}^2})/\bG_{\bV} $ is exactly the irreducible component of $\tilde{\mathfrak{Z}}(\nu,\omega)$ corresponding to $L$, and such irreducible components $[(Z \cap \Pi^{s}_{\bV,\mathbf{W}^1\oplus\mathbf{W}^2})/\bG_{\bV}]$ form a basis of $\mathbf{H}^{\mathrm{BM}}_{\mathrm{top}}(\tilde{\mathfrak{Z}}(\nu,\omega),\mathbb{Z})$, we can see that $\CC^{s,\omega^1,\omega^2}$ is a $\mathbb{Z}$-linear isomorphism.

	Secondly, we show that $\CC^{s,\omega^1,\omega^2}$ is $\mathbf{U}^{-}(\mathfrak{g})$-linear after base change to $\mathbb{Q}$. 
	Notice that there is a commutative diagram
	
	\[
	\xymatrix{
		\bG_{\bV}\times^{P}\Pi^{s}_{\bV'',\mathbf{W}^1\oplus\mathbf{W}^2}    \ar[d] & \bG_{\bV}\times^{P}\bar{F}^{nil,s}_{\mathbf{W}^1\oplus\mathbf{W}^2} \ar[d] \ar[l]_{\phi^{s}_{1}}  \ar[r]^{\phi^{s}_{2}}   &  	\Pi^{s}_{\bV,\mathbf{W}^1\oplus\mathbf{W}^2} \ar[d]\\
		\bG_{\bV}\times^{P}\Pi_{\bV'',\mathbf{W}^1\oplus\mathbf{W}^2}& \bG_{\bV}\times^{P}\bar{F}^{nil}_{\mathbf{W}^1\oplus\mathbf{W}^2} \ar[l]_{\phi_{1}} \ar[r]^{\phi_{2}} &\Pi_{\bV,\mathbf{W}^1\oplus\mathbf{W}^2} ,
	}
	\]
	where the right square is Cartesian. Since $\phi_{2}$ is proper, we have $j^{s\ast}\phi_{2\ast}\phi^{!}_{1} \cong \phi^{s}_{2\ast}\phi^{s!}_{1} j^{s\ast}  $. 
	
	Notice that $\mathfrak{P}_{i}(\nu,\omega^{1}) \cap (\tilde{\mathfrak{Z}}(\nu'',\omega) \times \tilde{\mathfrak{Z}}(\nu,\omega))$ is isomorphic to the geometric quotient of the variety $\tilde{\mathfrak{U}}$ consisting of $(X,y,z,\mathbf{S})$, where $(x,y,z) \in \Pi^{s}_{\bV,\mathbf{W}^1\oplus\mathbf{W}^2}$ and $ \mathbf{S}$ is a $X$-invariant subspace such that it contains the image of $z$ and its  dimension vector is $\nu''$. Similarly,  the variety $\bG_{\bV}\times^{P}\bar{F}^{nil}_{\mathbf{W}^1\oplus\mathbf{W}^2}$ is isomorphic to $\tilde{\mathfrak{U}}$, so $\phi^{s}_{2\ast}\phi^{s!}_{1}$  equals to the cap product of $[sw(\mathfrak{P}_{i}(\nu,\omega^{1}))]$. 
	We can see that the morphism  $\CC^{s,\omega^1,\omega^2}$ is a $\mathbf{U}^{-}(\mathfrak{g})$-linear isomorphism after base change to $\mathbb{Q}$.
	
	Since $\CC^{s,\omega^1,\omega^2}$ is a $\mathbf{U}^{-}(\mathfrak{g})$-linear isomorphism between integrable highest weight modules, it must be an isomorphism of $\mathbf{U}(\mathfrak{g})$-modules. Since the image of $\CC^{s,\omega^1,\omega^2}$ is contained in $\mathbf{H}(\tilde{\mathfrak{Z}},\mathbb{Z})$, we get the proof.
\end{proof}

As a corollary, we can use the isomorphism $ \Delta: \mL(\omega^1,\omega^2) \rightarrow \mL(\omega^{2}) \otimes \mL(\omega^{1})$ in Theorem \ref{tensor} to prove the Conjecture \ref{canN2}. Notice that similar coproducts for Yangians $Y(\mathfrak{g})$ have been  defined in \cite{MR3077693} and \cite{maulik2019quantum} in different settings, we do not know the concrete relation between these constructions.

\begin{theorem}\label{main3}
	Let $\delta:\mathbf{H}(\tilde{\mathfrak{Z}},\mathbb{Z}) \longrightarrow \mathbf{H}(\mathfrak{L}(\omega^{2}),\mathbb{Z})  \otimes \mathbf{H}(\mathfrak{L}(\omega^{1}),\mathbb{Z})$ be the unique isomorphism such that the following commutative diagram commutes
	\[
	\xymatrix{
		\mL(\omega^1,\omega^2)^{\psi}_{-1}   \ar[d]_{\CC^{s,\omega^1,\omega^2}}  \ar[r]^-{\Delta} &  \mL(\omega^{2})^{\psi}_{-1}  \otimes \mL(\omega^{1})^{\psi}_{-1}  \ar[d]^{\CC^{s,\omega^{2}} \otimes \CC^{s,\omega^{1}}}      \\
		\mathbf{H}(\tilde{\mathfrak{Z}},\mathbb{Z}) \ar[r]^-{\delta} & \mathbf{H}(\mathfrak{L}(\omega^{2}),\mathbb{Z})  \otimes \mathbf{H}(\mathfrak{L}(\omega^{1}),\mathbb{Z}) ,
	}
	\]
	where $\Delta$ is the morphism in Theorem \ref{tensor} and those $\CC^{s}$ are the morphisms in Theorem \ref{main1} and \ref{main2} respectively. Then after base change to $\mathbb{Q}$, $\delta$ is the unique isomorphism which satisfies the conditions in Conjecture \ref{canN2}.
\end{theorem}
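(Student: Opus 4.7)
The plan is to verify two things: (i) $\delta$ is an isomorphism of $\mathfrak{g}$-modules after base change to $\mathbb{Q}$, and (ii) the candidate $\tilde{\varphi} := \sigma \circ \delta^{-1}: \mathbf{H}(\mathfrak{L}(\omega^{1})) \otimes \mathbf{H}(\mathfrak{L}(\omega^{2})) \to \mathbf{H}(\tilde{\mathfrak{Z}}(\omega))$ (with $\sigma$ swapping tensor factors) satisfies the restriction condition of Conjecture \ref{canN2}. Part (i) is automatic, since $\delta$ is a composition of $_{\mathbb{Z}}\mathbf{U}(\mathfrak{g})$-module isomorphisms (Theorem \ref{tensor} combined with Propositions \ref{cant1} and \ref{cant2} for $\Delta$, and Theorems \ref{main1}, \ref{main2} for the characteristic cycle maps). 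Once (ii) is established, the theorem follows from the uniqueness clause of the conjecture, which holds because $\mathbf{H}^{\mathrm{BM}}_{\mathrm{top}}(\tilde{\mathfrak{Z}}_{1},\mathbb{Q})$ generates $\mathbf{H}(\tilde{\mathfrak{Z}}(\omega))$ as a $\mathfrak{g}$-submodule, as discussed before Conjecture \ref{canN2}.

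The core step is (ii), which I would verify by tracking generators through the defining chain. By Proposition \ref{ssZ1}, the irreducible components of $\tilde{\mathfrak{Z}}_{1}(\nu,\omega)$ correspond under $\Phi_{Q^{(2)}}$ to simple perverse sheaves $L \in \mP^{1,s}_{\nu,\omega^1,\omega^2}$, i.e.\ to non-$\mathcal{N}_{\nu}$ simple summands of $L_{\boldsymbol{\omega}^{1}\ftnu\boldsymbol{\omega}^{2}}$. Applying Proposition \ref{CC} to $Q^{(2)}$, the classes $\CC^{s,\omega^1,\omega^2}([L])$ for such $L$ form a basis of $\mathbf{H}^{\mathrm{BM}}_{\mathrm{top}}(\tilde{\mathfrak{Z}}_{1},\mathbb{Z})$ upper-triangular in the refined string order, so the inclusion $\mathbf{H}^{\mathrm{BM}}_{\mathrm{top}}(\tilde{\mathfrak{Z}}_{1}) \hookrightarrow \mathbf{H}(\tilde{\mathfrak{Z}})$ corresponds under $\CC^{s,\omega^1,\omega^2}$ to the inclusion of the submodule spanned by $[L]$ for $L \in \mP^{1,s}_{\nu,\omega^1,\omega^2}$. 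On the other side, Lusztig's restriction formula (Proposition \ref{indres formula}) decomposes $\mathbf{Res}^{\bV\oplus\mathbf{W}^{1}\oplus\mathbf{W}^{2}}_{\mathbf{V}^{1}\oplus\mathbf{W}^{1},\mathbf{V}^{2}\oplus\mathbf{W}^{2}}(L_{\boldsymbol{\omega}^{1}\ftnu\boldsymbol{\omega}^{2}})$ over admissible splittings of the word; only the extremal splitting $\mathbf{V}^{1}=0$, $\mathbf{V}^{2}=\bV$ contributes to $\mL(\omega^{2})^{\psi}_{-1}\otimes \mathbb{Z}[L_{0}]$, yielding (up to shift) $L_{\ftnu\boldsymbol{\omega}^{2}}\boxtimes L_{\boldsymbol{\omega}^{1}}$, while other splittings produce summands in $\mathcal{N}_{\nu''}$ or contribute classes projecting trivially onto $\mL(\omega^{2})^{\psi}_{-1}\otimes \mathbb{Z}[L_{0}]$. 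Composing with $\mathbf{D}\boxtimes\mathbf{D}$, $\sigma$, and then $\CC^{s,\omega^{2}}\otimes\CC^{s,\omega^{1}}$ (using $\CC^{s,\omega^{1}}([L_{0}])=[\mathfrak{L}(0,\omega^{1})]$) carries this span onto $\mathbf{H}(\mathfrak{L}(\omega^{2}))\otimes[\mathfrak{L}(0,\omega^{1})]$, which matches the conjecture's distinguished subspace after applying $\sigma$.

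To complete the identification with the Thom isomorphism, I would use the explicit vector-bundle structure $\Pi^{s}_{\bV,\mathbf{W}^{1}\oplus\mathbf{W}^{2},\nu}\to \Lambda^{s}_{0,\mathbf{W}^{1}}\times\Lambda^{s}_{\bV,\mathbf{W}^{2}}$ from Section 5.1.3, whose passage to $\bG_{\bV}$-quotients yields precisely the Thom isomorphism $\tilde{\mathfrak{Z}}_{1}(\nu,\omega)\cong \mathfrak{L}(0,\omega^{1})\times\mathfrak{L}(\nu,\omega^{2})$. The main obstacle is pinning down the signs and normalizations: the various $\psi$-twists (Section 6.2), Lusztig's Euler-form shifts in $\mathbf{Res}$, and the $\pm 1$ signs appearing in Proposition \ref{CC} must combine to give precisely the Thom isomorphism rather than a signed or scaled variant. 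I would reduce this to the trivial weight space $\nu=0$, where both maps send $[L_{0}]\otimes[L_{0}]$ to $[\tilde{\mathfrak{Z}}_{1}(0,\omega)]$ essentially by definition, and then propagate the normalization to all weights using the $\mathfrak{g}^{-}$-equivariance that is built into every arrow of the chain (Theorem \ref{main1}, Theorem \ref{main2}, and Theorem \ref{tensor}).
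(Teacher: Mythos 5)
Your overall skeleton is right, and you correctly locate the key ingredients: Proposition \ref{ssZ1} identifying $\mathrm{Irr}\,\tilde{\mathfrak{Z}}_{1}$ with $\mP^{1,s}_{\nu,\omega^1,\omega^2}$, the upper-triangularity from Proposition \ref{CC}, Lusztig's restriction formula showing that only the extremal splitting survives the projection to the localizations (because $[L_{\boldsymbol{\omega}^1\ftnu'}]=0$ in $\mL(\omega^1)$ for any non-empty $\ftnu'$), and the vector-bundle description of $\tilde{\mathfrak{Z}}_{1}$ over $\mathfrak{L}(0,\omega^1)\times\mathfrak{L}(\nu,\omega^2)$ giving the Thom isomorphism. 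Up to that point your argument matches the paper's. The difference, and the place where your write-up would break down, is in the last step where you try to resolve the normalization question.

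You propose to fix the sign and scaling by checking at $\nu=0$ and then ``propagating to all weights using $\mathfrak{g}^-$-equivariance.'' This cannot work as stated: $\mathbf{H}^{\mathrm{BM}}_{\mathrm{top}}(\tilde{\mathfrak{Z}}_{1})$ is a $\mathbf{U}^+(\mathfrak{g})$-submodule of $\mathbf{H}(\tilde{\mathfrak{Z}})$ but is \emph{not} stable under the $f_i$-action (this is precisely Nakajima's observation just before Conjecture \ref{canN2}: the Thom isomorphism is only $\mathbf{U}^+_1(\mathfrak{g})$-linear). So the restriction of $\delta$ to $\mathbf{H}^{\mathrm{BM}}_{\mathrm{top}}(\tilde{\mathfrak{Z}}_{1})$ has no $\mathbf{U}^-$-structure to propagate with, and even the $\mathbf{U}^+$-structure cannot generate the subspace from the single element at $\nu=0$ (it lowers $\nu$). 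The paper sidesteps this entirely by making the comparison with the Thom map a consequence of a functorial identity rather than an equivariance argument: it observes that left multiplication by $L_{\boldsymbol{\omega}^1}$ is isomorphic to the pushforward $i_{\mathbf{W}^1\ast}$ along the closed embedding $\mathbf{E}_{\bV,\mathbf{W}^2,\Omega^{(1)}}\hookrightarrow \bfEVOff$, that this defines a linear isomorphism $[L_{\boldsymbol{\omega}^1}]\colon \mL(\nu,\omega^2)\to\mL^1(\nu,\omega^1,\omega^2)$, and then uses compatibility of characteristic cycles with proper pushforward (together with restriction to the stable locus) to obtain a commutative square $\CC^{s,\omega^2}\circ([L_{\boldsymbol{\omega}^1}])^{-1} = \mathrm{Thom}\circ\CC^{s,\omega^1,\omega^2}|_{\mL^1}$. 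Combined with the direct spanning-set computation $\Delta(u)=([L_{\boldsymbol{\omega}^1}])^{-1}(u)\otimes[L_{\boldsymbol{\omega}^1}]$ for $u=[L_{\boldsymbol{\omega}^1\ftnu\boldsymbol{\omega}^2}]$, this pins down $\delta|_{\tilde{\mathfrak{Z}}_1}=\mathrm{Thom}\otimes[\mathfrak{L}(0,\omega^1)]$ exactly, with no residual sign to chase. You should replace the equivariance-propagation step with this explicit commutative diagram; the rest of your plan then goes through.
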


\begin{proof}
	The uniqueness is trivial once such isomorphism exists, so we only need to check that $\delta$ satisfies the conditions in Conjecture \ref{canN2}.
	Let $\mK^{1}(\nu,\omega^1,\omega^2)$ be the $\mathcal{A}$-submodule of $\mK(\nu,\omega^1,\omega^2)$ spanned by those $[L]$ with $L \in \mP^{1}_{\nu,\omega^1,\omega^2}$ and let $\mL^{1}(\nu,\omega^1,\omega^2)$ be the $\mathcal{A}$-submodule of $\mL(\nu,\omega^1,\omega^2)$ spanned by those $[L]$ with $L \in \mP^{1,s}_{\nu,\omega^1,\omega^2}$. Then the left multiplication by $L_{\boldsymbol{\omega}^{1}}$ defines a linear isomorphism $[L_{\boldsymbol{\omega}^{1}}]: \mK(\nu,\omega^{2}) \rightarrow   \mK^{1}(\nu,\omega^1,\omega^2)$ and also induces a linear isomorphism $[L_{\boldsymbol{\omega}^{1}}]: \mL(\nu,\omega^{2}) \rightarrow   \mL^{1}(\nu,\omega^1,\omega^2)$. Regard vertices in $I^{1}$ as unframed vertex, we can see that the operator $[\mathcal{E}_{i}]$ commutes with $[L_{\boldsymbol{\omega}^{1}}]$. In particular, the morphism  $[L_{\boldsymbol{\omega}^{1}}]: \mL(\nu,\omega^{2}) \longrightarrow   \mL^{1}(\nu,\omega^1,\omega^2)$ is $\mathbf{U}_{v}^{+}(\mathfrak{g})$-linear.
	
	By Proposition \ref{ssZ1}, we can see that the restriction of $\CC^{s,\omega^1,\omega^2}$ on $\mL^{1}(\nu,\omega^1,\omega^2)$ defines a linear morphism 
	$\CC^{s,\omega^1,\omega^2}:\mL^{1}(\nu,\omega^1,\omega^2)^{\psi}_{-1} \longrightarrow \mathbf{H}^{\mathrm{BM}}_{\mathrm{top}}(\tilde{\mathfrak{Z}}_{1},\mathbb{Z}). $ By Proposition \ref{CC}, it is an isomorphism. Notice that the left multiplication by $L_{\boldsymbol{\omega}^{1}}$ is isomorphic to $i_{\mathbf{W}^{1} \ast}$, there is a commutative diagram 
		\[
	\xymatrix{
		\mathbb{Q}\otimes_{\mathbb{Z}}\mL^{1}(\nu,\omega^1,\omega^2)^{\psi}_{-1}  \ar[d]_{\CC^{s,\omega^1,\omega^2}}  \ar[rr]^-{ ([L_{\boldsymbol{\omega}^{1}}])^{-1}} & &  \mathbb{Q}\otimes_{\mathbb{Z}}\mL(\nu,\omega^{2})^{\psi}_{-1}   \ar[d]^{\CC^{s,\omega^{2}}}      \\
		\mathbf{H}(\tilde{\mathfrak{Z}}_{1}) \ar[rr]^-{Thom} & & \mathbf{H}(\mathfrak{L}(\omega^{2}))  ,
	}
	\]
	where $Thom$ is the Thom isomorphism in Section 5.2.

    By Proposition \ref{span}, we can see
    $$ {\rm{span}}_{\mathcal{A}} \{[L]|L \in \mP_{\nu,\omega^{2}} \} ={\rm{span}}_{\mathcal{A}} \{[L_{ \ftnu\boldsymbol{\omega}^{2}}]| \ftnu \in \mathcal{S}_{\nu}  \}.  $$
    Notice that the left multiplication by $[L_{\boldsymbol{\omega}^{1}}]$ is isomorphic to the linear map $[i_{\mathbf{W}^{1}\ast}]$ induced by $i_{\mathbf{W}^{1}\ast}$  and its inverse is given by the linear map $[i^{\ast}_{\mathbf{W}^{1}}]$ induced by $i^{\ast}_{\mathbf{W}^{1}}$, we can deduce that $$ {\rm{span}}_{\mathcal{A}} \{[L]|L \in \mP^{1}_{\nu,\omega^1,\omega^2} \} ={\rm{span}}_{\mathcal{A}} \{[L_{ \boldsymbol{\omega}^{1}\ftnu\boldsymbol{\omega}^{2}}]| \ftnu \in \mathcal{S}_{\nu}  \}.  $$
    In particular, $\{[L_{ \boldsymbol{\omega}^{1}\ftnu\boldsymbol{\omega}^{2}}]| \ftnu \in \mathcal{S}_{\nu}  \}]$ is also a  span set  of $\mL^{1}(\nu,\omega^1,\omega^2)^{\psi}_{-1}$.
    Given any $[L_{\boldsymbol{\omega}^{1}\ftnu \boldsymbol{\omega}^{2}}]$, by Proposition \ref{indres formula}, we have the following equation in $\mL(\omega^{2})^{\psi}_{-1} \otimes \mL(\omega^{1})^{\psi}_{-1}$,
    \begin{equation}
    	\begin{split}
    		\Delta ([L_{\boldsymbol{\omega}^{1}\ftnu \boldsymbol{\omega}^{2}}])=& [L_{\ftnu\boldsymbol{\omega}^{2}}]\otimes [L_{\boldsymbol{\omega}^{1}}]+ \sum\limits_{\ftnu',\ftnu''} n_{\ftnu',\ftnu''}[L_{\ftnu''\boldsymbol{\omega}^{2}}]\otimes [L_{\boldsymbol{\omega}^{1} \ftnu'}] \\
    		=&[L_{\ftnu\boldsymbol{\omega}^{2}}]\otimes [L_{\boldsymbol{\omega}^{1}}]\\
    		=&([L_{\boldsymbol{\omega}^{1}}])^{-1}([L_{\boldsymbol{\omega}^{1}\ftnu \boldsymbol{\omega}^{2}}])\otimes [L_{\boldsymbol{\omega}^{1}}], 
    	\end{split}
    \end{equation}	
    since $[L_{\boldsymbol{\omega}^{1} \ftnu'}]=0$ in $\mL(\omega^{1})$ for any flag type $\ftnu'$ non-empty. Since $\Delta(u)= ([L_{\boldsymbol{\omega}^{1}}])^{-1}(u)\otimes [L_{\boldsymbol{\omega}^{1}}]$ holds for a span set of $\mL^{1}(\nu,\omega^1,\omega^2)^{\psi}_{-1} $, $$\Delta(u)= ([L_{\boldsymbol{\omega}^{1}}])^{-1}(u)\otimes [L_{\boldsymbol{\omega}^{1}}] $$ for any $u \in \mL^{1}(\nu,\omega^1,\omega^2)^{\psi}_{-1}$. Notice that $\CC^{s,\omega^{1}} ( [L_{\boldsymbol{\omega}^{1}}] ) =[\mathfrak{L}(0,\omega^{1})]$, we have 
    $\delta(u)= Thom(u) \otimes [\mathfrak{L}(0,\omega^{1})] $ for any $u \in \mathbf{H}(\tilde{\mathfrak{Z}}_{1})$. The theorem is proved.
\end{proof}

We immediately have the following corollary, which enhances  the results in \cite{MR1865400}.
\begin{corollary}
The homology groups  $\mathbf{H}(\tilde{\mathfrak{Z}},\mathbb{Z})$ of the tensor product variety with  integer coefficients have a $_{\mathbb{Z}}\mathbf{U}(\mathfrak{g})$-module structure, which is canonically isomorphic to the tensor product ${_{\mathbb{Z}}L}_{1}(\lambda_{2}) \otimes {_{\mathbb{Z}}L}_{1}(\lambda_{1})$ via the composition of $(\varphi^{\omega^{2}}\otimes \varphi^{\omega^{1}})$ and $\delta$.
\end{corollary}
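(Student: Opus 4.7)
The plan is to assemble the isomorphism out of the pieces already established, namely Theorems \ref{main1}, \ref{main2}, and \ref{main3}, together with Proposition \ref{cant2}. First I would endow $\mathbf{H}(\tilde{\mathfrak{Z}},\mathbb{Z})$ with a $_{\mathbb{Z}}\mathbf{U}(\mathfrak{g})$-module structure as follows: by Proposition \ref{modulet}, $\mL(\omega^{1},\omega^{2})^{\psi}_{-1}$ is a $_{\mathbb{Z}}\mathbf{U}(\mathfrak{g})$-module, and by Theorem \ref{main2} the map $\CC^{s,\omega^{1},\omega^{2}}$ is an isomorphism of $_{\mathbb{Z}}\mathbf{U}(\mathfrak{g})$-modules onto $\mathbf{H}(\tilde{\mathfrak{Z}},\mathbb{Z})$. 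Transporting the action along this $\mathbb{Z}$-linear isomorphism produces the desired integral module structure. One must check that this structure agrees with Nakajima's $\mathbb{Q}$-coefficient action after base change, which is immediate since $\CC^{s,\omega^{1},\omega^{2}}$ intertwines $[\mathcal{F}^{(r)}_i]^{\psi}$ and $[\mathcal{E}^{(r)}_i]^{\psi}$ with the Hecke operators $f_i,e_i$ (the relevant identification is already embedded in the proof of Theorem \ref{main2}).

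Next I would show that $\delta$ itself is defined over $\mathbb{Z}$ and is $_{\mathbb{Z}}\mathbf{U}(\mathfrak{g})$-linear. By the defining square of Theorem \ref{main3},
\[
\delta \;=\; (\CC^{s,\omega^{2}} \otimes \CC^{s,\omega^{1}})\,\circ\,\Delta\,\circ\,(\CC^{s,\omega^{1},\omega^{2}})^{-1}.
\]
Each of the three factors is a $_{\mathbb{Z}}\mathbf{U}(\mathfrak{g})$-linear isomorphism of $\mathbb{Z}$-modules: $(\CC^{s,\omega^{1},\omega^{2}})^{-1}$ by Theorem \ref{main2}, the coproduct $\Delta$ by Theorem \ref{tensor} (its $\mathcal{A}$-linear integrality descends to $v=-1$ together with the $\psi$-twists, which are compatible by construction of the Euler-form sign on both sides), and $\CC^{s,\omega^{j}}$ by Theorem \ref{main1} for $j=1,2$. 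Hence $\delta$ is a $_{\mathbb{Z}}\mathbf{U}(\mathfrak{g})$-linear isomorphism
\[
\delta:\mathbf{H}(\tilde{\mathfrak{Z}},\mathbb{Z})\;\xrightarrow{\sim}\;\mathbf{H}(\mathfrak{L}(\omega^{2}),\mathbb{Z})\otimes \mathbf{H}(\mathfrak{L}(\omega^{1}),\mathbb{Z}).
\]

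Finally I would post-compose with $\varphi^{\omega^{2}}\otimes\varphi^{\omega^{1}}$. By Theorem \ref{main1} (and the Corollary recorded immediately after it), each $\varphi^{\omega^{j}}$ is already an isomorphism of $_{\mathbb{Z}}\mathbf{U}(\mathfrak{g})$-modules $\mathbf{H}(\mathfrak{L}(\omega^{j}),\mathbb{Z})\xrightarrow{\sim}{_{\mathbb{Z}}L}_{1}(\lambda_{j})$, so the tensor product is a $_{\mathbb{Z}}\mathbf{U}(\mathfrak{g})$-linear isomorphism onto ${_{\mathbb{Z}}L}_{1}(\lambda_{2})\otimes {_{\mathbb{Z}}L}_{1}(\lambda_{1})$. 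Composing gives the claimed canonical isomorphism
\[
(\varphi^{\omega^{2}}\otimes\varphi^{\omega^{1}})\circ\delta:\mathbf{H}(\tilde{\mathfrak{Z}},\mathbb{Z})\;\xrightarrow{\sim}\;{_{\mathbb{Z}}L}_{1}(\lambda_{2})\otimes {_{\mathbb{Z}}L}_{1}(\lambda_{1}).
\]
Canonicity follows from the fact that, after base change to $\mathbb{Q}$, this isomorphism coincides with the composition $\tilde{\varphi}^{-1}$ of Nakajima's canonical isomorphism identified in Theorem \ref{main3}.

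The only potentially subtle point — and the one I would check most carefully — is that the $\psi$-twist conventions match consistently through the whole chain, so that the ``positivity'' coming from the characteristic cycles aligns with the Chevalley sign twists of \cite{nakajima2001quiver} used to define $e_i,f_i$. This is essentially bookkeeping, because the two twists $\psi^{\pm}$ introduced before Proposition \ref{modulet} were defined precisely so that Proposition \ref{modulet} holds and so that Theorem \ref{main1} produces a genuine $_{\mathbb{Z}}\mathbf{U}(\mathfrak{g})$-isomorphism; nevertheless, verifying compatibility on the tensor factor (where the signs depend on both $\omega^{1}$ and $\omega^{2}$ as well as on the intermediate dimension vectors appearing in $\Delta$) is the main obstacle to a fully rigorous proof.
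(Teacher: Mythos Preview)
Your proposal is correct and matches the paper's approach: the paper states this corollary as immediate from Theorems \ref{main1}, \ref{main2}, \ref{main3} and Proposition \ref{cant2}, and the commutative triangle displayed right after it is exactly the composition you unwound, namely $(\varphi^{\omega^{2}}\otimes\varphi^{\omega^{1}})\circ\delta=\tilde{\chi}^{\omega^1,\omega^2,\psi}_{-1}\circ(\CC^{s,\omega^1,\omega^2})^{-1}$. Your concern about $\psi$-twist compatibility through $\Delta$ is legitimate caution, but it is already absorbed into Proposition \ref{cant2} and the proofs of Theorems \ref{main1} and \ref{main2}, so no additional verification is needed.
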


\begin{remark}
	 Indeed, after using the same analysis of Section 12 in \cite{nakajima2001quiver}, one can deduce that the homology groups $\mathbf{H}(\mathfrak{L}(\omega),\mathbb{Z})$ and $\mathbf{H}(\tilde{\mathfrak{Z}},\mathbb{Z})$ have $_{\mathbb{Z}}\mathbf{U}(\mathfrak{g})$-module structures.
\end{remark}

Denote the composition of $(\varphi^{\omega^{2}}\otimes \varphi^{\omega^{1}})$ and $\delta$ by $\tilde{\varphi}^{\omega^1,\omega^2}$, we get the following commutative diagram of $_{\mathbb{Z}}\mathbf{U}(\mathfrak{g})$-isomorphisms
	\[
\xymatrix{
	\mL(\omega^1,\omega^2)^{\psi}_{-1} \ar[rr] ^{\CC^{s,\omega^1,\omega^2}} \ar[dr]_{\tilde{\chi}^{\omega^1,\omega^2,\psi}_{-1}} &   &  	\mathbf{H}(\tilde{\mathfrak{Z}},\mathbb{Z}) \ar[dl]^{\tilde{\varphi}^{\omega^1,\omega^2}}\\
	& {_{\mathbb{Z}}L}_{1}(\lambda_{2}) \otimes {_{\mathbb{Z}}L}_{1}(\lambda_{1}), &  
}
\]
where  $Can^{\omega^1,\omega^2,\psi}_{-1}$ is the canonical isomorphism in Proposition \ref{cant2}.

Take $\psi$-twist and denote the composition of $(\varphi^{\omega^{2},\psi}\otimes \varphi^{\omega^{1},\psi})$ and $\delta$ by $\tilde{\varphi}^{\omega^1,\omega^2,\psi}$, we get the following commutative diagram of $_{\mathbb{Z}}\mathbf{U}_{-1}(\mathfrak{g})$-isomorphisms 
	\[
\xymatrix{
	\mL(\omega^1,\omega^2)_{-1} \ar[rr] ^{\CC^{s,\omega^1,\omega^2}} \ar[dr]_{\tilde{\chi}^{\omega^1,\omega^2}_{-1}} &   &  	\mathbf{H}(\tilde{\mathfrak{Z}},\mathbb{Z})^{\psi} \ar[dl]^{\tilde{\varphi}^{\omega^1,\omega^2,\psi}}\\
	& {_{\mathbb{Z}}L}_{-1}(\lambda_{2}) \otimes {_{\mathbb{Z}}L}_{-1}(\lambda_{1}), &  
}
\]
where  $\tilde{\chi}^{\omega^1,\omega^2}_{-1}$ is the canonical isomorphism in Theorem \ref{tensor} at $v=-1$. In particular, we can compare the canonical basis of tensor products at $v=-1$ with the fundamental classes.
\begin{corollary}
	The transition matrix between the canonical basis of the tensor product ${_{\mathbb{Z}}L}_{-1}(\lambda_{2}) \otimes {_{\mathbb{Z}}L}_{-1}(\lambda_{1})$ at $v=-1$ and the fundamental classes in $	\mathbf{H}(\tilde{\mathfrak{Z}},\mathbb{Z})^{\psi}$ is an upper triangular (with respect to the string order) matrix, whose diagonal elements are $1$ and the other elements are non-negative integers. More precisly, for $Z=\Phi_{Q^{(2)}}(L)$, we have the following equation 
	$$\tilde{\chi}^{\omega^1,\omega^2}_{-1}([L])=\tilde{\varphi}^{\omega^1,\omega^2,\psi}( [Z] + \sum\limits_{Z \prec Z'}  c_{Z,Z'}[Z']), $$ 	where  $c_{Z,Z'}$ are constants in $\mathbb{N} $,   $Z'$ runs over irreducible components of $\tilde{\mathfrak{Z}}$, and $\prec$ is the refine string order coming from $Q^{(2)}$.
\end{corollary}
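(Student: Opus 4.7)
The approach parallels the proof of the preceding corollary for the framed case, replacing $Q^{(1)}$ by the two-framed quiver $Q^{(2)}$. First I would fix a simple perverse sheaf $L \in \mathcal{P}^s_{\nu,\omega^1,\omega^2}$ and apply Proposition \ref{CC} to $Q^{(2)}$ viewed as an acyclic quiver in its own right. Since $SS(L) \subseteq \Pi_{\bV,\mathbf{W}^1 \oplus \mathbf{W}^2}$ by the singular support analysis preceding Proposition \ref{ffss}, this yields an identity in $\mathbf{H}^{\mathrm{BM}}_{\mathrm{top}}(\Pi_{\bV,\mathbf{W}^1 \oplus \mathbf{W}^2}, \mathbb Z)$ of the form
$$\CC_{\bG_\bV}([L]) = [Z] + \sum_{Z \prec Z'} c_{Z,Z'}\,[Z'], \qquad c_{Z,Z'} \in \mathbb{N},$$
where $Z = \Phi_{Q^{(2)}}(L)$ and $Z'$ ranges over $\mathrm{Irr}\,\Pi_{\bV,\mathbf{W}^1 \oplus \mathbf{W}^2}$. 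The non-negativity of the coefficients reflects the standard positivity of characteristic cycles on simple perverse sheaves.

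Next I would apply $j^{s\ast}$ along the open embedding $\Pi^s \hookrightarrow \Pi$. By Proposition \ref{ffss}, the components $Z'$ with $Z' \cap \Pi^s = \varnothing$ correspond precisely to the simples lying in $\mathcal{N}_\nu$ and are killed by $j^{s\ast}$; the surviving ones descend under the free $\bG_\bV$-action to irreducible components $\tilde Z' = (Z' \cap \Pi^s)/\bG_\bV$ of $\tilde{\mathfrak Z}(\nu,\omega)$. Writing $\tilde Z = (Z \cap \Pi^s)/\bG_\bV$, the same bookkeeping used in the proof of Theorem \ref{main2} produces
$$\CC^{s,\omega^1,\omega^2}([L]) = [\tilde Z] + \sum_{\tilde Z \prec \tilde Z'} c_{\tilde Z,\tilde Z'}\,[\tilde Z'], \qquad c_{\tilde Z,\tilde Z'} \in \mathbb{N},$$
an identity in $\mathbf{H}(\tilde{\mathfrak Z},\mathbb Z)^\psi$ after accounting for the sign twist. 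The commutative diagram displayed just before the corollary translates this through $\tilde\varphi^{\omega^1,\omega^2,\psi}$ to the asserted formula for $\tilde\chi^{\omega^1,\omega^2}_{-1}([L])$.

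The principal obstacle is the compatibility of the two incarnations of the refine string order: the one on $\mathrm{Irr}\,\Pi_{\bV,\mathbf{W}^1 \oplus \mathbf{W}^2}$ produced by Proposition \ref{CC} must agree with the refine string order on $\mathrm{Irr}\,\tilde{\mathfrak Z}$ that appears in the statement, under the bijection $Z' \mapsto \tilde Z'$. Since both orders are built inductively from the numerical invariants $\epsilon_i$ via the Kashiwara-type reflections $\rho_{i,t}$, and $\Phi_{Q^{(2)}}$ is an isomorphism of crystals intertwining $\epsilon_i$ on either side, the required compatibility reduces to checking that passage to the stable locus $\Pi^s$ does not alter the value of $\epsilon_i$ at the generic point of each surviving irreducible component. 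Once that verification is in hand, the upper-triangular shape and the non-negativity of the coefficients transfer from $\mathrm{Irr}\,\Pi_{\bV,\mathbf{W}^1 \oplus \mathbf{W}^2}$ to $\mathrm{Irr}\,\tilde{\mathfrak Z}$ with no further work.
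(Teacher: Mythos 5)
Your proposal is correct and mirrors the paper's reasoning: the upper-triangular identity $\CC^{s,\omega^1,\omega^2}([L]) = [\tilde Z] + \sum_{Z\prec Z'} c_{Z,Z'}[\tilde Z']$ with $c_{Z,Z'}\in\mathbb N$ is already established inside the proof of Theorem 6.17 via Proposition 6.16 applied to $Q^{(2)}$ followed by $j^{s\ast}$, and the corollary then falls out by reading that identity through the $\psi$-twisted commutative diagram displayed immediately before it. The ``principal obstacle'' you flag is actually a non-issue: the refine string order on $\mathrm{Irr}\,\tilde{\mathfrak Z}$ in the statement is, by definition, the one ``coming from $Q^{(2)}$'' --- i.e.\ transported from the crystal order on $\mathcal{B}$ for $Q^{(2)}$ via the bijection $\Phi_{Q^{(2)}}$ of Proposition 6.5 --- not an independently defined order on $\tilde{\mathfrak Z}$, so no verification about $\epsilon_i$ on the stable locus is required.
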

\end{spacing}

\end{document}